\newtheorem{thm}{Theorem}[section]
\newtheorem{lem}[thm]{Lemma}
\newtheorem{prop}[thm]{Proposition}
\theoremstyle{definition}
\newtheorem{defn}[thm]{Definition}
\theoremstyle{remark}
\newtheorem{rem}[thm]{\textbf{Remark}}
\newtheorem{q}[thm]{\textbf{Question}}
\DeclareMathOperator{\peye}{\textproto{o}}
      \def\@makefnmark{%
         \leavevmode
            \raise.9ex\hbox{\check@mathfonts
                \fontsize\sf@size\z@\normalfont%
                            \@thefnmark}%
       }
\newcommand{\dd}{\textrm{d}}
\begin{document}

\title[]{Stationary self-similar profiles for the two-dimensional inviscid Boussinesq equations}
\author[]{Ken Abe}
\author[]{Daniel Ginsberg}
\author[]{In-Jee Jeong}
\date{}
\address[Ken Abe]{Department of Mathematics, Graduate School of Science, Osaka Metropolitan University, 3-3-138 Sugimoto, Sumiyoshi-ku Osaka, 558-8585, Japan}
\email{kabe@omu.ac.jp}
\address[Daniel Ginsberg]{Department of Mathematics, Brooklyn College (CUNY), Brooklyn, NY 11210, USA}
\email{daniel.ginsberg@brooklyn.cuny.edu}
\address[In-Jee Jeong]{Department of Mathematical Sciences and RIM, Seoul National University, Seoul 08826, Korea}
\email{injee\_j@snu.ac.kr}

\subjclass[2020]{35Q31, 35Q35}
\keywords{Self-similar solutions, Dubreil-Jacotin--Long equation, Minimax theorems, Singular elliptic problem}
\date{\today}

\begin{abstract}
We consider ($-\alpha$)-homogeneous solutions (stationary self-similar solutions of degree $-\alpha$) to the two-dimensional inviscid Boussinesq equations in a half-plane. We show their non-existence and existence with both regular and singular profile functions. More specifically, we demonstrate:  \\ 

\begin{itemize}
\item Non-existence of rotational ($-\alpha$)-homogeneous solutions with regular profiles\\ $(u,p,\rho)\in C^{1}(\overline{\mathbb{R}^{2}_{+}}\backslash \{0\})$ for $0\leq \alpha\leq 1$ and $(u,p,\rho)\in C^{2}(\overline{\mathbb{R}^{2}_{+}}\backslash \{0\})$ for $-1/2\leq \alpha<0$ \\
\item Existence of rotational ($-\alpha$)-homogeneous solutions with regular profiles\\ $(u,p,\rho)\in C^{2}(\overline{\mathbb{R}^{2}_{+}}\backslash \{0\})$ for $\alpha>1$ and $(u,p,\rho)\in C^{1}(\overline{\mathbb{R}^{2}_{+}})$ for $\alpha<-2$\\
\item Existence of rotational ($-\alpha$)-homogeneous solutions with $x_1$-symmetric singular profiles\\
 $(u,p,\rho) \in C^{\infty}(\overline{\mathbb{R}^{2}_{+}}\backslash \{x_1=0\}\cup \{x_2=0\})\cap C(\overline{\mathbb{R}^{2}_{+}})$ for $-1<\alpha< -1/2$ and $(u,p,\rho)\in C^{\infty}(\overline{\mathbb{R}^{2}_{+}}\backslash \{x_1=0\}\cup \{x_2=0\})$ for $-1/2\leq  \alpha<1$\\
\end{itemize}
The ($-\alpha$)-homogeneous solutions with continuous profiles $(u,p,\rho)\in C^{\infty}(\overline{\mathbb{R}^{2}_{+}}\backslash \{x_1=0\}\cup \{x_2=0\})\cap C(\overline{\mathbb{R}^{2}_{+}})$ for $-1<\alpha<-1/2$ provide examples to self-similar weak solutions in $\mathbb{R}^{2}_{+}$ for the scaling exponent $\alpha\approx -0.617$, at which Wang et al. (2023) numerically discovered the existence of backward self-similar solutions with smooth profile functions.
\end{abstract}

\maketitle

\tableofcontents

\section{Introduction}
The 2D inviscid Boussinesq equations
\begin{equation}
\begin{aligned}
u_t+u\cdot \nabla u+\nabla p&=-\rho e_{2},\\
\nabla \cdot u&=0,\\
\rho_t+u\cdot \nabla \rho&=0,
\end{aligned}
\end{equation}
describe the velocity $u(x,t)=(u^{1}(x,t),u^{2}(x,t))$, the pressure $p(x,t)$, and the density $\rho(x,t)$ of stratified fluids when the density variation is small compared with the mean density. These equations can be derived from the equations of thermodynamics by the Boussinesq approximation \cite{ob}, \cite{oberbeck88}, \cite{Bou}, incorporating the buoyancy force $-\rho e_{2}$ for $e_{2}=(0,1)$ and are widely used to model stratified flows in geophysical fluid dynamics, e.g., \cite[2.4]{Vallis17}, \cite[Chapter 4]{Dav}, \cite[Chapter 2]{DR82}, \cite{MA}. The Boussinesq equations in a half-plane also share the essential structure of the axisymmetric Euler equations
with swirl \cite{MaB}. The problem of singularity formation for these two equations has attracted attention during the last decade; see Kiselev \cite{KiselevICM} and Drivas and Elgindi \cite[5.2]{DE} for surveys.

\subsection{Self-similar solutions}

We consider self-similar solutions to the Boussinesq equations (1.1). We say that $(u,p,\rho)$ is a backward (resp. forward) self-similar solution if 
\begin{align*}
u(x,t)=\lambda^{\alpha}u(\lambda x,\lambda^{1+\alpha}t ),\quad  p(x,t)=\lambda^{2\alpha}u(\lambda x,\lambda^{1+\alpha}t ), \quad  
\rho(x,t)=\lambda^{2\alpha+1}\rho(\lambda x,\lambda^{1+\alpha}t ),
\end{align*}
for some $\alpha\in  \mathbb{R}$ and for all $\lambda>0$, $x\in \mathbb{R}^{2}$ and $t<0$ (resp. $t>0$) \cite{Chae07}. This self-similarity is of the \textit{second} kind \cite{ZR}, \cite{Bare96}, \cite{EF09} in the sense that the scaling exponent $\alpha\in \mathbb{R}$ is free in contrast to the viscous and diffusive Boussinesq equations
where $\alpha=1$ is dictated by the scaling of the problem (self-similarity of the first kind). 

Self-similarity of the second kind plays an important role in the study of
singularity formation in many equations of hydrodynamics:

\begin{itemize}
\item \textbf{One-dimensional models}: Various backward self-similar solutions have been discovered for the gCLM equations in the works of Elgindi and Jeong \cite{EJ20c}, Chen et al. \cite{CHH21},  Chen \cite{Chen20}, Lushnikov et al. \cite{LSS}, Huang et al. \cite{HQWW}, \cite{HQWW}, Zhen \cite{Z23}, and Jia and \v{S}ver\'{a}k \cite{SverakVideo}. They have the same scaling law as the inviscid Boussinesq equations and possess exponents distributing over the range $\alpha\in [-2,2]$; see Table 1 in Appendix A.
\item \textbf{Shock formation}: Backward self-similar solutions to the 1D inviscid Burgers equation with smooth profile functions exist for $\alpha=-1/(2i+1)$ and $i\in \mathbb{N}$, e.g., \cite{EF09}. Higher dimensional anisotropic self-similar solutions for $\alpha=-1/3$ involving additional scaling exponents have been obtained in the work of Collot et al. \cite{CGM22} in the study of the 2D Burgers equations with transverse viscosity and the work of Buckmaster et al. \cite{BVS22} in the study of shock and singularity formation of the 3D isentropic compressible Euler equations, cf. \cite{BSV23}, \cite{BSV23b}.
\item \textbf{Boundary layer separation}: Collot et al. \cite{CGM21} discovered two explicit backward self-similar solutions to the 2D inviscid Prandtl equations for $\alpha=-1/3$ with two different additional exponents modeling generic separation singularity relevant to Van-Dommelen--Shen boundary layer singularity and degenerate singularity relevant to Burgers shock; see \cite{CGIM22} for a stability result in the Prandtl boundary layer equation.
\item \textbf{Implosions in compressible fluids}: Merle et al. \cite{MRRS} discovered backward self-similar solutions to the isentropic compressible Euler equations for exponents $\{\alpha_n\}\subset (0,\alpha_{\peye})$ accumulating at the constant $\alpha_{\peye}$ with smooth spherically symmetric profile functions slowly decaying at infinity. They describe blow-ups of isentropic compressible Navier--Stokes equations \cite{MRRS2} and supercritical defocusing nonlinear Schr\"{o}dinger equations \cite{MRRS3}. Buckmaster et al. \cite{BCG} demonstrated that $3$D imploding solutions exist for all adiabatic exponents $\gamma>1$; see \cite{BCG2} for a review. 
\end{itemize} 


Returning to the Boussinseq equations (1.1), we distinguish the following four cases, cf.
\cite[Definition 1.1]{CHH21}, \cite[Definition 4.8, 4.1.10]{DE}:\\

\noindent
\textbf{Case 1}: $\alpha=-1$ (Scale-invariant). The equation (1.1) is reduced to a system for $1$-homogeneous velocity and density. Elgindi and Jeong \cite{EJ20} constructed finite energy blow-up solutions to the 2D inviscid Boussinesq equations in a corner domain by studying solutions of this type, cf. \cite[Problem 8]{DE}. See also Sarria and Wu \cite{SW15} for infinite energy blow-up solutions in a strip.

\noindent
\textbf{Case 2}: $\alpha>-1$ (Concentrating). (Time-dependent) backward self-similar solutions take the form 
\begin{align*}
u(x,t)=\frac{1}{(-t)^{\frac{\alpha}{\alpha+1}}}u\left(\frac{x}{ (-t)^{\frac{1}{\alpha+1}}} ,-1\right),\quad p(x,t)=\frac{1}{(-t)^{\frac{2\alpha}{\alpha+1}}}p\left(\frac{x}{ (-t)^{\frac{1}{\alpha+1}}} ,-1\right), \quad \rho(x,t)=\frac{1}{(-t)^{\frac{2\alpha+1}{\alpha+1}}}\rho\left(\frac{x}{ (-t)^{\frac{1}{\alpha+1}}} ,-1\right),
\end{align*}
where the profile function $(u,p,\rho)(x,-1)$ satisfies the self-similar equations:
\begin{equation}
\begin{aligned}
\frac{\alpha}{\alpha+1}u+\frac{1}{\alpha+1}x\cdot \nabla u +u\cdot \nabla u+\nabla p&=-\rho e_2,\\
\nabla \cdot u&=0,\\
\frac{2\alpha+1}{\alpha+1}\rho+\frac{1}{\alpha+1}x\cdot \nabla \rho +u\cdot \nabla \rho&=0.
\end{aligned}
\end{equation}
Elgindi \cite[Theorem 1 and Remark 1.3]{Elgindi} provided the first example of backward self-similar solutions to the Euler equations in this class with some axisymmetric profile without swirl,
and satisfying $u(x,-1)\in C^{1,\gamma}(\mathbb{R}^{3})$ for small $\gamma \in (0,1)$. See also Elgindi et al. \cite{EGM2} for a study
of their stability. Chen and Hou \cite{CH21} constructed asymptotically self-similar blow-up solutions to the 2D inviscid Boussinesq equations in a half-plane for some $C^{1,\gamma}$ initial data. The recent works \cite{CH22a}, \cite{CH22b} significantly extended the result for smooth initial data with a computer-assisted proof.

\noindent
\textbf{Case 3}: $\alpha<-1$ (Expanding). Chen et al. \cite{CHH21} discovered this type of backward self-similar solution for the De Gregorio model of the 3D Euler equations. See also Huang et al. \cite{HTW}, cf. \cite{SverakVideo}. They also exist for the generalized Constantin--Lax--Majda (gCLM) equations \cite{HQWW}.

\noindent
\textbf{Case 4}: $\alpha=\infty$ (Separation of variables). 
Chen \cite{Chen21} found this type of self-similar solutions for the gCLM equations inspired by the numerical work of Lushnikov et al. \cite{LSS}, cf. \cite{HQWW}.\\

Wang et al. \cite{WLGB} explored the existence of solutions to the backward self-similar equations (1.2) in a half-plane. The work \cite{WLGB} numerically searched for \textit{smooth} solutions satisfying the conditions $\nabla u$, $\nabla \rho\to 0$ as $|x|\to\infty$ as well as the $x_1$-symmetry 
\begin{equation}
\begin{aligned}
u^{1}(x_1,x_2)&=-u^{1}(-x_1,x_2),\quad
u^{2}(x_1,x_2)=u^{2}(-x_1,x_2), \\
p(x_1,x_2)&=p(-x_1,x_2),\quad
\rho(x_1,x_2)=\rho(-x_1,x_2),
\end{aligned}
\end{equation}
for \textit{unknown} $\alpha>-1$ by using neural networks, and found solutions 
at the exponent $\alpha\approx -0.617$ (in the notation of \cite[p.3]{WLGB}, $\mu=-\alpha/(\alpha+1)\approx 1.917$). The work \cite{WLGB} also found solutions to (1.2) peaked at the origin for the \textit{fixed} exponent $\alpha=-0.75$ ($\mu=3$); see Figure 1.  

In light of the above, it is natural to ask the following question:
\begin{q}
For which exponents $\alpha\in \mathbb{R}$ do self-similar solutions to the problem (1.1) exist? 
\end{q}

\subsection{The statement of the main results}

\subsubsection{Self-similar Boussinesq solutions}
This work aims to answer Question 1.1
in the class of stationary solutions. We consider ($-\alpha$)-homogeneous (stationary self-similar) solutions of the form   
\begin{align*}
u(x)=\frac{1}{|x|^{\alpha}}u\left(\frac{x}{|x|}\right),\quad p(x)=\frac{1}{|x|^{2\alpha}}p\left(\frac{x}{|x|}\right),\quad \rho(x)=\frac{1}{|x|^{2\alpha+1}}\rho\left(\frac{x}{|x|}\right).
\end{align*}
The $(-\alpha)$-homogeneous solutions to (1.1) are also solutions to the self-similar equations (1.2). We identify solutions of (1.1) in a half-plane with those in the whole plane under the $x_2$-symmetry: 
\begin{equation}
\begin{aligned}
u^{1}(x_1,x_2)&=u^{1}(x_1,-x_2),\quad
u^{2}(x_1,x_2)=-u^{2}(x_1,-x_2), \\
p(x_1,x_2)&=p(x_1,-x_2),\quad
\rho(x_1,x_2)=-\rho(x_1,-x_2).
\end{aligned}
\end{equation}


We say that ($-\alpha$)-homogeneous solutions $(u,p,\rho)$ have \textit{regular} profiles if $(u,p,\rho)$ is a classical stationary solution to (1.1) in $\mathbb{R}^{2}\backslash \{0\}$. Otherwise, we say that ($-\alpha$)-homogeneous solutions $(u,p,\rho)$ have \textit{singular} profiles. First, we state one of the main results about the non-existence and existence of rotational ($-\alpha$)-homogeneous solutions with regular profiles. We give a concrete characterization of irrotational ($-\alpha$)-homogeneous solutions with regular profiles in Theorem 1.9.

\begin{thm}[Rotational solutions with regular profiles]
The following holds for rotational ($-\alpha$)-homogeneous solutions to (1.1) and (1.4):

\noindent
(i) For $0\leq \alpha \leq 1$, no solutions $(u,p,\rho) \in C^{1}(\mathbb{R}^{2}\backslash \{0\})$ exist.\\
(ii) For $-1/2\leq \alpha < 0$, no solutions $(u,p,\rho) \in C^{2}(\mathbb{R}^{2}\backslash \{0\})$ exist.\\
(iii) For $\alpha>1$, solutions $(u,p,\rho)\in C^{2}(\mathbb{R}^{2}\backslash \{0\})$ exist.\\
(iv) For $\alpha<-2$, solutions $(u,p,\rho)\in C^{1}(\mathbb{R}^{2})$ exist.
\end{thm}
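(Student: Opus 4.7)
The plan is to reduce the problem to a singular nonlinear ODE on $(0,\pi)$ for an angular profile and then analyze each of the four $\alpha$-regimes separately. Introducing the stream function $\psi$ via $u=\nabla^{\perp}\psi$, the $(-\alpha)$-homogeneity together with (1.4) forces $\psi=r^{1-\alpha}\Psi(\theta)$ and $\rho=r^{-(2\alpha+1)}P(\theta)$ on the upper half-plane. Since $u\cdot\nabla\rho=0$, one has $\rho=F(\psi)$ on streamlines, and the modified Bernoulli function $\Pi=p+|u|^{2}/2+\rho x_{2}$ satisfies $\nabla\Pi=(\omega+x_{2}F'(\psi))\nabla\psi$, yielding the Dubreil--Jacotin--Long equation $-\Delta\psi=G(\psi)-x_{2}F'(\psi)$. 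Matching degrees forces $F$ and $G$ to be explicit power laws in $\psi$, so the PDE reduces to
\[
-\Psi''-(1-\alpha)^{2}\Psi = c_{G}\,\Psi^{-(\alpha+1)/(1-\alpha)} - c_{F}\sin\theta\,\Psi^{-(\alpha+2)/(1-\alpha)}
\]
on $(0,\pi)$, with $\Psi(0)=\Psi(\pi)=0$ from (1.4) and reflection symmetry about $\theta=\pi/2$ from (1.3); rotationality means $\Psi\not\equiv 0$.

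For (i) and (ii), I would argue by contradiction. The regularity hypothesis ($C^{1}$ or $C^{2}$ on $\overline{\mathbb{R}^{2}_{+}}\setminus\{0\}$) translates into precise matching conditions for $\Psi$ and $\Psi'$ at $\theta=0,\pi$. Testing the ODE first against $\Psi$, and then against a Pohozaev-type multiplier (such as $(\sin\theta)\Psi'$, adapted to the $\sin\theta$ factor in the nonlinearity), and integrating on $(0,\pi)$, one obtains two identities. The windows $0\le\alpha\le 1$ and $-1/2\le\alpha<0$ are precisely those in which all boundary contributions vanish under the stated regularity while the remaining bulk terms carry a common sign, forcing $\Psi\equiv 0$. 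The specific thresholds $\alpha\in\{-1/2,0,1\}$ emerge from the interplay between the linear coefficient $(1-\alpha)^{2}$ versus the first Dirichlet eigenvalue of $-\partial_{\theta\theta}$ on $(0,\pi)$, the integrability of $\sin\theta\,\Psi^{-(\alpha+2)/(1-\alpha)}$ at the endpoints, and the regularity being assumed; the stronger $C^{2}$ hypothesis in (ii) is needed to compensate for the weaker integrability as $\alpha$ decreases past $0$.

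For (iii) and (iv), I would construct positive solutions of the ODE and translate back. When $\alpha>1$ the stream function $\psi=r^{1-\alpha}\Psi$ decays at infinity and the reduced problem on $(0,\pi/2)$ with mixed Dirichlet/Neumann data is subcritical in the relevant sense; a direct minimization or a shooting argument produces $\Psi>0$ with $C^{2}$ regularity away from the origin. When $\alpha<-2$ the factor $r^{1-\alpha}$ with $1-\alpha>3$ vanishes fast enough at the origin to upgrade regularity to $C^{1}(\mathbb{R}^{2})$; the associated variational problem is supercritical, and I would invoke a minimax (mountain-pass) scheme on a suitable energy functional (as signaled by the paper's keywords). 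The main obstacle throughout is the singular nonlinearity: since $-(\alpha+2)/(1-\alpha)<0$ for $\alpha\in(-1,1)$, the term $\Psi^{-(\alpha+2)/(1-\alpha)}$ blows up at $\theta=0,\pi$, so both the choice of Pohozaev multiplier (non-existence) and the construction of admissible minimax paths (existence) rely on a sharp matched asymptotic expansion of $\Psi$ near the endpoints. Establishing this expansion at the exact order needed to cancel the otherwise non-integrable boundary terms is what generates—and sharpens—the four thresholds appearing in the statement.
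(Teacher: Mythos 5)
Your reduction of the non-existence parts (i)--(ii) to the Dubreil-Jacotin--Long equation contains a genuine gap: writing $\rho=F(\psi)$ and $\Pi=G(\psi)$ as \emph{global, single-valued power laws} of the stream function is an ansatz, not a consequence of the equations. The identities $u\cdot\nabla\rho=0$ and $u\cdot\nabla\Pi=0$ only make $\rho$ and $\Pi$ constant along streamlines, i.e.\ local functions of $\psi$; a hypothetical rotational solution need not satisfy any such global functional relation, so your Pohozaev-type argument would at best exclude solutions of DJL power-law type, not all $C^{1}$ (resp.\ $C^{2}$) rotational profiles. You also impose the $x_1$-reflection symmetry (1.3), which is not assumed in Theorem 1.2. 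The paper avoids both issues by working directly with the first-order system on the semicircle for $u=r^{-\alpha}(a\,e_{\phi}+f\,e_{r})$: combining $(1-\alpha)f+a'=0$ with the first integrals $a\rho'=(2\alpha+1)f\rho$, $a\Pi'=2\alpha f\Pi$, $a\omega'=(\alpha+1)f\omega$ one finds that $|\rho|^{1-\alpha}|a|^{2\alpha+1}$ (and its analogues) is constant on any interval where it is nonzero; for $-1/2<\alpha<1$ both exponents are positive, so the constant must equal its boundary value $0$ by $a(0)=a(\pi)=0$, forcing $\rho\equiv0$ and then $\omega\equiv0$. The thresholds $\alpha=-1/2,0,1$ come from these exponents changing sign (with the endpoint cases handled via the trace conditions on $\rho$ and $\omega$), not from eigenvalue comparisons or endpoint integrability of a singular term.

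For the existence parts your variational scheme is assigned to the wrong regimes. Setting $\beta=\alpha-1$, the nonlinear powers $1+2/\beta$ and $1+3/\beta$ are \emph{positive} for both $\alpha>1$ and $\alpha<-2$, so there is no singular nonlinearity in Theorem 1.2 (iii)--(iv) and no matched asymptotics at the endpoints are needed (that difficulty belongs to Theorem 1.3, where $-1<\alpha<1$). For $\alpha>1$ the functional is unbounded below (superlinear), so direct minimization fails; moreover positive solutions exist only for $1<\alpha<2$ (testing against $\sin\phi$ shows $|\beta|<1$ is necessary), so for $\alpha\geq2$ one must accept sign-changing critical points, and the mountain-pass geometry degenerates because the principal eigenvalue $1-\beta^{2}$ becomes non-positive --- the paper switches to a linking theorem on the decomposition $H^1_0=Y\oplus Z$. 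For $\alpha<-2$ the problem is \emph{sublinear} with an indefinite quadratic part, so a mountain-pass scheme has no geometry to work with; the paper uses the saddle point theorem there. These are not cosmetic substitutions: each minimax theorem is chosen to match the sign of $1-\beta^2$ and the super/sublinearity of the nonlinearity, and your proposal as written would fail in the ranges $\alpha\ge 2$ and $\alpha<-2$.
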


Figure 1 (A) shows the non-existence and existence ranges of $\alpha\in \mathbb{R}$ for rotational ($-\alpha$)-homogeneous solutions with regular profiles. The main result of this paper is the existence of ($-\alpha$)-homogeneous solutions to (1.1) with singular profiles for $-1<\alpha<1$ satisfying the $x_1$-symmetry (1.3). They provide examples of stationary self-similar weak solutions for $\alpha\approx -0.617$, at which Wang et al. \cite{WLGB} numerically discovered the existence of backward self-similar solutions to (1.1) with smooth profile functions.

\begin{thm}[Rotational solutions with singular profiles]
The following holds for rotational ($-\alpha$)-homogeneous solutions to (1.1), (1.3), and (1.4):

\noindent
(i) For $-1<\alpha< -1/2$, solutions $(u,p,\rho)\in C^{\infty}(\mathbb{R}^{2}\backslash \{x_1=0\}\cup\{x_2=0\})\cap C(\mathbb{R}^{2})$ exist. \\
(ii) For $-1/2\leq \alpha<1$, solutions $(u,p,\rho)\in C^{\infty}(\mathbb{R}^{2}\backslash \{x_1=0\}\cup\{x_2=0\})$ exist.

The solutions in (i) are weak solutions to (1.1) in $\mathbb{R}^{2}$ and the $C^{1}$-norm of $u$ diverges on the axis $\{x_1=0\}$. The velocity $u$ is ($-\alpha$)-H\"older continuous for the radial variable and ($2+2/(\alpha-1)$)-H\"older continuous for the angular variable. The solutions in (ii) satisfy (1.1) in $\mathbb{R}^{2}\backslash \{x_1=0\}\cup \{x_2=0\}$ and the $C^{1}$-norm of $u$ diverges on the axis $\{x_1=0\}$.   
\end{thm}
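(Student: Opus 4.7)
The plan is to reduce the problem to a singular two-point boundary value problem for the angular profile of a stream function, to solve it by a minimax argument on a regularization, and to extract the regularity of $(u,p,\rho)$ from that of the profile. Writing $u=\nabla^\perp\psi$, the steady transport of $\rho$ forces $\rho=F(\psi)$ on any region where $\psi$ has a definite sign, and the vorticity equation $u\cdot\nabla\omega=-\partial_1\rho$ integrates to the Dubreil-Jacotin--Long-type equation
\begin{equation*}
-\Delta\psi = \Omega(\psi)+x_2\,F'(\psi)
\end{equation*}
for an arbitrary vorticity profile $\Omega$. The symmetries (1.3)--(1.4) force $\psi$ to be odd in both $x_1$ and $x_2$, so I work in the first quadrant $\theta\in(0,\pi/2)$ with Dirichlet data on the axes and extend by the symmetries. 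Substituting $\psi(r,\theta)=r^{1-\alpha}\Psi(\theta)$ and using $(-\alpha)$-homogeneity to fix $\Omega$ and $F$ as power functions reduces the equation to the singular two-point problem
\begin{equation*}
\Psi''(\theta)+(1-\alpha)^2\Psi(\theta)+A\,\Psi^{s}+B\sin\theta\,\Psi^{q-1}=0,\qquad \Psi(0)=\Psi(\tfrac{\pi}{2})=0,\ \Psi>0,
\end{equation*}
with exponents $s=-(1+\alpha)/(1-\alpha)$ and $q-1=-(\alpha+2)/(1-\alpha)$, both negative when $-1<\alpha<1$; rotationality is guaranteed by taking $(A,B)\ne(0,0)$.

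For existence, the natural Lagrangian
\begin{equation*}
\mathcal{J}(\Psi)=\int_0^{\pi/2}\Bigl[\tfrac{1}{2}(\Psi')^2-\tfrac{(1-\alpha)^2}{2}\Psi^2-\tfrac{A}{s+1}\Psi^{s+1}-\tfrac{B\sin\theta}{q}\Psi^{q}\Bigr]\,d\theta
\end{equation*}
is singular at the endpoints, so a direct variational approach on $H^1_0(0,\pi/2)$ fails. I would regularize by replacing $\Psi$ with $\Psi+\varepsilon$ in the nonlinear terms, apply a mountain pass or linking theorem to each regularized $\mathcal{J}_\varepsilon$ (tuning the free parameters $A,B$ to arrange the required geometry), and pass to the limit $\varepsilon\to 0$ using a uniform pointwise lower bound $\Psi_\varepsilon(\theta)\geq c_0\sin\theta\cos\theta$ obtained by comparison with the first Dirichlet eigenfunction of $-\partial_\theta^2-(1-\alpha)^2$ on $(0,\pi/2)$. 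Such a bound keeps both singular terms integrable in the limit and delivers a strictly positive profile $\Psi$.

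For regularity, ODE bootstrap on $(0,\pi/2)$ gives $\Psi\in C^\infty$, hence $(u,p,\rho)\in C^\infty$ off the axes via the self-similar ansatz. A Frobenius analysis of the reduced ODE near $\theta=0$ and $\theta=\pi/2$ determines the leading exponent in $\Psi(\theta)\sim c\,\theta^\beta$, and combined with the prefactors $r^{-\alpha},r^{-2\alpha},r^{-2\alpha-1}$ in the ansatz, this yields continuous extension of $(u,p,\rho)$ to the axes precisely when $-1<\alpha<-1/2$ (case (i)), and its failure when $-1/2\leq\alpha<1$ (case (ii)). The claimed H\"older regularity in the radial and angular variables and the blow-up of $\|u\|_{C^1}$ on $\{x_1=0\}$ then follow by inserting this boundary expansion into the ansatz and differentiating.

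I expect the main obstacle to be the existence step. The two competing singular nonlinearities, combined with the buoyancy weight $\sin\theta$, prevent a direct application of standard minimax results for singular elliptic problems, and proving the strict positivity of $\lim_{\varepsilon\to 0}\Psi_\varepsilon$ is the crucial point that turns the regularized minimizer into a genuine solution of the singular problem.
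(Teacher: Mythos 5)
Your overall architecture coincides with the paper's: reduce via the Dubreil-Jacotin--Long equation and the homogeneous ansatz to a singular two-point problem on the quarter circle, regularize the singular nonlinearity, obtain a uniform lower barrier proportional to $\sin 2\theta$, and pass to the limit. However, the existence step as you describe it would fail. The regularized functional $\mathcal{J}_\varepsilon$ has \emph{no} mountain-pass or linking geometry: on $(0,\pi/2)$ the quadratic form $\int (w')^2-(1-\alpha)^2 w^2$ is positive definite for $-1<\alpha<1$ (all eigenvalues $4k^2-(1-\alpha)^2$ are positive), and the regularized nonlinearity is bounded, so $\mathcal{J}_\varepsilon$ is coercive and bounded below with no negative-definite subspace to link over; moreover, since the regularized potential is \emph{linear} in $w$ near $w=0$ (with positive slope $\approx A\varepsilon^{s}+B\sin\theta\,\varepsilon^{q-1}$), one has $\inf_{\|w\|=r}\mathcal{J}_\varepsilon<0=\mathcal{J}_\varepsilon(0)$ for every $r>0$, so the mountain-pass inequality $\inf_{\|w\|=r}\mathcal{J}_\varepsilon>\mathcal{J}_\varepsilon(0)$ cannot hold, and no tuning of $A,B\ge 0$ repairs this. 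The correct (and simpler) tool, which the paper uses, is direct minimization of the coercive regularized functional, followed by a maximum principle (valid because the principal eigenvalue $4-(1-\alpha)^2$ is positive) to show the minimizer is positive.

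A second concrete omission: a lower barrier alone does not suffice to pass to the limit. The $H^1_0$ bound coming from the minimization degenerates as $\varepsilon\to 0$ (the bound involves $\varepsilon^{-s}$ with $s>0$), so you also need a uniform \emph{upper} barrier of the form $w_\varepsilon\le b(\sin 2\theta)^{\sigma}$, constructed by the same comparison argument with a supersolution; this is what gives equiboundedness of the family and, crucially, guarantees that the limit actually vanishes at $\theta=0,\pi/2$, i.e.\ satisfies the Dirichlet condition. Your regularity discussion is in the right direction, but note that the threshold $\alpha=-1/2$ for global continuity is enforced twice: by the boundary expansion of $w'$ (via the energy identity $|w'|^2+\dots=C$, one needs both negative exponents to exceed $-1$, i.e.\ $\alpha<-1/2$) \emph{and} by the homogeneity degree $-2\alpha-1$ of $\rho$ at the origin; and the verification that the case (i) solutions are weak solutions across the axes (quadrant-wise integration by parts using continuity and the vanishing of the normal velocity) still has to be carried out.
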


\begin{figure}[h]
     \begin{minipage}[b]{0.45\linewidth}
\hspace{-128pt}
\includegraphics[scale=0.23]{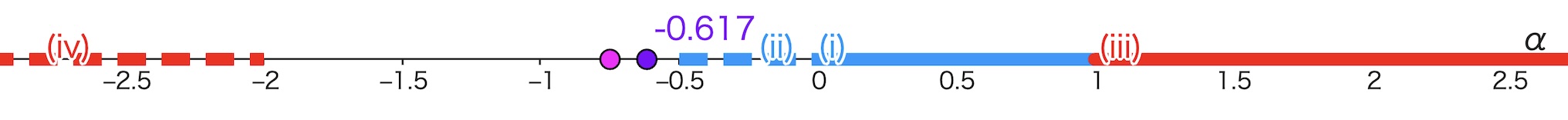}
    \subcaption{($-\alpha$)-homogeneous solutions with regular profiles in Theorem 1.2}
  \end{minipage}\\
 \begin{minipage}[b]{0.45\linewidth}
\hspace{-128pt}
\includegraphics[scale=0.23]{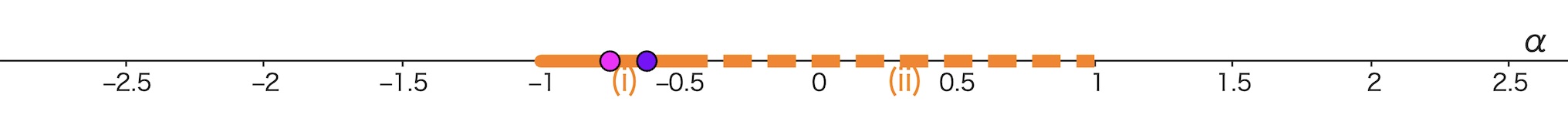}
    \subcaption{($-\alpha$)-homogeneous solutions with singular profiles in Theorem 1.3 (\textbf{the main result of this paper})}
  \end{minipage}\\
  \caption{The non-existence and existence ranges of $\alpha\in \mathbb{R}$ for rotational ($-\alpha$)-homogeneous solutions in Theorems 1.2 and 1.3. The purple point ($\alpha= -0.617$) represents the scaling exponent of numerical backward self-similar solutions with smooth profile functions \cite{WLGB}. The pink point ($\alpha=-0.75$) represents the fixed scaling exponent at which numerical backward self-similar solutions with peaked profile functions exist  \cite{WLGB}.}
\end{figure}

\begin{rem}[$-1<\alpha<-1/2$]
The non-existence and existence of rotational ($-\alpha$)-homogeneous solutions for $-1<\alpha< -1/2$ with regular profiles are generally unknown; see Theorems 1.5 and 1.7. We remark that the profile of backward self-similar solutions to the axisymmetric Euler equations without swirl \cite{Elgindi} are \textit{regular} and belong to $C^{1,\gamma}$ for small $\gamma\in (0,1)$. 
\end{rem}

\vspace{3pt}

\subsubsection{Self-similar Euler solutions}

In the sequel, we consider two particular stationary solutions to (1.1): the 2D Euler solutions $\rho\equiv 0$ and constant Bernoulli function solutions $\Pi=p+|u|^{2}/2+x_2\rho\equiv \textrm{const.}$ (Pseudo-Beltrami solutions). 

Elling \cite{Elling} constructed forward self-similar weak solutions to the 2D Euler equations in the whole plane for $-1<\alpha<1/2$ and ($-\alpha-1$)-homogeneous initial vorticity $\omega_0(x)=|x|^{-\alpha-1}\omega_0(x/|x|)$ with $2\pi/m$-periodic $\omega_0$
in the Wiener algebra $\omega_0(\phi)\in A(\mathbb{S}^{1})\subset C(\mathbb{S}^{1})$ for arbitrary $m\geq m_0$ for some $m_0\in \mathbb{N}$. Here, $\omega=\nabla^{\perp}\cdot u$ and $\nabla^{\perp}=(-\partial_2,\partial_1)$. Shao et al. \cite{SWZ} extended the result to $-1<\alpha<1$ and $2\pi/m$-periodic $\omega_0\in L^{1}(\mathbb{S}^{1})$ for arbitrary $m\geq 2$ (and also for measures $\omega_0\in \mathcal{M}(\mathbb{S}^{1})$) (The self-similar equations to forward self-similar solutions are the equations (1.2) replacing the signs of the first two terms in $(1.2)_1$ and $(1.2)_3$ by minus). Jeong \cite{Jeong17} and Elgindi and Jeong \cite{EJ20b} constructed unique scale-invariant weak solutions ($\alpha=-1$) for bounded $0$-homogeneous $m$-fold symmetric initial vorticity for $m\geq 3$; see also Elgindi et al. \cite{Elgindi2022} and Jeong and Said \cite{Jeong23} for long-time behavior.

Garc\'ia and G\'omez-Serrano \cite{Serrano22} constructed forward self-similar weak solutions to the generalized surface quasi-geostrophic (gSQG) equations for $\gamma\in (0,1)$ and $1<\alpha<1+\gamma$ for $2\pi/m$-periodic $\omega_0\in L^{p}(\mathbb{S}^{1})$ for $p>1/(1-\gamma)$ (The cases $\gamma=0$ and $\gamma=1$ correspond to the Euler equations and the SQG equations, respectively; see Figure 2 (A)). 

Theorems 1.5 and 1.6 show the non-existence and existence of rotational ($-\alpha$)-homogeneous 2D Euler solutions with regular profiles and the existence of rotational ($-\alpha$)-homogeneous 2D Euler solutions with singular profiles, respectively. Theorem 1.6 is essentially due to the recent work of Elgindi and Huang \cite[Lemma 5.3]{EH}. This statement includes the case $0<\alpha<1$ and detailed properties of profile functions. Theorem 1.5 is due to \cite[Theorem 1.7]{Abe11}. It turns out that the non-existence range of ($-\alpha$)-homogeneous solutions with regular profiles agrees with the existence range of ($-\alpha$)-homogeneous solutions with singular profiles in this class; see Figure 2 (B) and (C).

\begin{thm}[2D Euler solutions with regular profiles]
The following holds for rotational ($-\alpha$)-homogeneous solutions to (1.1) and (1.4) for $\rho\equiv 0$:

\noindent
(i) For $0\leq \alpha \leq 1$, no solutions $(u,p) \in C^{1}(\mathbb{R}^{2}\backslash \{0\})$ exist.\\
(ii) For $-1\leq \alpha < 0$, no solutions $(u,p) \in C^{2}(\mathbb{R}^{2}\backslash \{0\})$ exist.\\
(iii) For $\alpha>1$, solutions $(u,p)\in C^{2}(\mathbb{R}^{2}\backslash \{0\})$ exist.\\
(iv) For $\alpha<-1$, solutions $(u,p)\in C^{1}(\mathbb{R}^{2})$ exist.
\end{thm}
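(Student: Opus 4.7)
The plan is to reduce the problem to a nonlinear ODE on $\mathbb{S}^1$ for the angular profile of the stream function, and then read off non-existence and existence from the sign of a critical exponent. For $\alpha \neq 1$, write $u = \nabla^\perp \psi$ with $\psi(r,\theta) = r^{1-\alpha}\Psi(\theta)$, so the vorticity is $\omega = -r^{-\alpha-1}\bigl[\Psi'' + (1-\alpha)^2 \Psi\bigr]$. Stationary 2D Euler gives $u \cdot \nabla \omega = 0$, hence $\omega = F(\psi)$ on each simply connected streamline region; matching the two homogeneities forces $F(\psi) = c\,\psi^{-(1+\alpha)/(1-\alpha)}$ for some $c \in \mathbb{R}$. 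The $x_2$-symmetry~(1.4) translates to $\Psi$ being odd in $\theta$, so in particular $\Psi(0) = \Psi(\pi) = 0$ and the $x_1$-axis is a streamline. One thus arrives at the two-point boundary value problem
\[ \Psi'' + (1-\alpha)^2 \Psi + c\,\Psi^{-(1+\alpha)/(1-\alpha)} = 0, \qquad \Psi(0) = \Psi(\pi) = 0, \qquad \Psi(-\theta) = -\Psi(\theta). \]

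For non-existence in parts (i)--(ii), when $\alpha \in (-1,1) \setminus \{0\}$ the exponent $-(1+\alpha)/(1-\alpha)$ is negative, so the nonlinearity is singular at $\Psi = 0$. Since $\Psi$ vanishes at $\theta = 0$ while being smooth there (by the regularity hypothesis on $u$), balancing leading orders in the ODE forces $c = 0$, whence $\omega \equiv 0$, contradicting rotationality. The endpoint $\alpha = 1$, where the above ansatz degenerates, is handled directly: a $0$-homogeneous $\psi = \Psi(\theta)$ has radial streamlines, so the $(-2)$-homogeneous $\omega$ being constant along them forces $\omega \equiv 0$. The endpoints $\alpha = 0$ and $\alpha = -1$ correspond to the reduced ODEs $\Psi\Psi'' + \Psi^2 = -c$ and $\Psi'' + 4\Psi + c = 0$, respectively, and the symmetry together with the boundary conditions again forces only the irrotational solution $c = 0$, $\omega \equiv 0$. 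The $C^2$ hypothesis in (ii) provides the extra regularity needed throughout the decaying range $-1 \le \alpha < 0$ to carry out this classification.

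For existence in parts (iii)--(iv), when $\alpha > 1$ or $\alpha < -1$ the exponent $-(1+\alpha)/(1-\alpha)$ is strictly positive, so the nonlinearity is smooth (and vanishes) at $\Psi = 0$. A shooting argument then applies: fix $\Psi(0) = 0$, $\Psi'(0) > 0$, and choose $c$ so that the first positive zero of $\Psi$ lands exactly at $\theta = \pi$; continuity and monotonicity of this first zero as a function of $c$, together with the scaling freedom in $c$, make the choice possible. Extending by odd reflection yields a smooth $2\pi$-periodic $\Psi$, and hence a rotational $(-\alpha)$-homogeneous Euler solution with the prescribed symmetry. Regularity of $(u,p)$ at the origin is controlled by the radial weight $|x|^{-\alpha}$: for $\alpha > 1$ the profile blows up at $0$ and $(u,p) \in C^2(\mathbb{R}^2 \setminus \{0\})$, while for $\alpha < -1$ the factor $|x|^{|\alpha|}$ with $|\alpha| > 1$ makes $u$ and $\nabla u$ continuous at the origin, giving $(u,p) \in C^1(\mathbb{R}^2)$. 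The main delicacy lies in the endpoint analysis at $\alpha \in \{-1, 0, 1\}$ in the non-existence step, where the reduction either breaks down or the nonlinearity becomes marginal, and in verifying the monotonicity of the shooting parameter needed to place the first zero exactly at $\theta = \pi$.
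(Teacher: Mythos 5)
Your reduction to the angular ODE $\Psi''+(1-\alpha)^2\Psi+c\,\Psi^{(1+\alpha)/(\alpha-1)}=0$ is essentially the same first-integral structure the paper exploits (its relation $a\omega'=(\alpha+1)f\omega$ integrates to $|\omega|^{1-\alpha}|a|^{1+\alpha}=C_0$, which is exactly $\omega=c\,\psi^{-(1+\alpha)/(1-\alpha)}$), and the non-existence mechanism via the singular negative power at the zeros of $\Psi$ is the right idea. But there are two genuine gaps. First, a regularity mismatch in part (i): deriving $u\cdot\nabla\omega=0$, hence $\omega=F(\psi)$, requires differentiating the vorticity, i.e.\ $u\in C^2$, whereas (i) asserts non-existence for merely $C^1$ solutions. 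The paper avoids this for $0\le\alpha\le1$ by using the Bernoulli first integral $u\cdot\nabla\Pi=0$ (obtained by dotting the momentum equation with $u$, no extra derivatives), which gives $|\Pi|^{1-\alpha}|a|^{2\alpha}=C_0$ and then $\Pi\equiv0$, $a\omega\equiv0$; the vorticity first integral is reserved for $-1\le\alpha<0$, which is precisely why (ii) carries the $C^2$ hypothesis. Your argument as written proves (i) only under $C^2$. Secondly, the reduction to a \emph{single} global constant $c$ on $(0,\pi)$ needs justification (the constant could differ across nodal domains of $\Psi$, and $\omega=F(\psi)$ is only local away from critical points); the paper handles this by working on maximal intervals where $a\omega\neq0$ and using the signs of the exponents $1\mp\alpha$ to extend them to $(0,\pi)$ before invoking the boundary conditions.

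The existence argument in (iii)--(iv) is broken as stated. Shooting so that ``the first positive zero of $\Psi$ lands exactly at $\theta=\pi$'' produces a \emph{positive} solution on $(0,\pi)$, but multiplying the ODE by $\sin\phi$ and integrating (the paper's Remark 3.2) shows $(1-\beta^2)\int_0^\pi w\sin\phi\,d\phi=c\int_0^\pi w|w|^{2/\beta}\sin\phi\,d\phi$ with $\beta=\alpha-1$, so no positive rotational profile exists once $|\beta|\ge1$ — that is, for all $\alpha\ge 2$ and all $\alpha<0$, covering all of (iv) and most of (iii); at $\alpha=2$ this forces $c=0$ for \emph{any} sign of $c$, so no choice of the shooting parameter can work. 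Moreover, for the autonomous equation with $c>0$ the achievable first-zero times are bounded by $\pi/|\beta|$, which is $<\pi$ in exactly these ranges. A correct elementary construction must produce sign-changing profiles (e.g.\ shoot to a first zero at $\pi/(k+1)$ for $k+1>|\beta|$ and extend by odd reflection), or one follows the paper's route: the variational formulation of $-w''=\beta^2w+c_1w|w|^{2/\beta}$ with the mountain pass theorem for $1<\alpha<2$, the linking theorem for $\alpha\ge2$, and the saddle point theorem for $\alpha<-1$, followed by the regularity bookkeeping of Proposition 3.18 (where the $C^1(\mathbb{R}^2)$ claim for $\alpha<-1$ comes from the weight $r^{-\alpha}$ and $w\in C^2(\mathbb{S}^1)$). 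The monotonicity of the first-zero map that you flag as the main delicacy is indeed where your argument would have to do real work, and it cannot be salvaged in the positive-solution framing.
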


\begin{thm}[2D Euler solutions with singular profiles]
The following holds for rotational ($-\alpha$)-homogeneous solutions to (1.1), (1.3), and (1.4) for $\rho\equiv 0$:

\noindent
(i) For $-1<\alpha< 0$, solutions $(u,p)\in C^{\infty}(\mathbb{R}^{2}\backslash \{x_1=0\}\cup\{x_2=0\})\cap C(\mathbb{R}^{2})$ exist.\\
(ii) For $0<\alpha<1$, solutions $(u,p)\in C^{\infty}(\mathbb{R}^{2}\backslash \{x_1=0\}\cup\{x_2=0\})$ exist.

The solutions in (i) are weak solutions to (1.1) in $\mathbb{R}^{2}$ and the $C^{1}$-norm of $u$ diverges on the axis $\{x_1=0\}\cup \{x_2=0\}$. The velocity $u$ is ($-\alpha$)-H\"older continuous for the radial variable and ($2+2/(\alpha-1)$)-H\"older continuous for the angular variable. The solutions in (ii) satisfy (1.1) in $\mathbb{R}^{2}\backslash \{x_1=0\}\cup \{x_2=0\}$. The velocity $u$ diverges on the axis $\{x_1=0\}\cup \{x_2=0\}$.
\end{thm}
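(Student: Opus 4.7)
The plan is to reduce the Euler profile problem to a nonlinear angular ODE via the stream function, solve this ODE on the first quadrant, and reflect using the symmetries (1.3)--(1.4) to a solution on all of $\mathbb{R}^{2}$.

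Work in polar coordinates $(r,\phi)$ and take the stream function ansatz $u=\nabla^{\perp}\psi$ with $\psi(x)=r^{1-\alpha}\Psi(\phi)$. A direct computation gives
\begin{equation*}
\omega=\Delta\psi=r^{-\alpha-1}\bigl[(1-\alpha)^{2}\Psi+\Psi''\bigr].
\end{equation*}
Since $\rho\equiv 0$, stationary 2D Euler is equivalent to $u\cdot\nabla\omega=0$, which on any simply connected region of regular streamlines forces $\omega=F(\psi)$; matching powers of $r$ pins $F$ to a pure power
\begin{equation*}
F(s)=c\,|s|^{p-1}s,\qquad p=\frac{1+\alpha}{\alpha-1}.
\end{equation*}
The system then reduces to the ODE
\begin{equation*}
\Psi''+(1-\alpha)^{2}\Psi-c\,|\Psi|^{p-1}\Psi=0,\qquad \phi\in(0,\pi/2),\qquad \Psi(0)=\Psi(\pi/2)=0,
\end{equation*}
whose boundary conditions make the coordinate axes streamlines, as required by (1.3)--(1.4).

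Next I construct a positive profile $\Psi$. For $-1<\alpha<0$ one has $p\in(-1,0)$, and existence is essentially \cite[Lemma 5.3]{EH}: one shoots from $\phi=0$ with the forced local asymptotic $\Psi(\phi)\sim A\phi^{1-\alpha}$ obtained by balancing $\Psi''$ against the singular nonlinear term, and adjusts the free parameter so that $\Psi(\pi/2)=0$. For $0<\alpha<1$ the exponent $p<-1$ makes the nonlinearity strongly singular, but the same local balance remains valid. I track the first integral
\begin{equation*}
E=\tfrac{1}{2}(\Psi')^{2}+\tfrac{(1-\alpha)^{2}}{2}\Psi^{2}-\tfrac{c}{p+1}|\Psi|^{p+1},
\end{equation*}
choose the sign of $c$ so that the two singular contributions at $\Psi=0$ cancel and a half-orbit starting at $\Psi=0$ returns to $\Psi=0$, and close existence by a continuity/monotonicity argument in the shooting parameter. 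This is the main analytic obstacle: the strong singularity at $\Psi=0$ rules out naive variational arguments and forces careful phase-plane tracking.

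With $\Psi$ in hand on $[0,\pi/2]$, I extend it to $\mathbb{S}^{1}$ by odd reflection across $\phi=0,\pi$ and even reflection across $\phi=\pi/2,3\pi/2$, the unique extension consistent with (1.3) and (1.4). This yields a $(-\alpha)$-homogeneous classical solution $(u,p)$ of stationary 2D Euler on $\mathbb{R}^{2}\setminus(\{x_{1}=0\}\cup\{x_{2}=0\})$. The asymptotic $\Psi(\phi)\sim A\phi^{1-\alpha}$ translates into $\psi\sim A\,x_{2}^{1-\alpha}$ near $\{x_{2}=0\}$ in the first quadrant, from which I read off the claimed $(-\alpha)$-Hölder regularity in $r$, the stated angular Hölder exponent (obtained by iterating the local expansion beyond leading order), and the blow-up of $\nabla u$ on the axes. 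In case (i), $1-\alpha>1$ gives $u\to 0$ on the axes, hence $u\in C(\mathbb{R}^{2})$; in case (ii), $u$ diverges there. For case (i) the weak Euler equations on all of $\mathbb{R}^{2}$ are then verified by a cutoff argument: $|u|^{2}\sim|x|^{-2\alpha}$ is locally integrable for $\alpha<1$, the classical equation holds in each open quadrant, and the precise asymptotic (and the vanishing of $u$) near the axes ensures that the boundary contributions vanish in the limit.
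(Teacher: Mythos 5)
Your reduction coincides with the paper's: the ansatz $\omega=F(\psi)$ with a pure power $F$ is exactly the homogeneous Dubreil-Jacotin--Long equation (3.1)--(3.3) with $\rho\equiv 0$, the angular ODE is (4.1) with $c_2=0$ (hence autonomous), and your ``first integral plus continuity in the shooting parameter'' is in substance the quadrature of Theorem 4.3, where the parameter is $B=\max w$ and the time map $\phi_s(B;B)$ is shown by the intermediate value theorem to cross $\pi/4$. Up to that point the proposal is sound, and the reflection/extension and the cutoff verification of the weak formulation (Lemma 4.21) are also as in the paper.

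The gap is in the boundary asymptotics for case (i), and it propagates into the regularity and continuity claims. For $-1<\alpha<0$ the nonlinear power is $p\in(-1,0)$, so the term $\tfrac{2c}{p+1}|\Psi|^{p+1}$ in your first integral tends to $0$ as $\Psi\to 0$ and one gets $0<\Psi'(0)<\infty$: the profile vanishes \emph{linearly}, $\Psi\sim\Psi'(0)\,\phi$, not like $\phi^{1-\alpha}$. The balance $\Psi''\sim c\Psi^{p}$ forcing $\Psi\sim A\phi^{2/(1-p)}=A\phi^{1-\alpha}$ is the correct behavior only for $0<\alpha<1$, where $1-\alpha<1$ and $\Psi'$ must blow up; for $-1<\alpha<0$ it would give an exponent $1-\alpha>1$, which is incompatible with $\Psi>0$ concave and $\Psi(0)=0$. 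Consequently $u$ does \emph{not} tend to zero on the axes in case (i): its tangential component there is $-r^{-\alpha}\Psi'(0)\neq 0$. Continuity of $u$ across the axes holds for a different reason --- the normal component $-r^{-\alpha}(\alpha-1)\Psi$ vanishes on the axes and the tangential component extends evenly because $\Psi\in C^{1}[0,\pi/2]$ --- while continuity at the origin comes from $|u|\lesssim r^{-\alpha}$ with $-\alpha>0$. Likewise the angular H\"older exponent $2+2/(\alpha-1)=1-s$ must be read off from the first integral, which yields $\Psi'\in C^{1-s}[0,\pi/2]$ (Remark 4.5); iterating an expansion around the false leading order $\phi^{1-\alpha}$ would instead produce the exponent $-\alpha$. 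Finally, in the weak-solution step the quadrant boundary terms do not vanish because $u\to 0$ near the axes; they vanish because $u\cdot n=0$ there and the pressure contributions of adjacent quadrants cancel by continuity of $p$. None of this destroys the construction, but as written your case (i) regularity, continuity, and weak-solution assertions rest on an asymptotic that fails in exactly that range of $\alpha$.
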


\vspace{3pt}

\subsubsection{Self-similar Pseudo-Beltrami solutions}

The other class of stationary solutions to (1.1) are solutions with constant Bernoulli function $\Pi=p+|u|^{2}/2+x_2\rho$. We show that this class excludes ($-\alpha$)-homogeneous solutions with regular profiles for $-2<\alpha<-1$ in contrast to the 2D Euler case in Theorem 1.5 (iv), cf. Remark 1.4. On the other hand, the existence range of ($-\alpha$)-homogeneous solutions with singular profiles is restricted to $-1<\alpha<1$.

\begin{thm}[Pseudo-Beltrami solutions with regular profiles]
The following holds for rotational ($-\alpha$)-homogeneous solutions to (1.1) and (1.3) for $\Pi \equiv \textrm{const.}$:

\noindent
(i) For $-1/2\leq \alpha \leq 1$, no solutions $(u,p,\rho) \in C^{1}(\mathbb{R}^{2}\backslash \{0\})$ exist.\\
(ii) For $-2< \alpha < -1/2$, no solutions $(u,p,\rho) \in C^{2}(\mathbb{R}^{2}\backslash \{0\})$ exist. For $\alpha=-2$, no solutions $(u,p,\rho) \in C^{2}(\mathbb{R}^{2}\backslash \{0\})$ exist provided that $\nabla^{\perp}\cdot u/x_2$ vanishes on $x_2=0$.\\
(iii) For $\alpha>1$, solutions $(u,p,\rho) \in C^{2}(\mathbb{R}^{2}\backslash \{0\})$ exist.\\
(iv) For $\alpha<-2$, solutions $(u,p,\rho)\in C^{1}(\mathbb{R}^{2})$ exist.
\end{thm}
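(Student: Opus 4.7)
The plan is to introduce the stream function $\psi$ with $u=\nabla^{\perp}\psi$ and scalar vorticity $\omega=\nabla^{\perp}\cdot u=-\Delta\psi$, and use the identity $u\cdot\nabla u=\nabla(|u|^{2}/2)-\omega\nabla\psi$ to rewrite the momentum equation as $\nabla\Pi=\omega\nabla\psi+x_{2}\nabla\rho$. Combined with $u\cdot\nabla\rho=0$, the hypothesis $\Pi\equiv\textrm{const.}$ yields $\rho=F(\psi)$ (on connected components of $\{\nabla\psi\neq 0\}$) and the Dubreil--Jacotin--Long equation $\Delta\psi=x_{2}F'(\psi)$. Homogeneity of $\psi$ (degree $1-\alpha$) and $\rho$ (degree $-2\alpha-1$) forces $F(s)=c|s|^{m-1}s$ with $m=(2\alpha+1)/(\alpha-1)$, so writing $\psi=r^{1-\alpha}\Psi(\theta)$ reduces to the angular ODE
\[
\Psi''(\theta)+(1-\alpha)^{2}\Psi(\theta)=cm\sin\theta\,|\Psi(\theta)|^{m-2}\Psi(\theta),\qquad \Psi(0)=\Psi(\pi)=0,
\]
with endpoint vanishing forced by the $x_{2}$-symmetry (1.4).

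For the non-existence claims I compare $m$ with the regularity of $F$. Since $\psi=0$ on the $x_{2}$-axis, continuity of $\rho=F(\psi)$ there forces $F$ continuous at the origin, i.e.\ $m>0$, which fails throughout $-1/2\leq\alpha\leq 1$; the endpoints are handled separately ($\alpha=-1/2$ gives $m=0$, hence $F$ constant and $\omega=-x_{2}F'(\psi)\equiv 0$; $\alpha=1$ yields a dimensional mismatch, as $\psi$ is $0$-homogeneous while $\rho$ must be $(-3)$-homogeneous), proving (i). For (ii), with $m\in(0,1)$ on $-2<\alpha<-1/2$, balancing leading orders in the ODE near $\theta=0$, matching $\Psi''\sim\theta^{k-2}$ against $\sin\theta\,|\Psi|^{m-1}\sim\theta^{1+k(m-1)}$, forces the vanishing order to be $k=3/(2-m)\in(3/2,3)$. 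Writing $\psi$ in Cartesian near the $x_{2}$-axis gives $\psi\sim A\,x_{1}^{1-\alpha-k}x_{2}^{k}$, so $\partial_{2}^{2}u^{1}\sim\partial_{2}^{3}\psi\sim k(k-1)(k-2)|x_{2}|^{k-3}$ blows up as $x_{2}\to 0$ whenever $k<3$; hence $u\notin C^{2}(\mathbb{R}^{2}\setminus\{0\})$. At the boundary $\alpha=-2$ ($m=1$) the nonlinearity is linear and $\omega=-cx_{2}$ identically, so the extra assumption $\omega/x_{2}|_{\{x_{2}=0\}}=0$ forces $c=0$ and $\omega\equiv 0$.

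For the existence results, (iv) is realised by the explicit horizontal shear
\[
u(x)=(C|x_{2}|^{-\alpha},0),\quad \rho(x)=-\frac{\alpha C^{2}}{2\alpha+1}\mathrm{sgn}(x_{2})|x_{2}|^{-2\alpha-1},\quad p(x)=-\frac{C^{2}}{4\alpha+2}|x_{2}|^{-2\alpha},
\]
which solves (1.1) with $\Pi\equiv 0$; since $\alpha<-2$ gives $-\alpha>2$ and $-2\alpha-1>3$, this triple is $C^{2}(\mathbb{R}^{2})\subset C^{1}(\mathbb{R}^{2})$ after the $x_{2}$-symmetric extension. For (iii), the shear flow is singular on $\{x_{2}=0\}$ when $\alpha>1$, so I would construct a genuinely two-dimensional profile by solving the angular ODE on $(0,\pi)$ with Dirichlet data. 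Since $m>2$ in this regime the problem is a subcritical superlinear semilinear Dirichlet problem with weight $\sin\theta$, and standard variational methods (Nehari-manifold minimization or mountain pass applied to the energy $\mathcal{E}(\Psi)=\int_{0}^{\pi}[\tfrac12(\Psi')^{2}-\tfrac{(1-\alpha)^{2}}{2}\Psi^{2}+c\sin\theta\,|\Psi|^{m}]\,d\theta$) yield a positive $\Psi\in C^{2}([0,\pi])$; pulling back gives $(u,p,\rho)\in C^{2}(\mathbb{R}^{2}\setminus\{0\})$ after odd extension across $\{x_{2}=0\}$.

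The principal obstacle is the variational construction in (iii): one must verify a Palais--Smale condition for $\mathcal{E}$ on $H_{0}^{1}(0,\pi)$ (complicated by the degeneracy of the weight $\sin\theta$ at the endpoints and, when $\alpha>2$, by the indefiniteness of the quadratic part) and keep the critical point strictly positive on $(0,\pi)$ so that $|\Psi|^{m-2}\Psi$ remains classical for non-integer $m$. A secondary technical point is justifying the reduction to the single power $F(s)=c|s|^{m-1}s$ across the nodal set $\{\psi=0\}$ and handling the higher-order compatibility analysis at $\theta=0,\pi$ used for (ii).
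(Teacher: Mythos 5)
Your non-existence argument for (i) and (ii) has a genuine gap: you assume from the outset that $\rho$ is globally a single function of $\psi$, and moreover that homogeneity forces the single power law $F(s)=c|s|^{m-1}s$ on all of $\mathbb{R}^2_+$. The condition $u\cdot\nabla\rho=0$ only gives $\rho=F(\psi)$ \emph{locally} where $\nabla\psi\neq 0$; across critical points of $\psi$, on distinct components of its level sets, and on different nodal domains, $F$ may differ (the paper explicitly flags $\Pi=\Pi(\psi)$, $\rho=\rho(\psi)$ as an \emph{assumption} it makes only for the existence direction). A non-existence theorem must exclude all $C^1$ (resp.\ $C^2$) rotational solutions, not just those of this special form, so what you call a "secondary technical point" is in fact the heart of the matter. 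The paper avoids this entirely by writing the homogeneous system on the semicircle and exploiting the pointwise first-integral identities $a\rho'=(2\alpha+1)f\rho$ and $a\Omega'=(\alpha+2)f\Omega$ (with $\Omega=\omega/x_2$), which combined with $(1-\alpha)f+a'=0$ yield conserved quantities $|\rho|^{1-\alpha}|a|^{2\alpha+1}$ and $|\Omega|^{1-\alpha}|a|^{\alpha+2}$ that must vanish by the boundary condition $a(0)=a(\pi)=0$ for the stated ranges of $\alpha$; this forces $\rho\equiv 0$ and $\omega\equiv 0$ with no structural assumption. Your boundary asymptotics for (ii) is also only heuristic: the dominant balance $k=3/(2-m)$ is not the only consistent vanishing order (e.g.\ $\Psi\sim A\theta$ with $A\neq 0$ is compatible with the ODE for $m\in(0,1)$, and leads to a different — though still present — loss of regularity through $\rho=c|\psi|^{m-1}\psi$), so the case analysis is incomplete even granting the reduction.

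On the existence side, your part (iii) is the paper's approach in outline, but the obstacles you list are exactly where the work lies: for $\alpha\geq 2$ the quadratic form is indefinite and the paper replaces the mountain pass by a linking theorem on the decomposition $H^1_0=Y\oplus Z$, together with a Palais--Smale verification tailored to the superlinear regime; also note the density term in the energy should be the odd expression $\sin\theta\,\Psi|\Psi|^{m-1}$ rather than $|\Psi|^{m}$, since $\rho$ is odd in $\psi$. Your part (iv), by contrast, is correct and genuinely simpler than the paper's construction: the explicit rotational shear flow $u=(C|x_2|^{-\alpha},0)$ with the stated $p,\rho$ does satisfy (1.1), the $x_2$-symmetry (1.4), $\Pi\equiv 0$, and lies in $C^2(\mathbb{R}^2)$ for $\alpha<-2$, whereas the paper obtains (iv) via the saddle point theorem applied to the sublinear problem. (It does not satisfy the $x_1$-symmetry (1.3) literally cited in the theorem statement, but that citation appears to be a typo for (1.4), which is what the paper's own Lemma 2.4 and Section 3 constructions use.)
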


\begin{thm}[Pseudo-Beltrami solutions with singular profiles]
The following holds for rotational ($-\alpha$)-homogeneous solutions to (1.1), (1.3), and (1.4) for $\Pi \equiv \textrm{const.}$:

\noindent
(i) For $-1<\alpha< -1/2$, solutions $(u,p,\rho)\in C^{\infty}(\mathbb{R}^{2}\backslash \{x_1=0\}\cup\{x_2=0\})\cap C(\mathbb{R}^{2})$ exist. \\
(ii) For $-1/2<\alpha<1$, solutions $(u,p,\rho)\in C^{\infty}(\mathbb{R}^{2}\backslash \{x_1=0\}\cup\{x_2=0\})$ exist.

The solutions in (i) and (ii) have the same properties as those in Theorem 1.3.
\end{thm}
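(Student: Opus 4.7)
The plan is to reduce the Pseudo-Beltrami self-similar problem to a singular scalar ODE on the quarter-angle interval $\theta\in(0,\pi/2)$ and then produce a positive solution by the same minimax/singular-elliptic framework used earlier for Theorem 1.3. I would first introduce the stream function $\psi$ with $u=\nabla^{\perp}\psi$ and $\omega=-\Delta\psi$. Since $u\cdot\nabla\rho=0$ gives $\rho=F(\psi)$ for some scalar $F$, and the 2D identity $u\cdot\nabla u=\nabla(|u|^{2}/2)+\omega u^{\perp}$ together with $u^{\perp}=-\nabla\psi$ yields
\begin{equation*}
\nabla\Pi=x_{2}\nabla\rho-\omega u^{\perp}=\bigl(x_{2}F'(\psi)-\Delta\psi\bigr)\nabla\psi,
\end{equation*}
the Pseudo-Beltrami condition $\Pi\equiv\text{const.}$ is equivalent to the Dubreil-Jacotin--Long equation $\Delta\psi=x_{2}F'(\psi)$. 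The $(-\alpha)$-homogeneity ansatz $\psi(r,\theta)=r^{1-\alpha}\Psi(\theta)$ forces $F$ to be the pure power $F(s)=Cs^{k}$ with $k=(2\alpha+1)/(\alpha-1)$, and the symmetries (1.3)--(1.4) force $\Psi$ to be odd under $\theta\mapsto-\theta$ and $\theta\mapsto\pi-\theta$, so in particular $\Psi(0)=\Psi(\pi/2)=0$. The problem reduces to finding a positive solution of
\begin{equation*}
\Psi''+(1-\alpha)^{2}\Psi=Ck\sin\theta\,\Psi^{k-1}\quad\text{on }(0,\pi/2),\qquad \Psi(0)=\Psi(\pi/2)=0,
\end{equation*}
with $k-1=(\alpha+2)/(\alpha-1)<0$ throughout $-1<\alpha<1$, so that the right-hand side is a singular negative-power nonlinearity. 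The excluded value $\alpha=-1/2$ corresponds to $k=0$, for which $F$ is constant and the problem degenerates, accounting for the gap between ranges (i) and (ii).

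Next, I would construct the positive solution variationally. Testing the ODE against $\Psi$ and using the Poincar\'e inequality on $(0,\pi/2)$ (first Dirichlet eigenvalue $4$, with $(1-\alpha)^{2}<4$) forces $Ck<0$; the scaling $\Psi\mapsto\lambda\Psi$ then normalises $Ck=-1$, bringing the equation to
\begin{equation*}
-\Psi''=(1-\alpha)^{2}\Psi+\sin\theta\,\Psi^{k-1},
\end{equation*}
whose right-hand side is strictly positive for $\Psi>0$. This is the Euler-Lagrange equation of
\begin{equation*}
J(\Psi)=\int_{0}^{\pi/2}\Bigl(\tfrac{1}{2}(\Psi')^{2}-\tfrac{(1-\alpha)^{2}}{2}\Psi^{2}-\tfrac{1}{k}\sin\theta\,\Psi^{k}\Bigr)d\theta.
\end{equation*}
For $-1<\alpha<-1/2$ one has $k\in(0,\tfrac12)$ and $-(1/k)\sin\theta\,\Psi^{k}$ is concave and sublinear; the functional is coercive on $H^{1}_{0}$ and satisfies $J(\varepsilon\phi)<0$ for small $\varepsilon>0$ and any positive $\phi$, so a nontrivial global minimiser exists and, via the strong maximum principle, yields a positive solution. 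For $-1/2<\alpha<1$ one has $k<0$ and the nonlinear term blows up as $\Psi\to 0$, so $J$ is not finite on $H^{1}_{0}$; here I would regularise by replacing $\Psi^{k-1}$ with $(\Psi+\varepsilon)^{k-1}$, apply the same variational scheme to the bounded-below regularised functional, and pass to the limit $\varepsilon\to 0$ using a priori bounds from test functions comparable to $\sin(2\theta)$ together with a sub-/supersolution comparison that keeps the minimiser bounded away from zero on every compact subinterval of $(0,\pi/2)$. Both implementations are precisely the singular-elliptic minimax scheme already developed in the paper for the proof of Theorem 1.3.

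Finally, standard ODE bootstrap gives $\Psi\in C^{\infty}(0,\pi/2)$, hence $(u,p,\rho)\in C^{\infty}$ off the axes. A Frobenius analysis at the endpoints via the dominant balance $\Psi''\sim\sin\theta\,\Psi^{k-1}$ yields the endpoint asymptotics $\Psi(\theta)\sim c_{\ast}\theta^{1-\alpha}$, from which the $(-\alpha)$-radial and $(2+2/(\alpha-1))$-angular H\"older regularity of $u$ stated in Theorem 1.3 follows; continuity up to the origin in the range $-1<\alpha<-1/2$ is then automatic since $|x|^{-\alpha}$ with $-\alpha\in(1/2,1)$ vanishes H\"older-continuously at $0$. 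Reconstructing $u=\nabla^{\perp}\psi$, $\rho=C\psi^{k}$, and $p=\text{const.}-\tfrac{1}{2}|u|^{2}-x_{2}\rho$ then produces the Pseudo-Beltrami self-similar solutions required by (i) and (ii). The main obstacle is the variational treatment of the singular nonlinearity $\Psi^{k-1}$ with $k-1<0$: for $k<0$ the natural functional is not finite on $H^{1}_{0}$, so the minimisation must be carried out by truncation together with a comparison principle to preserve interior positivity in the limit $\varepsilon\to 0$, which is exactly the singular elliptic problem referred to in the keywords.
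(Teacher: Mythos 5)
Your construction is essentially the paper's: you reduce the Pseudo-Beltrami case to the homogeneous Dubreil-Jacotin--Long equation on $(0,\pi/2)$ with a single negative-power nonlinearity weighted by $\sin\theta$ (the case $c_1=0$, $c_2\neq 0$ of the paper's problem (4.1), with your $k-1$ equal to the paper's $-s'$ and the excluded value $\alpha=-1/2$ matching $\beta=-3/2$), and you solve it by $\varepsilon$-regularization, direct minimization of the coercive functional, maximum-principle comparison with $\sin 2\theta$, and passage to the limit — exactly the scheme of Section 4. One concrete error: your endpoint analysis via the dominant balance $\Psi''\sim\sin\theta\,\Psi^{k-1}$, giving $\Psi\sim c_*\theta^{1-\alpha}$, is wrong for $-1<\alpha<-1/2$, since $1-\alpha\in(3/2,2)$ contradicts the a priori lower bound $w\geq a\sin 2\phi$ that your own comparison argument produces; in that range the profile is $C^1$ up to the boundary with linear vanishing, the nonlinearity being subdominant, and moreover the asymptotics at $\theta=0$ and $\theta=\pi/2$ differ because $\sin\theta$ vanishes at only one endpoint. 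The correct route to the H\"older exponents (radial exponent $-\alpha$, angular exponent $2+2/(\alpha-1)$ for $w'$) is the first-integral identity obtained by multiplying the equation by $2w'$ and integrating, as in the paper's Remark 4.15, not a Frobenius ansatz. A smaller point: for $k\in(0,1/2)$ you minimize the unregularized functional directly; since $\Psi^{k}$ is not differentiable at $\Psi=0$, you still need the lower barrier $a\sin 2\theta$ (hence effectively the same regularization-plus-comparison machinery) to justify that the minimizer is positive and satisfies the Euler--Lagrange equation, so nothing is saved by splitting the cases.
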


\vspace{3pt}

\subsubsection{Self-similar irrotational solutions}
We completely classify irrotational ($-\alpha$)-homogeneous solutions to (1.1). Their velocity fields are harmonic vector fields $u\neq 0$ for integers $\alpha\in \mathbb{Z}$ or $u=0$ with $x_2$-dependent functions $p=p(x_2)$ and $\rho=\rho(x_2)$ (static equilibria). We use  polar coordinates $(r,\phi)$ and the  orthogonal frame $e_r=(\cos\phi,\sin\phi)$ and $e_\phi=(-\sin\phi,\cos\phi)$.

\clearpage

 \begin{figure}[h]
 \begin{minipage}[b]{0.45\linewidth}
\hspace{-128pt}
\includegraphics[scale=0.228]{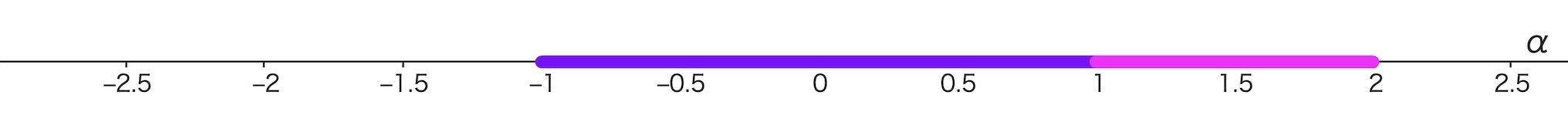}
    \subcaption{Forward self-similar weak solutions to 2D Euler and gSQG equations with scaling exponent $\alpha$}
  \end{minipage}\\
  \begin{minipage}[b]{0.45\linewidth}
\hspace{-128pt}
\includegraphics[scale=0.228]{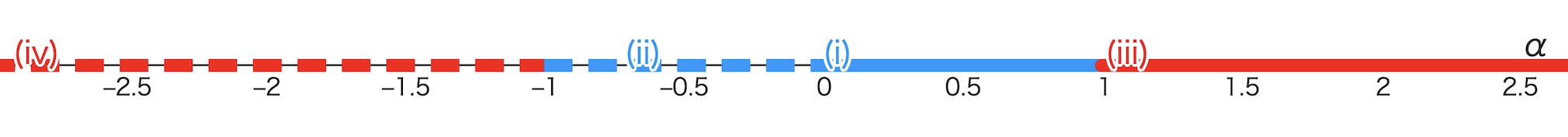}
\subcaption{($-\alpha$)-homogeneous Euler solutions with regular profiles in Theorem 1.5}
  \end{minipage}\\
  \begin{minipage}[b]{0.45\linewidth}
\hspace{-128pt}
\includegraphics[scale=0.228]{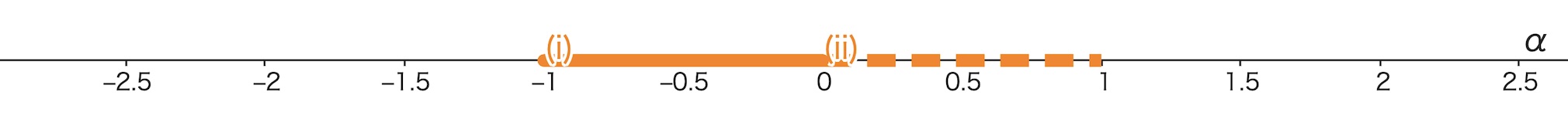}
\subcaption{($-\alpha$)-homogeneous Euler solutions with singular profiles in Theorem 1.6}
  \end{minipage}\\
  \caption{The existence ranges of the scaling exponent $\alpha\in \mathbb{R}$ for forward self-similar weak solutions to the 2D Euler equations (drawn in purple) \cite{Elling}, \cite{SWZ} and gSQG equations (drawn in pink) \cite{Serrano22}, and the non-existence and existence ranges of $\alpha\in \mathbb{R}$ for 2D Euler ($-\alpha$)-homogeneous solutions in Theorems 1.5 and 1.6.}
\end{figure}


 \begin{figure}[h]
 \begin{minipage}[b]{0.45\linewidth}
\hspace{-128pt}
\includegraphics[scale=0.2387]{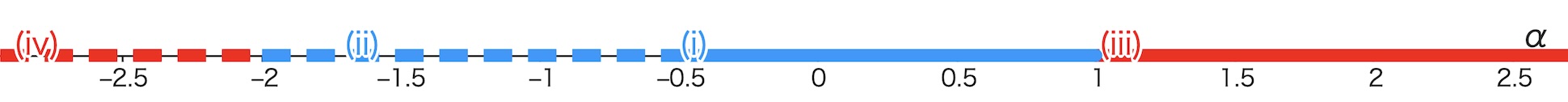}
    \subcaption{($-\alpha$)-homogeneous Pseudo-Beltrami solutions with regular profiles in Theorem 1.7}
  \end{minipage}\\
  \begin{minipage}[b]{0.45\linewidth}
\hspace{-128pt}
\includegraphics[scale=0.229]{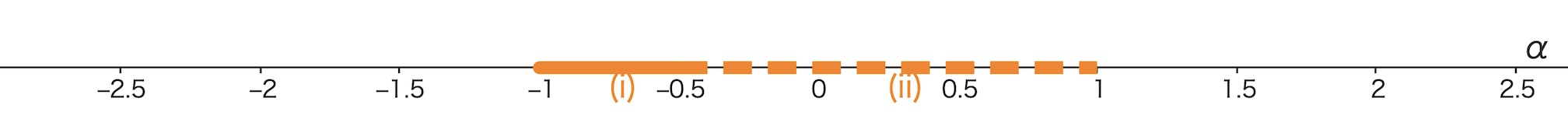}
\subcaption{($-\alpha$)-homogeneous Pseudo-Beltrami solutions with singular profiles in Theorem 1.8}
  \end{minipage}\\
  \caption{The non-existence and existence ranges of $\alpha\in \mathbb{R}$ for Pseudo-Beltrami ($-\alpha$)-homogeneous solutions in Theorems 1.7 and 1.8.}
\end{figure}

\begin{thm}[Irrotational solutions with regular profiles]
Irrotational ($-\alpha$)-homogeneous solutions $(u,p,\rho) \in C^{1}(\mathbb{R}^{2}\backslash \{0\})$ to (1.1) and (1.4) are either the following (i) or (ii):\\
\noindent
(i) The velocity field is the harmonic vector field
\begin{align}
u=\frac{C}{r^{\alpha}}\left(\sin((\alpha-1)\phi)e_{\phi}+\cos((\alpha-1)\phi)e_{r}  \right),\quad \alpha\in \mathbb{Z},  
\end{align}
for some constant $C\in \mathbb{R}\backslash \{0\}$. If $\rho\equiv 0$, there exists a constant $b\in \mathbb{R}$ such that 
\begin{align}
p=\frac{1}{r^{2\alpha}}b,\quad \alpha(C^{2}+2b)=0.  
\end{align}
If $\rho\nequiv 0$, the integers $\alpha$ are negative. 

\noindent
(ii) The solution is the static equilibrium
\begin{align}
u=0,\quad p=p(x_2),\quad \rho(x_2)=-p'(x_2),
\end{align}
If $\rho\equiv 0$, $\alpha=0$ and the pressure $p$ is constant. If $\rho\nequiv 0$, $\alpha<-1$. 
\end{thm}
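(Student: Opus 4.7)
The plan is in two stages. First I will reduce the irrotational and divergence-free conditions to an angular ODE that forces $u$ to take the harmonic form (1.5) or vanish; then I will use the Bernoulli identity together with the stationary transport equation to classify the pressure and density.

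Writing $u = r^{-\alpha}(f(\phi) e_r + g(\phi) e_\phi)$ in polar coordinates, the conditions $\nabla\cdot u = 0$ and $\nabla^{\perp}\cdot u = 0$ become the coupled first-order system $g' = (\alpha-1)f$, $f' = (1-\alpha)g$, which yields the harmonic-oscillator equation $f'' + (1-\alpha)^2 f = 0$ on $\mathbb{S}^{1}$. Either $f \equiv 0$ (forcing $g \equiv 0$, hence $u \equiv 0$ and placing us in case (ii)), or $2\pi$-periodicity of $f$ forces $1-\alpha \in \mathbb{Z}$, i.e., $\alpha \in \mathbb{Z}$. The $x_2$-symmetry (1.4) translates into $f$ being even and $g$ odd, so up to a single constant $C$ we obtain $f = C\cos((\alpha-1)\phi)$, $g = C\sin((\alpha-1)\phi)$, which is precisely (1.5); in Cartesian form $u = C r^{-\alpha}(\cos(\alpha\phi), \sin(\alpha\phi))$ and, crucially, $|u|^2 = C^2 r^{-2\alpha}$ is purely radial.

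For the pressure and density, irrotationality gives the Bernoulli identity $(u\cdot\nabla)u = \nabla(|u|^2/2)$, so the momentum equation reads $\nabla B = -\rho e_2$ with $B := p + |u|^2/2$. Taking the $2$D curl yields $\partial_1 \rho = 0$, hence $\rho = \rho(x_2)$, $B = B(x_2)$, and $\rho = -B'(x_2)$; the stationary transport equation reduces to the pointwise identity $u^2(x)\,\rho'(x_2) = 0$ on $\mathbb{R}^2\setminus\{0\}$. In case (i), for $\alpha \in \mathbb{Z}\setminus\{0\}$ the function $u^2 = C r^{-\alpha}\sin(\alpha\phi)$ is a nontrivial real-analytic function whose zero set has empty interior, so $\rho' \equiv 0$; combined with the $(-2\alpha-1)$-homogeneity and oddness of $\rho$ this yields $\rho \equiv 0$, and an analogous $0$-homogeneity-plus-$x_2$-dependence argument handles the edge case $\alpha = 0$. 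Once $\rho \equiv 0$, $B$ is a constant $B_0$, and matching $p = B_0 - \tfrac{1}{2} C^2 r^{-2\alpha}$ against the required $(-2\alpha)$-homogeneous form $p = r^{-2\alpha} b$ forces $B_0 = 0$ (hence $b = -C^2/2$) when $\alpha \neq 0$ and leaves $B_0$ free when $\alpha = 0$, both instances being encapsulated by the relation $\alpha(C^2 + 2b) = 0$. In case (ii), $u \equiv 0$ gives $p = p(x_2)$ and $\rho = -p'(x_2)$ directly; the $(-2\alpha)$-homogeneity together with the evenness from (1.4) forces $p(x_2) = A|x_2|^{-2\alpha}$, and imposing $C^{1}(\mathbb{R}^{2}\setminus\{0\})$ regularity along the axis $\{x_2 = 0\}$ pins down the admissible exponents as $\alpha = 0$ (giving constant $p$) when $\rho \equiv 0$ and $\alpha < -1$ when $\rho \not\equiv 0$.

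The main obstacle, beyond organizing the case split cleanly, is the one-sided regularity analysis on the axis $\{x_2 = 0\}$ in case (ii) with $\rho \not\equiv 0$: one must Taylor-expand $|x_2|^{-2\alpha}$ and its derivatives at $x_2 = 0$ and verify that \emph{both} $p$ and $\rho = -p'$ lie in $C^{1}$ through the axis, which is what singles out the sharp threshold on $\alpha$. The remaining ingredients are routine: a linear-ODE classification for the angular profile, and the rigidity of the transport equation in the presence of a nontrivial analytic $u^2$.
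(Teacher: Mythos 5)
Your proposal is correct in outline but takes a genuinely different route from the paper for the pressure--density part, and the comparison is instructive. For the velocity both arguments coincide: the irrotational and divergence-free conditions give the angular system $g'=(\alpha-1)f$, $f'=(1-\alpha)g$, hence $f''+(1-\alpha)^2f=0$, and integrality of $\alpha$ follows from periodicity on $\mathbb{S}^{1}$ (the paper phrases this through the Neumann conditions $f'(0)=f'(\pi)=0$ inherited from $a(0)=a(\pi)=0$, which is equivalent). For $(p,\rho)$ the paper stays in the angular variable: it integrates $\rho'+(2\alpha+1)\cot\phi\,\rho=0$ from the system (2.12) to get $\rho(\sin\phi)^{2\alpha+1}=C_0$ and reads off the admissible exponents from continuity of the profile on $\mathbb{S}^{1}$. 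You instead work in Cartesian variables with the Bernoulli form of the momentum equation, deduce $\rho=\rho(x_2)$ from the curl, and then feed this into the transport equation $u^{2}\rho'(x_2)=0$. This buys you something the paper does not record: in case (i) with $\alpha\neq 0$ the zero set of $u^{2}=Cr^{-\alpha}\sin(\alpha\phi)$ is a finite union of rays, so every line $\{x_2=c\}$ with $c\neq 0$ contains points where $u^{2}\neq 0$; hence $\rho'\equiv 0$ and, by oddness and homogeneity, $\rho\equiv 0$. In other words the sub-case ``$u\not\equiv 0$ and $\rho\not\equiv 0$'' of (i) is vacuous. The paper never tests its candidate $\rho=C_0x_2^{-2\alpha-1}$ against $u\cdot\nabla\rho=0$ and only records the necessary condition $-\alpha\in\mathbb{N}$, so your conclusion is stronger than, and consistent with, the stated theorem.

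The one substantive problem is the threshold in case (ii). Your own criterion --- $C^{1}$ regularity of $p=A|x_2|^{-2\alpha}$ and of $\rho=-p'=2\alpha A\,\mathrm{sgn}(x_2)|x_2|^{-2\alpha-1}$ across $\{x_2=0\}$ --- yields $-2\alpha-1\geq 1$, i.e.\ $\alpha\leq -1$, not $\alpha<-1$: at $\alpha=-1$ one gets $p=Ax_2^{2}$ and $\rho=-2Ax_2$, which is exactly the smooth static equilibrium (1.10) and is a legitimate irrotational $(-\alpha)$-homogeneous solution with $\rho\not\equiv 0$. So the strict inequality you assert does not follow from your argument, and in fact cannot, since this example falsifies the implication ``$\rho\not\equiv 0\Rightarrow\alpha<-1$''. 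The paper's proof has the same boundary slip: $p''(\phi)=C_0(2\alpha\cos^{2}\phi+1)(\sin\phi)^{-2\alpha-2}$ is continuous on $\mathbb{S}^{1}$ when $\alpha=-1$, so the requirement $p\in C^{2}(\mathbb{S}^{1})$ also only forces $\alpha\leq-1$. If you carry out the Taylor expansion at the axis that you correctly single out as the delicate step, you should state the conclusion as $\alpha\leq-1$.
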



\subsubsection{Desingularization of self-similar vortex sheets}

We finally show the convergence of ($-\alpha$)-homogeneous solutions of (1.1) with regular profiles to the ($-1$)-homogeneous vortex sheet solution with singular profiles as $\alpha\to 1+0$. This convergence issue appears in current sheet formation in astrophysics \cite{LB94}, \cite{Aly94}. All ($-1$)-homogeneous solutions to (1.1) with regular profiles are the irrotational solution (1.5) by Theorems 1.2 (i) and 1.9. The $(-1)$-homogeneous irrotational solution for $C=-1/2$ and $b=-1/8$ in (1.5) take the form 
\begin{align}
u_{\textrm{ir}}(x)=\frac{1}{2r}e_r,\qquad p_{\textrm{ir}}(x)=-\frac{1}{8r^{2}}.
\end{align}
See Figure 4 (A) for streamlines.

Rotational ($-1$)-homogeneous solutions with singular profiles can be constructed by replacing $e_r$ by $-e_{r}$ on intervals in $[0,\pi]$. The simplest one forms
\begin{align}
u_{\textrm{sh}}(x)=
\begin{cases}
\ -\displaystyle\frac{1}{2r}e_{r}\quad &0<\phi<\displaystyle\frac{\pi}{2},\\
\ \displaystyle\frac{1}{2r}e_{r}\quad &\displaystyle\frac{\pi}{2}<\phi<\pi.
\end{cases}
\qquad p_{\textrm{sh}}(x)=-\frac{1}{8r^{2}}.
\end{align}\\
The solution (1.9) is a weak solution to the 2D Euler equations in $\mathbb{R}^{2}\backslash \{0\}$ satisfying the $x_2$-symmetry (1.4) with the discontinuous velocity field $u_{\textrm{sh}}\in C^{1}(\mathbb{R}^{2}\backslash \{x_1=0\})$ producing the vortex sheet $\omega_{\textrm{sh}}=r^{-2}\delta_{\pi/2}(\phi)$, cf. \cite[Chapter 9]{MaB}. See Figure 4 (B) for streamlines. Self-similar vortex sheet solutions are relevant to non-uniqueness and long-time behavior of solutions to the 2D Euler equations; see Elling and Gnann \cite{EG}, Cie\'{s}lak et al. \cite{Cieslak1}, \cite{Cieslak2}, \cite{Cieslak3}, Jeong and Said \cite{Jeong23}, and Shao et al. \cite{SWZ}. We show that ($-\alpha$)-homogeneous solutions to the Boussinesq equations desingularize the ($-1$)-homogeneous vortex sheet solution (1.9); see Figure 4 (C).

\begin{thm}[Desingularization as $\alpha\to 1+0$]
There exist ($-\alpha$)-homogeneous solutions $(u_\alpha,p_\alpha,\rho_{\alpha})\in C^{2}(\mathbb{R}^{2}\backslash \{0\})$ to (1.1) and (1.4) for $1<\alpha<2$ such that $r^{\alpha}u_\alpha\to r u_{\textrm{sh}}$, $r^{2\alpha}p_{\alpha}\to r^{2}p_{\textrm{sh}}$, and $r^{2\alpha+1}\rho_{\alpha}\to 0$ locally uniformly for $\phi \in \mathbb{S}^{1}\backslash \{\pm\pi/2\}$ as $\alpha\to 1+0$. The function $r^{2\alpha}p_{\alpha}$ converges to $r^{2}p_{\textrm{sh}}$ also at $\phi=\pm \pi/2$. 
\end{thm}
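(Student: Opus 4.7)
The plan is to reduce the problem to a system of ODEs for the angular profiles on $\mathbb{S}^1$, extract a distinguished family of stream-function profiles $\Psi_\alpha$ from the construction behind Theorem 1.2 (iii), and verify its convergence to the vortex-sheet profile as $\alpha \to 1^+$. Inserting the ansatz
\[
u_\alpha = r^{-\alpha}(U^\alpha_r(\phi)e_r + U^\alpha_\phi(\phi)e_\phi),\quad p_\alpha = r^{-2\alpha}P^\alpha(\phi),\quad \rho_\alpha = r^{-2\alpha-1}R^\alpha(\phi)
\]
into (1.1) written in polar coordinates yields the coupled ODEs
\[
(U^\alpha_\phi)' = (\alpha-1)U^\alpha_r,\qquad (P^\alpha)' = -R^\alpha\cos\phi,\qquad U^\alpha_\phi (R^\alpha)' = (2\alpha+1) U^\alpha_r R^\alpha,
\]
together with the algebraic identity $\alpha (U^\alpha_r)^2 + (U^\alpha_\phi)^2 - U^\alpha_\phi (U^\alpha_r)' + 2\alpha P^\alpha = R^\alpha \sin\phi$ coming from the $e_r$-component of momentum. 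The vortex sheet (1.9) fits this system at $\alpha=1$ as the discontinuous profile $U^{\mathrm{sh}}_r = \mp 1/2$, $U^{\mathrm{sh}}_\phi \equiv 0$, $P^{\mathrm{sh}} \equiv -1/8$, $R^{\mathrm{sh}} \equiv 0$, the jump of $U_r$ across $\phi = \pm\pi/2$ being the vortex sheet itself, and satisfies $\Pi \equiv 0$, so it is in fact a Pseudo-Beltrami solution.

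Next I would parametrize the velocity via the stream function $\psi_\alpha = r^{1-\alpha}\Psi_\alpha(\phi)$, so that $U^\alpha_r = -\Psi_\alpha'$ and $U^\alpha_\phi = (1-\alpha)\Psi_\alpha$. Since both $\rho$ and the Bernoulli function $\Pi = p + |u|^2/2 + x_2\rho$ are constant on streamlines of a steady Boussinesq flow and are homogeneous, the system reduces to a semilinear Dubreil-Jacotin--Long ODE for $\Psi_\alpha$ on $[-\pi,\pi]$ with $\Psi_\alpha$ odd (by (1.4)) and $\Psi_\alpha(\pm\pi) = 0$. From the construction behind Theorem 1.2 (iii) I would select a branch normalized by $\Psi_\alpha(\pi/2) = \pi/4$ to match the peak of the triangular profile $\Psi_{\mathrm{sh}}(\phi) = \min\{\phi,\pi-\phi\}/2$ on $[0,\pi]$ associated with (1.9). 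Uniform $L^\infty$ bounds on $\Psi_\alpha$ together with equicontinuity on compacts of $(0,\pi) \setminus \{\pi/2\}$, obtained from the ODE and a maximum-principle monotonicity argument on each of $(0,\pi/2)$ and $(\pi/2,\pi)$, give an Arzel\`{a}--Ascoli limit $\Psi_\alpha \to \Psi^\ast$ locally uniformly. Since the forcing coefficients in the DJL ODE involve $\Pi_\alpha$ and $\rho_\alpha$, both of which are expected to vanish in the limit, the reduced equation degenerates to $(\Psi^\ast)'' = 0$ on each subinterval, forcing $\Psi^\ast = \Psi_{\mathrm{sh}}$. Differentiation then gives $r^\alpha u_\alpha \to r u_{\mathrm{sh}}$ locally uniformly, because $U^\alpha_\phi = (1-\alpha)\Psi_\alpha \to 0$ and $U^\alpha_r = -\Psi_\alpha' \to U^{\mathrm{sh}}_r$.

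For the density and pressure, the transport relation combined with $U^\alpha_\phi \to 0$ forces $R^\alpha \to 0$ locally uniformly away from $\pm\pi/2$, and the algebraic identity then yields $P^\alpha \to -1/8$ on the same set. Convergence of $r^{2\alpha}p_\alpha$ extends through $\phi = \pm\pi/2$ because $(P^\alpha)' = -R^\alpha\cos\phi$ vanishes there by virtue of the $\cos\phi$ factor: uniform bounds on $R^\alpha$ provide uniform continuity of $P^\alpha$ across the transition, so pointwise convergence on the complement of $\{\pm\pi/2\}$ passes to those two angles by continuity.

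I expect the main obstacle to be the construction and uniform control of the distinguished branch $\Psi_\alpha$ through the degeneration $\alpha \to 1^+$: the DJL nonlinearity involves exponents of the form $(\alpha\pm1)/(\alpha-1) \to \infty$, so standard compactness breaks down and a particular branch must be isolated by tracking the minimax critical level from Theorem 1.2 (iii) continuously in $\alpha$, combined with the homogeneity structure that forces $R^\alpha$ to vanish in the limit and the explicit piecewise-linear shape of $\Psi_{\mathrm{sh}}$ to anchor the limiting profile.
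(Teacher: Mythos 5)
Your overall architecture matches the paper's: reduce to the angular Dubreil-Jacotin--Long ODE, take the mountain-pass solution from Theorem 3.1 (i.e.\ the branch behind Theorem 1.2 (iii)) for $0<\beta=\alpha-1<1$, normalize it at its peak $\phi=\pi/2$, and show the normalized profile converges to the piecewise-linear stream function of the vortex sheet. However, the step you flag as ``the main obstacle'' is in fact the entire content of the proof, and your proposal neither resolves it nor sets up the right mechanism for the limit. When you renormalize $w$ so that $w(\pi/2)$ is fixed, the non-homogeneous nonlinearities force the coefficients to become $\beta$-dependent constants $c_{1,\beta},c_{2,\beta}$, and everything (uniform bound on $w'$, equicontinuity, Arzel\`a--Ascoli, identification of the limit) hinges on proving these stay bounded as $\beta\to 0^+$. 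The paper does this by multiplying the ODE by $2w'$ to get the first integral $|w'|^2=\beta^2(1-w^2)+\tfrac{c_{1,\beta}}{\beta+1}(1-w^{2+2/\beta})+\tfrac{c_{2,\beta}}{\beta+3/2}(1-\sin\phi\, w^{2+3/\beta}-f_\beta)$, integrating the inverse function of $w$ against the explicit series $\int_0^1(1-\sigma^s)^{-1/2}\,\mathrm{d}\sigma$ to bound $c_{1,\beta}$, and running a contradiction argument through the trace identity for $w'(0)$ to bound $c_{2,\beta}$. Nothing in your proposal substitutes for this; ``tracking the minimax critical level continuously in $\alpha$'' is not what is needed and would be considerably harder than the direct ODE argument.

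Separately, your proposed identification of the limit is wrong as stated. You claim the forcing terms vanish so the equation degenerates to $(\Psi^\ast)''=0$ on each subinterval. But with the peak normalized to a fixed positive value, the nonlinear terms $w^{1+2/\beta}$, $w^{1+3/\beta}$ equal $1$ at $\phi=\pi/2$ for every $\beta$; they converge to zero only locally uniformly on $[0,\pi/2)$ and remain order one at the peak. This is precisely the mechanism that creates the corner (the sheet) in the limit: the first integral shows $|w'|^2\to c_{1,0}+\tfrac{2}{3}c_{2,0}$ locally uniformly away from the peak, a nonzero constant, and one must also rule out $c_{1,0}=c_{2,0}=0$ (otherwise the limit is constant, contradicting the boundary values). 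So the limit profile is linear not because the right-hand side disappears everywhere, but because it concentrates at $\phi=\pi/2$. Your argument for pressure convergence at $\phi=\pm\pi/2$ via equi-Lipschitz continuity of $P^\alpha$ is workable once the uniform bound on $c_{2,\beta}$ is in hand (the paper instead evaluates the first integral directly at the peak), but again it depends on the missing coefficient bounds.
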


This study's ($-\alpha$)-homogeneous solutions provide examples of singular stationary solutions belonging to supercritical spaces for the local well-posedness of the 2D inviscid Boussinesq equations. These examples may reveal interesting behavior of nonstationary solutions in supercritical regimes; see, e.g., works on the 2D Euler equations: loss of regularity in time \cite{Jeong21}, \cite{CMO} and non-uniqueness \cite{Elling}, \cite{Vishik18}, \cite{Vishik18b}, \cite{EH}, \cite{SWZ}.

\begin{figure}
\begin{minipage}[b]{0.45\linewidth}
\hspace{-20pt}
\includegraphics[scale=0.1]{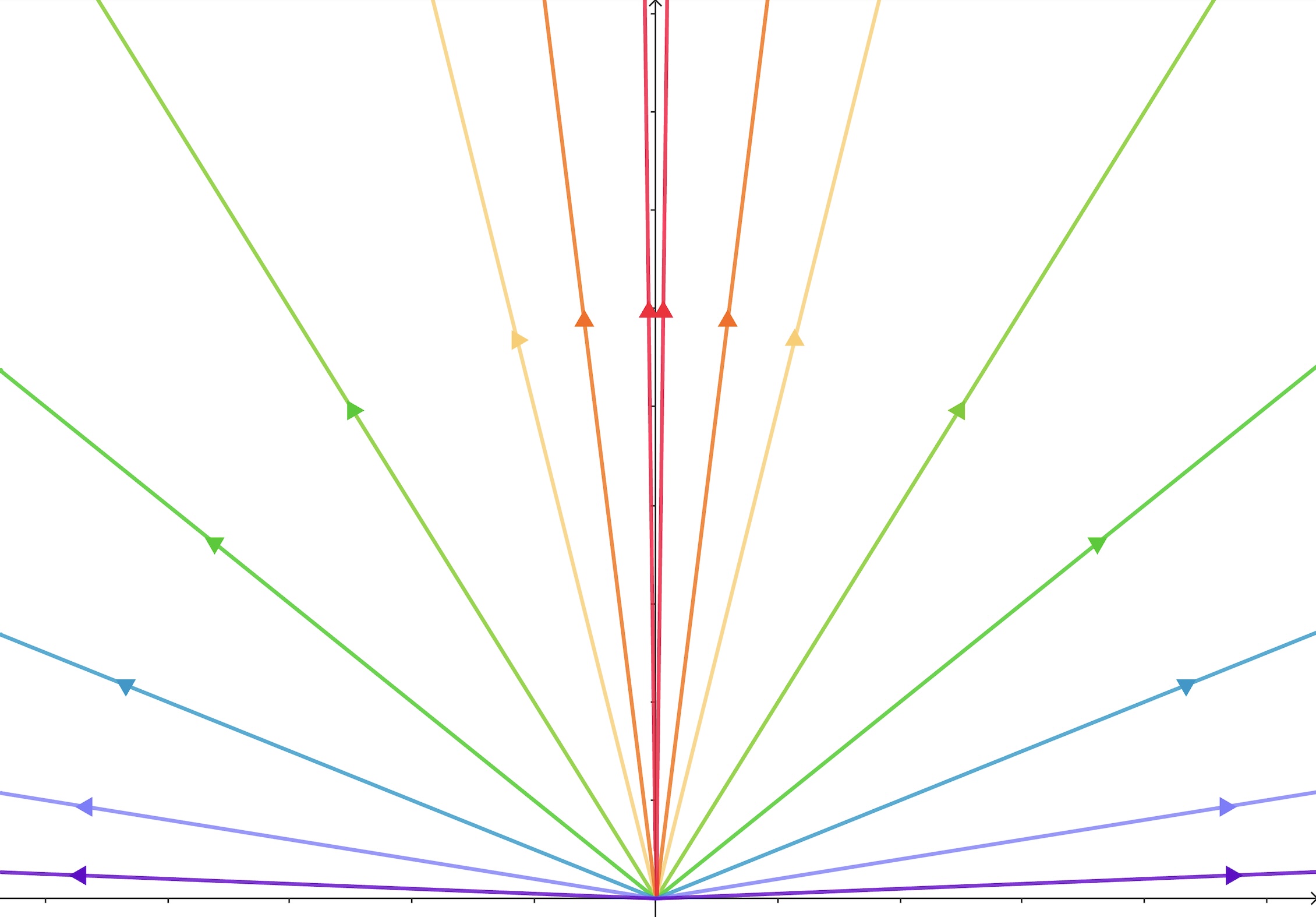}
\subcaption{The ($-1$)-homogeneous irrotational solution in (1.8)}
  \end{minipage}\\
\vspace{20pt}
\begin{minipage}[b]{0.45\linewidth}
\hspace{-20pt}
\includegraphics[scale=0.1]{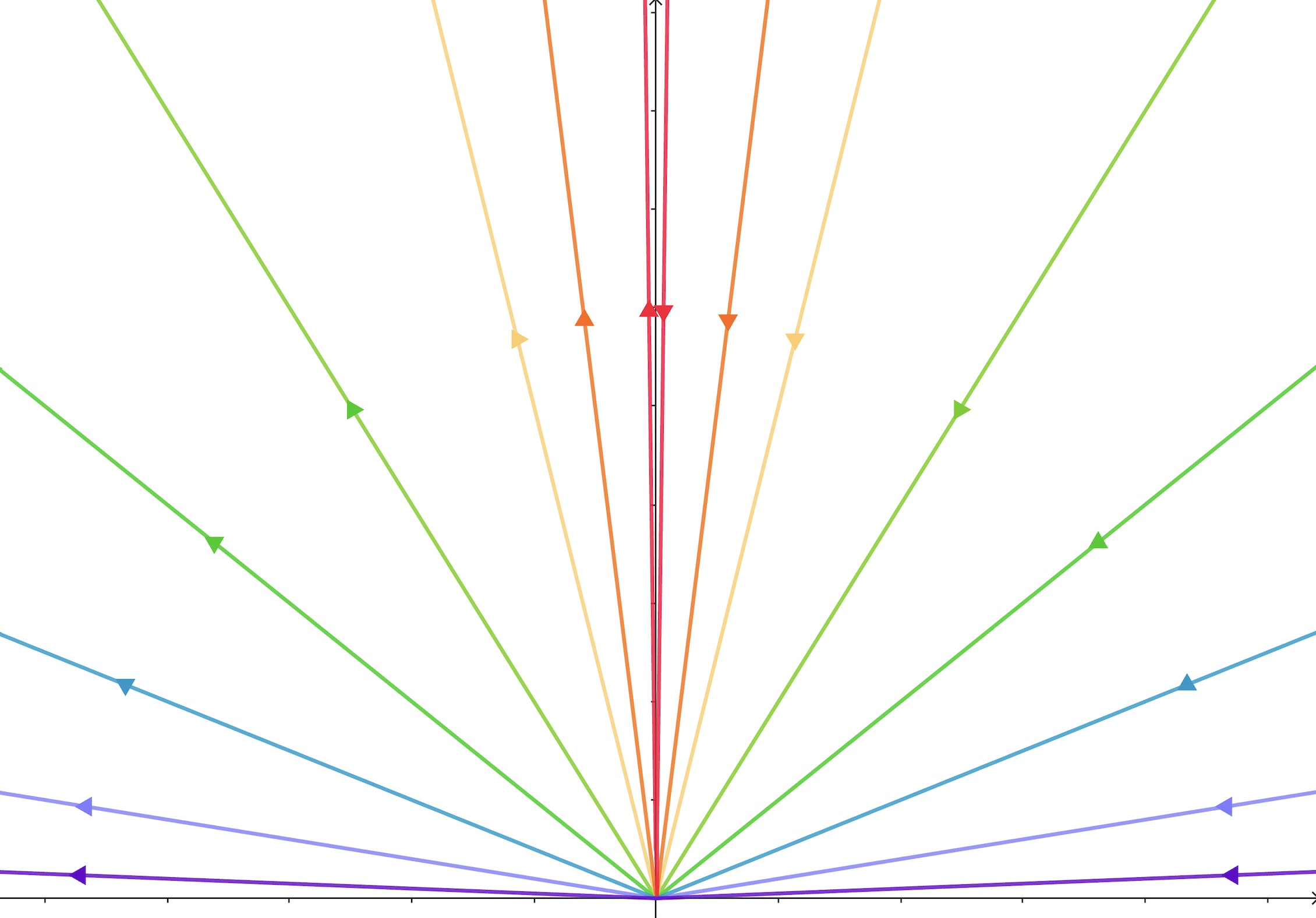}
\subcaption{The ($-1$)-homogeneous vortex sheet solution in (1.9)}
  \end{minipage}\\
\vspace{20pt}
\begin{minipage}[b]{0.45\linewidth}
\hspace{-20pt}
\includegraphics[scale=0.11]{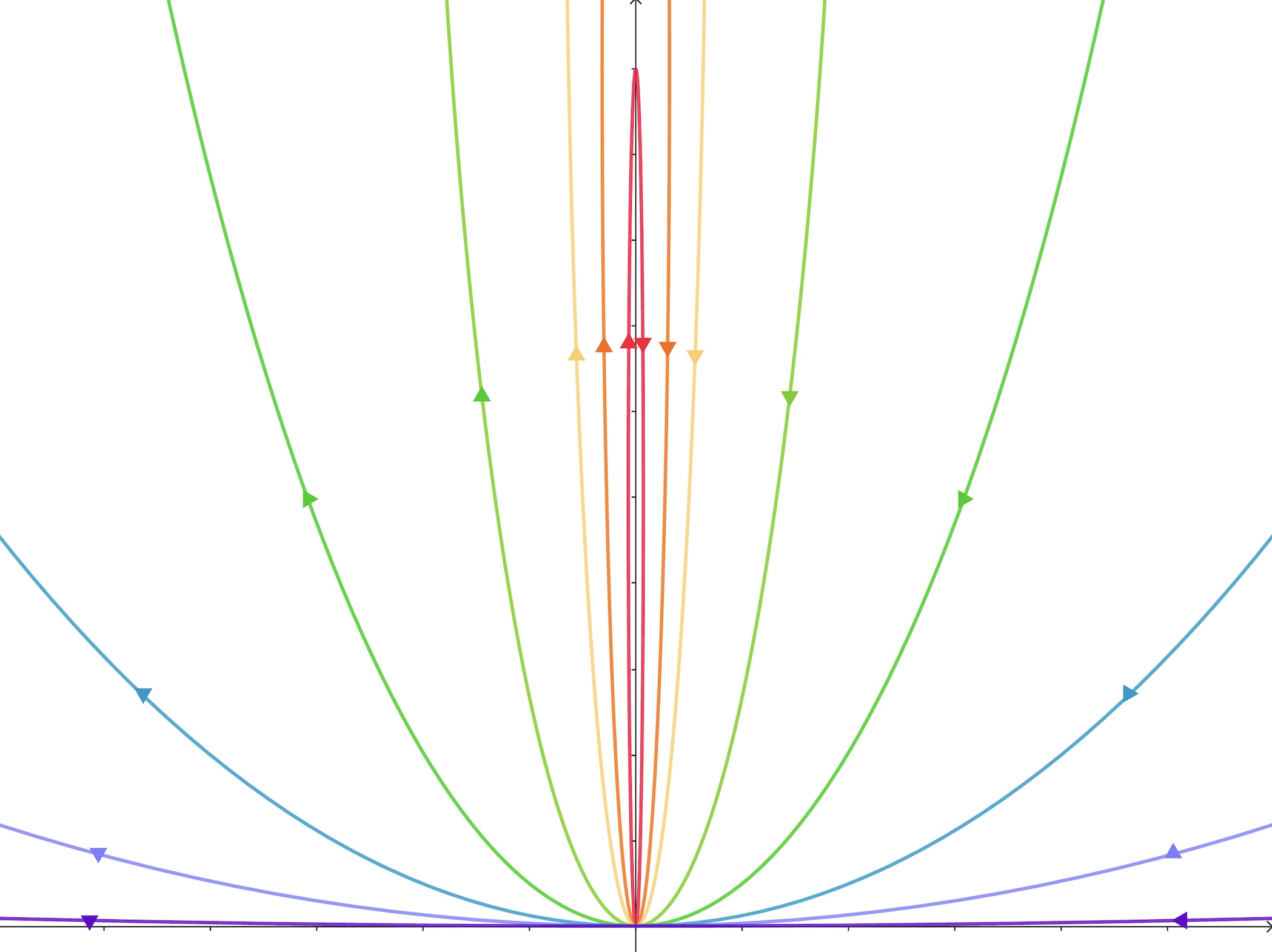}
    \subcaption{The ($-\alpha$)-homogeneous Boussinesq solution for $1<\alpha<2$ in Theorem 1.10}
\end{minipage}\\
\vspace{20pt}
  \caption{Streamlines of ($-\alpha$)-homogeneous solutions}
\end{figure}

\subsection{Relevant works on stratified flows: blow-up and stability}

\subsubsection{The singularity formation}

The singularity formation of the 2D inviscid Boussinesq equations has attracted attention during the last decade. Chae \cite[Theorem 2.4]{Chae07} investigated the non-existence of backward self-similar solutions to the 2D inviscid Boussinesq equations. Chae and Wolf \cite{CW19} and Chae and Constantin \cite{CC21} established non-blow-up criteria that exclude self-similar singularities for solutions to the inviscid  Boussinesq equations. 

Elgindi and Jeong \cite{EJ20} constructed scale-invariant blow-up solutions ($\alpha=-1$) to the inviscid Boussinesq equations in a corner domain of angle $\pi/2$. It is an open question \cite[Problem 8]{DE} whether scale-invariant solutions with vanishing vorticity and density gradient on the boundary exist. Elgindi and Pasqualotto \cite{Elgindi23}, \cite{Elgindi23b} constructed blow-up solutions in the whole plane for some $C^{1,\gamma}$ initial data smooth except for the origin. C\'{o}rdoba et al. \cite{CMZ23} and C\'{o}rdoba and Mart\'{i}nez-Zoroa \cite{CM23} discussed non-self-similar and non-axisymmetric blow-up solutions to the Euler equations; see also Chen \cite{Chen23}. 

\subsubsection{Stability of static equilibrium}

The static equilibrium $p=p(x_2)$ and $\rho(x_2)=p'(x_2)$ is a physically important steady-state of the inviscid Boussinesq equations describing a stable density of lighter fluid above and heavier fluid below. Elgindi and Widmayer \cite{EW} and Jurja and Widmayer \cite{JW} show the local in-time stability of the $1$-homogeneous static equilibrium 
\begin{align}
u_s=0,\quad p_s=\frac{1}{2}x_{2}^{2},\quad \rho_s=-x_2.
\end{align}
Elgindi \cite{Elgindi17}, Castro et al. \cite{CCL}, and Bianchini et al. \cite{Bianchini} demonstrated the asymptotic stability of the $1$-homogeneous static equilibrium in the IPM equation. See also Castro et al. \cite{CCL19b} and Tao et al. \cite{Wu20} for stability in other systems. Kiselev and Yao \cite{KY23} and Kiselev et al. \cite{KPY22} demonstrated density gradient growth (small-scale formations) both in the IPM equations and in the 2D inviscid Boussinesq equations. The work \cite{KY23} showed the instability of the $1$-homogeneous static equilibrium in the IPM equations with low regular norms, cf. \cite{CCL}. See also Bianchini et al. \cite{BCM} for the ill-posedness result to the IPM equations in $H^{2}$, cf. \cite{CGO}.  

Masmoudi et al. \cite{Masmoudi22} demonstrated the asymptotic stability of the static equilibria with Couette flow in the viscous Boussinesq equations without diffusivity. Bedrossian et al. \cite{Bedrossian23} showed the stability of static equilibria with Couette flow in the inviscid Boussinesq equations and vorticity and density gradient growth during the existence of smooth solutions (shear-buoyancy instability).

\subsection{Ideas of the proof}

We demonstrate the main theorems of this paper in the following order:

\begin{itemize}
\item[(I)] The non-existence of ($-\alpha$)-homogeneous solutions with regular profiles
\item[(II)] The existence of ($-\alpha$)-homogeneous solutions with regular profiles
\item[(III)] The existence of ($-\alpha$)-homogeneous solutions with singular profiles
\end{itemize}

The approaches to (I) and (II) are based on works on homogeneous solutions to the 3D Euler equations \cite{LuoShvydkoy}, \cite{Shv}, and \cite{Abe11}; see \cite[6.1.2]{BV22}. We consider the stationary analog between the axisymmetric Euler equations and the 2D inviscid Boussinesq equations. The approach to (I) is based on the homogeneous Boussinesq equations on the semi-circle. The approach to (II) is based on the Dubreil-Jacotin--Long equation for the stream function \cite{Dub}, \cite{Long53}, cf. \cite{Yih58}, \cite[Chapter 3]{Yih65}, \cite{CLV}, \cite{CDG21b}. We consider the 1D homogeneous Dubreil-Jacotin--Long equation for the angular variable $\phi\in [0,\pi]$ with \textit{positive} nonlinear powers and construct ($-\alpha$)-homogeneous solutions with regular profiles by applying three minimax theorems according to $\alpha\in \mathbb{R}$ values.

The completely new part of this study is (III) the existence of homogeneous solutions with singular profiles. We construct solutions to the 1D homogeneous Dubreil-Jacotin--Long equation with \textit{negative} nonlinear powers. This equation is non-autonomous and not integrable, unlike the case of the 2D Euler equations \cite{EH}. We construct solutions to the non-autonomous equation based on the maximum principle \cite{delPino}. 

We outline the approaches to (I)--(III).

\subsubsection{The non-existence of homogeneous solutions with regular profiles}

The $(-\alpha)$-homogeneous solutions $u=r^{-\alpha}(a(\phi)e_{\phi}+f(\phi)e_{r})$, $p=r^{-2\alpha}p(\phi)$, and $\rho=r^{-2\alpha-1}\rho(\phi)$ to  (1.1) and (1.4) satisfy the equations on the semi-circle: 
\begin{equation}
\begin{aligned}
a \omega+2\alpha \Pi &=(2\alpha+1)(\sin \phi)\rho,\\
f\omega+\Pi'&=(\sin\phi)\rho',\\
(1-\alpha)f+a'&=0, \\ 
a\rho'&=(2\alpha+1)f\rho,\quad \phi\in (0,\pi),
\end{aligned}
\end{equation}
with the vorticity $\omega=(1-\alpha)a-f'$ and the Bernoulli function $\Pi=p+|a|^{2}/2+|f|^{2}/2+(\sin\phi)\rho$. The odd symmetry of $u^{2}$ and $\rho$ in (1.4) imply the boundary conditions
\begin{align}
a(0)=a(\pi)=0,\quad \rho(0)=\rho(\pi)=0.   
\end{align}
We use the first integral condition $u\cdot \nabla \Pi=0$ for homogeneous solutions, 
\begin{align}
a\Pi'=2\alpha f\Pi.
\end{align}
The system (1.11) for $(a,f,p,\rho)$ has simpler forms in the following cases: 
\begin{itemize}
\item[(i)] $\omega\equiv 0$: a couple of systems for $(a,f)$ and $(\Pi,\rho)$.  
\item[(ii)] $\alpha=1$: a system for $(f,p,\rho)$
\item[(iii)] $\rho\equiv 0$: a system for $(a,f)$ with constant $p$
\item[(iv)] $\Pi\equiv \textrm{const.}$: a system for $(a,f,\rho)$
\end{itemize}

In (i), systems for $(a,f)$ and $(\Pi,\rho)$ are integrable. In (ii), no solutions exist for $f\equiv 0$, and all solutions for $f\nequiv 0$ are irrotational. In (iii) for $-1\leq \alpha<1$ and (iv) for $-2\leq \alpha<1$, we show that all rotational ($-\alpha$)-homogeneous solutions are irrotational and with zero-densities, i.e., $\omega\equiv 0$ and $\rho\equiv 0$. They demonstrate Theorem 1.9, the case $\alpha=1$ of Theorem 1.2 (i), (i) and (ii) of Theorems 1.5 and 1.7. 

In (iii), we use the first integral condition $u\cdot \nabla \omega=0$ for the  homogeneous vorticity,
\begin{align}
a\omega'=(\alpha+1)f\omega.
\end{align}
The vorticity is an odd function and satisfies the boundary condition 
\begin{align}
\omega(0)=\omega(\pi)=0.
\end{align}
We integrate the equations $(1.11)_3$ and (1.13) for $0\leq \alpha<1$ and the equations $(1.11)_3$ and (1.14) for $-1<\alpha<0$ by using the boundary condition $(1.12)_1$ and conclude that $\omega\equiv 0$. The case $\alpha=-1$ uses the condition (1.15). 

In (iv), we use the first integral condition $u\cdot \nabla \Omega=0$ for the homogeneous normalized vorticity $\Omega=\omega/x_2$,  
\begin{align}
a\Omega'=(\alpha+2)f\Omega.
\end{align}
We integrate the equation $(1.11)_3$ and $(1.11)_4$ for $-1/2< \alpha< 1$ and the equation $(1.11)_3$ and (1.16) for $-2<\alpha<-1/2$ by using the boundary condition $(1.12)_1$ and conclude that $\omega\equiv 0$. The cases $\alpha=-1/2$ uses the boundary condition $(1.12)_2$. The case $\alpha=-2$ uses the additional condition $\Omega(0)=\Omega(\pi)=0$. 

In a general case in (i) and (ii) of Theorem 1.2, we integrate the equations $(1.11)_3$ and $(1.11)_4$ for $-1/2< \alpha<1$ by using the boundary condition $(1.12)_1$ and first deduce $\rho\equiv 0$ and conclude that $\omega\equiv 0$ by using (i) and (ii) of Theorem 1.5. The case $\alpha=-1/2$ uses the boundary condition $(1.12)_2$. 

\subsubsection{The existence of homogeneous solutions with regular profiles}

The main contribution of this work is the existence of homogeneous solutions to (1.1) and (1.4). The density $\rho$ and the Bernoulli function $\Pi$ are the first integrals of the velocity field $u=\nabla^{\perp}\psi$ and locally are functions of the stream function $\psi$. We assume that they are global functions of $\psi$ in a half-plane, i.e., $\Pi=\Pi(\psi)$ and $\rho=\rho(\psi)$. Then, the momentum equation $u^{\perp}\omega+\nabla \Pi=x_2\nabla \rho$ for the $\pi/2$ counter-clockwise rotation of the velocity $u^{\perp}=(-u^{2},u^{1})$ implies that the stream function is a solution to the Dubreil-Jacotin--Long equation \cite{Dub}, \cite{Long53}, cf. \cite{Yih58}, \cite[Chapter 3]{Yih65}, \cite{CLV}, \cite{CDG21b}:
\begin{equation}
\begin{aligned}
-\Delta \psi&=-\Pi'(\psi)+x_2\rho'(\psi),\quad x\in \mathbb{R}^{2}_{+},\\
\psi(x_1,0)&=0,\quad x_1\in \mathbb{R}.
\end{aligned}
\end{equation}
Solutions to this elliptic equation provide steady stratified flows for given functions $\rho(\psi)$ and $\Pi(\psi)$. We construct ($-\alpha+1$)-homogeneous solutions 
\begin{align}
\psi=\frac{w(\phi)}{r^{\alpha-1}},
\end{align}
by choosing ($-2\alpha+1$)-homogeneous densities and ($-2\alpha$)-homogeneous Bernoulli functions as follows:
\begin{align}
\Pi=C_1|\psi|^{2+\frac{2}{\alpha-1}},\quad \rho=C_2\psi|\psi|^{1+\frac{3}{\alpha-1}}.
\end{align}
Then, the function $w$ satisfies the semilinear equation:  
\begin{equation}
\begin{aligned}
-w''&=\beta^{2}w+c_1 w|w|^{\frac{2}{\beta}}+c_2 \sin{\phi} |w|^{1+\frac{3}{\beta}},\quad \phi\in (0,\pi),\\
w(0)&=w(\pi)=0,
\end{aligned}
\end{equation}
for the constants 
\begin{align}
\beta=\alpha-1,\quad c_1=-2C_1\left(1+\frac{1}{\beta}\right), \quad c_2=2C_2\left(1+\frac{3}{2\beta}\right). 
\end{align}
We construct solutions to the homogeneous Dubreil-Jacotin--Long equation (1.20) for \textit{positive} nonlinear powers by assuming that the constants $(\beta,c_1,c_2)$ satisfy the conditions:  
\begin{equation}
\begin{aligned}
&\beta\in \mathbb{R}\backslash [-2,0],\quad c_1,c_2\geq 0,\quad c_1\neq 0\ \textrm{or}\ c_2\neq 0,\\
&c_2=0\quad \textrm{for}\ -3\leq \beta<-2.
\end{aligned}
\end{equation}
We construct solutions of (1.20) by seeking critical points of the associated functional
\begin{align}
I[w]=\frac{1}{2}\int_{0}^{\pi}\left(|w'|^{2}-\beta^{2}|w|^{2}\right)\dd \phi
-\int_{0}^{\pi}\left(\frac{c_1\beta}{2(\beta+1)}|w|^{2+\frac{2}{\beta}}+\frac{c_2\beta}{2\beta+3}\sin\phi w|w|^{1+\frac{3}{\beta}}  \right)\dd \phi.
\end{align} 
The principal eigenvalue $\mu_1=1-\beta^{2}$ of the operator $-\partial_{\phi}^{2}-\beta^{2}$ on $L^{2}(0,\pi)$ is positive for $0<\beta<1$ and non-positive for $1\leq \beta$ and $\beta<-2$. Moreover, the powers of the nonlinear terms in $(1.20)_1$ are superlinear for $\beta>0$ and sublinear for $\beta<-2$. We thus divide the existence problem to (1.20) into the following three cases and apply different minimax theorems according to the values of $\beta$:
\begin{itemize}
\item[(i)] $0<\beta<1$: Mountain pass theorem 
\item[(ii)] $1\leq \beta$: Linking theorem
\item[(iii)] $\beta<-2$: Saddle point theorem
\end{itemize}

The main steps of applying those minimax theorems are to verify the Palais--Smale conditions for the functional $I$ and to obtain inequalities of the form $\inf_{N}I>\max_{M_0}I$ for suitable subsets $N$ and $M_0$ in $H^{1}_{0}(0,\pi)$. We show the Palais--Smale conditions for the superlinear case $\beta>0$ and for the sublinear case $\beta<-2$ respectively, and choose suitable subsets $N$ and $M_0$ in $H^{1}_{0}(0,\pi)$ according to the values of $\beta$ to obtain the inequality $\inf_{N}I>\max_{M_0}I$. 

For $0<\beta<1$, a mountain pass theorem applied to a modified functional to (1.23) and a strong maximum principle imply the existence of positive and even symmetric solutions to (1.20) for $\phi=\pi/2$ in $(0,\pi)$.

The existence of solutions to (1.20) for $(\beta,c_1,c_2)$ satisfying (1.22) implies (iii) and (iv) of Theorems 1.2, 1.5, and 1.7.

\subsubsection{The existence of homogeneous solutions with singular profiles}

The main results in this work (Theorems 1.3, 1.6, and 1.8) are based on the existence of solutions to the homogeneous Dubreil-Jacotin--Long equation (1.20) with \textit{negative} nonlinear powers for $(\beta,c_1,c_2)$ satisfying 
\begin{equation}
\begin{aligned}
&-2<\beta<0,\quad c_1, c_2\geq 0,\quad c_1\neq 0\ \textrm{or}\ c_2\neq 0,\\
&c_1=0\quad \textrm{for}\ \beta=-1,\\
&c_2=0\quad \textrm{for}\ \beta=-3/2.
\end{aligned}
\end{equation}
We construct ($-\alpha$)-homogeneous solutions to (1.1) satisfying the $x_1$-symmetry (1.3) by constructing positive solutions to the following problem for $s=-1-2/\beta\in (0,\infty)$ and $s'=-1-3/\beta\in (1/2,\infty)$:  
\begin{equation}
\begin{aligned}
-w''&=\beta^{2}w+c_1 w^{-s}+c_2 \sin{\phi} w^{-s'},\\
w&>0,\quad \phi\in \left(0,\pi/2\right),\\
w(0)&=w(\pi/2)=0.
\end{aligned}
\end{equation}
Solutions to (1.25) provide ($-\alpha$)-homogeneous solutions to (1.1), (1.3), and (1.4) with singular profiles for $-1<\alpha<1$ in Theorems 1.3, 1.6, and 1.8.  

The problem (1.25) is a singular elliptic problem; see, e.g., \cite{Oliva}. We construct solutions to (1.25) based on the argument of del Pino \cite{delPino} using the maximum principle. The eigenvalues of the operator $-L_{\beta}=-\partial_{\phi}^{2}-\beta^{2}$ on $(0,\pi/2)$ are $\mu_{k}=4k^{2}-\beta^{2}$ for $k\in \mathbb{N}$. For $-2<\beta<0$, the principal eigenvalue $\mu_1=4-\beta^{2}$ is positive and the associated eigenfunction is $\sin2\phi$. We show that the maximum principle holds for the operator $-L_{\beta}$ and $-2<\beta<0$; namely, that for a non-negative coefficient $c(\phi)\geq 0$, the functions $w$ satisfying
\begin{align*}
-L_{\beta}w+c(\phi)w\geq 0,\quad \phi\in (0,\pi/2),\quad w(0)=w(\pi/2)=0,
\end{align*}
are positive in $(0,\pi/2)$. Using this maximum principle, we show the a priori estimate for solutions to (1.25): 
\begin{align}
a\sin2\phi\leq w\leq b(\sin2\phi)^{\sigma},\quad \phi\in (0,\pi/2),
\end{align}
for arbitrary $\beta^{2}/4<\sigma\leq \min\{1,-2\beta/3\}$ ($\sigma\neq 1$) for constants $a=a(\beta)$ and $b=b(\beta)$, independent of $w$.

We construct solutions to (1.25) by solving the problem
\begin{equation}
\begin{aligned}
-L_{\beta}w&=g_{\varepsilon}(\phi,w),\quad \phi\in (0,\pi/2), \\
w(0)&=w(\pi/2)=0,
\end{aligned}
\end{equation}
for the bounded function 
\begin{align}
g_{\varepsilon}(\phi,t)=c_1(t_++\varepsilon)^{-s}+c_2\sin\phi (t_++\varepsilon)^{-s'},\quad t_+=\max\{t,0\},\quad \varepsilon>0. 
\end{align}
Since the principal eigenvalue of $-L_{\beta}$ is positive for $-2<\beta<0$, solutions of (1.27) are obtained by minimizing the following functional on $H^{1}_{0}(0,\pi/2)$:
\begin{align}
I[w]=\frac{1}{2}\int_{0}^{\pi/2}\left(|w'|^{2}-\beta^{2}|w|^{2} \right)\dd \phi
-\int_{0}^{\pi/2}G(\phi,w)\dd \phi,\quad G(\phi,w)=\int_{0}^{w}g_{\varepsilon}(\phi,s)\dd s.
\end{align}
Solutions of (1.27) are positive by a maximum principle and satisfy the a priori estimate (1.26). Taking $\varepsilon\to0$  yields solutions to (1.25).

\subsection{Organization of the paper}
Section 2 demonstrates the rigidity of $(-\alpha)$-homogeneous solutions (Theorem 1.9 and (i) and (ii) of Theorems 1.2, 1.5, and 1.7). Section 3 shows the existence of $(-\alpha)$-homogeneous solutions with regular profiles ((iii) and (iv) of Theorems 1.2, 1.5, and 1.7). Section 4 shows the existence of $(-\alpha)$-homogeneous solutions with singular profiles (Theorems 1.3, 1.6, and 1.8). Section 5 demonstrates the desingularization of the ($-1$)-homogeneous vortex sheet solution (Theorem 1.10). Appendix A reviews the existence of backward self-similar solutions to quasi-linear fluid equations.

\subsection{Acknowledgements}
While preparing this manuscript, the authors learned the work of Elgindi and Huang \cite{EH}, which was essential for the main result of this study (Theorem 1.3). The authors thank Tarek Elgindi for informing us of this work. KA is supported by the JSPS through the Grant in Aid for Scientific Research (C) 24K06800 and MEXT Promotion of Distinctive Joint Research Center Program JPMXP0619217849. DG is supported by a startup grant from Brooklyn College and NSF grant DMS-2406852. IJ is supported by the National Research Foundation of Korea grant No. 2022R1C1C1011051.

\section{The non-existence of homogeneous solutions with regular profiles}

We derive the homogeneous Boussinesq system on the semi-circle (1.11) and first investigate the system (1.11) for particular classes: (i) $\omega\equiv 0$, (ii) $\alpha=1$, (iii) $\rho\equiv 0$, and (iv) $\Pi\equiv \textrm{const.}$ We then demonstrate the non-existence of general rotational ($-\alpha$)-homogeneous solutions using the result in case (iii). 

\subsection{The homogeneous Boussinesq system}
Stationary solutions to the Boussinesq equations (1.1) and (1.4) satisfy the following equations in a half-plane:
\begin{equation}
\begin{aligned}
\omega u^{\perp}+\nabla \Pi&=x_2 \nabla \rho, \\
\nabla \cdot u&=0,\\
u\cdot \nabla \rho&=0, \quad x\in \mathbb{R}^{2}_{+},\\
u^{2}(x_1,0)&=0,\quad x_1\in \mathbb{R},
\end{aligned}
\end{equation}
for the vorticity $\omega=\nabla^{\perp}\cdot u$ and the Bernoulli function $\Pi=p+|u|^{2}/2+x_2\rho$. The density is an odd function and vanishes on the boundary
\begin{align}
\rho(x_1,0)=0,\quad x_1\in \mathbb{R}.
\end{align}
Multiplying $u$ by the momentum equation $(2.1)_1$ implies that 
\begin{align}
u\cdot\nabla \Pi=0.   
\end{align}
If $\rho\equiv 0$, taking the rotation to (2.1$)_1$ implies that 
\begin{align}
u\cdot \nabla \omega=0.  
\end{align}
If $\Pi\equiv 0$, taking the rotation to $\Omega u^{\perp}=\nabla \rho$ for $\Omega=\omega/x_2$ implies that 
\begin{align}
u\cdot \nabla \Omega=0.   
\end{align}

We consider $(-\alpha)$-homogeneous vector fields 
\begin{align}
u(x)=\frac{1}{r^{\alpha}}(v+f(\phi)e_{r} ),\quad v=a(\phi)e_{\phi}.  
\end{align}
The vorticity of $(-\alpha)$-homogeneous vector fields are $(-\alpha-1)$-homogeneous and expressed as 
\begin{align}
\omega(x)=\frac{1}{r^{\alpha+1}}\left( (1-\alpha)a(\phi)-f'(\phi) \right).
\end{align}
We consider ($-2\alpha$)-homogeneous functions $p$ and ($-2\alpha-1$)-homogeneous functions $\rho$. We identify them with functions on the circle $\mathbb{S}^{1}$ and denote by $p(x)=r^{-2\alpha}p(\phi)$ and $\rho(x)=r^{-2\alpha-1}\rho(\phi)$. We say that $(u,p,\rho)$ is ($-\alpha$)-homogeneous if $u$, $p$, and $\rho$ are ($-\alpha$)-homogeneous, ($-2\alpha$)-homogeneous, and ($-2\alpha-1$)-homogeneous, respectively. For ($-\alpha$)-homogeneous $(u,p,\rho)\in C^{1}(\mathbb{R}^{2}\backslash \{0\})$, the functions $a,f, p$, and $\rho$ are continuously differentiable on the circle, i.e., $(a,f,p,\rho)\in C^{1}(\mathbb{S}^{1})$. Under the $x_2$-symmetry (1.4), they are symmetric for $\theta\in (-\pi,\pi)$ and identified with functions on $(0,\pi)$. The trace conditions $(2.1)_4$ and (2.2) then yield
\begin{align}
a(0)=a(\pi)=0,\quad \rho(0)=\rho(\pi)=0.
\end{align}
The Bernoulli function $\Pi$ and the normalized vorticity $\Omega$ of ($-\alpha$)-homogeneous $(u,p,\rho)$ are ($-2\alpha$)-homogeneous and ($-\alpha-2$)-homogeneous, respectively. By the polar coordinates expression of the gradient $\nabla=e_r\partial_r+r^{-1}e_{\phi}\partial_{\phi}$, we derive from the equations $(2.1)_1$, $(2.1)_2$, ad $(2.1)_4$ that $(-\alpha)$-homogeneous solutions $(u,p,\rho)$ to (1.1) and (1.4) satisfy 
\begin{equation}
\begin{aligned}
a\omega+2\alpha \Pi&=(2\alpha+1)(\sin\phi) \rho,\\
f\omega+\Pi'&=(\sin\phi) \rho',\\
(1-\alpha)f+a'&=0,\quad \phi\in (0,\pi),\\
a(0)=a(\pi)&=0. 
\end{aligned}
\end{equation}
The equations $(2.1)_3$ and (2.3) for $(-\alpha)$-homogeneous solutions are expressed as 
\begin{align}
a\rho'=(2\alpha+1)f\rho,\quad a\Pi'=2\alpha f \Pi.   
\end{align}
The equations (2.9) are a system for $(a,f,p,\rho)$. In the subsequent sections, we investigate particular classes of solutions.

\subsection{Irrotational solutions}
We assume that $\omega\equiv 0$ in (2.9). Then, $(a,f)$ and $(\Pi,\rho)$ satisfy: 
\begin{equation}
\begin{aligned}
(1-\alpha)a-f'&=0,\\
(1-\alpha)f+a'&=0,\quad \phi\in (0,\pi),\\
a(0)=a(\pi)&=0,
\end{aligned}
\end{equation}
\begin{equation}
\begin{aligned}
2\alpha \Pi&=(2\alpha+1)(\sin\phi)\rho,\\
\Pi'&=(\sin\phi)\rho',\quad \phi\in (0,\pi),\\
\rho(0)&=\rho(\pi)=0.
\end{aligned}
\end{equation}

\vspace{3pt}

\begin{proof}[Proof of Theorem 1.9]
By eliminating the function $a$ from $(2.11)$, 
\begin{align*}
-f''&=(\alpha-1)^{2}f,\quad \phi\in (0,\pi),\\
f'(0)&=f'(\pi)=0.
\end{align*}
Solutions to this equation are $f=C\cos((\alpha-1)\phi )$ for $C\neq 0$ and $\alpha\in \mathbb{Z}$ or $C=0$ and $\alpha\in \mathbb{R}$. Thus $u\nequiv 0$ is expressed as (1.5) with some constants $C\neq 0$ and $\alpha\in \mathbb{Z}$ or $u\equiv 0$ and $\alpha\in \mathbb{R}$. 

By differentiating the equation $(2.12)_1$ and combining it with $(2.12)_2$, 
\begin{align*}
\rho'+(2\alpha+1)(\cot \phi)\rho=0.
\end{align*}
By multiplying $(\sin\phi)^{2\alpha+1}$ by the above equation and integrating, 
\begin{align*}
\rho(\sin\phi)^{2\alpha+1}=C_0,
\end{align*} 
for some constant $C_0$. By the continuity of the density on $\mathbb{S}^{1}$ and the trace condition $(2.12)_3$, $\rho\nequiv 0$ and $\alpha<-1/2$ or $\rho\equiv 0$ and $\alpha\in \mathbb{R}$.

Suppose that $u\nequiv 0$. Then, $u$ is given by (1.5) for $\alpha\in \mathbb{Z}$. If $\rho\equiv 0$, $p=r^{-2\alpha}b$ for $b\in \mathbb{R}$ satisfying $\alpha (2b+C^{2})=0$ by $(2.12)_2$ and $(2.12)_1$. If $\rho\nequiv 0$, $-\alpha\in \mathbb{N}$.

Suppose that $u\equiv 0$. By (2.12$)_1$ and (2.12$)_2$,
\begin{align*}
2\alpha p&=(\sin\phi)\rho,\\
p'+\cos\phi\rho&=0, \quad \phi\in (0,\pi).
\end{align*}
Since $\rho\in C^{1}(\mathbb{S}^{1})$, we have $p\in C^{2}(\mathbb{S}^{1})$. If $\rho\equiv 0$, $p(\phi)=\textrm{const.}$ and $\alpha p=0$. Thus $p(x)=p(\phi)/r^{\alpha}$ is constant in $\mathbb{R}^{2}$. If $\rho\nequiv 0$, 
\begin{align*}
\rho(\phi)=\frac{C_0}{(\sin\phi)^{2\alpha+1}},\quad p''(\phi)=C_0\frac{2\alpha \cos^{2}\phi+1 }{(\sin\phi)^{2\alpha+2}}.
\end{align*}
Thus, $\alpha<-1$ by $p\in C^{2}(\mathbb{S}^{1})$.
\end{proof}

\subsection{($-1$)-homogeneous solutions}
We assume that $\alpha=1$ in (2.9). Then, $a\equiv 0$ and $(f,p,\rho)$ satisfies 
\begin{equation}
\begin{aligned}
2 \Pi&=3(\sin\phi) \rho,\\
-ff'+\Pi'&=(\sin\phi) \rho',\quad \phi\in (0,\pi).\\
\end{aligned}
\end{equation}
The first integral conditions (2.10) form 
\begin{align}
f\rho=0,\quad f \Pi=0.   
\end{align}


\begin{lem}[$\alpha=1$]
There exist no ($-1$)-homogeneous solutions $(u,p,\rho)\in C^{1}(\mathbb{R}^{2}\backslash \{0\})$ to (1.1) and (1.4) other than the irrotational solution for $\alpha=1$ (i.e., $u=r^{-1}Ce_{r}$, $p=r^{-2}b$, and $\rho=0$ satisfying $C^{2}+2b=0$).
\end{lem}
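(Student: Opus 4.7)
The plan is to reduce the homogeneous Boussinesq system on the semi-circle to a system for $f$ alone and exploit the algebraic first integrals that survive when $\alpha=1$. Setting $\alpha=1$ in $(2.9)_3$ gives $a'\equiv 0$, and combined with the boundary condition $(2.9)_4$ we get $a\equiv 0$. The vorticity thus simplifies to $\omega=-f'$, while the first integrals (2.10) collapse to the purely algebraic identities (2.14), namely $f\rho\equiv 0$ and $f\Pi\equiv 0$. What remains is the reduced system (2.13), whose first equation determines $\Pi=(3/2)\sin\phi\,\rho$ pointwise.

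Next I would let $E=\{\phi\in(0,\pi):f(\phi)\neq 0\}$, which is open by continuity of $f\in C^1$. On $E$ the algebraic identities force $\rho\equiv 0$ and $\Pi\equiv 0$, and differentiating gives $\rho'\equiv \Pi'\equiv 0$ on $E$. Inserting this into the second equation of (2.13), $-ff'+\Pi'=\sin\phi\,\rho'$, yields $ff'\equiv 0$ on $E$, so $f'\equiv 0$ on $E$. Hence $f$ is locally constant on the open set $E$, and each connected component of $E$ is an open subinterval of $(0,\pi)$ on which $f$ equals a fixed nonzero constant.

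The continuity step is where the argument really bites. If $E$ were a proper open subset of $(0,\pi)$, then at any boundary point $\phi_{0}\in\partial E\cap(0,\pi)$ the value $f(\phi_{0})=0$ would have to be the limit of a nonzero constant value of $f$ attained on the adjacent component of $E$, a contradiction. Thus either $E=\emptyset$ (so $f\equiv 0$) or $E=(0,\pi)$ (so $f\equiv C$ for some constant $C\neq 0$). In either case $\omega=-f'\equiv 0$, so no rotational solution exists.

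Finally, I would read off the irrotational solution in the remaining case $f\equiv C$. With $f\equiv C\neq 0$ the identities $f\rho=f\Pi=0$ give $\rho\equiv 0$ and $\Pi\equiv 0$; since $\Pi=p+f^{2}/2=p+C^{2}/2$, we obtain $p\equiv -C^{2}/2$ as a profile on $\mathbb{S}^{1}$, which in Cartesian form is $u=r^{-1}Ce_{r}$, $p=-C^{2}/(2r^{2})$, $\rho=0$, matching (1.5)--(1.6) with $\alpha=1$ and $b=-C^{2}/2$. The degenerate case $f\equiv 0$ (so $u\equiv 0$) is ruled out as a nontrivial solution by Theorem 1.9(ii), which requires $\alpha=0$ or $\alpha<-1$ for any static equilibrium. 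The main obstacle, as sketched above, is handling the open set $\{f\neq 0\}$: the proof pivots entirely on ruling out the possibility that $f$ could vanish on a nontrivial closed set inside $(0,\pi)$ while being a nonzero constant on its complement.
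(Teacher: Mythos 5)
Your proposal is correct and follows essentially the same route as the paper: with $\alpha=1$ one gets $a\equiv 0$, the algebraic identities $f\rho=f\Pi=0$ force $\rho=\Pi=0$ on $\{f\neq 0\}$, and the reduced equation $(2.13)_2$ makes $f$ locally constant there, so a connectedness/continuity argument shows $f$ is either identically zero or a nonzero constant — your open-closed argument just spells out the paper's terse claim that the interval $J$ "is extendable to $(0,\pi)$." The only cosmetic difference is the degenerate case $f\equiv 0$, which you dispatch by citing Theorem 1.9(ii) whereas the paper integrates $\rho'+3\cot\phi\,\rho=0$ directly to get $\rho(\sin\phi)^{3}=\mathrm{const}$ and hence $\rho\equiv 0$; both are fine.
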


\begin{proof}
Suppose that $f\equiv 0$. Then, by (2.13), 
\begin{align*}
\rho (\sin\phi)^{3}=C_0,
\end{align*}
for some constant $C_0$. By the boundedness of $\rho$ on $\mathbb{S}^{1}$, $\rho\equiv 0$. By $(2.13)_1$, $\Pi\equiv 0$ and $p\equiv 0$. Thus, no solutions exist.

Suppose that $f\nequiv 0$. Then there exists an interval $J\subset (0,\pi)$ such that $f\neq 0$ on $J$. The functions $\Pi$ and $\rho$ vanish on $J$ by (2.14) and hence $f=C$ for some constant $C\neq 0$ by $(2.13)_2$. The interval $J$ is extendable to $(0,\pi)$ and $\Pi\equiv 0$ and $\rho\equiv 0$. Thus, $\omega=(1-\alpha)a-f'\equiv 0$.
\end{proof}

\begin{rem}[Viscous-diffusive Boussinesq]
In contrast to the inviscid Boussinesq equations in Lemma 2.1, $(-1)$-homogeneous solutions can exist for the viscous and diffusive Boussinesq equations:  
\begin{equation*}
\begin{aligned}
\omega u^{\perp}+\nabla \Pi&=x_2 \nabla \rho+\Delta u, \\
\nabla \cdot u&=0,\\
u\cdot \nabla \rho&=\Delta \rho,
\end{aligned}
\end{equation*}
in $\mathbb{R}^{2}\backslash \{0\}$ or in a sectorial domain $D=\{x=(x_1,x_2)\in \mathbb{R}^{2}\backslash \{0\}\ |\ |\textrm{arctan}(x_2/x_1)|<\phi_0\ \}$ for $\phi_0\in (0,\pi)$ subject to the Dirichlet boundary conditions $u=0$ and $\rho=0$ on $\partial D$. The viscous and diffusion terms of $(-1)$-homogeneous vector fields $u$ and ($-3$)-homogeneous functions $\rho$ are as follows: 
\begin{align*}
\Delta u&=\left(\partial_r^{2}+\frac{1}{r}\partial_r+\frac{1}{r^{2}}\partial_{\phi}^{2} \right)\left(\frac{1}{r}(v+fe_{r}) \right)
=\frac{1}{r^{3}} \left( (a''+2f')e_{\phi}+(f''-2a')e_{r}   \right),\\
\Delta \rho&=\left(\partial_r^{2}+\frac{1}{r}\partial_r+\frac{1}{r^{2}}\partial_{\phi}^{2} \right)\left(\frac{1}{r^{3}}\rho \right)
=\frac{1}{r^{5}}(9\rho-\rho'').
\end{align*}
The function $a$ is constant by $(2.9)_3$. The homogeneous viscous-diffusive Boussinesq system follows adding the viscous and diffusive terms into the inviscid equations $(2.9)_1$, $(2.9)_2$ and $(2.10)_1$: 
\begin{equation*}
\begin{aligned}
af'-2p-a^{2}-f^{2}&=-(\sin\phi) \rho+f'',\\
p'+(\cos\phi) \rho&=2f',\\
-3f\rho&=9\rho-\rho''.
\end{aligned}
\end{equation*}
The constant $a$ is zero for the sectorial domain $D$ by the Dirichlet boundary condition.

For the 2D Navier--Stokes equations $\rho\equiv 0$, the above system is reduced to a second-order ODE for $f$ with some constants $a$ and $C$ satisfying $2p=4f+C$ \cite{Jeffery}, \cite{Hamel}. It is known \cite{Rosenhead}, \cite{Fraenkel62}, \cite{Fraenkel63} that for given $\phi_0\in (0,\pi)$ and flux $F\in \mathbb{R}$, there exists a solution $f$ and a constant $C$ satisfying 
\begin{align*}
-4f-C-f^{2}&=f'',\quad \phi\in (-\phi_0,\phi_0),\\
f(\pm\phi_0)&=0,\\
F&=\int_{-\phi_0}^{\phi_0}f \dd \phi.
\end{align*}
The work \cite[Theorem 2]{Sverak2011} showed that all solutions for $\phi\in [-\pi,\pi]$ with $F=0$ are periodic functions $f$ expressed by elliptic functions and $C=\int_{-\pi}^{\pi}f^{2}\dd \phi$, cf. \cite{GW15} for different ($-1$)-homogeneous solutions in $\mathbb{R}^{2}\backslash \{0\}$ representing spirals. We note that Brandolese and Karch \cite{BK23} recently constructed forward self-similar solutions to the 3D Oberbeck-Boussinesq system with Newtonian gravitational field, cf. \cite{JS}. See also Tsai \cite{Tsai24}. 
\end{rem}

\subsection{2D Euler solutions}
We assume that $\rho\equiv 0$ in (2.9). Then, $p$ is constant on $\mathbb{S}^{1}$ by $(2.9)_2$ and $(2.9)_3$ and $(a,f)$ satisfies 
\begin{equation}
\begin{aligned}
a^{2}-af'+2\alpha p+\alpha f^{2}&=0,\\
(1-\alpha)f+a'&=0,\quad \phi\in(0,\pi),\\
a(0)=a(\pi)&=0.
\end{aligned}
\end{equation}
The conditions (2.4) and (2.3) are expressed as 
\begin{align}
a\omega'=(\alpha+1)f \omega, \quad a\Pi'=2\alpha f \Pi.
\end{align}
The vorticity is an odd function under the $x_2$-symmetry (1.4) and satisfies the boundary condition 
\begin{align}
\omega(0)=\omega(\pi)=0.
\end{align}


\begin{lem}
Let $-1\leq \alpha\leq 1$. Let $(u,p)$ be a ($-\alpha$)-homogeneous solution to (1.1) and (1.4) for $\rho\equiv 0$. Assume that\\ 
\noindent
(i) $(u,p)\in C^{1}(\mathbb{R}^{2}\backslash \{0\})$ for $0\leq \alpha\leq 1$, and \\
(ii) $(u,p)\in C^{2}(\mathbb{R}^{2}\backslash \{0\})$ for $-1\leq \alpha< 0$.\\
\noindent
Then, $\omega\equiv 0$.
\end{lem}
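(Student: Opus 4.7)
The plan is to exploit the two first-integral conditions (2.16) together with the kinematic constraint $a' = -(1-\alpha)f$ from $(2.15)_2$, which together reduce to a separable first-order ODE for $\Pi$ or for $\omega$ on each connected component of $\{a \neq 0\} \subset (0,\pi)$. I split the analysis according to the sign of $\alpha$ so that the resulting separation exponent is favorable, and treat the boundary values $\alpha = \pm 1$ separately.

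In the case $0 \leq \alpha < 1$, substituting $f = -a'/(1-\alpha)$ into the Bernoulli identity $a\Pi' = 2\alpha f \Pi$ yields $(1-\alpha)a\Pi' + 2\alpha a'\Pi = 0$. On each component of $\{a \neq 0\}$ this is a linear first-order ODE for $\Pi$, so either $\Pi \equiv 0$ on the whole component or $\Pi$ never vanishes there; in the latter case, separation of variables gives $\Pi = C|a|^{-2\alpha/(1-\alpha)}$. Since the exponent is nonnegative and $\Pi \in C^1(\mathbb{S}^1)$ is bounded while $a \to 0$ at the component boundary, we must have $C = 0$ when $\alpha > 0$, forcing $\Pi \equiv 0$. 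The identity $a\omega = -2\alpha\Pi$ encoded in $(2.15)_1$ then gives $\omega = 0$ on $\{a \neq 0\}$, and for $\alpha = 0$ the same relation delivers the conclusion directly. On any open subarc where $a \equiv 0$, $(2.15)_2$ forces $f \equiv 0$ so $u = 0$ there, and continuity of $\omega$ across the (closed) zero set of $a$ yields $\omega \equiv 0$ on $(0,\pi)$. The case $-1 < \alpha < 0$ is symmetric: I combine the vorticity first integral $a\omega' = (\alpha+1) f\omega$ with $f = -a'/(1-\alpha)$ to obtain $(1-\alpha)a\omega' + (\alpha+1)a'\omega = 0$, whose separation produces $\omega = C|a|^{-(\alpha+1)/(1-\alpha)}$ on components of $\{a \neq 0\}$; the exponent lies in $(0,1)$, and the continuity of $\omega$ (guaranteed by $(u,p) \in C^2$) forces $C = 0$. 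The remaining steps are identical.

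The value $\alpha = 1$ is already covered by Lemma 2.1. The endpoint $\alpha = -1$ is the most delicate because the vorticity first integral degenerates to $a\omega' = 0$, so $\omega$ is only locally constant on each component of $\{a \neq 0\}$. Here I invoke the boundary conditions $\omega(0) = \omega(\pi) = 0$ from the $x_2$-symmetry (2.17) and propagate the value $0$ across successive components by continuity: the leftmost component touches either $\phi = 0$ or a subinterval where $a \equiv 0$ (on which $\omega \equiv 0$ automatically), forcing the local constant to vanish; by induction, $\omega \equiv 0$ on every component. The main subtlety throughout is the topology of $\{a = 0\}$---the separation identities hold only off the zero set, so one has to work component by component and close each argument either by boundedness (for $\alpha \neq -1$) or by the explicit boundary condition (for $\alpha = -1$) combined with the continuity of $\omega$.
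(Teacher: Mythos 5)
Your treatment of $0\leq\alpha<1$ and $-1<\alpha<0$ is correct and is essentially the paper's own argument (cases (ii)--(iv) of the proof of Lemma 2.3): the same first integrals (2.16) combined with $(2.15)_2$ produce the same separable ODEs, the same power laws $|\Pi|^{1-\alpha}|a|^{2\alpha}=C_0$ and $|\omega|^{1-\alpha}|a|^{1+\alpha}=C_0$, and the constant is killed by boundedness of $\Pi$ (resp.\ $\omega$) against $a\to 0$ at the edge of a maximal interval of $\{a\neq 0\}$. Your component-by-component bookkeeping, ending with $\omega=0$ on the dense open set $\{a\neq 0\}\cup\mathrm{int}\{a=0\}$ and a continuity argument, is a legitimate and slightly more careful packaging of the paper's ``we may assume $J=(0,\pi)$'' step. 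Your $\alpha=0$ shortcut via $a\omega=0$ from $(2.15)_1$ also works and is marginally simpler than the paper's route through $f\omega=0$.

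The gap is at $\alpha=-1$. From $a\omega'=0$ you only learn that $\omega$ is constant on each connected component of $\{a\neq 0\}$, and the propagation you propose --- ``the leftmost component touches either $\phi=0$ or a subinterval where $a\equiv 0$ \dots by induction'' --- does not go through for a general closed zero set. If $\{a=0\}$ is Cantor-like (empty interior, not discrete), components of $\{a\neq 0\}$ need not abut $\phi=0$, an interval of $\{a=0\}$, or one another; there is no leftmost component and no ordering to induct over; and continuity of $\omega$ alone cannot equalize the locally constant values (the Cantor function is continuous and locally constant off the Cantor set without being constant). The repair is exactly the paper's case (v): on $\mathrm{int}\{a=0\}$ one has $f=-a'/2=0$, hence $\omega=2a-f'=0$ and $\omega'=0$ there, while $\omega'=0$ on $\{a\neq 0\}$; since $\omega'$ is continuous (this is where the $C^{2}$ hypothesis, available precisely for $\alpha<0$, is used) and vanishes on a dense open set, $\omega'\equiv 0$ on $(0,\pi)$, so $\omega$ is a single global constant and the boundary condition (2.17) gives $\omega\equiv 0$. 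Equivalently: if $\omega'\neq 0$ on some open $J$, then $a\equiv 0$ on $J$, whence $f\equiv 0$ and $\omega\equiv 0$ on $J$, a contradiction.
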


\begin{proof}
We consider the following five cases (i)-(v) for $-1\leq \alpha\leq 1$.\\
\noindent
(i) $\alpha=1$. No solutions exist other than the irrotational solution $\omega\equiv 0$ by Lemma 2.1.\\
(ii) $0<\alpha<1$. By $(2.15)_2$ and $(2.16)_2$, 
\begin{align*}
(1-\alpha)a\Pi'+2\alpha a'\Pi=0.
\end{align*}
Suppose that $a\Pi\nequiv 0$. Then there exists an open interval $J\subset (0,\pi)$ such that $a\Pi\neq 0$ on $J$. By dividing the above equation by $a\Pi$ and integrating it, 
\begin{align*}
|\Pi|^{1-\alpha}|a|^{2\alpha}=C_0,
\end{align*}
for some constant $C_0\neq 0$. The condition $0<\alpha<1$ implies that the function $a\Pi$ does not vanish on the boundary of $J$, and we may assume that $J=(0,\pi)$. By the boundary condition $(2.15)_3$, $C_0=0$. This is a contradiction. Thus $a\Pi\equiv 0$. 

Suppose that $\Pi\nequiv 0$. Then there exists $J\subset (0,\pi)$ such that $\Pi\neq 0$ on $J$. Then $a=0$ and $f=0$ by $(2.15)_2$ and $p=0$ and $\Pi=0$ on $J$ by $(2.15)_1$. This a contradiction. Thus $\Pi\equiv 0$. 

By $(2.15)_1$,
\begin{align*}
0=a^{2}-af'+2\alpha p+\alpha f^{2}=a\omega+2\alpha \Pi=a\omega.
\end{align*}
If $\omega\neq 0$ on some $J\subset (0,\pi)$, $a=0$ and $f=0$ by $(2.15)_2$, and $\omega=(1-\alpha)a-f'=0$. This is a contradiction. Thus $\omega\equiv 0$.

\noindent
(iii) $\alpha=0$. By $(2.16)_2$, 
\begin{align*}
a\Pi'=0.
\end{align*}
Suppose that $\Pi'\nequiv 0$. Then, there exists $J\subset (0,\pi)$ such that $\Pi'\neq 0$ on $J$. By $a=0$ and $(2.15)_2$, $f=0$ and $\Pi'=p'+aa'+ff'=0$. This is a contradiction. Thus $\Pi'\equiv 0$. 

By $(2.15)_2$ and $\omega=a-f'$,
\begin{align*}
f\omega=f\omega+\Pi'=f(a-f' )+p'+aa'+ff'=0.
\end{align*}
Suppose that $\omega\nequiv 0$. Then there exists an open interval $J\subset (0,\pi)$ such that $\omega\neq 0$ on $J$. By $f=0$ and $(2.15)_2$, $\omega=a$ is constant on $J$. We may assume $J=(0,\pi)$.  By the boundary condition $(2.15)_3$, $\omega\equiv 0$. This is a contradiction. Thus $\omega\equiv 0$.

\noindent
(iv) $-1<\alpha<0$. By $(2.16)_1$ and $(2.15)_2$, 
\begin{align*}
(1-\alpha)a\omega'+(1+\alpha)a'\omega=0.
\end{align*}
Suppose that $a\omega\nequiv 0$. Then there exists an open interval $J\subset (0,\pi)$ such that $a\omega\neq 0$ on $J$. By dividing the above equation and integrating it, 
\begin{align*}
|\omega|^{1-\alpha}|a|^{1+\alpha}=C_0,
\end{align*}
for some constant $C_0\neq 0$. The condition $-1<\alpha<0$ implies that $a\omega$ does not vanish on the boundary of $J$, and we may assume that $J=(0,\pi)$. By the boundary condition $(2.15)_3$, $C_0=0$. This is a contradiction. Thus $a\omega\equiv 0$. 

Suppose that $\omega\nequiv 0$. Then, there exists $J\subset (0,\pi)$ such that $\omega\neq 0$ on $J$. By $a=0$ and (2.15$)_2$, $f=0$ and $\omega=(1-\alpha)a-f'=0$ on $J$. This is a contradiction. Thus $\omega\equiv 0$.\\
\noindent
(v) $\alpha=-1$. By $(2.16)_1$, 
\begin{align*}
a\omega'=0.
\end{align*}
Suppose that $\omega'\nequiv 0$. Then there exists $J\subset (0,\pi)$ such that $\omega'\neq 0$ on $J$. Then $a=0$ and $f=0$ on $J$ by $(2.15)_2$ and $\omega=(1-\alpha)a-f'=0$. This is a contradiction. Thus $\omega'\equiv 0$. By the boundary condition (2.17), $\omega\equiv 0$. 
\end{proof}

\subsection{Pseudo-Beltrami solutions}
We assume that $\Pi\equiv \textrm{const.}$ in (2.9). Then $\alpha\Pi=0$ by $(-2\alpha)$-homogeneity of $\Pi$, and $(a,f,\rho)$ satisfies 
\begin{equation}
\begin{aligned}
a\omega&=(2\alpha+1)(\sin\phi) \rho,\\
f\omega&=(\sin\phi) \rho',\\
(1-\alpha)f+a'&=0,\quad \phi\in (0,\pi),\\
a(0)=a(\pi)&=0. 
\end{aligned}
\end{equation}
The conditions $(2.1)_3$ and (2.5) are expressed as 
\begin{align}
a\rho'=(2\alpha+1)f\rho,\quad   a\Omega'=(\alpha+2)f\Omega.  
\end{align}
The function $\rho$ satisfies 
\begin{align}
\rho(0)=\rho(\pi)=0. 
\end{align}
The function $\Omega$ is an even function under the $x_2$-symmetry (1.4) and generally does not vanish on the boundary. If the homogeneous $\Omega=\omega/x_2$ vanishes on $x_2=0$, the function defined on a semi-circle $\Omega=\Omega(\phi)$ satisfies the boundary condition
\begin{align}
\Omega(0)=\Omega(\pi)=0.
\end{align}


\begin{lem}
Let $-2\leq \alpha\leq 1$. Let $(u,p,\rho)$ be a ($-\alpha$)-homogeneous solution to (1.1) and (1.4) for $\Pi\equiv \textrm{const}$. Assume that \\
\noindent
(i) $(u,p,\rho)\in C^{1}(\mathbb{R}^{2}\backslash \{0\})$ for $-1/2\leq \alpha\leq 1$, and \\
(ii) $(u,p,\rho)\in C^{2}(\mathbb{R}^{2}\backslash \{0\})$ for $-2\leq \alpha< -1/2$.\\
\noindent
For $\alpha=-2$, assume in addition that $\nabla^{\perp}\cdot u/x_2$ vanishes on $\{x_2=0\}$. Then, $\omega\equiv 0$ and $\rho\equiv 0$.
\end{lem}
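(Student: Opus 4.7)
The plan is to adapt the pattern of the 2D Euler proof in Lemma 2.3 by combining the first-integral conditions in (2.19) with the divergence-free constraint $(1-\alpha)f+a'=0$ from $(2.18)_3$ to produce conserved quantities along the semicircle, and then to use the boundary conditions $(2.18)_4$, $(2.20)$, and the extra hypothesis $\Omega(0)=\Omega(\pi)=0$ at $\alpha=-2$ to force these constants to vanish. The case $\alpha=1$ is already covered by Lemma 2.1, so I would split the remaining range $-2\leq\alpha<1$ into four subcases according to whether the exponents in the resulting power-law first integrals degenerate: the generic ranges $-1/2<\alpha<1$ and $-2<\alpha<-1/2$, and the critical values $\alpha=-1/2$ and $\alpha=-2$.

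For $-1/2<\alpha<1$, eliminating $f$ via $f=-a'/(1-\alpha)$ in $a\rho'=(2\alpha+1)f\rho$ produces $(1-\alpha)a\rho'+(2\alpha+1)a'\rho=0$, and on any maximal open interval $J\subset(0,\pi)$ with $a\rho\neq 0$ this integrates to $|\rho|^{1-\alpha}|a|^{2\alpha+1}=C_J$. Since both exponents are strictly positive, continuity of $a$ and $\rho$ at the endpoints of $J$ forces $C_J=0$, so $a\rho\equiv 0$. A standard open-set argument combined with $(2.18)_1$ then gives $\rho\equiv 0$, whereupon $(2.18)_1$ and $(2.18)_2$ reduce to $a\omega=f\omega=0$. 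On the open set $V=\{\omega\neq 0\}$ one has $a\equiv f\equiv 0$, hence $a'\equiv f'\equiv 0$ in its interior, and $\omega=(1-\alpha)a-f'\equiv 0$ on $V$, a contradiction. The range $-2<\alpha<-1/2$ runs the analogous argument with $\Omega=\omega/x_2$ in place of $\rho$: the $C^2$ assumption yields $\omega\in C^1$ with $\omega(0)=\omega(\pi)=0$, so $\Omega\in C(\overline{[0,\pi]})$ by l'Hopital, and combining $a\Omega'=(\alpha+2)f\Omega$ with $(2.18)_3$ yields $|\Omega|^{1-\alpha}|a|^{\alpha+2}=C_J$ with both exponents positive, forcing $a\Omega\equiv 0$. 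Hence $a\omega\equiv 0$ on $(0,\pi)$, and $(2.18)_1$ with $2\alpha+1\neq 0$ gives $\rho\equiv 0$; the same $V$-argument closes the case.

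The critical values require a modified argument. At $\alpha=-1/2$, $(2.18)_1$ degenerates directly to $a\omega=0$. From $(2.18)_3$ one has $f=-(2/3)a'$, hence $a\in C^2$ and $\omega=(3/2)a+(2/3)a''$. On the open set $V=\{\omega\neq 0\}$ one has $a\equiv 0$, so $a'\equiv a''\equiv 0$ in the interior of $V$, giving $\omega\equiv 0$ on $V$, a contradiction; hence $\omega\equiv 0$. Then $(2.18)_2$ yields $\sin\phi\cdot\rho'\equiv 0$, so $\rho$ is constant, and the boundary condition $\rho(0)=0$ from $(2.20)$ gives $\rho\equiv 0$. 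At $\alpha=-2$ the first integral reduces to $a\Omega'=0$, so $\Omega$ is constant on each connected component of $\{a\neq 0\}$; on any open subinterval of $\{a=0\}$, $a'\equiv 0\Rightarrow f\equiv 0\Rightarrow\omega\equiv 0$, which forces $\Omega\equiv 0$ there. Hence any component of $\{\Omega\neq 0\}$ contains no open subinterval of $\{a=0\}$ and so consists of $\{a\neq 0\}$-components glued continuously at isolated zeros of $a$; continuity of $\Omega$ then forces it to be a single nonzero constant across such a component, in contradiction with the assumed boundary values $\Omega(0)=\Omega(\pi)=0$ and the maximality of the component. Thus $\Omega\equiv\omega\equiv 0$, and $(2.18)_1$ with $2\alpha+1=-3\neq 0$ gives $\rho\equiv 0$.

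The main obstacle is the critical case $\alpha=-2$, where the power-law conservation becomes trivial and one must run a careful connectedness analysis on the zero sets of $a$ and $\Omega$ in order to exploit the additional hypothesis $\nabla^{\perp}\cdot u/x_2=0$ on $\{x_2=0\}$. The case $\alpha=-1/2$ is softer but still requires passing from the first-order conservation law to the second-order ODE identity $\omega=(3/2)a+(2/3)a''$ to close, rather than invoking the power-law integral directly.
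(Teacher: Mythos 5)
Your overall strategy coincides with the paper's: the same five-way case split ($\alpha=1$ via Lemma 2.1, the two generic ranges, and the two critical values), the same combination of $(2.18)_3$ with the first integrals $(2.19)$ to produce the power-law conserved quantities $|\rho|^{1-\alpha}|a|^{2\alpha+1}$ and $|\Omega|^{1-\alpha}|a|^{\alpha+2}$, and the same use of $(2.18)_4$, $(2.20)$, and $(2.21)$ to force the constants to vanish. The generic ranges are handled correctly, and at $\alpha=-1/2$ your variant (prove $\omega\equiv 0$ first from the degenerate $(2.18)_1$ via the identity $\omega=(3/2)a+(2/3)a''$, then get $\rho\equiv 0$ from $(2.18)_2$ and $(2.20)$) works just as well as the paper's route, which instead derives $\rho'\equiv 0$ from the degenerate first integral $(2.19)_1$ before touching $\omega$.

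The one genuine gap is at $\alpha=-2$. You assert that a component $W$ of $\{\Omega\neq 0\}$ consists of $\{a\neq 0\}$-components ``glued continuously at isolated zeros of $a$,'' and that continuity of $\Omega$ then forces a single constant across $W$. But you only established that $\{a=0\}\cap W$ has empty interior; nothing prevents it from being a Cantor-like set, and a continuous function that is locally constant on a dense open set need not be constant (the Cantor function is the standard counterexample). So the gluing step as written fails. It is repairable under the stated $C^{2}$ hypothesis: $\Omega'$ is continuous on $(0,\pi)$ and vanishes on the dense open set $\{a\neq 0\}\cap W$, hence $\Omega'\equiv 0$ on $W$ and $\Omega$ is genuinely constant there. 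The paper sidesteps the issue entirely by arguing on $\{\Omega'\neq 0\}$ rather than $\{\Omega\neq 0\}$: on any open interval where $\Omega'\neq 0$ one has $a\equiv 0$ from $a\Omega'=0$, hence $f\equiv 0$ by $(2.18)_3$, hence $\omega=(1-\alpha)a-f'\equiv 0$ and $\Omega\equiv 0$ there, contradicting $\Omega'\neq 0$; thus $\Omega'\equiv 0$ globally, and $(2.21)$ together with $(2.18)_1$ gives $\omega\equiv 0$ and $\rho\equiv 0$. You should adopt this (or the $\Omega'$-continuity repair) in place of the isolated-zeros claim.
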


\begin{proof}
We consider the following five cases (i)-(v) for $-2\leq \alpha\leq 1$.\\
\noindent
(i) $\alpha=1$. The result $\omega\equiv 0$ and $\rho\equiv 0$ follows from Lemma 2.1.\\
\noindent
(ii) $-1/2<\alpha<1$. By $(2.18)_3$ and $(2.19)_1$, 
\begin{align*}
(1-\alpha)a\rho'+(2\alpha+1)a'\rho=0.
\end{align*}
Suppose that $a\rho\nequiv  0$. Then, there exists an open interval $J\subset (0,\pi)$ such that $a\rho\neq  0$ on $J$. By dividing the above equation by $a\rho$ and integrating it,
\begin{align*}
|\rho|^{1-\alpha}|a|^{2\alpha+1}=C_0,
\end{align*}
for some constant $C_0\neq 0$. The conditions $-1/2<\alpha<1$ and $(2.18)_4$ imply that $C_0=0$. This is a contradiction. Thus $a\rho\equiv 0$.

Suppose that $\rho\nequiv 0$. Then, there exists $J\subset (0,\pi)$ such that $\rho\neq 0$ on $J$. Then, $a=0$ and $\rho=0$ by $(2.18)_1$. This is a contradiction. Thus $\rho\equiv 0$. 

By $(2.18)_1$ and $(2.18)_2$, $a\omega\equiv 0$ and $f\omega\equiv 0$. If $\omega\neq 0$ for some $J\subset (0,\pi)$, $a=0$ and 

\begin{align*}
(f^{2})'=2ff'=-2f \left( (1-\alpha)a-f' \right)=-2f\omega=0.
\end{align*}
Thus $\omega=-f'=0$. This is a contradiction. Thus $\omega\equiv 0$ and $\rho\equiv 0$.\\
\noindent
(iii) $\alpha=-1/2$. By $(2.19)_1$, 
\begin{align*}
a\rho'=0.
\end{align*}
Suppose that $\rho'\nequiv 0$. Then there exists $J\subset (0,\pi)$ such that $\rho'\neq 0$ on $J$. The equations $a\rho'\equiv 0$, $(2.18)_3$ and $(2.18)_2$ imply that $a=0$, $f=0$, and $\rho'=0$. This is a contradiction. Thus $\rho'\equiv 0$.

By the boundary condition (2.20), $\rho\equiv 0$. Then $a\omega=0$ and $f\omega=0$ by $(2.18)_1$ and $(2.18)_2$. Suppose that $\omega\nequiv 0$. Then, there exists $J\subset (0,\pi)$ such that $\omega\neq 0$ on $J$. By $a=f=0$ on $J$, $\omega=(1-\alpha)a-f'=0$ on $J$. This is a contradiction. Thus $\omega\equiv 0$ and $\rho\equiv 0$.\\
\noindent
(iv) $-2<\alpha<-1/2$. By $(2.18)_3$ and (2.19$)_2$, 
\begin{align*}
(1-\alpha)a\Omega'+(\alpha+2)a'\Omega=0.
\end{align*}
Suppose that $a\Omega\nequiv 0$. Then, there exists an open interval $J\subset (0,\pi)$ such that $a\Omega\neq 0$ on $J$. By dividing the above equation by $a\Pi$ and integrating it,
\begin{align*}
|\Omega|^{1-\alpha} |a|^{\alpha+2}=C_0,
\end{align*}
for some constant $C_0\neq 0$. The conditions $-2<\alpha<-1/2$ and $(2.18)_4$ imply that $C_0=0$. This is a contradiction. Thus $a\Omega\equiv 0$.

By $(2.18)_1$ and $(2.18)_2$, $\rho\equiv 0$, $a\omega\equiv 0$ and $f\omega\equiv 0$. Then $\omega\equiv 0$ and $\rho\equiv 0$ in the same way as in (iv).\\
\noindent
(v) $\alpha=-2$. By $(2.19)_2$, 
\begin{align*}
a\Omega'=0.
\end{align*}
Suppose that $\Omega'\nequiv 0$. Then, there exists $J\subset (0,\pi)$ such that $\Omega'\neq 0$ on $J$. By $(2.18)_3$, $a=0$ and $f=0$ on $J$ and $\Omega=\omega/\sin\phi=((1-\alpha)a-f')/\sin\phi=0$ on $J$. This is a contradiction. Thus $\Omega'\equiv 0$. 

By the assumption (2.21) and the equation $(2.18)_1$, $\omega=\sin\phi\Omega\equiv 0$ and $\rho\equiv 0$.
\end{proof}

\subsection{Rotational solutions}
We consider general solutions to the system (2.9) with the first integral conditions (2.10).

\begin{lem}
Let $-1/2\leq \alpha\leq 1$. Let $(u,p,\rho)\in C^{1}(\mathbb{R}^{2}\backslash \{0\})$ be a ($-\alpha$)-homogeneous solution to (1.1) and (1.4). Assume in addition that $(u,p)\in C^{2}(\mathbb{R}^{2}\backslash \{0\})$ for $-1/2\leq \alpha<0$. Then, $\omega\equiv 0$ and $\rho\equiv 0$.
\end{lem}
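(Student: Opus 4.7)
The plan is to reduce this general case to the two previously handled scenarios: the case $\alpha=1$ from Lemma 2.1 and the 2D Euler case $\rho\equiv 0$ from Lemma 2.3. The key step is to first prove $\rho\equiv 0$ by exploiting only the divergence-free relation $(2.9)_3$ and the density first integral $(2.10)_1$, together with the boundary condition $a(0)=a(\pi)=0$. Once $\rho\equiv 0$ is established, the system collapses to the Euler regime and Lemma 2.3 produces $\omega\equiv 0$ in the stated regularity classes.

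To deduce $\rho\equiv 0$ in the main range $-1/2<\alpha<1$, I would eliminate $f$ from $(2.10)_1$ using $f=-a'/(1-\alpha)$ to obtain the single identity
\begin{equation*}
(1-\alpha)a\rho'+(2\alpha+1)a'\rho=0.
\end{equation*}
On any maximal open interval $J\subset (0,\pi)$ on which $a\rho\neq 0$, division and integration give $|a|^{2\alpha+1}|\rho|^{1-\alpha}=C$ for some constant on $J$. Since $2\alpha+1>0$ and $1-\alpha>0$, the left-hand side extends continuously to $[0,\pi]$ and vanishes at the endpoints of $J$ (either by maximality of $J$ inside $(0,\pi)$, or by the Dirichlet trace $a(0)=a(\pi)=0$ at $\phi\in\{0,\pi\}$), forcing $C=0$, a contradiction. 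Hence $a\rho\equiv 0$. If $\rho\not\equiv 0$ on some sub-interval $J$, then $a\equiv 0$, so $a'=0$, and $(2.9)_3$ gives $f\equiv 0$ and $\omega\equiv 0$ on $J$. Substituting back into $(2.9)_1$--$(2.9)_2$ yields, for $\alpha\neq 0$, the ODE $\rho'/\rho=-(2\alpha+1)\cot\phi$, whose integral $\rho(\sin\phi)^{2\alpha+1}=C'$ again forces $C'=0$ by the same continuity argument at a boundary point of $J$ (using $2\alpha+1>0$ and $\rho(0)=\rho(\pi)=0$). For $\alpha=0$, $(2.9)_1$ directly reduces to $\sin\phi\,\rho=0$ on $J$, contradicting $\rho\neq 0$ there.

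The two endpoint values require separate treatment, which is the main technical obstacle. At $\alpha=-1/2$ the coefficient $2\alpha+1$ vanishes and the identity degenerates to $a\rho'=0$; here I would apply the same dichotomy, observing that $\rho'\neq 0$ on an interval forces $a\equiv 0$ hence $f\equiv 0$ and $\omega\equiv 0$, after which $(2.9)_1$--$(2.9)_2$ give $\Pi\equiv 0$ and $\sin\phi\,\rho'=0$, a contradiction; thus $\rho'\equiv 0$ and the boundary condition $\rho(0)=0$ from $(2.8)$ yields $\rho\equiv 0$. At $\alpha=1$ the coefficient $1-\alpha$ vanishes, and Lemma 2.1 handles this case directly, asserting that the only admissible solutions are the irrotational ones with $\rho\equiv 0$. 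With $\rho\equiv 0$ established throughout $-1/2\leq\alpha\leq 1$, Lemma 2.3 (whose regularity hypotheses $C^{1}$ for $0\leq\alpha\leq 1$ and $C^{2}$ for $-1\leq\alpha<0$ match precisely the hypotheses imposed here) immediately delivers $\omega\equiv 0$, completing the proof.
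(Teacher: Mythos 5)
Your proposal is correct and follows essentially the same route as the paper: derive $(1-\alpha)a\rho'+(2\alpha+1)a'\rho=0$ from $(2.9)_3$ and $(2.10)_1$, integrate to conclude $a\rho\equiv 0$ and then $\rho\equiv 0$ for $-1/2<\alpha<1$, treat $\alpha=-1/2$ via the degenerate identity $a\rho'=0$ plus the trace condition and $\alpha=1$ via Lemma 2.1, and finally invoke Lemma 2.3 to get $\omega\equiv 0$. The only cosmetic difference is your separate handling of $\alpha=0$ through $(2.9)_1$, which the paper subsumes into the integrated ODE $\rho(\sin\phi)^{2\alpha+1}=C_0$; both are valid.
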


\begin{proof}
(i) $\alpha=1$. The result $\omega\equiv 0$ and $\rho\equiv 0$ follows from Lemma 2.1.\\
\noindent
(ii) $-1/2<\alpha<1$. By $(2.9)_3$ and $(2.10)_1$,  
\begin{align*}
(1-\alpha)a\rho'+(2\alpha+1)a'\rho=0.
\end{align*}
Suppose that $a \rho\nequiv 0$. Then, there exists an open interval $J\subset (0,\pi)$ such that $a \rho\neq 0$ on $J$. By dividing the above equation and integrating it, 
\begin{align*}
|\rho|^{1-\alpha}|a|^{2\alpha+1}=C_0, 
\end{align*}
for some constant $C_0\neq 0$. The conditions $-1/2<\alpha<1$ and $(2.9)_4$ imply that $C_0=0$. This is a contradiction. Thus $a \rho\equiv 0$. 

Suppose that $\rho\nequiv 0$. Then, there exists an open interval $J\subset (0,\pi)$ such that $\rho\neq 0$ on $J$. By $(2.9)_3$, $a=0$ and $f=0$ and $\omega=(1-\alpha)-f'=0$ on $J$. Thus, $(\Pi,\rho)$ satisfies the equations (2.12) on $J$. By integrating an equation for $\rho$, 
\begin{align*}
\rho (\sin\phi)^{2\alpha+1}=C_0,
\end{align*}
for some constant $C_0\neq 0$. We may assume that $J=(0,\pi)$. The condition $-1/2<\alpha<1$ implies that $C_0=0$ and $\rho=0$. This is a contradiction. Thus, $\rho\equiv 0$ and $\omega\equiv 0$ by Lemma 2.3 (i) and (ii).\\
\noindent
(iii) $\alpha=-1/2$. By $(2.10)_1$, 
\begin{align*}
a\rho' =0.
\end{align*}
Suppose that $\rho'\nequiv 0$. Then there exists $J\subset (0,\pi)$ such that $\rho'\neq 0$. By $(2.9)_3$, $a=0$ and $f=0$ and $\omega=(1-\alpha)-f'= 0$. By $(2.9)_1$ and $(2.9)_2$, $\Pi=0$ and $\rho'=0$. This is a contradiction. Thus $\rho'\equiv 0$.

By the boundary condition $(2.8)_2$ and Lemma 2.3 (ii), $\rho\equiv 0$ and $\omega\equiv 0$.
\end{proof}

\begin{proof}[Proof of (i) and (ii) of Theorems 1.2, 1.5, and 1.7]
The results follow from Lemmas 2.3, 2.4, and 2.5.
\end{proof}


\section{The existence of homogeneous solutions with regular profiles}

We construct solutions to the homogeneous Dubreil-Jacotin--Long equation (1.20) for constants $(\beta,c_1,c_2)$ satisfying the conditions (1.22) based on three minimax theorems outlined in subsection 3.2. The ($-\alpha$)-homogeneous solutions with regular profiles in (iii) and (iv) of Theorems 1.2, 1.5, and 1.7 are obtained by setting the constants $(\alpha, C_1, C_2)$ by (1.21) and the functions $(\psi,\Pi,\rho)$ by (1.18) and (1.19). 

\subsection{The homogeneous Dubreil-Jacotin--Long equation with positive nonlinear powers}

We first derive the Dubreil-Jacotin--Long equation for stream functions in a half-plane. For symmetric solenoidal vector fields (1.4), there exist stream functions such that
\begin{align*}
\psi(x_1,x_2)=-\psi(x_1,-x_2).
\end{align*}
By the momentum equation $(2.1)_1$, the Bernoulli function $\Pi$ and the density $\rho$ are the first integrals of the velocity field $u=\nabla^{\perp}\psi$ and local functions of $\psi$. We assume that they are global functions of $\psi$ in the half-plane, i.e. $\Pi=\Pi(\psi)$, and $\rho=\rho(\psi)$. Substituting them into $(2.1)_1$ implies that the stream function $\psi$ is a solution to the Dubreil-Jacotin--Long equation in a half-plane \cite{Dub}, \cite{Long53}, cf. \cite{Yih58}, \cite[Chapter 3]{Yih65}, \cite{CLV}, \cite{CDG21b}:
\begin{equation}
\begin{aligned}
-\Delta \psi&=-\Pi'(\psi)+x_2 \rho'(\psi),\quad x\in \mathbb{R}^{2}_{+},\\
\psi(x_1,0)&=0,\quad x_1\in \mathbb{R}.
\end{aligned}
\end{equation}
Solutions of (3.1) for given functions $\Pi(\psi)$ and $\rho(\psi)$ provide steady stratified flows (2.1).

We construct ($-\alpha$)-homogeneous solutions by choosing homogeneous functions $\Pi$ and $\rho$. We assume that the stream function $\psi$ is $(-\alpha+1)$-homogeneous and expressed as 
\begin{align}
\psi=\frac{w(\phi)}{r^{\alpha-1}},
\end{align}
with some functions $w(\phi)$ and the polar coordinates $(r,\phi)$ in $\mathbb{R}^{2}_{+}$. We choose the functions $\Pi$ and $\rho$ by 
\begin{equation}
\begin{aligned}
\Pi(\psi)&=C_1|\psi|^{2+\frac{2}{\alpha-1}}=C_1\frac{1}{r^{2\alpha}}|w|^{2+\frac{2}{\alpha-1}},   \\
\rho(\psi)&=C_2\psi |\psi|^{1+\frac{3}{\alpha-1}}=C_2\frac{1}{r^{2\alpha+1}}w|w|^{1+\frac{3}{\alpha-1}},
\end{aligned}
\end{equation}
with constants $C_1$ and $C_2$ so that $\Pi$ is an even function for the $x_2$-variable and ($-2\alpha$)-homogeneous and $\rho$ is an odd function for the $x_2$-variable and ($-2\alpha+1$)-homogeneous. By using the polar coordinates,
\begin{align*}
\Delta \psi &=\left(\partial_r^{2}+\frac{1}{r}\partial_r+\frac{1}{r^{2}}\partial_{\phi}^{2} \right)\left(\frac{w(\phi)}{r^{\alpha-1}}\right)
=\frac{1}{r^{\alpha+1}}\left( (\alpha-1)^{2}w+w''  \right),\\
\Pi'(\psi)&=C_1\left(\frac{2\alpha }{\alpha-1}\right) \frac{1}{r^{\alpha+1}}w|w|^{\frac{2}{\alpha-1}},\\
\rho'(\psi)&=C_2\left(\frac{2\alpha+1 }{\alpha-1}\right)\frac{1}{r^{\alpha+2}}|w|^{1+\frac{3}{\alpha-1}}.
\end{align*}
Substituting them into the equation (3.1) implies that $w$ is a solution to the one-dimensional semi-linear Dirichlet problem:  
\begin{equation}
\begin{aligned}
-w''&=\beta^{2}w+c_1 w|w|^{\frac{2}{\beta}}+c_2 \sin{\phi} |w|^{1+\frac{3}{\beta}},\quad \phi\in (0,\pi),\\
w(0)&=w(\pi)=0,
\end{aligned}
\end{equation}
for the constants 
\begin{align}
\label{betacchoices}
\beta=\alpha-1,\quad c_1=-2C_1 \left(1+\frac{1}{\beta}\right),\quad c_2=2C_2 \left(1+\frac{3}{2\beta}\right).
\end{align}
We assume that the constants ($\beta$, $c_1$, $c_2$) satisfy 
\begin{equation}
\begin{aligned}
&\beta\in \mathbb{R}\backslash [-2,0],\quad c_1,c_2\geq 0,\quad c_1\neq 0\ \textrm{or}\ c_2\neq 0,\\
& c_2=0\quad \textrm{for}\ -3\leq \beta <-2.
\end{aligned}
\end{equation}

\begin{thm}
Let ($\beta$, $c_1$, $c_2$) satisfy (3.6). There exists a solution $w\in C^{2}[0,\pi]\cap C_0[0,\pi]$ to (3.4). For $\beta>0$, $w\in C^{3}[0,\pi]$. For $0<\beta<1$, there exists a positive solution to (3.4)
which is even symmetric with respect to $\phi=\pi/2$ and increasing in $(0,\pi/2)$. 
\end{thm}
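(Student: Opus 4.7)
The plan is to construct solutions of (3.4) as critical points of the functional $I$ in (1.23) on $H^{1}_{0}(0,\pi)$, applying three different minimax theorems stratified by the sign of the principal eigenvalue $\mu_1 = 1 - \beta^2$ of the operator $L_\beta = -\partial_\phi^2 - \beta^2$ with Dirichlet data. Since $H^{1}_{0}(0,\pi) \hookrightarrow C[0,\pi]$, polynomial nonlinearities of any fixed growth define a $C^1$ functional; condition (3.6) moreover ensures that both coefficients $\frac{c_1\beta}{2(\beta+1)}$ and $\frac{c_2\beta}{2\beta+3}$ appearing in (1.23) are non-negative, so that the nonlinear part enters $I$ with a minus sign. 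The exponents $p_1 = 2 + 2/\beta$ and $p_2 = 2 + 3/\beta$ both exceed $2$ when $\beta > 0$ (superlinear regime) and both lie in $(0,2)$ when $\beta < -2$ (sublinear regime); this dichotomy dictates the choice of minimax method.

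For $0 < \beta < 1$ the Friedrichs inequality $\int |w|^2 \leq \int |w'|^2$ yields $\int (|w'|^2 - \beta^2 |w|^2)\,\dd\phi \geq (1-\beta^2)\int |w'|^2\,\dd\phi$, so the quadratic part of $I$ is equivalent to $\|w\|_{H^{1}_{0}}^2$. Combined with superlinear growth this produces the mountain-pass geometry $I(0) = 0$, $I \geq \delta > 0$ on a small sphere, and $I(te) \to -\infty$ along any fixed positive $e \in H^{1}_{0}$; the Ambrosetti--Rabinowitz identity $p_i F_i(w) \leq w f_i(w)$ then supplies the Palais--Smale condition. To obtain a positive solution that is even about $\phi = \pi/2$ and increasing on $(0,\pi/2)$, I would first restrict to the subspace of functions symmetric about $\pi/2$ (invariant because $\sin\phi = \sin(\pi-\phi)$) and replace the nonlinearities by their positive-part truncations; the resulting mountain-pass critical point is non-negative by testing $I'(w)$ against $w_-$, strictly positive in $(0,\pi)$ by the strong maximum principle, and monotone on $(0,\pi/2)$ by a reflection argument.

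For $\beta \geq 1$ and for $\beta < -2$ the quadratic form $\int(|w'|^2 - \beta^2|w|^2)\,\dd\phi$ is indefinite, so I decompose $H^{1}_{0} = V \oplus W$ with $V = \mathrm{span}\{\sin(k\phi) : k^2 \leq \beta^2\}$ finite-dimensional and $W$ its $L^2$-orthogonal complement; on $V$ the quadratic part of $I$ is non-positive, while on $W$ it is equivalent to $\|w\|_{H^{1}_{0}}^2$ via the spectral gap. For $\beta \geq 1$ I apply Rabinowitz's linking theorem with the sphere $S_\rho \cap W$ (on which $I \geq \delta > 0$ by superlinearity plus coercivity on $W$) and the cylinder $Q = (\overline{B_R} \cap V) \oplus [0,R]e$ for a fixed unit $e \in W$, on whose boundary $I \leq 0$ once $R$ is large (the superlinear negative terms dominate on $V \oplus \mathbb{R} e$). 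For $\beta < -2$ I apply Rabinowitz's saddle-point theorem: $I$ is bounded below on $W$ since the coercive quadratic part dominates the sublinear nonlinear part there, while $I \to -\infty$ on the finite-dimensional $V$ because its quadratic form is negative definite and beats the sublinear subtraction (recalling that the restriction $c_2 = 0$ for $-3 \leq \beta < -2$ in (3.6) keeps the second primitive well defined).

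The main obstacle is the Palais--Smale condition in the indefinite regimes. For $\beta \geq 1$ I would verify it by the standard superlinear argument: combining $I(w_n) = c + o(1)$ with $\langle I'(w_n), w_n \rangle = o(\|w_n\|)$ and using the Ambrosetti--Rabinowitz inequality produces a bound on $\|w_n\|_{L^{p_1}} + \|w_n\|_{L^{p_2}}$, and then coercivity on $W$, finite-dimensionality of $V$, and the one-dimensional compact embedding $H^{1}_{0} \hookrightarrow L^p$ upgrade this to $H^{1}_{0}$-boundedness and a strongly convergent subsequence. For $\beta < -2$, boundedness of PS sequences is direct from $I(w_n) = c + o(1)$ together with the subquadratic bound on the nonlinearity and coercivity on $W$. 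Finally, elliptic bootstrap applied to $-w'' = \beta^2 w + c_1 w|w|^{2/\beta} + c_2 \sin\phi |w|^{1+3/\beta}$ promotes any $H^{1}_{0}$ critical point to $C^2[0,\pi] \cap C_0[0,\pi]$, and to $C^3[0,\pi]$ when $\beta > 0$, since both nonlinearities are then $C^1$ in $w$ (their derivatives $(1+2/\beta)|w|^{2/\beta}$ and $(1+3/\beta)w|w|^{3/\beta-1}$ are continuous at $w = 0$ when $\beta > 0$).
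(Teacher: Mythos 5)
Your proposal follows essentially the same route as the paper: the same functional on $H^{1}_{0}(0,\pi)$, the same stratification into mountain pass ($0<\beta<1$), linking ($\beta\geq 1$), and saddle point ($\beta<-2$) governed by the sign of $\mu_1$ and the super/sublinearity of the powers, the same spectral decomposition into a finite-dimensional negative part and a coercive complement, the same positive-part truncation plus strong maximum principle for positivity, and elliptic bootstrap for regularity. The only deviations are cosmetic (a direct Poincar\'e argument for coercivity and a direct Ambrosetti--Rabinowitz bound for Palais--Smale boundedness, where the paper uses compactness/contradiction arguments, and a symmetric-subspace restriction plus reflection where the paper invokes the Gidas--Ni--Nirenberg symmetry theorem a posteriori), and these all work.
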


\begin{rem}[Non-existence of positive solutions for $1\leq |\beta|$]
No positive solutions exist to (3.4) for $1\leq |\beta|$. The principal eigenvalue and the associated eigenfunction of the operator $-L_{\beta}=-\partial_{\phi}^{2}-\beta^{2}$ are $1-\beta^{2}$ and $e_{1}=\sqrt{2/\pi}\sin \phi$. By multiplying $e_1$ by (3.4),
\begin{align*}
(1-\beta^{2})\int_{0}^{\pi}we_1\dd \phi=\int_{0}^{\pi}\left(c_1 w|w|^{\frac{2}{\beta}}+c_2 \sin{\phi} |w|^{1+\frac{3}{\beta}}\right)e_1\dd \phi.
\end{align*}
Thus, $|\beta|<1$ for positive $w$. 
\end{rem}

\begin{rem}[Green function]
The Green function of the operator $-\partial_{\phi}^{2}$ in $(0,\pi)$ is the function
\begin{align*}
G(\phi,\phi')=\frac{1}{2}(\phi+\phi')-\frac{1}{\pi}\phi\phi'-\frac{1}{2}|\phi-\phi'|.
\end{align*}
Namely, $G(\phi,\phi')$ is a distributional solution to 
\begin{align*}
-\partial_{\phi}^{2}G(\phi,\phi')&=\delta_{\phi'}(\phi),\quad \phi\in (0,\pi),\\
G(0,\phi')&=G(\pi,\phi')=0,
\end{align*}
where the delta function $\delta_{\phi'}(\phi)$ is supported at $\phi'\in (0,\pi)$. The function $\psi_{\textrm{sh}}=G(\phi,\pi/2)=\pi/4-|\phi-\pi/2|/2$ is the stream function of the ($-1$)-homogeneous vortex sheet solution (1.9),
\begin{align*}
u_{\textrm{sh}}=\nabla^{\perp}\psi_{\textrm{sh}}
=\left(e_{\phi}\partial_r-\frac{1}{r}e_{r}\partial_{\phi}\right)G\left(\phi,\frac{\pi}{2}\right)=
\begin{cases}
\ -\displaystyle\frac{1}{2r}e_{r}\quad &0<\phi<\displaystyle\frac{\pi}{2},\\
\ \displaystyle\frac{1}{2r}e_{r}\quad &\displaystyle\frac{\pi}{2}<\phi<\pi.
\end{cases}
\end{align*}
\end{rem}

\subsection{The variational approach}

We outline the variational approach to construct solutions to (3.4). We set the operator $L_{\beta}$, the domain $D$, and the function $g(\phi,w)$ by
\begin{equation}
\begin{aligned}
L_{\beta}&=\partial_{\phi}^{2}+\beta^{2},\\
D&=(0,\pi),\\
g(\phi,w)&=c_1 w|w|^{\frac{2}{\beta}}+c_2 \sin{\phi} |w|^{1+\frac{3}{\beta}},
\end{aligned}
\end{equation}
and recast the problem (3.4) as 
\begin{equation}
\begin{aligned}
-L_{\beta}w&=g(\phi,w)\quad \textrm{in}\ D,\\
w&=0\quad \textrm{on}\ \partial D.
\end{aligned}
\end{equation}

\begin{prop}
The eigenvalues of the operator $-L_{\beta}=-\partial_{\phi}^{2}-\beta^{2}$ on $D=(0,\pi)$ are $\mu_k=k^{2}-\beta^{2}$ for $k\in \mathbb{N}$ and the associated eigenfunctions $e_k=\sqrt{2/\pi}\sin{(k\phi)}$ are orthonormal basis on $L^{2}(D)$. 
\end{prop}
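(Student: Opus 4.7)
The plan is to solve the constant-coefficient eigenvalue problem
\begin{equation*}
-\partial_\phi^2 v - \beta^2 v = \mu v, \quad \phi \in (0,\pi), \qquad v(0) = v(\pi) = 0,
\end{equation*}
directly by elementary ODE methods, and then invoke standard Sturm--Liouville / Fourier sine series theory to obtain orthonormality and completeness in $L^2(D)$.

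First I would rewrite the eigenvalue equation as $v'' + (\beta^2 + \mu) v = 0$. For a nontrivial solution satisfying $v(0) = 0$ we must have $\beta^2 + \mu > 0$ (the cases $\beta^2 + \mu \leq 0$ are easily ruled out, since exponentials and linear functions cannot simultaneously vanish at both endpoints), so the general solution compatible with $v(0) = 0$ is $v(\phi) = A \sin(\sqrt{\beta^2 + \mu}\,\phi)$. Imposing $v(\pi) = 0$ with $A \neq 0$ forces $\sqrt{\beta^2 + \mu}\,\pi = k\pi$ for some $k \in \mathbb{N}$, giving precisely $\mu_k = k^2 - \beta^2$ and the associated (unnormalized) eigenfunction $\sin(k\phi)$.

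Next I would verify that the normalization constant is $\sqrt{2/\pi}$ by computing $\int_0^\pi \sin^2(k\phi)\,d\phi = \pi/2$, so that $e_k = \sqrt{2/\pi}\sin(k\phi)$ has unit $L^2$-norm. Orthogonality $\langle e_k, e_j\rangle_{L^2(D)} = \delta_{kj}$ for $k \neq j$ follows from the classical product-to-sum identity for $\sin(k\phi)\sin(j\phi)$, or equivalently from the self-adjointness of $-L_\beta$ under Dirichlet boundary conditions combined with the simplicity of the eigenvalues.

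Finally, for completeness, I would appeal to the fact that $\{\sqrt{2/\pi}\sin(k\phi)\}_{k \in \mathbb{N}}$ is the standard Fourier sine basis of $L^2(0,\pi)$; alternatively, one can observe that $-\partial_\phi^2$ with Dirichlet boundary conditions on $(0,\pi)$ is a positive self-adjoint operator with compact resolvent, hence its eigenfunctions form an orthonormal basis, and the perturbation by the bounded multiplication operator $-\beta^2 \mathrm{Id}$ merely shifts all eigenvalues by $-\beta^2$ without affecting the eigenfunctions. No step here is genuinely difficult; the proof is essentially a bookkeeping exercise, so I would keep the write-up short.
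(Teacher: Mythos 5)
Your proof is correct: the paper states this proposition without any proof, treating it as a standard fact about the Dirichlet problem for $-\partial_\phi^2$ on $(0,\pi)$ shifted by the constant $-\beta^2$. Your elementary ODE analysis together with the appeal to the Fourier sine basis (or, equivalently, to the spectral theorem for a self-adjoint operator with compact resolvent) is exactly the standard justification, so there is nothing to add or compare.
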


We prepare the bilinear form estimates associated with the operator $-L_{\beta}$ and apply minimax theorems to the associated functional to the problem (3.8) as sketched in the following subsubsections.

\subsubsection{Bilinear form estimates }
The bilinear form associated with the operator $-L_{\beta}$ is of the form
\begin{align}
B(w,\eta)=\int_{D}(w'\eta'-\beta^{2}w\eta)\dd \phi.
\end{align}
We will show that this bilinear form is coercive on $H^{1}_{0}(D)$ for $0<\beta<1$. For $\beta\in \mathbb{R}\backslash [-2,1)$, we take a positive integer $K\geq 1$ such that 
\begin{align}
\mu_1<\mu_2<\cdots<\mu_K=K^{2}-\beta^{2}\leq 0<\mu_{K+1}<\cdots.
\end{align}
We will show that the following direct sum decomposition holds
\begin{equation}
\begin{aligned}
&H^{1}_{0}(D)=Y\oplus Z,\\
&Y=\textrm{span}\ (e_1,e_2,\cdots,e_K),\\
&Z=\left\{z\in H^{1}_{0}(D)\ \middle|\ (z,y)_{L^{2}}=0,\ y\in Y\ \right\},
\end{aligned}
\end{equation}
and that the bilinear form $B(\cdot,\cdot)$ is coercive on $Z$. We will prepare the necessary bilinear form estimates in Lemmas 3.10 and 3.11.

\subsubsection{Regularity of critical points}
The functional associated with the problem (3.8) is as follows:
\begin{equation}
\begin{aligned}
I[w]&=\frac{1}{2}\int_{D} \left(|w'|^{2}-\beta^{2}|w|^{2}\right) \dd \phi-\int_{D}G(\phi,w)\dd \phi,\\
G(\phi,w)&=\int_{0}^{w}g(\phi,s)\dd s=\frac{c_1\beta}{2(\beta+1)}|w|^{2+\frac{2}{\beta}}+\frac{c_2 \beta}{2\beta+3}(\sin\phi) w|w|^{1+\frac{3}{\beta}}.
\end{aligned}
\end{equation}
We will show that the functional $I$ is Fr\'echet differentiable and construct classical solutions to (3.8) by seeking a critical point $w\in H^{1}_{0}(D)$ of $I$ in the sense that 
\begin{align}
\langle I'[w],\eta\rangle=0,\quad \eta\in H^{1}_{0}(D).
\end{align}

\subsubsection{The three minimax theorems}
We find critical points of the functional $I$ by applying the following three minimax theorems according to the value of $\beta\in \mathbb{R}\backslash [-2,0]$:\\
 
\noindent
(i) $0<\beta<1$: Mountain pass theorem,\\
(ii) $1\leq \beta$: Linking theorem,\\
(iii) $\beta<-2$: Saddle point theorem. \\

The three divisions are due to the sign of the principal eigenvalue $\mu_1$ of the linear operator $-L_{\beta}$ and the nonlinear powers in $g(\phi,w)$: the superlinear case $\beta> 0$ and the sublinear case $\beta<-2$.

\subsubsection{The Palais--Smale condition}
The main steps in applying those minimax theorems are the Palais--Smale condition and functional inequalities on subsets. A Palais--Smale sequence $\{w\}\subset H^{1}_{0}(D)$ at level $c\in \mathbb{R}$ is a sequence satisfying the conditions
\begin{align*}
I[w_n]&\to c,\qquad  \\
I'[w_n]&\to 0\quad \textrm{in}\ H^{-1}(D)=H^{1}_{0}(D)^{*}.
\end{align*}
The functional $I$ satisfies the (PS$)_c$ condition if any Palais-Smale sequence at level $c\in \mathbb{R}$ has a convergent subsequence in $H^{1}_{0}(D)$. We will show that any Palais--Smale sequence at level $c$ is a bounded sequence both for the superlinear case $\beta\!>\!0$ and the sublinear case $\beta\!<\!-2$ and has a convergent subsequence in $H^{1}_{0}(D)$ (Lemma 3.16).

\subsubsection{Functional inequalities on subsets}
We will show that the functional $I$ satisfies the inequality of the form 
\begin{align}
\inf_{N}I >\max_{M_0} I,
\end{align}
by choosing suitable subsets $N$ and $M_0$ in $H^{1}_{0}(D)$ according to the value of $\beta$ in Lemma 3.17 and applying the three minimax theorems. \\

\noindent
(i) $0<\beta<1$. The functional $I$ is positive for small $w\in H^{1}_{0}(D)$ by $\mu_1>0$ and negative for large $w\in H^{1}_{0}(D)$ by the superlinear nonlinearity $\beta>0$. We will show the inequality (3.14) for 
\begin{equation}
\begin{aligned}
&N=\left\{w\in H^{1}_{0}(D)\ \middle|\  ||w||_{H^{1}_{0}}=r_0\   \right\},\\
&M_0=\{0,w_0  \},
\end{aligned}
\end{equation}
for some $r_0>0$ and $w_0\in H^{1}_{0}(D)$ such that $||w_0||_{H^{1}_{0}}>r_0$ and apply a mountain pass theorem (Lemma 3.6).

\noindent
(ii) $1\leq \beta$. The functional $I$ is positive for small $w\in Z$ by the direct sum decomposition (3.11) and negative for large $w\in H^{1}_{0}(D)$ and for $w\in Y$. We will show the inequality (3.14) for 
\begin{equation}
\begin{aligned}
&N=\left\{w\in Z\ \middle|\  ||w||_{H^{1}_{0}}=r_0\   \right\},\\
&M_0=\left\{w=y+\lambda z_0\in Y\oplus \mathbb{R}z_0\ \middle|\  \lambda=0\ \textrm{and}\ ||w||_{H^{1}_0}\leq \rho_0,\ \textrm{or}\ \lambda>0\ \textrm{and}\ ||w||_{H^{1}_{0}}=\rho_0\  \right\},
\end{aligned}
\end{equation}
for some $z_0\in Z$ and $\rho_0>r_0=||z_0||_{H^{1}_{0}}$ and apply a linking theorem (Lemma 3.8).

\noindent
(iii) $\beta<-2$. The functional $I$ is bounded from below on $Z$ by the sublinear nonlinearity $\beta<-2$ and tends to be minus infinity for large $w\in Y$. We will show the inequality (3.14) for 
\begin{equation}
\begin{aligned}
&N=Z,\\
&M_0=\left\{y\in Y\ \middle|\  ||y||_{H^{1}_{0}}=\rho_0\ \right\},
\end{aligned}
\end{equation}
for some $\rho_0>0$, apply a saddle point theorem (Lemma 3.9).

\subsubsection{Positive solutions}

We find positive solutions to (3.4) by solutions to the modified problem 
\begin{equation}
\begin{aligned}
-w''&=\beta^{2}w_{+}+c_1 w^{1+\frac{2}{\beta}}_{+}+c_2 \sin{\phi} w^{1+\frac{3}{\beta}}_{+},\quad \phi\in (0,\pi),\\
w(0)&=w(\pi)=0,
\end{aligned}
\end{equation}
where $s_{+}=\max\{s,0\}$. The functional associated with this problem is as follows:
\begin{equation}
\begin{aligned}
\tilde{I}[w]&=\frac{1}{2}\int_{D} \left(|w'|^{2}-\beta^{2}w_{+}^{2}\right)\dd \phi-\int_{D}\tilde{G}(\phi,w)\dd \phi,\\
\tilde{G}(\phi,w)&=\int_{0}^{w}g(\phi,s)\dd s=\frac{c_1\beta}{2(\beta+1)}w_{+}^{2+\frac{2}{\beta}}+\frac{c_2\beta}{2\beta+3}(\sin\phi)w_{+}^{2+\frac{3}{\beta}}.
\end{aligned}
\end{equation}
We find critical points of $\tilde{I}$ for $0<\beta<1$ by the mountain pass theorem by modifying the argument to $I$ and using the pointwise estimate $w_{+}\leq |w|$. The classical solutions $w\in C^{2}[0,\pi]\cap C_{0}[0,\pi]$ to (3.18) satisfy $-w''\geq 0$ and hence are positive by the strong maximum principle \cite[6.4. Theorem 3]{E}. Positive solutions to (3.4) are even symmetric for $\phi=\pi/2$ and increasing in $(0,\pi/2)$ by the symmetry result for the one-dimensional non-autonomous Dirichlet problem \cite[Theorem 1']{GNN}.\\

In the sequel, we first state the three minimax theorems and construct classical solutions to (3.8) following the steps above.  

\subsection{The three minimax theorems}

According to \cite{Willem}, we define the Palais--Smale condition and state three minimax theorems for a Banach space $X$ equipped with the norm $||\cdot||_{X}$. Let $C^{1}(X;\mathbb{R})$ be the space of all functionals $I[\cdot]: X\to \mathbb{R}$ such that the Fr\'echet derivative $I': X\to X^{*}$ exists and is continuous on $X$. We say that $w\in X$ is a critical point of $I$ if 
\begin{align*}
\langle I'[w],\eta\rangle=0\quad \textrm{for all}\ \eta\in X.
\end{align*}
The constant $c\in \mathbb{R}$ is a critical value if a critical point $w\in X$ exists, such as $c=I[w]$. 
\begin{defn}[Palais--Smale condition]
We say that a sequence $\{w_n\}\subset X$ is a Palais--Smale sequence at level $c\in \mathbb{R}$ if 
\begin{align*}
I[w_n]&\to c,\\ 
I'[w_n]&\to 0\quad \textrm{in}\ X^{*}.
\end{align*}
We say that $I$ satisfies the $(\textrm{PS})_c$ condition if any Palais--Smale sequence at level $c\in \mathbb{R}$ has a convergent subsequence in $X$.
\end{defn}

The following three minimax theorems are due to Ambrosetti and Rabinowitz \cite[Theorems 2.10, 2.11, 2.12]{Willem}.

\begin{lem}[Mountain pass theorem]
Assume that there exist $w_0\in X$ and $r_0>0$ such that $||w_0||_{X}>r_0$ and 
\begin{align*}
\inf_{||w||_{X}=r_0}I[w]>I[0]\geq I[w_0]. 
\end{align*}
Assume that $I$ satisfies the $(\textrm{PS})_c$ condition with 
\begin{align*}
c&=\inf_{\gamma\in \Lambda}\max_{0\leq t\leq 1}I[\gamma(t)], \\
\Lambda&=\left\{\gamma\in C([0,1]; X )\ \middle|\ \gamma(0)=0,\ \gamma(1)=w_0\  \right\}.
\end{align*}
Then, $c$ is a critical value of $I$.
\end{lem}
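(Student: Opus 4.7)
The plan is to argue by contradiction using the standard deformation technique associated with the Palais--Smale condition. First, I would observe that $c$ is well-defined and strictly greater than $I[0]$: indeed, since $\|w_0\|_X > r_0$, for any $\gamma\in\Lambda$ the continuous function $t\mapsto \|\gamma(t)\|_X$ passes from $0$ to $\|w_0\|_X$ and therefore attains the value $r_0$ at some $t_*\in(0,1)$ by the intermediate value theorem. Consequently
\begin{equation*}
\max_{0\le t\le 1}I[\gamma(t)]\ge I[\gamma(t_*)]\ge \inf_{\|w\|_X=r_0}I[w]=:a>I[0],
\end{equation*}
so $c\ge a>I[0]\ge I[w_0]$. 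In particular $c$ is finite and cannot be attained at the endpoints of any admissible path.

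Next, I would assume toward contradiction that $c$ is not a critical value. The analytical heart of the argument is then a quantitative deformation lemma: if $I\in C^1(X;\mathbb{R})$ satisfies $(\mathrm{PS})_c$ and $c$ is not critical, then for any sufficiently small $\bar\varepsilon>0$ there exists $\varepsilon\in(0,\bar\varepsilon)$ and a continuous map $\eta:[0,1]\times X\to X$ with $\eta(0,\cdot)=\mathrm{id}$, $\eta(s,w)=w$ whenever $I[w]\notin[c-2\bar\varepsilon,c+2\bar\varepsilon]$, and $\eta(1,\cdot)$ mapping the sublevel set $\{I\le c+\varepsilon\}$ into $\{I\le c-\varepsilon\}$. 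This deformation is produced by flowing along a locally Lipschitz pseudo-gradient vector field for $I$; the $(\mathrm{PS})_c$ hypothesis guarantees that $\|I'\|_{X^*}$ stays bounded below on an annular neighborhood of $I^{-1}(c)$, preventing the flow from stalling.

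Granted such a deformation, I would now pick $\bar\varepsilon:=\tfrac12(c-I[0])>0$. By the definition of $c$ as an infimum, choose $\gamma\in\Lambda$ with $\max_{t}I[\gamma(t)]<c+\varepsilon$, so $\gamma([0,1])\subset\{I\le c+\varepsilon\}$. Define $\tilde\gamma(t):=\eta(1,\gamma(t))$. Since $I[0]<c-2\bar\varepsilon$ and $I[w_0]\le I[0]<c-2\bar\varepsilon$, the endpoints are fixed: $\tilde\gamma(0)=0$ and $\tilde\gamma(1)=w_0$, hence $\tilde\gamma\in\Lambda$. On the other hand, $\eta(1,\cdot)$ maps $\{I\le c+\varepsilon\}$ into $\{I\le c-\varepsilon\}$, so $\max_{t}I[\tilde\gamma(t)]\le c-\varepsilon$, contradicting the definition of $c$.

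The main obstacle is the construction of the deformation $\eta$, i.e., the Deformation Lemma itself, which requires building a locally Lipschitz pseudo-gradient vector field on the regular set of $I$ and invoking $(\mathrm{PS})_c$ to obtain the uniform lower bound on $\|I'\|_{X^*}$ needed to ensure the flow decreases $I$ by a definite amount in unit time. Once that lemma is in hand, the mountain-pass topology (an arbitrary path between two ``valleys'' must cross the $r_0$-sphere, where $I$ is bounded below by $a>I[0]$) closes the argument in a single line.
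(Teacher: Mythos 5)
The paper does not prove this lemma at all: it is quoted as a known minimax theorem of Ambrosetti--Rabinowitz type and cited directly from Willem's book (Theorems 2.10--2.12 there), so there is no in-paper argument to compare against. Your proposal reconstructs the standard proof from that reference, and it is essentially correct: the intermediate value theorem forces every admissible path to cross the sphere $\{\|w\|_X=r_0\}$, giving $c\ge\inf_{\|w\|_X=r_0}I>I[0]\ge I[w_0]$, and then the quantitative deformation lemma (built from a pseudo-gradient flow, with the $(\mathrm{PS})_c$ condition supplying the lower bound on $\|I'\|_{X^*}$ near the level set $I^{-1}(c)$ when $c$ is assumed noncritical) pushes a nearly optimal path below level $c-\varepsilon$ while fixing the endpoints, contradicting the definition of $c$.

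One small quantitative slip: with your choice $\bar\varepsilon=\tfrac12(c-I[0])$ you get $c-2\bar\varepsilon=I[0]$ exactly, so the claimed strict inequality $I[0]<c-2\bar\varepsilon$ fails and the endpoints of the path are not guaranteed to lie outside the band $[c-2\bar\varepsilon,c+2\bar\varepsilon]$ where the deformation may move points. You need $\bar\varepsilon$ strictly smaller than $\tfrac12(c-I[0])$ (any $\bar\varepsilon\in(0,\tfrac12(c-I[0]))$ works) to ensure $\eta(1,0)=0$ and $\eta(1,w_0)=w_0$. This is a one-line fix and does not affect the structure of the argument.
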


\begin{rem}
For the functional $I$ satisfying $I[0]\geq I[w_0]$, the functional estimate in Lemma 3.6 can be expressed as $\inf_{N}I>\max_{M_0}I$ for 
\begin{align*}
N&=\left\{w\in X\ \middle|\ ||w||_{X}=r_0  \right\}, \\
M_0&=\left\{0,w_0\right\}.
\end{align*}
\end{rem}

We apply linking and saddle point theorems when $X$ admits a direct sum decomposition $X=Y\oplus Z$ with a finite-dimensional subspace $Y$ and a subspace $Z$.

\begin{lem}[Linking theorem]
For $\rho_0>r_0>0$ and $z_0\in Z$ such that $||z_0||_{X}=r_0$, set 
\begin{align*}
M&=\left\{w=y+\lambda z_0\in Y\oplus \mathbb{R}z_0\ |\ ||w||_{X}\leq \rho_0,\ \lambda\geq 0,\ y\in Y\right\},\\
M_0&=\left\{w=y+\lambda z_0\in Y\oplus \mathbb{R}z_0\ |\ \lambda=0\ \textrm{and}\ ||y||_{X}\leq \rho_0,\ \textrm{or}\ \lambda> 0\ \textrm{and}\ ||w||_{X}= \rho_0 \right\},\\
N&=\left\{z\in Z\ |\ ||z||_{X}=r_0\  \right\}.
\end{align*}
Assume that there exist $\rho_0>r_0>0$ and $z_0\in Z$ satisfying $||z_0||_{X}=r_0$ such that 
\begin{align*}
\inf_{N}I>\max_{M_0}I.
\end{align*}
Assume that $I$ satisfies the $(\textrm{PS})_c$ condition with 
\begin{align*}
c&=\inf_{\gamma\in \Lambda}\max_{w\in M}I[\gamma[w]],\\
\Lambda&=\left\{\gamma\in C(M; X)\ \middle|\ \gamma|_{M_0}=id\  \right\}.
\end{align*}
Then, $c$ is a critical value of $I$.
\end{lem}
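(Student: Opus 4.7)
The plan is to follow the standard critical point theory argument combining a topological linking property with a quantitative deformation lemma. The proof proceeds in two steps: first, show that $N$ and $M_0$ link in the sense that every $\gamma \in \Lambda$ satisfies $\gamma(M) \cap N \neq \emptyset$, which guarantees $c \geq \inf_N I > \max_{M_0} I$ and in particular that $c$ is finite; second, use a deformation argument together with the $(\textrm{PS})_c$ condition to conclude that $c$ is actually attained as a critical value.

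For the linking step, I would exploit the fact that $M$ lies in the finite-dimensional space $Y \oplus \mathbb{R}z_0$. Introduce the continuous map $F\colon M \to Y \oplus \mathbb{R}$ defined by $F(w) = (P_Y \gamma(w),\, \|P_Z \gamma(w)\|_X)$, where $P_Y$ and $P_Z$ are the projections associated with the decomposition $X = Y \oplus Z$. The condition $\gamma(w) \in N$ is then equivalent to $F(w) = (0, r_0)$. On $M_0$, where $\gamma = \textrm{id}$, one computes $F(y + \lambda z_0) = (y,\, \lambda r_0)$; the equation $F(w) = (0, r_0)$ would force $y = 0$ and $\lambda = 1$, i.e.\ $w = z_0$, but on $M_0$ either $\lambda = 0$ or $\|w\|_X = \rho_0 \neq r_0 = \|z_0\|_X$, so $(0, r_0) \notin F(M_0) = F(\partial M)$. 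Hence the Brouwer degree $\deg(F, \textrm{int}(M), (0, r_0))$ is well-defined. Computing it with $\gamma$ replaced by the inclusion $M \hookrightarrow X$ yields the single preimage $z_0 \in \textrm{int}(M)$, at which the derivative is nondegenerate, so the degree equals $\pm 1$. The straight-line homotopy $t\gamma + (1-t)\textrm{id}$ remains in $\Lambda$, fixes $M_0$, and therefore keeps the target off $F_t(\partial M)$; by homotopy invariance, the degree remains $\pm 1$ for every $\gamma \in \Lambda$. Thus $F^{-1}(0, r_0) \neq \emptyset$, and taking the infimum over $\gamma$ yields $c \geq \inf_N I > \max_{M_0} I$.

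Next, I would derive a contradiction assuming $c$ is not a critical value. Set $\delta := c - \max_{M_0} I > 0$. By the $(\textrm{PS})_c$ condition, a quantitative deformation lemma produces, for every sufficiently small $\bar\varepsilon > 0$, some $\varepsilon \in (0, \bar\varepsilon)$ and a continuous map $\eta \in C(X, X)$ such that $\eta(w) = w$ whenever $|I[w] - c| \geq 2\varepsilon$, and $\eta$ sends the sublevel set $\{I \leq c + \varepsilon\}$ into $\{I \leq c - \varepsilon\}$. Choosing $\bar\varepsilon < \delta/4$ and $\gamma \in \Lambda$ with $\max_{w \in M} I[\gamma(w)] < c + \varepsilon$, set $\tilde\gamma := \eta \circ \gamma$. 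Since $I \leq \max_{M_0} I = c - \delta < c - 2\varepsilon$ on $M_0$ and $\gamma|_{M_0} = \textrm{id}$, the map $\eta$ acts as the identity on $\gamma(M_0) = M_0$, so $\tilde\gamma \in \Lambda$. But then $\max_{w \in M} I[\tilde\gamma(w)] \leq c - \varepsilon$, contradicting the definition of $c$ as an infimum.

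The main obstacle is the linking step: one has to arrange $F$ so that its domain and codomain share the same finite dimension $\dim Y + 1$, and verify that the target $(0, r_0)$ stays off $F_t(M_0)$ uniformly along the homotopy, both of which rely crucially on the strict inequality $\rho_0 > r_0$ and on $Y$ being finite-dimensional. Once the linking is established, the deformation step is routine.
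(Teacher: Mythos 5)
Your argument is correct: the degree-theoretic verification that $\gamma(M)$ intersects $N$ for every $\gamma\in\Lambda$ (giving $c\geq \inf_N I>\max_{M_0}I$), followed by the quantitative deformation lemma under the $(\textrm{PS})_c$ condition, is the standard proof of the linking theorem. The paper itself offers no proof of this lemma --- it is quoted as a known minimax theorem from Willem's book (Theorem 2.12, Rabinowitz's linking theorem) --- and your argument is essentially the proof given in that reference, so there is nothing in the paper to compare against beyond the citation. The only implicit point worth making explicit is that the projections $P_Y$, $P_Z$ are continuous because $Y$ is finite-dimensional (and in the paper's application they are the $L^2$-orthogonal projections on $H^1_0$), which is what makes your map $F$ continuous and the Brouwer degree well-defined.
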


\begin{lem}[Saddle-point theorem]
For $\rho_0>0$, set 
\begin{align*}
M&=\left\{w\in Y\ \middle|\ ||w||_{X}\leq \rho_0\ \right\},\\
M_0&=\left\{w\in Y\ \middle|\ ||w||_{X}= \rho_0\ \right\},\\
N&=Z.
\end{align*}
Assume that there exists $\rho_0>0$ such that 
\begin{align*}
\inf_{N}I>\max_{M_0}I.
\end{align*}
Assume that $I$ satisfies the $(\textrm{PS})_c$ condition with 
\begin{align*}
c&=\inf_{\gamma\in \Lambda}\max_{w\in M}I[\gamma[w]],\\
\Lambda&=\left\{\gamma\in C(M; X)\ \middle|\ \gamma|_{M_0}=id\  \right\}.
\end{align*}
Then, $c$ is a critical value of $I$.
\end{lem}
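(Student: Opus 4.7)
The statement is a classical minimax theorem; the plan is to follow the standard proof via topological linking plus a deformation argument powered by the $(\mathrm{PS})_c$ condition, as in Willem.

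First, I would establish the linking property: for every $\gamma \in \Lambda$ the image $\gamma(M)$ must meet $N = Z$. Because $X = Y \oplus Z$ with $Y$ finite-dimensional, let $P \colon X \to Y$ denote the continuous projection with $\ker P = Z$. Then $P \circ \gamma \colon M \to Y$ is continuous on the closed ball $M \subset Y$, and $P \circ \gamma|_{M_0} = \mathrm{id}$, because $\gamma|_{M_0} = \mathrm{id}$ and $M_0 \subset Y$. Since $0 \notin M_0$, the straight-line homotopy $H(t,w) = t P\gamma(w) + (1-t) w$ keeps the boundary away from $0$, so Brouwer degree theory yields
\[
\deg(P \circ \gamma, \mathrm{int}\, M, 0) = \deg(\mathrm{id}, \mathrm{int}\, M, 0) = 1.
\]
Hence there exists $w_\star \in M$ with $\gamma(w_\star) \in Z = N$, giving $\max_M I \circ \gamma \ge \inf_N I$. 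Taking the infimum over $\gamma$ shows $c \ge \inf_N I > \max_{M_0} I$; the admissible choice $\gamma = \mathrm{id}_M$ combined with compactness of $M \subset Y$ gives $c \le \max_M I < \infty$, so $c$ is a finite real number.

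Next, I would argue by contradiction that $c$ is a critical value. If not, choose $0 < \bar\varepsilon < \tfrac{1}{2}(c - \max_{M_0} I)$, which is positive by the previous step. The $(\mathrm{PS})_c$ condition combined with the quantitative deformation lemma (Willem, Lemma 2.3) then produces some $0 < \varepsilon < \bar\varepsilon$ and a continuous map $\eta \colon X \to X$ with $\eta(I^{c+\varepsilon}) \subset I^{c-\varepsilon}$ and $\eta = \mathrm{id}$ on $\{w : |I[w] - c| \ge 2\bar\varepsilon\}$, where $I^a := \{w \in X : I[w] \le a\}$. By definition of $c$, pick $\gamma \in \Lambda$ with $\max_M I \circ \gamma \le c + \varepsilon$, and set $\tilde\gamma := \eta \circ \gamma$. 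For $w \in M_0$ one has $I[\gamma(w)] = I[w] \le \max_{M_0} I < c - 2\bar\varepsilon$, so $\eta$ fixes $\gamma(w)$ and hence $\tilde\gamma \in \Lambda$. But then $\max_M I \circ \tilde\gamma \le c - \varepsilon$, contradicting the definition of $c$.

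The principal obstacle is the deformation lemma itself: constructing a locally Lipschitz pseudo-gradient vector field on $X \setminus \{\text{critical points}\}$ and integrating its flow is the standard but nontrivial ingredient that converts $(\mathrm{PS})_c$ compactness into the quantitative descent statement used above. The topological linking via Brouwer degree, while conceptually independent, is elementary once the finite-dimensionality of $Y$ is exploited, and the final contradiction is then purely formal.
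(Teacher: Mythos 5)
Your argument is the standard proof of Rabinowitz's saddle point theorem and it is correct: the Brouwer-degree computation for $P\circ\gamma$ on the finite-dimensional ball $M\subset Y$ (admissible because $P\circ\gamma$ equals the identity on $M_0$ and $0\notin M_0$) gives the linking $\gamma(M)\cap Z\neq\emptyset$, hence $c\geq\inf_N I>\max_{M_0}I$, and the quantitative deformation lemma together with $(\mathrm{PS})_c$ yields the contradiction in the usual way. The paper itself offers no proof of this lemma --- it is quoted verbatim from Willem's book (attributed there to Ambrosetti--Rabinowitz) --- so there is nothing to compare against beyond noting that your write-up is exactly the proof given in that cited source, with the $\varepsilon$, $\bar\varepsilon$ bookkeeping handled correctly.
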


\subsection{Bilinear form estimates}

We first prepare $L^{2}$-bilinear form estimates to (3.9) and the direct sum decomposition (3.11) by the Fourier expansion of $w\in L^{2}(D)$ using the orthogonal basis $\{e_k\}$ on $L^{2}(D)$. We then show that the bilinear form is coercive on $H^{1}_{0}(D)$ for $0<\beta<1$ and on $Z$ for $\beta\in \mathbb{R}\backslash [-2,1)$ by a contradiction argument. 

\begin{lem}
(i) For $0<\beta<1$, the bilinear form (3.9) satisfies  
\begin{align}
B(w,w)\geq \mu_1 ||w||_{L^{2}}^{2},\quad w\in H^{1}_{0}(D).
\end{align}
(ii) For $\beta\in \mathbb{R}\backslash  [-2,1)$, the direct sum decomposition (3.11) holds. The bilinear form (3.9) satisfies 
\begin{equation}
\begin{aligned}
&B(w,w)=B(y,y)+B(z,z),\quad w=y+z\in Y\oplus Z, \\
\label{s3Bcoer}
&\mu_K||y||_{L^{2}}^{2}\geq  B(y,y)\geq \mu_{1}||y||_{L^{2}}^{2},\quad y\in Y, \\
&B(z,z)\geq \mu_{K+1}||z||_{L^{2}}^{2},\quad z\in Z.
\end{aligned}
\end{equation}
\end{lem}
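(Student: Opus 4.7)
The plan is to diagonalize the bilinear form $B$ against the orthonormal basis $\{e_k\}$ supplied by Proposition 3.4. Concretely, for any $w\in H^{1}_{0}(D)$, expand $w=\sum_{k=1}^{\infty}c_k e_k$ with $c_k=(w,e_k)_{L^{2}}$. Then $\|w\|_{L^{2}}^{2}=\sum c_k^{2}$, and integration by parts together with the eigenvalue identity $-e_k''=k^{2}e_k$ (with $e_k|_{\partial D}=0$) gives $\int_D|w'|^{2}\,\dd\phi=\sum k^{2}c_k^{2}$, first for test functions and then for all of $H^{1}_{0}(D)$ by density. Hence
\begin{equation*}
B(w,w)=\sum_{k=1}^{\infty}(k^{2}-\beta^{2})c_k^{2}=\sum_{k=1}^{\infty}\mu_k c_k^{2}.
\end{equation*}

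For part (i), the hypothesis $0<\beta<1$ means $\mu_1=1-\beta^{2}>0$ and in particular $\mu_k\ge\mu_1$ for every $k\ge 1$, so $B(w,w)\ge\mu_1\sum c_k^{2}=\mu_1\|w\|_{L^{2}}^{2}$ directly.

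For part (ii), since $\beta\in\mathbb R\setminus[-2,1)$ we have $\beta^{2}\ge 1$, so there is a unique integer $K\ge 1$ with $K^{2}\le\beta^{2}<(K+1)^{2}$, giving the ordering (3.10). Any $w\in H^{1}_{0}(D)$ splits as $w=y+z$ with $y=\sum_{k=1}^{K}c_k e_k\in Y$ and $z=\sum_{k>K}c_k e_k$; since $y\in H^{1}_{0}(D)$ (finite sum of eigenfunctions) and $w\in H^{1}_{0}(D)$, also $z=w-y\in H^{1}_{0}(D)$, and by construction $z$ is $L^{2}$-orthogonal to every $e_k$ with $k\le K$, so $z\in Z$. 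Uniqueness of the decomposition is immediate from reading off Fourier coefficients. The orthogonality $B(y,z)=0$ follows from $L^{2}$-orthogonality of $y$ and $z$ plus integration by parts: $\int_Dy'z'\,\dd\phi=-\int_D y''z\,\dd\phi=\sum_{k=1}^{K}k^{2}c_k(e_k,z)_{L^{2}}=0$. This gives $B(w,w)=B(y,y)+B(z,z)$. The bounds then follow from $B(y,y)=\sum_{k=1}^{K}\mu_k c_k^{2}$ with $\mu_1\le\mu_k\le\mu_K$ (both nonpositive), sandwiching $B(y,y)$ between $\mu_K\|y\|_{L^{2}}^{2}$ and $\mu_1\|y\|_{L^{2}}^{2}$, and from $B(z,z)=\sum_{k>K}\mu_k c_k^{2}\ge\mu_{K+1}\|z\|_{L^{2}}^{2}$.

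There is no real obstacle here; the only point that needs slight care is justifying the Parseval-type identity $\int_D|w'|^{2}=\sum k^{2}c_k^{2}$ on all of $H^{1}_{0}(D)$, which is handled by the density of $C_c^{\infty}(D)$ in $H^{1}_{0}(D)$ combined with the eigenvalue equation. The $L^{2}$-orthogonal projection onto the finite-dimensional space $Y$ automatically lands in $H^{1}_{0}(D)$, which is why the direct sum decomposition works in $H^{1}_{0}(D)$ (not only in $L^{2}(D)$).
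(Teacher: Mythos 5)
Your proposal is correct and follows essentially the same route as the paper: expand $w$ in the eigenbasis $\{e_k\}$, diagonalize $B$ so that $B(w,w)=\sum_k\mu_k|(w,e_k)_{L^2}|^2$, and read off both the coercivity bound in (i) and the splitting estimates in (ii) from the ordering of the $\mu_k$. The only difference is that you make explicit the density argument behind the Parseval identity for $\int_D|w'|^2\,\dd\phi$, which the paper leaves implicit.
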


\begin{proof}
(i) By $-e_k''-\beta^{2}e_k=\mu_k e_k$ and integration by parts,
\begin{align*}
B(e_k,e_l)
=\int_{D}(e_k'e_l'-\beta^{2}e_ke_l) \dd\phi
=\mu_k\int_{D}e_ke_l \dd\phi=\mu_k\delta_{k,l},
\end{align*}
where $\delta_{k,l}$ denotes Kronecker delta. By Fourier expansion of $w=\sum_{k=1}^{\infty}(w,e_k)_{L^{2}}e_k 
\in H^{1}_{0}(D)$,
\begin{align*}
B(w,w)=\sum_{k,l=1}^{\infty}(w,e_k)_{L^{2}}(w,e_l)_{L^{2}}B(e_k,e_l)=\sum_{k=1}^{\infty}\mu_k|(w,e_k)_{L^{2}}|^{2}\geq \mu_1||w||_{L^{2}}^{2}.  
\end{align*}
Thus (3.20) holds. 

(ii) For $w\in H^{1}_{0}(D)$, we set 
\begin{align*}
w=\sum_{k=1}^{\infty}(w,e_k)_{L^{2}}e_k=\sum_{k=1}^{K}(w,e_k)_{L^{2}}e_k+\sum_{k=K+1}^{\infty}(w,e_k)_{L^{2}}e_k=:y+z.
\end{align*}
By $w\in H^{1}_{0}(D)$, $y\in H^{1}_{0}(D)$ and $z=w-y\in H^{1}_{0}(D)$. Thus, $z\in Z$ and the direct sum decomposition (3.11) holds. By
\begin{align*}
B(y,z)&=\sum_{k=1}^{K}\sum_{l=K+1}^{\infty}(w,e_k)_{L^{2}}(w,e_l)_{L^{2}}B(e_k,e_l)=0,\\
B(y,y)&=\sum_{k,l=1}^{K}(w,e_k)_{L^{2}}(w,e_l)_{L^{2}}B(e_k,e_l)=\sum_{k=1}^{K}\mu_n|(w,e_k)_{L^{2}}|^{2},\\
B(z,z)&=\sum_{k,l=K+1}^{\infty}(w,e_k)_{L^{2}}(w,e_l)_{L^{2}}  B(e_k,e_l)=\sum_{k=K+1}^{\infty}\mu_k|(w,e_k)_{L^{2}}|^{2},
\end{align*}
the bilinear form estimates (3.21) follow.
\end{proof}

\begin{lem}
(i) For $0<\beta<1$, there exists $\delta>0$ such that 
\begin{align}
\label{s3Bcoer2}
B(w,w)\geq \delta ||w||_{H^{1}_{0}}^{2},\quad w\in H^{1}_{0}(D).
\end{align}
(ii) For $\beta\in \mathbb{R}\backslash [-2,1)$, there exists $\delta>0$ such that 
\begin{align}
B(z,z)\geq \delta ||z||_{H^{1}_{0}}^{2},\quad z\in Z.
\end{align}
\end{lem}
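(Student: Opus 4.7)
The plan is to establish both coercivity estimates by a contradiction/compactness argument that upgrades the $L^2$-positivity recorded in Lemma 3.10 to a quantitative $H^1_0$-coercivity, exploiting the compact embedding $H^1_0(D) \hookrightarrow L^2(D)$ provided by Rellich--Kondrachov on the bounded one-dimensional domain $D = (0,\pi)$.

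For part (i), suppose (3.22) fails for every $\delta > 0$. Then one can choose $\{w_n\} \subset H^1_0(D)$ with $||w_n||_{H^1_0} = 1$ and $B(w_n, w_n) \to 0$. Boundedness in $H^1_0(D)$ gives, along a subsequence, $w_n \rightharpoonup w$ weakly in $H^1_0$ and, by Rellich, $w_n \to w$ strongly in $L^2$. Weak lower semicontinuity yields $||w'||_{L^2}^2 \leq \liminf ||w_n'||_{L^2}^2$, while $L^2$-convergence gives $||w_n||_{L^2}^2 \to ||w||_{L^2}^2$. Together these imply
$$B(w, w) \;\leq\; \liminf_{n \to \infty} B(w_n, w_n) \;=\; 0.$$
However, Lemma 3.10(i) with $0 < \beta < 1$ forces $B(w, w) \geq (1 - \beta^2)\,||w||_{L^2}^2 \geq 0$, so $w \equiv 0$. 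Then $||w_n||_{L^2} \to 0$ together with $B(w_n, w_n) = ||w_n'||_{L^2}^2 - \beta^2 ||w_n||_{L^2}^2 \to 0$ yields $||w_n'||_{L^2} \to 0$, contradicting $||w_n||_{H^1_0} = 1$ (using that this norm is equivalent to $||\cdot'||_{L^2}$ on $H^1_0(D)$ via Poincar\'e).

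Part (ii) runs identically on the subspace $Z$, after the preliminary observation that $Z$ is closed in $H^1_0(D)$: by (3.11) it is the intersection of the kernels of the finitely many continuous linear functionals $w \mapsto (w, e_k)_{L^2}$, $k = 1, \dots, K$. Taking $\{z_n\} \subset Z$ with $||z_n||_{H^1_0} = 1$ and $B(z_n, z_n) \to 0$, one extracts $z_n \rightharpoonup z$ in $H^1_0$ and $z_n \to z$ in $L^2$; closedness gives $z \in Z$. The same semicontinuity argument produces $B(z, z) \leq 0$, while Lemma 3.10(ii) gives $B(z, z) \geq \mu_{K+1}\,||z||_{L^2}^2 \geq 0$ with $\mu_{K+1} > 0$ by the choice of $K$ in (3.10). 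Hence $z = 0$ and the contradiction closes as before.

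The only delicate step is the passage from the bound of Lemma 3.10, which controls only the $L^2$-norm, to a full $H^1_0$-coercivity; the compactness argument bridges this gap by ruling out concentration of $H^1_0$-mass along a sequence whose $L^2$-norms collapse. No deeper technical obstacle arises, since on $D = (0,\pi)$ both the Rellich embedding and the Poincar\'e inequality are entirely standard. A purely algebraic alternative is in fact available ((i) follows from $||w||_{L^2}^2 \leq ||w'||_{L^2}^2$, giving $B(w,w) \geq (1 - \beta^2)||w'||_{L^2}^2$, and (ii) follows from the Fourier expansion, giving $B(z,z) \geq (1 - \beta^2/(K+1)^2)\,||z'||_{L^2}^2$), but I would present the compactness proof, as it matches the paper's stated strategy and sets up the framework needed for the Palais--Smale analysis in Lemma 3.16.
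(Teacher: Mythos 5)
Your proof is correct and follows essentially the same route as the paper: both upgrade the $L^{2}$-lower bound of Lemma 3.10 to $H^{1}_{0}$-coercivity via a compactness argument (Rellich embedding plus weak lower semicontinuity of the gradient norm), the only cosmetic difference being that the paper computes the infimum over the unit sphere directly and shows it is positive, while you phrase it as a contradiction starting from a sequence with $B(w_n,w_n)\to 0$. Your parenthetical algebraic shortcut (Poincar\'e with constant $1$ on $(0,\pi)$ for (i), and the Fourier tail bound $B(z,z)\geq (1-\beta^{2}/(K+1)^{2})\|z'\|_{L^{2}}^{2}$ for (ii)) is also valid and in fact more elementary, but the compactness version is what the paper uses.
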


\begin{proof}
We show (3.22). By (3.20), 
\begin{align*}
\delta=\inf\left\{B(w,w)\ \middle|\ ||w||_{H^{1}_{0}}=1,\ w\in H^{1}_{0}(D) \right\}\geq 0.
\end{align*}
We take a sequence $\{w_n\}\subset H^{1}_{0}(D)$ such that $B(w_n,w_n)\to \delta$ and $||w_n||_{H^{1}_{0}}^{2}=1$. By choosing a subsequence, $w_n\rightharpoonup w$ on $H^{1}_{0}(D)$ and $w_n\to w$ on $L^{2}(D)$ for some $w\in H^{1}_{0}(D)$ and  
\begin{align*}
\delta=\lim_{n\to\infty}B(w_n,w_n)=1-\beta^{2}||w||_{L^{2}}^{2}.
\end{align*}
By the lower semi-continuity of the norm $||w_n||_{H^{1}_{0}}$ in the weak convergence and the $L^{2}$-bilinear form estimate (3.20),
\begin{align*}
\delta=\lim_{n\to\infty}B(w_n,w_n)\geq B(w,w)\geq \mu_{1}||w||_{L^{2}}^{2}.
\end{align*}
If $w\equiv  0$, $\delta=1$. If $w\nequiv 0$, $\delta>0$. Thus, $\delta$ is positive, and (3.22) holds.

The estimate (3.23) follows the same argument using the $L^{2}$-bilinear form estimate $(3.21)_3$ on $Z$. 
\end{proof}

\subsection{Regularity of critical points}

We show that the functional (3.12) associated with the problem (3.8) is Fre\'chet differentiable, and its critical points are classical solutions to (3.8). In the subsequent subsection, we also prepare a functional identity to demonstrate the Palais--Smale condition.

\begin{prop}
The functional $I\in C^{1}(H^{1}_{0}(D);\mathbb{R})$ in (3.12) satisfies 
\begin{equation}
\langle I'[w],\eta\rangle =B(w,\eta)-(g(\cdot,w),\eta)_{L^{2}},\quad w,\eta\in H^{1}_{0}(D),
\end{equation}
\begin{equation}
\begin{aligned}
I[w]-\sigma \langle I'[w],w\rangle=\left(\frac{1}{2}-\sigma\right)B(w,w)&+c_1\left(\sigma-\frac{\beta}{2(\beta+1)}  \right)\int_{D}|w|^{2+\frac{2}{\beta}}\dd \phi\\
&+c_2\left(\sigma-\frac{\beta}{2(\beta+3)}  \right)\int_{D}(\sin\phi)|w|^{2+\frac{3}{\beta}}\dd \phi,\quad \sigma\in \mathbb{R}.
\end{aligned}
\end{equation}
\end{prop}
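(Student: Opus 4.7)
Decompose $I = Q - F$ where $Q[w] = \tfrac{1}{2} B(w,w)$ is the quadratic part and $F[w] = \int_D G(\phi,w)\,d\phi$ is the nonlinear part, and establish Fr\'echet differentiability of each separately before combining. The identity (3.25) will then follow by a direct substitution after (3.24) is in hand.

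For $Q$, the polarization identity $Q[w+\eta] - Q[w] - B(w,\eta) = \tfrac{1}{2} B(\eta,\eta)$ together with the trivial bound $|B(\eta,\eta)| \leq (1+\beta^{2})\|\eta\|_{H^{1}_{0}}^{2}$ gives Fr\'echet differentiability with $\langle Q'[w],\eta\rangle = B(w,\eta)$; continuity of $Q' : H^{1}_{0}(D) \to H^{-1}(D)$ is then automatic from boundedness and linearity of $B$ in its first argument.

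For $F$, the key input is the one-dimensional Sobolev embedding $H^{1}_{0}(D) \hookrightarrow C_0(\bar D)$, which provides a uniform bound $\|w\|_{L^{\infty}} \leq C\|w\|_{H^{1}_{0}}$. Under the assumptions (3.6) on $(\beta,c_1,c_2)$, the exponents $2+\tfrac{2}{\beta}$ and $2+\tfrac{3}{\beta}$ both exceed $1$ on the admissible range of $\beta$ (the case $-3 \leq \beta < -2$ being removed by the requirement $c_2=0$), so the Carath\'eodory function $G(\phi,\cdot)$ is $C^{1}$ in its second argument with $\partial_w G = g$, and the nonlinearities are smooth in $w$. The mean value theorem yields the pointwise estimate
\begin{equation*}
\bigl|G(\phi, w+\eta) - G(\phi,w) - g(\phi,w)\eta\bigr| \leq \omega\bigl(\|w\|_{L^{\infty}} + \|\eta\|_{L^{\infty}}\bigr)\, \eta^{2},
\end{equation*}
with a continuous nondecreasing modulus $\omega$. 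Integrating over $D$ and invoking the Sobolev embedding twice produces the Fr\'echet derivative $\langle F'[w], \eta\rangle = (g(\cdot,w), \eta)_{L^{2}}$; continuity of $F'$ from $H^{1}_{0}(D)$ into $H^{-1}(D)$ reduces via the same embedding to continuity of the Nemytskii operator $w \mapsto g(\cdot,w)$ from $C(\bar D)$ into itself, which is immediate from the explicit form (3.7) under the exponent bounds. Combining $I' = Q' - F'$ gives (3.24).

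For (3.25), starting from
\begin{equation*}
I[w] - \sigma\langle I'[w], w\rangle = \bigl(\tfrac{1}{2}-\sigma\bigr)B(w,w) + \int_D \bigl(\sigma\, g(\phi,w)\,w - G(\phi,w)\bigr)\,d\phi,
\end{equation*}
one substitutes the explicit formulas for $G$ from (3.12) and for $g(\phi,w)\,w = c_1|w|^{2+2/\beta} + c_2(\sin\phi)\, w|w|^{1+3/\beta}$ from (3.7), and collects coefficients in front of the two nonlinear monomials $|w|^{2+2/\beta}$ and $(\sin\phi)\,w|w|^{1+3/\beta}$ to obtain the stated identity. The only (mild) obstacle in the whole argument is the Nemytskii continuity needed for $F$, which the one-dimensional Sobolev embedding handles cleanly; the remainder is routine calculus and algebra.
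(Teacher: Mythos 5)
Your overall strategy — splitting $I$ into the quadratic part and the Nemytskii part, handling the quadratic part by bilinearity and the nonlinear part via the one-dimensional embedding $H^{1}_{0}(D)\hookrightarrow C(\overline{D})$, then obtaining (3.25) by substituting the explicit formulas for $G$ and $g$ — is the same as the paper's, except that the paper computes Gateaux derivatives (via difference quotients and a dominated-convergence bound on $g$) and upgrades to Fr\'echet differentiability by continuity of the Gateaux derivative, whereas you attempt a direct Fr\'echet remainder estimate.

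That direct estimate is where there is a genuine flaw: the claimed bound
\begin{equation*}
\bigl|G(\phi, w+\eta) - G(\phi,w) - g(\phi,w)\eta\bigr| \leq \omega\bigl(\|w\|_{L^{\infty}} + \|\eta\|_{L^{\infty}}\bigr)\, \eta^{2}
\end{equation*}
is false in the sublinear regime $\beta<-2$ of (3.6). There $1+\tfrac{2}{\beta}\in(0,1)$ (and $1+\tfrac{3}{\beta}\in(0,1)$ when $\beta<-3$), so $g(\phi,\cdot)$ is only H\"older continuous, not locally Lipschitz, near $t=0$; your own premise that ``the nonlinearities are smooth in $w$'' fails at $w=0$. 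Concretely, taking $w=0$ the left-hand side equals $\tfrac{c_1\beta}{2(\beta+1)}|\eta|^{2+2/\beta}$ with exponent $2+\tfrac{2}{\beta}\in(1,2)$, which is not $O(\eta^{2})$. The conclusion nevertheless survives: by the mean value theorem the remainder is $(g(\phi,w+\theta\eta)-g(\phi,w))\eta$, and the H\"older continuity of $g$ in $t$ gives a bound $C|\eta|^{1+\gamma}$ with $\gamma=\min\{1,\,1+\tfrac{2}{\beta},\,1+\tfrac{3}{\beta}\}>0$, which after integration and the Sobolev embedding is still $o(\|\eta\|_{H^{1}_{0}})$, so Fr\'echet differentiability holds. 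You should replace the $\eta^{2}$ bound by this $|\eta|^{1+\gamma}$ bound (or follow the paper and pass through the Gateaux derivative, where only the dominated-convergence bound $|g(\phi,w+\varepsilon\eta\sigma)|\leq c_1(|w|+|\eta|)^{1+2/\beta}+c_2(|w|+|\eta|)^{1+3/\beta}$ is needed). The remaining steps — continuity of the Nemytskii map on $C(\overline{D})$, and the coefficient bookkeeping for (3.25) — are correct.
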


\begin{proof}
We set
\begin{align*}
I[w]=\frac{1}{2}B(w,w)-\int_{D}G(\phi,w)\dd \phi=:I_0[w]-J[w].
\end{align*}
For $\varepsilon>0$ and $w, \eta\in H^{1}_{0}(D)$, 
\begin{align*}
\frac{1}{\varepsilon}\left(I_0[w+\varepsilon \eta]-I_0[w] \right)
=B(w,\eta)+\frac{\varepsilon}{2}B(\eta,\eta).
\end{align*}
Thus the Gateaux derivative $D_G I_0 : H^{1}_{0}(D)\to H^{-1}(D)$ exists and $<D_G I_0[w],\eta>=B(w,\eta)$ for $\eta\in H^{1}_{0}(D)$. By the boundedness of the bilinear form on $H^{1}_{0}(D)\times H^{1}_{0}(D)$, the Gateaux derivative $D_G I_0[w]$ is continuous. Thus the Fre\'chet derivative $I_0' = D_G I_0$ exists and $I_0 \in C^1(H_0^1(\Omega); \mathbb{R})$. 

For $\varepsilon>0$ and $w, \eta\in H^{1}_{0}(D)$, 
\begin{align*}
\frac{1}{\varepsilon}\left(J[w+\varepsilon \eta]-J[w] \right)
=\frac{1}{\varepsilon}\int_{D}\left(G(\phi,w+\varepsilon\eta)-G(\phi,w)\right)\dd \phi
=\int_{D}\left(\int_{0}^{1}g(\phi,w+\varepsilon \eta\sigma)\eta \dd \sigma\right) \dd \phi
\end{align*}
By the pointwise estimate $|g(\phi,w+\varepsilon \eta \sigma)|\leq c_1(|w|+|\eta|)^{1+2/\beta}+c_2(|w|+|\eta|)^{1+3/\beta}$ for $0<\varepsilon\leq 1$ and $0\leq \sigma \leq 1$, letting $\varepsilon\to 0$ implies that 
\begin{align*}
<D_G J[w], \eta>=(g(\cdot,w),\eta )_{L^{2}},\quad \eta\in H^{1}_{0}(D).
\end{align*}
Since the Gateaux derivative $D_G J : H^{1}_{0}(D)\to H^{-1}(D)$ exists and is continuous, $J \in C^1(H_0^1(\Omega); \mathbb{R})$ and (3.24) holds.   

The identity (3.25) follows (3.12) and (3.24).
\end{proof}

\begin{lem}
Critical points $w\in H^{1}_{0}(D)$ of the functional $I$ are classical solutions $w\in C^{2}(\overline{D})\cap C_0(\overline{D})$ to (3.8). For $\beta>0$, $w\in C^{3}(\overline{D})$. 
\end{lem}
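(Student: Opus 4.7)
The plan is to bootstrap regularity directly from the Euler--Lagrange equation using one-dimensional Sobolev embedding and direct integration, rather than any heavy elliptic machinery. From Proposition 3.12, the critical point condition (3.13) means $w\in H^{1}_{0}(D)$ is a weak solution of
\begin{align*}
-w''=\beta^{2}w+g(\phi,w)\quad\text{in }D=(0,\pi),\qquad w(0)=w(\pi)=0,
\end{align*}
with $g$ as in (3.7). By the one-dimensional embedding $H^{1}_{0}(0,\pi)\hookrightarrow C_{0}([0,\pi])$, the function $w$ is continuous on the closed interval and vanishes at the endpoints.

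Next I would verify that $f(\phi):=\beta^{2}w(\phi)+g(\phi,w(\phi))\in C([0,\pi])$ under (3.6). The point is to rewrite the nonlinear terms as $w|w|^{2/\beta}=|w|^{1+2/\beta}\operatorname{sgn}(w)$ and $|w|^{1+3/\beta}$: for $\beta>0$ both exponents $1+2/\beta$ and $1+3/\beta$ are strictly positive, while for $\beta<-2$ one has $1+2/\beta\in(0,1)$, and $1+3/\beta>0$ whenever the coefficient $c_{2}$ is nonzero (this is precisely why (3.6) forces $c_{2}=0$ when $-3\leq\beta<-2$). Hence both nonlinear terms are continuous functions of $w$ vanishing at $w=0$, so composition with the continuous function $w(\phi)$ and multiplication by $\sin\phi$ give $f\in C([0,\pi])$. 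Since $w$ is absolutely continuous with $w'\in L^{2}$ and satisfies $\int_{0}^{\pi}w'\eta'\,\dd\phi=\int_{0}^{\pi}f\eta\,\dd\phi$ for all $\eta\in H^{1}_{0}(D)$, we get $w''=-f$ in the distributional sense. The identity $w'(\phi)=w'(0)-\int_{0}^{\phi}f(s)\,\dd s$ (or equivalently the Green function representation in Remark 3.3) extends $w'$ to $C^{1}([0,\pi])$, yielding $w\in C^{2}([0,\pi])\cap C_{0}([0,\pi])$ and classical satisfaction of (3.8).

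For the second statement, assuming $\beta>0$, I would note that $g\in C^{1}([0,\pi]\times\mathbb{R})$: the $\phi$-derivative is $c_{2}\cos\phi\,|w|^{1+3/\beta}$, and the $w$-derivatives are $c_{1}(1+2/\beta)|w|^{2/\beta}$ and $c_{2}(1+3/\beta)\sin\phi\,|w|^{3/\beta}\operatorname{sgn}(w)$, all continuous since $2/\beta,3/\beta>0$. Having already established $w\in C^{2}$, differentiating the ODE once gives
\begin{align*}
w'''(\phi)=-\beta^{2}w'(\phi)-\partial_{\phi}g(\phi,w(\phi))-\partial_{w}g(\phi,w(\phi))\,w'(\phi)\in C([0,\pi]),
\end{align*}
so $w\in C^{3}([0,\pi])$. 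The main (mild) obstacle is handling the $\beta<-2$ regime, where the raw factors $|w|^{2/\beta}$ and $|w|^{3/\beta}$ blow up at $w=0$; this is controlled entirely by the compatibility restrictions in (3.6), and no further smoothness beyond $C^{2}$ should be expected in this sublinear regime.
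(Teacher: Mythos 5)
Your proposal is correct and follows essentially the same route as the paper: use the embedding $H^{1}_{0}(D)\subset C(\overline{D})$ to see that $g(\phi,w(\phi))$ is bounded and continuous (using the restriction $c_2=0$ for $-3\leq\beta<-2$ in (3.6) to keep the exponents nonnegative), read off $w\in C^{2}(\overline{D})$ from the equation, and for $\beta>0$ differentiate once more using that $g$ is $C^{1}$ in $(\phi,w)$. Your write-up is merely more explicit than the paper's about the exponent bookkeeping in the sublinear regime and about the chain-rule computation for the $C^{3}$ claim.
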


\begin{proof}
Since $H^{1}_{0}(D)\subset C^{1/2}(\overline{D})$, $g(\phi,w)$ is bounded continuous in $\overline{D}$ for critical points $w\in H^{1}_{0}(D)$ of $I$. By (3.22), the weak derivative $-w''=\beta^{2}w+g(\phi,w)$ is bounded and continuous in $\overline{D}$. Thus, $w$ is a classical solution to (3.8). For $\beta>0$, $g(\cdot ,w)$ is a $C^{1}$-function for $w$ and $g(\phi,w(\phi))\in C^{1}[0,\pi]$. By the equation (3.8), $w\in C^{3}[0,\pi]$.
\end{proof}

\subsection{The Palais--Smale condition}

We show that the functional $I$ satisfies the Palais--Smale condition. The primary step of the proof is to obtain the boundedness of the Palais--Smale sequence for the superlinear case $\beta>0$ and for the sublinear case $\beta<-2$.

For $0<\beta<1$, the Palais--Smale sequence is bounded by the identity (3.25) and the coercive estimate (3.22) on $H^{1}_{0}(D)$. For $\beta\geq 1$,  we use the direct sum decomposition (3.11) with the finite-dimensional space $Y$ and the coercive estimate (3.23) on $Z$. By a contradiction argument, we show the boundedness of the Palais--Smale sequence for all $\beta>0$. 

\begin{prop}
For $\beta>0$, any sequences $\{w_n\}\subset H^{1}_{0}(D)$ satisfying 
\begin{equation}
\begin{aligned}
&\sup_{n\geq 1}I[w_n]<\infty,\\
&I'[w_n]\to 0\quad H^{-1}(D),
\end{aligned}
\end{equation}
are bounded in $H^{1}_{0}(D)$.
\end{prop}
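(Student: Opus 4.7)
The plan is to argue by contradiction. Assume $\|w_n\|_{H^{1}_{0}(D)}\to\infty$ and set $v_n := w_n/\|w_n\|_{H^{1}_{0}}$, so that $\|v_n\|_{H^{1}_{0}}=1$. By the compact one-dimensional Sobolev embedding $H^{1}_{0}(D)\hookrightarrow C^{0}(\overline{D})$, after passing to a subsequence $v_n\rightharpoonup v$ in $H^{1}_{0}(D)$ and $v_n\to v$ in $C^{0}(\overline{D})$ and $L^{2}(D)$. The first task is to show $v\equiv 0$. Expanding $\langle I'[w_n],\eta\rangle = o(\|\eta\|_{H^{1}_{0}})$ in powers of $\|w_n\|_{H^{1}_{0}}$ and dividing by the dominant factor $\|w_n\|_{H^{1}_{0}}^{q-1}$, where $p=2+2/\beta$ and $q=2+3/\beta$ satisfy $q>p>2$ for $\beta>0$, the superlinear term dominates and reduces the identity to $c_{2}\int_{D}\eta\sin\phi\,|v|^{1+3/\beta}\,\dd\phi=0$ for every $\eta\in H^{1}_{0}(D)$, forcing $v\equiv 0$ whenever $c_{2}>0$; the case $c_{2}=0,\,c_{1}>0$ is analogous with division by $\|w_n\|_{H^{1}_{0}}^{p-1}$.

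With $v\equiv 0$, I would then divide both $\langle I'[w_n],w_n\rangle=o(\|w_n\|_{H^{1}_{0}})$ and $I[w_n]=O(1)$ by $\|w_n\|_{H^{1}_{0}}^{2}$. The strong convergence $v_n\to 0$ in $L^{2}(D)$ together with $\|v_n\|_{H^{1}_{0}}=1$ yields $B(v_n,v_n)\to 1$ when $0<\beta<1$ by Lemma 3.11(i), and the two relations reduce to a linear system in the limits
\begin{align*}
X:=\lim_{n\to\infty}c_{1}\|w_n\|_{H^{1}_{0}}^{p-2}\!\int_{D}|v_n|^{p}\,\dd\phi,\qquad Y:=\lim_{n\to\infty}c_{2}\|w_n\|_{H^{1}_{0}}^{q-2}\!\int_{D}\sin\phi\,v_n|v_n|^{q-1}\,\dd\phi,
\end{align*}
namely $X+Y=1$ and $\tfrac{\beta}{\beta+1}X+\tfrac{2\beta}{2\beta+3}Y=1$, with unique solution $X=3(\beta+1)/\beta>0$ and $Y=-(2\beta+3)/\beta<0$. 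When $c_{1}=0$ the forced identity $X=0$ is inconsistent, and similarly for $c_{2}=0$. For $c_{1},c_{2}>0$ the additional relations obtained by testing $\langle I'[w_n],w_n^{\pm}\rangle\to 0$ separately, together with the identities $B(w_n,w_n^{\pm})=\pm B(w_n^{\pm},w_n^{\pm})$ arising from $(w_n^{+})'(w_n^{-})'=0$ almost everywhere, decompose $X=X_{+}+X_{-}$ and $Y=Y_{+}-Y_{-}$ with $X_{\pm},Y_{\pm}\geq 0$; the pointwise estimate $\int|v_n|^{p}\leq\|v_n\|_{L^{\infty}}^{p-2}\|v_n\|_{L^{2}}^{2}\to 0$ forced by the compactness of $v_n\to 0$ then conflicts with the positive asymptotic rate $\|w_n\|_{H^{1}_{0}}^{p-2}\int|v_n|^{p}\to X/c_{1}>0$.

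For $\beta\geq 1$ the same normalization scheme applies, with the coercivity of $B$ on $H^{1}_{0}(D)$ replaced by the coercive estimate (3.23) on the subspace $Z$ of the direct sum decomposition $H^{1}_{0}(D)=Y\oplus Z$ from Lemma 3.11(ii); the finite-dimensional component $y_n$ of $v_n$ converges strongly by equivalence of norms on $Y$, so that $\|y_n\|_{H^{1}_{0}}\to 0$ and $\|v_n\|_{H^{1}_{0}}^{2}=\|y_n\|^{2}+\|z_n\|^{2}$ forces $\|z_n\|\to 1$, against the coercivity-driven conclusion. The principal obstacle throughout is the sign-indefinite integrand $\sin\phi\cdot w_n|w_n|^{q-1}$ appearing in the $c_{2}$-term, which defeats a direct Ambrosetti--Rabinowitz absorption via any single choice of $\sigma$ in the identity (3.25); the contradiction is extracted instead from the over-determined system of rescaled relations obtained by simultaneous tests against $w_n$ and $w_n^{\pm}$, using essentially the strict ordering $q>p>2$ that holds throughout the superlinear regime $\beta>0$.
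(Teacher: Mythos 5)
Your setup (normalize $v_n=w_n/\|w_n\|_{H^1_0}$, pass to a weak limit, show $v\equiv 0$ by dividing $\langle I'[w_n],\eta\rangle$ by the dominant power of $\|w_n\|_{H^1_0}$) is sound, and your $2\times 2$ linear system $X+Y=1$, $\tfrac{\beta}{\beta+1}X+\tfrac{2\beta}{2\beta+3}Y=1$ with solution $X=3(\beta+1)/\beta$, $Y=-(2\beta+3)/\beta$ is correctly derived; it genuinely kills the cases $c_1=0$ or $c_2=0$, where one of $X,Y$ is forced to vanish by definition yet the system assigns it a nonzero value. But in the main case $c_1,c_2>0$ the system is \emph{consistent}, and your closing step is not a contradiction: $\int_D|v_n|^{p}\,\dd\phi\le\|v_n\|_{L^\infty}^{p-2}\|v_n\|_{L^2}^{2}\to 0$ is perfectly compatible with $\|w_n\|_{H^1_0}^{p-2}\int_D|v_n|^{p}\,\dd\phi\to X/c_1>0$, because the prefactor $\|w_n\|_{H^1_0}^{p-2}\to\infty$; e.g.\ $\int|v_n|^p=(X/c_1)\|w_n\|^{-(p-2)}$ realizes both limits simultaneously. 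The auxiliary relations from testing against $w_n^{\pm}$ only add $X_\pm,Y_\pm\ge 0$ and $X_--Y_-\ge 0$, which are again consistent with the solved values ($X_-\ge(2\beta+3)/\beta$, $0\le X_+\le 1$), so no contradiction is extracted there either. The case $\beta\ge 1$ is also left hanging: you invoke an unspecified ``coercivity-driven conclusion'' without ever establishing an inequality that $\|z_n\|_{H^1_0}\to 1$ would violate.

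The paper closes the argument differently and at a different order: after deducing $\tilde w_n\to 0$ in $L^2$ (your step showing $v\equiv 0$), it divides the single identity $I[w_n]-\sigma\langle I'[w_n],w_n\rangle$ (equation (3.25)), with $\sigma\in(\beta/(2(\beta+1)),1/2)$ chosen so that the nonlinear terms appear with nonnegative coefficients and can be \emph{discarded}, by $\|w_n\|_{H^1_0}^{2}$; the direct sum decomposition $\tilde w_n=\tilde y_n+\tilde z_n$ with $\tilde y_n\to 0$ in the finite-dimensional $Y$ and the coercivity $B(\tilde z_n,\tilde z_n)\ge\delta\|\tilde z_n\|_{H^1_0}^2$ then yield $0\ge(1/2-\sigma)\delta>0$. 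The missing ingredient in your argument is precisely this one-sided (inequality) use of the functional identity: you only extract equalities at order $\|w_n\|^2$, and two equalities in two unknowns cannot be over-determined. To repair your proof you would need a third independent relation or a sign constraint strong enough to exclude $(X,Y)=(3(\beta+1)/\beta,\,-(2\beta+3)/\beta)$; as written, none is supplied.
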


\begin{proof}
We argue by contradiction. Suppose that $M_n=||w_n||_{H^{1}_{0}}$ diverges. We set $\tilde{w}_n=w_n/M_n$ so that $||\tilde{w}_n||_{H^{1}_{0}}=1$. We take $\sigma \in (\beta/2(\beta+1), 1/2)$ so that all coefficients on the right-hand side of (3.25) are positive. If $c_1\neq 0$, we divide (3.25) for $w_n$ by $M_n^{2+2/\beta}$. If $c_2\neq 0$, we divide (3.25) for $w_n$ by $M_n^{2+3/\beta}$. By (3.25), letting $n\to\infty$ implies that $\tilde{w}_n\to 0$ on $L^{2}(D)$. 

By the direct sum decomposition (3.11), $\tilde{w}_n=\tilde{y}_n+\tilde{z}_n\in Y\oplus Z$ satisfies  
\begin{align*}
1=||\tilde{w}_{n}||_{H^{1}_{0}}^{2}&=||\tilde{y}_{n}||_{H^{1}_{0}}^{2}
+2(\tilde{y}_{n}', \tilde{z}_{n}')_{L^{2}}
+||\tilde{z}_{n}||_{H^{1}_{0}}^{2},\\
||\tilde{w}_{n}||_{L^{2}}^{2}&=||\tilde{y}_{n}||_{L^{2}}^{2}+||\tilde{z}_{n}||_{L^{2}}^{2}.
\end{align*}
Since $\tilde{w}_n$ vanishes on $L^{2}(D)$, so is $\tilde{y}_n$ and $\tilde{z}_n$. Since $Y$ is finite-dimensional, $\tilde{y}_n$ vanishes on $H^{1}_{0}(D)$ and $\tilde{z}_n$ is bounded on $H^{1}_{0}(D)$. Letting $n\to\infty$ implies that $1=\lim_{n\to\infty}||\tilde{z}_n||_{H^{1}_{0}}$. By dividing (3.25) for $w_n$ by $M_n^{2}$ and applying the bilinear form estimates $(3.23)_2$, (3.24), and (3.25), 
\begin{align*}
\frac{1}{M_n^{2}}\left(   I[w_n] -\sigma 
\langle I'[w_n],w_n \rangle \right)
\geq \left(\frac{1}{2}-\sigma\right)B(\tilde{w}_n, \tilde{w}_n)
&= \left(\frac{1}{2}-\sigma\right)\left(B(\tilde{y}_n, \tilde{y}_n)+B(\tilde{z}_n, \tilde{z}_n)  \right) \\
&\geq \left(\frac{1}{2}-\sigma\right)\left( \mu_1||\tilde{y}_n||_{L^{2}}^{2}+\delta||\tilde{z}_n||_{H^{1}_{0}}^{2} \right). 
\end{align*}
Letting $n\to\infty$ implies that $0\geq (1/2-\sigma)\delta>0$. This is a contradiction. Thus $\{w_n\}$ is bounded in $H^{1}_{0}(D)$.
\end{proof}

We use the identity (3.24) to demonstrate the Palais--Smale sequence boundedness for $\beta<-2$. 

\begin{prop}
For $\beta<-2$, any sequences $\{w_n\}\subset H^{1}_{0}(D)$ satisfying (3.26) are bounded in $H^{1}_{0}(D)$.
\end{prop}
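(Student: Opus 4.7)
The plan is to argue by contradiction along the lines of Proposition~3.16, but exploiting the sublinear growth of $g$ for $\beta<-2$: one has $|g(\phi,w)|\le C(1+|w|^{\gamma})$ with $\gamma=1+2/\beta$ when $c_{1}>0$ and $\gamma=1+3/\beta$ when $c_{1}=0$ (the structural condition $c_{2}=0$ for $-3\le\beta<-2$ keeps $1+3/\beta$ non-negative); in every case $\gamma<1$. Suppose $M_{n}:=\|w_{n}\|_{H^{1}_{0}}\to\infty$, normalize $\hat w_{n}:=w_{n}/M_{n}$, and decompose $w_{n}=y_{n}+z_{n}\in Y\oplus Z$ as in Lemma~3.10.

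The first step is to trap the $Z$-component. Testing (3.24) against $\eta=z_{n}$, using $B(y_{n},z_{n})=0$, the coercivity $B(z_{n},z_{n})\ge\delta\|z_{n}\|_{H^{1}_{0}}^{2}$ from Lemma~3.11, and the H\"older bound
\[
\Big|\int_{D}g(\phi,w_{n})\,z_{n}\,\dd\phi\Big|\le CM_{n}^{\gamma}\,\|z_{n}\|_{H^{1}_{0}},
\]
one deduces $\|z_{n}\|_{H^{1}_{0}}=O(M_{n}^{\gamma})=o(M_{n})$, hence $\hat z_{n}\to 0$ in $H^{1}_{0}$. Since $Y$ is finite-dimensional and $\|\hat y_{n}\|_{H^{1}_{0}}\to 1$, a subsequence satisfies $\hat y_{n}\to \hat y\in Y$ with $\|\hat y\|_{H^{1}_{0}}=1$. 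Dividing the identity $I[w_{n}]=(1/2)B(w_{n},w_{n})-\int_{D}G(\phi,w_{n})\,\dd\phi$ by $M_{n}^{2}$, using $\int_{D}G(\phi,w_{n})\,\dd\phi=O(M_{n}^{2+2/\beta})+O(M_{n}^{2+3/\beta})=o(M_{n}^{2})$ and $I[w_{n}]=O(1)$, yields $B(\hat w_{n},\hat w_{n})\to 0$; continuity of $B$ on the finite-dimensional $Y$ then gives $B(\hat y,\hat y)=0$. As $B|_{Y}$ is negative semidefinite and diagonal in $\{e_{k}\}_{k\le K}$ with coefficients $\mu_{k}=k^{2}-\beta^{2}\le 0$, this places $\hat y$ in $\ker(B|_{Y})=\operatorname{span}\{e_{k}:\mu_{k}=0\}$; in the generic case this kernel is trivial, so $\hat y=0$ contradicts $\|\hat y\|=1$.

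The main obstacle is the degenerate case $\beta=-K_{0}$ for an integer $K_{0}\ge 3$, where $\hat y=\pm(1/K_{0})e_{K_{0}}$. The plan is to test the PS relation against $\eta=e_{K_{0}}$: since $\mu_{K_{0}}=0$ we have $B(w_{n},e_{K_{0}})=0$, whence $\int_{D}g(\phi,w_{n})e_{K_{0}}\,\dd\phi=o(1)$. The exponents $1+2/\beta,\,1+3/\beta\in(0,1)$ make the rescaled integrands $\hat w_{n}|\hat w_{n}|^{2/\beta}$ and $|\hat w_{n}|^{1+3/\beta}$ uniformly bounded, and dominated convergence combined with the uniform convergence $\hat w_{n}\to\hat y$ (by the 1-D Sobolev embedding) yields
\[
\int_{D}g(\phi,w_{n})e_{K_{0}}\,\dd\phi=c_{1}A^{(1)}M_{n}^{1+2/\beta}(1+o(1))+c_{2}A^{(2)}M_{n}^{1+3/\beta}(1+o(1)),
\]
where $A^{(1)}=\pm K_{0}^{-(1+2/\beta)}\int_{D}|e_{K_{0}}|^{2+2/\beta}\,\dd\phi\ne 0$ always, while $A^{(2)}=\pm K_{0}^{-(1+3/\beta)}\int_{D}\sin\phi\,e_{K_{0}}|e_{K_{0}}|^{1+3/\beta}\,\dd\phi$ vanishes precisely when $K_{0}$ is even by the involution $\phi\mapsto\pi-\phi$. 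Whenever $c_{1}>0$, or $c_{1}=0$ with $K_{0}$ odd, the leading term diverges, contradicting the $o(1)$ bound. The residual sub-case $c_{1}=0$, $c_{2}>0$, $K_{0}$ even (forcing $\beta\in\{-4,-6,\ldots\}$) is the most delicate, as the $\phi\mapsto\pi-\phi$ symmetry annihilates the leading-order contribution of natural test functions; its resolution requires an iterated expansion of $|w_{n}|^{1+3/\beta}$ about $\alpha_{n}e_{K_{0}}$ (with $\alpha_{n}=(w_{n},e_{K_{0}})_{L^{2}}\sim\pm M_{n}/K_{0}$) coupled with successive test functions breaking the remaining symmetries, so as to produce a divergent contribution at some finite order that contradicts the PS bound.
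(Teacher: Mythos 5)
Your overall scaffolding---argue by contradiction, normalize $\hat w_n=w_n/M_n$, split along $Y\oplus Z$, and use the sublinearity of $g$ (exponents $1+2/\beta,\,1+3/\beta<1$, with the condition $c_2=0$ for $-3\le\beta<-2$ guarding the singular exponent) to make the nonlinear terms negligible after rescaling---is the same as the paper's. The execution differs, though. The paper closes the argument in one stroke: it pairs $I'[w_n]$ with the single test function $\tilde\eta_n=-\tilde y_n+\tilde z_n$, so that the surviving quantity $-B(\tilde y_n,\tilde y_n)+B(\tilde z_n,\tilde z_n)$ is a sum of two non-negative terms by $(3.21)_2$ and the coercivity (3.23); its vanishing controls both components at once, with no use of the bound $(3.26)_1$ and no case analysis on $\ker(B|_Y)$. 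Your route instead extracts $B(\hat y,\hat y)=0$ from ``$I[w_n]=O(1)$''; but (3.26) only supplies $\sup_n I[w_n]<\infty$, and since $B\le0$ on $Y$ a one-sided bound on $I[w_n]$ yields only $\limsup B(\hat y_n,\hat y_n)\le 0$, which is vacuous. The two-sided bound does hold for genuine Palais--Smale sequences, and the step can anyway be repaired by testing against $-y_n$ (which is essentially what the paper does), so this is a fixable overreach rather than a fatal one.

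The genuine gap is in the resonant case $\beta=-K_0$, $K_0\ge 3$ an integer, where $\mu_{K}=0$ and $\hat y=\pm e_{K_0}/K_0$ survives. Testing against $e_{K_0}$ is a reasonable idea, but two things are missing. First, the claim that $A^{(2)}\ne 0$ for $K_0$ odd is asserted, not proved: the reflection $\phi\mapsto\pi-\phi$ merely fails to force the integral $\int_D\sin\phi\,\operatorname{sgn}(e_{K_0})|e_{K_0}|^{2+3/\beta}\,\dd\phi$ to vanish; it does not show it is nonzero, and the integrand genuinely changes sign. Second, and more seriously, the sub-case $c_1=0$, $c_2>0$, $K_0$ even (e.g.\ $\beta=-4$, which is admissible under (3.6)) is explicitly left open: ``an iterated expansion \dots\ so as to produce a divergent contribution at some finite order'' is a programme, not an argument, and nothing in the proposal guarantees that such a contribution ever appears. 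As written, the proof is therefore incomplete for these parameter values. (The resonant direction $e_K$ is also exactly where the paper's inequality $-\mu_K\|\tilde y_n\|_{L^2}^2\ge 0$ degenerates, so the difficulty you isolated is a real one---but identifying it is not the same as resolving it, and a complete proof must.)
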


\begin{proof}
Suppose that $M_n=||w_n||_{H^{1}_{0}}$ diverges. Then, $\tilde{w}_{n}=w_n/M_n$ satisfies $||\tilde{w}_{n}||_{H^{1}_{0}}=1$. By the direct sum decomposition (3.11), we set $\tilde{w}_{n}=\tilde{y}_{n}+\tilde{z}_{n}\in Y\oplus Z$ and $\tilde{\eta}_{n}=-\tilde{y}_n+\tilde{z}_n$. Since $\{\tilde{y}_{n}\}$ is bounded in $L^{2}(D)$ and $Y$ is finite-dimensional, $\{\tilde{y}_{n}\}$ and $\{\tilde{z}_{n}\}$ are bounded in $H^{1}_{0}(D)$. By substituting $w_n$ and $\tilde{\eta}_{n}$ into (3.24),
\begin{align*}
\frac{1}{M_n}\langle I'[w_n],\tilde{\eta}_{n}\rangle=B(\tilde{w}_n,\tilde{\eta}_{n} )-M_n^{\frac{2}{\beta}}c_1\int_{D}\tilde{w}_{n}|\tilde{w}_{n}|^{\frac{2}{\beta}}\tilde{\eta}_{n}\dd \phi
-M_n^{\frac{3}{\beta}}c_2\int_{D}(\sin\phi)\tilde{w}_{n}|\tilde{w}_{n}|^{\frac{3}{\beta}}\tilde{\eta}_{n}\dd \phi.
\end{align*}
By $\beta<-2$ and $c_2=0$ for $-3\leq \beta<-2$, letting $n\to\infty$ implies that $\lim_{n\to\infty}B(\tilde{w}_n,\tilde{\eta}_{n} )=0$. By the bilinear form estimates $(3.21)_2$ and (3.23), 
\begin{align*}
0=\lim_{n\to\infty}B(\tilde{w}_n,\tilde{\eta}_{n} )=\lim_{n\to\infty}\left(-B(\tilde{y}_n,\tilde{y}_{n} )+B(\tilde{z}_n,\tilde{z}_{n} ) \right)
\geq \limsup_{n\to\infty}\left( -\mu_K||\tilde{y}_{n}||_{L^{2}}^{2}+\delta||\tilde{z}_{n}||_{H^{1}_{0}}^{2}\right).
\end{align*}
Thus $\{\tilde{y}_{n}\}$ and $\{\tilde{z}_{n}\}$ vanish in $H^{1}_{0}(D)$ and we obtain the contradiction $1=\lim_{n\to\infty}||\tilde{w}_n||_{H^{1}_{0}}=0$. 
\end{proof}

\begin{lem}
For $\beta\in \mathbb{R}\backslash [-2,0]$ and $c\in \mathbb{R}$, the functional $I$ satisfies $(\textrm{PS})_c$ condition. 
\end{lem}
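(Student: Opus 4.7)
The plan is to combine the boundedness results in Propositions 3.14 and 3.15 with the compactness of the one-dimensional Sobolev embedding $H^{1}_{0}(D)\hookrightarrow C(\overline{D})$. Let $\{w_n\}\subset H^{1}_{0}(D)$ be a Palais--Smale sequence at level $c$. For $\beta>0$, Proposition 3.14 gives boundedness of $\{w_n\}$ in $H^{1}_{0}(D)$; for $\beta<-2$, Proposition 3.15 gives the same. After extracting a subsequence (not relabeled), I may assume $w_n\rightharpoonup w$ in $H^{1}_{0}(D)$ for some $w\in H^{1}_{0}(D)$, and by Rellich--Kondrachov in one dimension (or more sharply, by $H^{1}_{0}(D)\hookrightarrow C^{1/2}(\overline{D})$), $w_n\to w$ uniformly on $\overline{D}$, hence strongly in $L^{2}(D)$.

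The second step is to upgrade this to strong $H^{1}_{0}$-convergence. Using (3.24),
\begin{align*}
B(w_n-w,w_n-w)=\langle I'[w_n],w_n-w\rangle-B(w,w_n-w)+(g(\cdot,w_n),w_n-w)_{L^{2}}.
\end{align*}
The first term vanishes as $n\to\infty$ because $I'[w_n]\to 0$ in $H^{-1}(D)$ and $\{w_n-w\}$ is bounded in $H^{1}_{0}(D)$; the second vanishes by the weak convergence $w_n\rightharpoonup w$ applied to the continuous linear functional $B(w,\cdot)$ on $H^{1}_{0}(D)$; for the third, the condition (3.6) (in particular $c_{2}=0$ for $-3\leq\beta<-2$, which rules out the only range where $1+3/\beta<0$) guarantees that $s\mapsto g(\phi,s)$ is continuous and locally bounded in $s$, so uniform convergence $w_n\to w$ together with $\|w_n-w\|_{L^{2}}\to 0$ yields $(g(\cdot,w_n),w_n-w)_{L^{2}}\to 0$.

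Consequently,
\begin{align*}
\|(w_n-w)'\|_{L^{2}}^{2}-\beta^{2}\|w_n-w\|_{L^{2}}^{2}=B(w_n-w,w_n-w)\longrightarrow 0,
\end{align*}
and since $\|w_n-w\|_{L^{2}}\to 0$ by Step 1, we deduce $\|(w_n-w)'\|_{L^{2}}\to 0$, i.e.\ $w_n\to w$ in $H^{1}_{0}(D)$, which is the $(\mathrm{PS})_c$ condition.

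The main technical point I expect to verify carefully is the Nemytskii-type continuity of $g(\phi,\cdot)$ on bounded sets across the whole admissible range of $\beta$: for $\beta>0$ this is routine superlinearity, while for $\beta<-2$ one uses that $w|w|^{2/\beta}=|w|^{1+2/\beta}\,\mathrm{sgn}\,w$ with $1+2/\beta\in(0,1)$ is Hölder continuous, and (thanks to $c_2=0$ in $[-3,-2)$) $|w|^{1+3/\beta}$ has exponent in $[0,1)$ whenever it appears. Continuity is all that is needed here because $\{w_n\}$ converges uniformly, so no singular analysis is involved — the compactness of the embedding does all the work.
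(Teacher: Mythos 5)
Your proposal is correct and follows essentially the same route as the paper: boundedness of the Palais--Smale sequence via Propositions 3.14 and 3.15, extraction of a weakly convergent subsequence that converges uniformly on $\overline{D}$ by the compact embedding, and then the identity (3.24) to show $B(w_n-w,w_n-w)\to 0$ and hence strong $H^{1}_{0}$-convergence. The only cosmetic difference is that the paper tests with $\langle I'[w_n]-I'[w],w_n-w\rangle$ while you split off $B(w,w_n-w)$ separately, which is an equivalent algebraic rearrangement.
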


\begin{proof}
We take a Palais--Smale sequence $\{w_n\}\subset H^{1}_{0}(D)$ at level $c\in \mathbb{R}$. The sequence $\{w_n\}$ is bounded by Propositions 3.14 and 3.15, and we take a subsequence (still denoted by $\{w_n\}$) such that $w_n\rightharpoonup w$ on $H^{1}_{0}(D)$ and $w_n\to w$ in $C(\overline{D})$ for some $w\in H^{1}_{0}(D)$. By the weak convergence, 
\begin{align*}
\lim_{n\to\infty}\langle I'[w],w_n-w \rangle=0.
\end{align*}
On the other hand, by the condition $(3.26)_2$,
\begin{align*}
\lim_{n\to\infty}\langle I'[w_n],w_n-w \rangle=0,
\end{align*}
so by the identity (3.24), 
\begin{align*}
\langle I'[w_n]-I'[w],w_n-w\rangle
=B(w_n-w,w_n-w)-(g(\cdot,w_n)-g(\cdot,w), w_n-w )_{L^{2}}.
\end{align*}
The left-hand side vanishes as $n\to\infty$. Since $g(\phi, w_n)$ is uniformly bounded in $\overline{D}$ and $w_n$ uniformly converges to $w$ in $\overline{D}$, letting $n\to\infty$ implies that 
\begin{align*}
0=\lim_{n\to\infty}B(w_n-w,w_n-w)=\lim_{n\to\infty}\int_{D}\left( |w_n'-w'|^{2}-\beta^{2}|w_n-w|^{2} \right) \dd \phi
=\lim_{n\to\infty}\int_{D} |w_n'-w'|^{2} \dd \phi.
\end{align*}
Thus $w_n$ strongly converges to $w$ in $H^{1}_{0}(D)$.
\end{proof}

\subsection{Functional inequalities on subsets}

We choose subsets $N$ and $M_0$ in $H^{1}_{0}(D)$ according to the value of $\beta\in \mathbb{R}\backslash [-2,0]$ and obtain the inequality of the form (3.14). 

\begin{lem}
The functional $I$ satisfies the inequality (3.14) for $\beta$, $N$ and $M_0$ given by (3.15)-(3.17).
\end{lem}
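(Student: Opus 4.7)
The plan is to verify the inequality $\inf_N I>\max_{M_0}I$ separately in each of the three cases, exploiting the common decomposition $I[w]=\tfrac{1}{2}B(w,w)-\int_D G(\phi,w)\dd\phi$ and the one-dimensional Sobolev embedding $H^1_0(D)\hookrightarrow L^\infty(D)$, which yields
\begin{align*}
\left|\int_D G(\phi,w)\dd\phi\right|\leq C_1\|w\|_{H^1_0}^{2+2/\beta}+C_2\|w\|_{H^1_0}^{2+3/\beta},
\end{align*}
with constants depending only on $(\beta,c_1,c_2)$. Together with Lemma 3.11, this yields coercivity on $H^1_0(D)$ when $0<\beta<1$ (via (3.22)) and on $Z$ otherwise (via (3.23)), while (3.21)$_2$ gives $B(y,y)\leq\mu_K\|y\|_{L^2}^2\leq 0$ on the finite-dimensional subspace $Y$.

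For Case (i) $0<\beta<1$, both nonlinear exponents exceed $2$, so combining the estimates gives $I[w]\geq (\delta/2)\|w\|_{H^1_0}^2-C_1\|w\|_{H^1_0}^{2+2/\beta}-C_2\|w\|_{H^1_0}^{2+3/\beta}$, strictly positive for small nonzero $\|w\|$. Fixing $r_0$ small delivers $\inf_N I\geq (\delta/4)r_0^2>0=I[0]$. For $w_0$ I would pick any nonnegative $e\in H^1_0(D)\setminus\{0\}$: along the ray $te$, both contributions to $G$ integrate to nonnegative quantities growing strictly faster than $t^2$ (since $c_1,c_2\geq 0$, $e\geq 0$, and $\sin\phi\geq 0$ on $D$), so $I[te]\to-\infty$ and $w_0=t_0 e$ suffices for $t_0$ large. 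Case (ii) $\beta\geq 1$ handles the infimum on $N\subset Z$ identically using (3.23). The bottom cap $Y\cap \overline{B}_{\rho_0}$ of $M_0$ sits in the finite-dimensional $Y$, on which $B(y,y)\leq 0$ and continuity yield a uniform bound $I[y]\leq M$; the upper cap $\{y+\lambda z_0:\lambda>0,\ \|w\|=\rho_0\}$ lives in the finite-dimensional cone $Y\oplus\mathbb{R}_+ z_0$, where all norms are equivalent and the superlinear nonlinearity dominates the quadratic part $O(\|w\|^2)$. Choosing $z_0$ as a suitable positive multiple of $e_{K+1}$ and taking $\rho_0$ large then delivers $\max_{M_0}I<\inf_N I$.

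For Case (iii) $\beta<-2$, both exponents $2+2/\beta$ and $2+3/\beta$ are sublinear (with $c_2=0$ enforced for $-3\leq\beta<-2$), so
\begin{align*}
I[z]\geq\frac{\delta}{2}\|z\|_{H^1_0}^2-C_1\|z\|_{H^1_0}^{2+2/\beta}-C_2\|z\|_{H^1_0}^{2+3/\beta}\geq -C'
\end{align*}
uniformly on $Z$, hence $\inf_N I$ is finite. On $M_0=\{y\in Y:\|y\|=\rho_0\}$ I would scale $y=\rho_0\hat y$ and use finite-dimensionality: by continuity of $\hat y\mapsto\int_D G(\phi,\hat y)\dd\phi$ and positivity of the $c_1$-component (which vanishes only at $\hat y\equiv 0$) on the compact unit sphere, this integral admits a strictly positive lower bound $\gamma>0$, yielding $\int_D G(\phi,y)\dd\phi\geq \gamma\rho_0^{2+2/\beta}$. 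Combined with $B(y,y)\leq 0$, this forces $\max_{M_0}I\leq -(\gamma/2)\rho_0^{2+2/\beta}\to-\infty$, and picking $\rho_0$ large completes the estimate.

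The principal obstacle, I expect, concerns the sign structure of the $c_2$-term of $G$, which contains $w|w|^{1+3/\beta}$ and therefore changes sign with $w$. When $c_1=0$, directions in $Y$ which are odd about $\phi=\pi/2$ (such as $e_2$) can make $\int G_{c_2}\dd\phi$ vanish by symmetry, and this coincides with the resonant integer values of $\beta$ where $\mu_K=0$. At such points the simultaneous degeneracy of the quadratic and nonlinear parts must be broken, likely by exploiting the strict negativity of $\mu_K$ at non-integer $\beta$ together with a careful choice of $z_0\in Z$ in Case (ii) and of the hemisphere direction in Case (iii), ensuring the strict inequality survives in every direction of $M_0$.
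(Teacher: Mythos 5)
Your overall strategy coincides with the paper's: the lower bound $\inf_N I>0$ (resp.\ $\inf_N I>-\infty$) comes from the coercivity estimates (3.22)--(3.23) combined with $H^1_0(D)\subset L^\infty(D)$, and the upper bound on $M_0$ comes from scaling along rays together with the finite-dimensionality of $Y$ (resp.\ $Y\oplus\mathbb{R}z_0$). Case (i) is complete as you wrote it, and your choice of a \emph{nonnegative} ray direction, which makes the $c_2$-contribution $\int_D\sin\phi\,w|w|^{1+3/\beta}\,\dd\phi$ nonnegative along the ray, is exactly the right precaution (the paper scales a generic $w$ and tacitly writes $|w|^{2+3/\beta}$ in place of $w|w|^{1+3/\beta}$ at this step).

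The gap is the one you flag yourself in the final paragraph, and it is genuine rather than cosmetic. In case (iii) your main estimate rests on ``positivity of the $c_1$-component,'' which is vacuous when $c_1=0$ (allowed by (3.6) for $\beta<-3$), and your fallback does not close the resonant configuration: for $\beta=-K$ with $K$ even one has $\mu_K=0$, and $e_K$ is odd about $\phi=\pi/2$, so both $B(ce_K,ce_K)$ and $\int_D\sin\phi\,(ce_K)|ce_K|^{1+3/\beta}\,\dd\phi$ vanish identically; hence $I\equiv 0$ along that direction of $M_0$ while $I[0]=0$ with $0\in N=Z$, and no choice of $\rho_0$ can restore the strict inequality by the route you describe. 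The same sign issue threatens the bottom cap in case (ii): you need $I\le 0$ on $Y$, but $-\int_D G\,\dd\phi$ can be positive where $y<0$ when $c_1=0$. You should be aware that the paper's own proof elides precisely this point --- it bounds the $c_2$-part of $\int_D G\,\dd\phi$ from below by $\frac{c_2\beta}{2\beta+3}\int_D\sin\phi\,|y|^{2+3/\beta}\,\dd\phi$, i.e.\ it treats the term as sign-definite, which fails where $y<0$ --- so the obstruction you isolated is not an artifact of your write-up. Closing it requires an additional idea (e.g.\ restricting to $c_1\neq 0$, working with a modified functional that is even in $w$ as in (3.19), or excluding the resonant values of $\beta$), not merely ``a careful choice of $z_0$.''
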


\begin{proof}
(i) $0<\beta<1$. By the embedding $H^{1}_{0}(D)\subset L^{\infty}(D)$ and the coercive estimate (3.20) on $H^{1}_{0} (D)$, there exists a constant $C>0$ such that for $w\in H^{1}_{0}(D)$,
\begin{align*}
I[w]
\geq \frac{\delta}{2}||w||_{H^{1}_{0}}^{2}
-C\left(\frac{c_1\beta}{2(\beta+1)}||w||_{H^{1}_{0}}^{2+\frac{2}{\beta}}  
+\frac{c_2\beta}{2\beta+3}||w||_{H^{1}_{0}}^{2+\frac{3}{\beta}}
\right)
=||w||_{H^{1}_{0}}^{2}\left(\frac{\delta}{2}
-C\left(\frac{c_1\beta}{2(\beta+1)}||w||_{H^{1}_{0}}^{\frac{2}{\beta}}  
+\frac{c_2\beta}{2\beta+3}||w||_{H^{1}_{0}}^{\frac{3}{\beta}}
 \right)\right).
\end{align*}
Thus $\inf_{N}I>0$ for $N=\{w\in H^{1}_{0}(D)\ |\ ||w||_{H^{1}_{0}}=r_0\ \}$ and some small $r_0>0$.

For $\rho>0$ and $w\in H^{1}_{0}(D)$ satisfying $||w||_{H^{1}_{0}}=r_0$, letting $\rho\to\infty$ implies that 
\begin{align*}
I[\rho w]
&=\frac{\rho^{2}}{2}B(w,w)-\rho^{2+\frac{2}{\beta}}\frac{c_1\beta}{2(\beta+1)}\int_{D}|w|^{2+\frac{2}{\beta}}\dd \phi
-\rho^{2+\frac{3}{\beta}}\frac{c_1\beta}{2\beta+3}\int_{D}\sin\phi |w|^{2+\frac{3}{\beta}}\dd \phi\\
&=\rho^{2}\left(\frac{1}{2}B(w,w)-\rho^{\frac{2}{\beta}}\frac{c_1\beta}{2(\beta+1)}\int_{D}|w|^{2+\frac{2}{\beta}}\dd \phi
-\rho^{\frac{3}{\beta}}\frac{c_1\beta}{2\beta+3}\int_{D}\sin\phi |w|^{2+\frac{3}{\beta}}\dd \phi\right)\to -\infty.
\end{align*}
Thus $\max_{M_0}I\leq 0$ for $M_0=\{0,w_0\}$ and some $w_0\in H^{1}_{0}$ such that $||w_0||_{H^{1}_{0}}>r_0$. We demonstrated the inequality (3.14) for $\beta$, $N$, and $M_0$ given by (3.15).\\

\noindent
(ii) $\beta\geq 1$. By the coercive estimate (3.23) on $Z$, for $N=\{w\in Z\ |\ ||w||_{H^{1}_{0}}=r_0\ \}$ and some small $r_0>0$, 
\begin{align*}
\inf_{N}I=\inf\left\{I[w]\ \middle|\ w\in Z,\ ||w||_{H^{1}_{0}}=r_0\  \right\}>0.
\end{align*}
We set $z_0=r_0e_{K+1}/||e_{K+1}||_{H^{1}_{0}}$. For $\rho>0$, $w=y+\lambda z_0$, $y\in Y$, and $\lambda>0$, letting $\rho\to\infty$ implies that 
\begin{align*}
I[\rho w]=\rho^{2}\left( \frac{1}{2}B(w,w)-\rho^{\frac{2}{\beta}}\frac{c_1\beta}{2(\beta+1)}\int_{D}|w|^{2+\frac{2}{\beta}}\dd \phi
-\rho^{\frac{3}{\beta}}\frac{c_1\beta}{2\beta+3}\int_{D}\sin\phi|w|^{2+\frac{3}{\beta}}\dd \phi   \right)\to -\infty.
\end{align*}
Since $Y\oplus \mathbb{R}z_0$ is finite-dimensional, there exists $\rho_0>r_0$ such that 
\begin{align*}
\max\left\{ I[w]\ \middle|\ ||w||_{H^{1}_{0}}=\rho_0,\ w\in Y\oplus \mathbb{R}z_0\  \right\}\leq 0.
\end{align*}
The functional $I$ is non-positive on $Y$ and hence $\max_{M_0}I\leq 0$ for $M_0=\{w=y+\lambda z_0\in Y\oplus \mathbb{R}z_0\ |\ \lambda=0\ \textrm{and}\ ||w||_{H^{1}_{0}}\leq \rho_0,\ \textrm{or}\ \lambda>0\ \textrm{and}\ ||w||_{H^{1}_{0}}=\rho_0\ \} $. We demonstrated the inequality (3.14) for $\beta$, $N$, and $M_0$ given by (3.16).\\

\noindent
(iii) $\beta<-2$. For arbitrary $\varepsilon>0$, we apply Young's inequality to estimate 
\begin{align*}
||w||_{H^{1}_{0}}^{2+\frac{2}{\beta}}&\leq \varepsilon \left(1+\frac{1}{\beta}\right) ||w||_{H^{1}_{0}}^{2}+\frac{1}{(-\beta) \varepsilon^{-\beta-1}},\\
||w||_{H^{1}_{0}}^{2+\frac{3}{\beta}}&\leq \varepsilon \left(1+\frac{3}{2\beta}\right) ||w||_{H^{1}_{0}}^{2}+\frac{3}{(-2\beta) \varepsilon^{\frac{3}{-2\beta}-1}}.
\end{align*}
By the embedding $H^{1}_{0}(D)\subset L^{\infty}(D)$, there exists a constant $C$ independent of $\varepsilon$ such that 
\begin{align*}
I[z]
\geq \frac{1}{2}B(z,z)
-C\left(
\frac{c_1\beta}{2(\beta+1)}||z||_{H^{1}_{0}}^{2+\frac{2}{\beta}}
+\frac{c_2\beta}{2\beta+3}||z||_{H^{1}_{0}}^{2+\frac{3}{\beta}}
\right).
\end{align*}
By applying the coercive estimate (3.23) on $Z$ and the above estimates,
\begin{align*}
I[z]\geq \frac{1}{2}\left(\delta -C(c_1+c_2)\varepsilon \right) ||z||_{H^{1}_{0}}^{2}
-C\left(  \frac{-1}{(\beta+1)\varepsilon^{-\beta-1}}+\frac{-3}{(2\beta+3)\varepsilon^{-\frac{3}{2\beta}-1 }}     \right).
\end{align*}
Thus for $N=Z$ and small $\varepsilon>0$ such that $\delta-C(c_1+c_2)\varepsilon>0$,
\begin{align*}
\inf_{N}I\geq C \left(\frac{1}{(\beta+1)\varepsilon^{-\beta-1}}+\frac{3}{(2\beta+3)\varepsilon^{-\frac{3}{2\beta}-1  } } \right)>-\infty.
\end{align*}
By the embedding $H^{1}_{0}(D)\subset L^{\infty}(D)$, there exists a constant $C>0$ such that 
\begin{align*}
\int_{D}|y|^{2+\frac{2}{\beta}}\dd \phi \leq C||y||_{H^{1}_{0}}^{2+\frac{2}{\beta}}.
\end{align*}
By dividing both side by $||y||_{H^{1}_{0}}^{2+2/\beta}$, 
\begin{align*}
C_1=\inf\left\{  \int_{D}|\tilde{y}|^{2+\frac{2}{\beta}}\dd \phi\  \middle|\ ||\tilde{y}||_{H^{1}_{0}}=1,\  \tilde{y}\in Y\   \right\}.
\end{align*}
Since $Y$ is finite-dimensional, $C_1$ is positive. Similarly, the constant
\begin{align*}
C_2=\inf\left\{  \int_{D}(\sin\phi)|\tilde{y}|^{2+\frac{3}{\beta}}\dd \phi\  \middle|\ ||\tilde{y}||_{H^{1}_{0}}=1,\  \tilde{y}\in Y\   \right\}
\end{align*}
is positive. Since $B(y,y)$ is non-positive for $y\in Y$, 
\begin{align*}
\max\left\{I[y]\ \middle|\ ||y||_{H^{1}_{0}}=\rho, \ y\in Y\  \right\}
\leq -\frac{c_1\beta}{2(\beta+1)}\rho^{2+\frac{2}{\beta}} C_1-\frac{c_2\beta}{2\beta+3}\rho^{2+\frac{3}{\beta}}C_2.
\end{align*}
The right-hand side diverges to minus infinity as $\rho\to\infty$. Thus for $M_0=\{w\in H^{1}_{0}(D)\ |\ ||w||_{H^{1}_{0}}=\rho_0\ \}$ and some large $\rho_0>0$, 
\begin{align*}
\inf_{N}I>\max_{M_0}I.
\end{align*}
We demonstrated the inequality (3.14) for $\beta$, $N$, and $M_0$ given by (3.17).
\end{proof}

\begin{proof}[Proof of Theorem 3.1]\!\!
The functional $I$ satisfies the (PS$)_c$ condition for $\beta\in \mathbb{R}\backslash [-2,0]$ and $c\!\in\! \mathbb{R}\!$ by Lemma 3.16.

\noindent
(i) $0<\beta<1$. The inequality (3.14) holds for $\beta$, $N$, and $M_0$ given by (3.15) by Lemma 3.17. Thus, the assumption of the mountain pass theorem (Lemma 3.6) is satisfied by Remark 3.7, and the existence of a critical point $w\in H^{1}_{0}(D)$ follows.\! The critical point is a classical solution $w\in C^{3}(\overline{D})$ to (3.8) by Lemma 3.13.

\noindent
(ii) $\beta\geq 1$. The inequality (3.14) holds for $\beta$, $N$, and $M_0$ given by (3.16) by Lemma 3.17. Thus, the existence of a critical point $w\in H^{1}_{0}(D)$ follows the linking theorem (Lemma 3.8). The critical point is a classical solution $w\in C^{3}(\overline{D})$ to (3.8) by Lemma 3.13.

\noindent
(iii) $\beta<-2$. The inequality (3.14) holds for $\beta$, $N$, and $M_0$ given by (3.17) by Lemma 3.17. Thus, the existence of a critical point $w\in H^{1}_{0}(D)$ follows the saddle point theorem (Lemma 3.9). The critical point is a classical solution $w\in C^{2}(\overline{D})$ to (3.8) by Lemma 3.13.

For $0<\beta<1$, we consider the functional $\tilde{I}$ in (3.19) instead of $I$. We modify the argument to $I$ and obtain a classical solution $w\in C^{3}(\overline{D})$ to (3.18). Solutions to (3.18) are positive by a strong maximum principle \cite[6.4. Theorem 3]{E} and thus solutions to (3.4). Positive solutions to (3.4) are even symmetric with respect to $\phi=\pi/2$ and increasing in $(0,\pi/2)$ by a symmetry result for non-autonomous one-dimensional Dirichlet problems \cite[Theorem 1']{GNN}.
\end{proof}

\subsection{Homogeneous solution construction}

We demonstrate the existence of rotational ($-\alpha$)-homogeneous solutions to (1.1) and (1.4) for $\alpha\in \mathbb{R}\backslash [-1,1]$ with regular profiles by using Theorem 3.1 and complete the proofs for (iii) and (iv) of Theorems 1.2, 1.5, and 1.7.

\begin{prop}
Let $(\beta,c_1,c_2)$ satisfy (3.6). Let $w\in C^{2}[0,\pi]\cap C_{0}[0,\pi]$ be a solution to (3.4). Set $(\alpha, C_1, C_2)$ by (3.5). Set $(\psi,\Pi,\rho)$ by (3.2) and (3.3), and extend them to $\mathbb{R}^{2}\backslash \{0\}$ by the $x_2$-symmetry (1.4). Then, $(u,p,\rho)\in C^{2}(\mathbb{R}^{2}\backslash \{0\})$ for $\alpha>1$ and $(u,p,\rho)\in C^{1}(\mathbb{R}^{2})$ for $\alpha<-1$.
\end{prop}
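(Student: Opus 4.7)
The plan is straightforward: construct the Boussinesq solution directly from the profile $w$ via the stream-function ansatz, extend across $\{x_2 = 0\}$ by the $x_2$-symmetry (1.4), and then track how the regularity of $w$ propagates to $(u,p,\rho)$. First, I would set $\psi(r,\phi) = r^{1-\alpha}w(\phi)$ on $\mathbb{R}^2_+ \setminus \{0\}$. The computation already carried out in deriving (3.4) shows that, with $(\alpha,C_1,C_2)$ chosen by (3.5) and $\Pi,\rho$ given by (3.3), the function $\psi$ solves the Dubreil-Jacotin--Long equation (3.1). Putting $u=\nabla^\perp\psi$ yields $u = r^{-\alpha}\bigl(a(\phi)e_\phi + f(\phi)e_r\bigr)$ with $a = -\beta w$, $f = -w'$, and then $p = \Pi - |u|^2/2 - x_2\rho$ produces a stationary Boussinesq solution on $\mathbb{R}^2_+ \setminus \{0\}$. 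Since $w(0) = w(\pi) = 0$, both $u^2$ and $\rho$ vanish on $\{x_2 = 0\}$, so the $x_2$-symmetry (1.4) extends $(u,p,\rho)$ continuously across the axis to $\mathbb{R}^2 \setminus \{0\}$.

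Next, I would check the boundary compatibility at $\phi = 0,\pi$ needed for higher regularity of the circle-extension. Under (1.4), the angular profiles $a,\rho$ extend oddly and $f,\Pi$ extend evenly around $\phi = 0$ and $\phi = \pi$, so $C^k$ regularity on $\mathbb{S}^1$ requires the odd-order derivatives of the even profiles and the even-order derivatives of the odd profiles to vanish at the endpoints. The nontrivial identities reduce to $w''(0) = w''(\pi) = 0$, which follow by evaluating (3.4) at $\phi = 0$ and $\phi = \pi$ and using $w(0) = w(\pi) = \sin 0 = \sin\pi = 0$. These yield $f'(0) = f'(\pi) = 0$ and $a''(0) = a''(\pi) = 0$, matching what is needed for $C^2$ extension.

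For case (iii), $\alpha > 1$ (i.e., $\beta > 0$), Lemma 3.13 gives $w \in C^3[0,\pi]$. Since $w$ vanishes linearly at the endpoints and the exponents $2 + 2/\beta$ and $2 + 3/\beta$ both exceed $2$ for $\beta > 0$, the nonlinear angular profiles $|w|^{2+2/\beta}$ and $w|w|^{1+3/\beta}$ lie in $C^2[0,\pi]$. Together with the compatibility conditions, this promotes $a,f,\Pi(\phi),\rho(\phi)$ to $C^2(\mathbb{S}^1)$; multiplying by the radial weights $r^{-\alpha}$, $r^{-2\alpha}$, $r^{-2\alpha-1}$, which are smooth on $r > 0$, yields $(u,p,\rho) \in C^2(\mathbb{R}^2 \setminus \{0\})$.

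For case (iv), $\alpha < -1$ (i.e., $\beta < -2$), one has $w \in C^2[0,\pi]$, and the compatibility conditions promote $a,f$ to $C^1(\mathbb{S}^1)$. The exponents $2 + 2/\beta$ and $2 + 3/\beta$ both lie in $(1,2)$, and the restriction $c_2 = 0$ for $-3 \le \beta < -2$ excludes exactly the range where $2 + 3/\beta$ would drop to or below $1$, so the angular profiles of $\Pi$ and $\rho$ also lie in $C^1(\mathbb{S}^1)$. Away from the origin this already gives $C^1$ regularity; at the origin each component has the form $r^\gamma g(\phi)$ with $g \in C^1(\mathbb{S}^1)$ and $\gamma \in \{-\alpha,-2\alpha,-2\alpha-1\}$, all strictly larger than $1$, so the Cartesian partial derivatives are $O(r^{\gamma-1})$ and vanish continuously at the origin, giving $(u,p,\rho) \in C^1(\mathbb{R}^2)$. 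The main obstacle is the careful bookkeeping of vanishing orders at $\phi = 0,\pi$: one must verify that the boundary identities supplied by (3.4) are precisely strong enough to match the odd/even extensions required by (1.4) to the regularity class claimed in each case.
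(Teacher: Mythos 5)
Your proposal is correct and follows essentially the same route as the paper's proof: express $(u,p,\rho)$ through $(3.27)$, use the equation $(3.4)$ together with $w(0)=w(\pi)=0$ to get $w''(0)=w''(\pi)=0$ and hence the $C^2$ odd extension to $\mathbb{S}^1$, invoke $w\in C^3$ for $\beta>0$, and for $\alpha<-1$ use the exponent ranges (with $C_2=0$ on $-3\le\beta<-2$) plus the homogeneity degrees exceeding $1$ to get $C^1$ through the origin. Your exponent bookkeeping is just a more explicit version of what the paper leaves implicit.
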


\begin{proof}
The functions $(u,p,\rho)$ are of the form: 
\begin{equation}
\begin{aligned}
u=\frac{-1}{r^{\alpha}}\left((\alpha-1)we_{\phi}+w'e_{r} \right),\quad
\rho=\frac{C_2}{r^{2\alpha+1}}w|w|^{1+\frac{3}{\alpha-1}},\quad
\Pi=\frac{C_1}{r^{2\alpha}}|w|^{2+\frac{2}{\alpha-1}},\quad
p=\Pi-\frac{1}{2}|u|^{2}-x_2\rho.
\end{aligned}
\end{equation}
The second derivative of $w$ vanishes at $\phi=0$ and $\pi$ by the Dirichlet boundary condition and the equation (3.4). Thus $w\in C^{2}(\mathbb{S}^{1})$ by the odd extension.

For $\alpha<-1$, $u\in C^{1}(\mathbb{S}^{1})$ and $\Pi\in C^{1}(\mathbb{S}^{1})$. For $-2\leq \alpha<-1$, $C_2=0$ and $\rho\equiv 0$. For $\alpha<-2$, $\rho\in C^{1}(\mathbb{S}^{1})$. Thus for $\alpha<-1$, $\rho\in C^{1}(\mathbb{S}^{1})$ and $p\in C^{1}(\mathbb{S}^{1})$.

For $\alpha<-1$, $u(x)=r^{-\alpha} u(\phi)$, $\rho(x)=r^{-2\alpha-1}\rho(\phi)$, and $\Pi(x)=r^{-2\alpha}\Pi(\phi)$ are $C^{1}$-functions for $r\geq 0$. Thus $(u,p,\rho)\in C^{1}(\mathbb{R}^{2})$. 

For $\alpha>1$, $w\in C^{3}(\mathbb{S}^{1})$ and $(u,p,\rho)\in C^{2}(\mathbb{R}^{2}\backslash \{0\})$. 
\end{proof}

\begin{lem}
The functions $(u,p,\rho)$ in Proposition 3.18 are ($-\alpha$)-homogeneous solutions to (1.1) and (1.4) in $\mathbb{R}^{2}\backslash \{0\}$ for $\alpha>1$ and in $\mathbb{R}^{2}$ for $\alpha<-1$. 
\end{lem}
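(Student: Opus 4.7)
The plan is to reverse the derivation carried out at the start of Section~3.1 that reduced the stationary Boussinesq system (2.1) to the ODE (3.4). On $\mathbb{R}^2_+$, introduce the $(-\alpha+1)$-homogeneous stream function $\psi(x) := w(\phi)/r^{\alpha-1}$, so that $u = \nabla^\perp \psi$ automatically satisfies the incompressibility equation $(2.1)_2$, and a direct polar-coordinate computation of $\nabla^\perp\psi = (e_\phi \partial_r - r^{-1} e_r \partial_\phi)\psi$ recovers the formula for $u$ displayed in (3.27).

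By the choice (3.3), both $\Pi$ and $\rho$ are pointwise functions of $\psi$ on $\mathbb{R}^2_+$, hence $\nabla \Pi = \Pi'(\psi)\nabla\psi$ and $\nabla \rho = \rho'(\psi)\nabla\psi$. The identity $\nabla^\perp \psi \cdot \nabla \psi \equiv 0$ immediately yields the density transport equation $(2.1)_3$ and the Bernoulli relation (2.3). Using $u^\perp = -\nabla\psi$ together with $\omega = \nabla^\perp \cdot u = \Delta \psi$, the momentum equation $(2.1)_1$ factors as
\begin{equation*}
\omega u^\perp + \nabla \Pi - x_2 \nabla \rho
= \bigl(-\Delta\psi + \Pi'(\psi) - x_2 \rho'(\psi)\bigr)(-\nabla\psi),
\end{equation*}
so it reduces to the Dubreil-Jacotin-Long equation (3.1). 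Substituting the ansatz (3.2) and (3.3) into (3.1) and expanding the Laplacian in polar coordinates gives exactly the semilinear ODE (3.4) with $\beta,c_1,c_2$ as in (3.5), which holds by hypothesis. With $p$ then defined by $p = \Pi - |u|^2/2 - x_2 \rho$ as in (3.27), the full system (2.1) holds on $\mathbb{R}^2_+\setminus\{0\}$.

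Finally, I would extend $(u,p,\rho)$ to the lower half-plane via the $x_2$-symmetry (1.4). The Dirichlet conditions $w(0) = w(\pi) = 0$ guarantee that $u^2$ and $\rho$ vanish along $\{x_2=0\}\setminus\{0\}$, supplying the trace condition $(2.1)_4$ and (2.2) and making the extension continuous across the axis. The Boussinesq system is invariant under the reflection symmetry (1.4) (every term in (2.1) has matching parity), so the equations extend to both half-planes. The only point needing attention is that no distributional singularities are deposited on $\{x_2=0\}$ by the extension; this is the part of the argument I expect to be the most delicate, but it is in fact automatic from the regularity furnished by Proposition~3.18: $(u,p,\rho) \in C^2(\mathbb{R}^2\setminus\{0\})$ for $\alpha>1$ and $(u,p,\rho) \in C^1(\mathbb{R}^2)$ for $\alpha<-1$, so a PDE that holds classically off a smooth hypersurface of codimension one with continuous terms extends classically across it. Thus the present lemma is essentially a bookkeeping step, with the nontrivial analytic content already contained in Section~3.1 and Proposition~3.18.
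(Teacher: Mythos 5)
Your proposal is correct and takes essentially the same route as the paper: both verify the momentum equation by computing $\omega=\Delta\psi$, $\nabla\Pi=\Pi'(\psi)\nabla\psi$, $\nabla\rho=\rho'(\psi)\nabla\psi$ and factoring $\omega u^{\perp}+\nabla\Pi-x_2\nabla\rho=-\nabla\psi\cdot(\text{LHS of }(3.4))=0$, with incompressibility and the transport equation automatic from $u=\nabla^{\perp}\psi$ and $\rho=\rho(\psi)$, and with the extension across $\{x_2=0\}$ (and through the origin when $\alpha<-1$) handled by the regularity already established in Proposition 3.18.
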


\begin{proof}
Since $(u,p,\rho)\in C^{1}(\mathbb{R}^{2}\backslash \{0\})$, we can differentiate $\psi$, $\Pi$, and $\rho$ in (3.27). By (3.3), 
\begin{align*}
\omega&=\Delta \psi
=\frac{1}{r^{\alpha+1}}\left((\alpha-1)^{2}w+w''\right)
=\frac{1}{r^{\beta+2}}\left(\beta^{2}w+w''\right),\\
\nabla \Pi&=(\nabla \psi) \Pi'(\psi)
=(\nabla \psi) C_1\left(\frac{2\alpha}{\alpha-1}\right)\frac{1}{r^{\alpha+1}}w|w|^{\frac{2}{\alpha-1}}
=-\nabla \psi  \frac{c_1}{r^{\beta+2}} w|w|^{\frac{2}{\beta}},\\
\nabla \rho&=
(\nabla \psi)\rho'(\psi)
=(\nabla \psi)C_2\left(\frac{2\alpha+1}{\alpha-1}\right)\frac{1}{r^{\alpha+2}}|w|^{1+\frac{3}{\alpha-1}}
=\nabla \psi  \frac{c_2}{r^{\beta+3}} |w|^{1+\frac{3}{\beta}},
\end{align*}
By the equation (3.4) in $(0,\pi)$, 
\begin{align*}
\omega u^{\perp}+\nabla \Pi-x_2\nabla \rho
=-\nabla \psi \frac{1}{r^{\beta+2}}\left(\beta^{2}w+w''+c_1w|w|^{\frac{2}{\beta}}+c_2 \sin\phi |w|^{1+\frac{3}{\beta}} \right)
=0. 
\end{align*}
Since $u=\nabla^{\perp} \psi$ is solenoidal, $(u,p,\rho)$ satisfies the equation (1.1) in $\mathbb{R}^{2}\backslash \{0\}$. For $\alpha<-1$, $(u,p,\rho)\in C^{1}(\mathbb{R}^{2})$ satisfies (1.1) in $\mathbb{R}^{2}$ by continuity at the origin. 
\end{proof}
\begin{proof}[Proof of (iii) and (iv) of Theorems 1.2, 1.5, and 1.7]
The existence results (iii) and (iv) of Theorem 1.2 follow the existence of rotational ($-\alpha$)-homogeneous solutions to (1.1) and (1.4) for $\alpha>1$ and $\alpha<-1$ with $C_1\neq 0$ or $C_2\neq 0$ in Lemma 3.19 ($C_2=0$ for $-2\leq \alpha<-1$). The existence results (iii) and (iv) of Theorems 1.5 and 1.7 are the cases $C_1\neq 0$ and $C_2=0$ and $C_1= 0$ and $C_2\neq 0$, respectively.
\end{proof}

\section{The existence of homogeneous solutions with singular profiles}

We construct ($-\alpha$)-homogeneous solutions with singular profiles $(u,p,\rho)\in C^{\infty}(\mathbb{R}^{2}\backslash \{x_1=0\}\cup \{x_2=0\})$ to (1.1) and (1.4) for $-1<\alpha<1$ satisfying the additional $x_1$-symmetry (1.3).

\subsection{The homogeneous Dubreil-Jacotin--Long equation with negative nonlinear powers}

We look for a stream function $\psi$ satisfying the $x_1$-symmetry
\begin{equation*}
\psi(x_1,x_2)=-\psi(-x_1,x_2),
\end{equation*}
and the Dubreil-Jacotin-Long equation (3.1) in the first quadrant $\{x_1>0, x_2>0\}$ with the additional boundary condition $\psi(0,x_2)=0$ for $x_2\geq 0$. Solutions to the equation (3.1) in the first quadrant provide stationary solutions to (1.1) by using the symmetric extensions (1.3) and (1.4). By choosing homogeneous functions $(\psi,\Pi,\rho)$ in the same way as (3.2) and (3.3) with constants $(\alpha,C_1,C_2)$, 
we arrive at the following problem,
\begin{equation}
\begin{aligned}
-w''&=\beta^{2}w+c_1w^{-s}+c_2\sin\phi w^{-s'},\\
w&>0,\quad \phi\in (0,\pi/2),\\
w(0)&=w(\pi/2)=0,
\end{aligned}
\end{equation}
where  $(\beta,c_1,c_2)$ are as in (3.5), and where
\begin{align}
s=-1-\frac{2}{\beta},\quad s'=-1-\frac{3}{\beta}.
\end{align}
By our choices in (3.5),
 $c_1$ and $c_2$ are zero at $\beta=-1$ and $-3/2$. We consider the constants $(\beta,c_1,c_2)$ satisfying the conditions
\begin{equation}
\begin{aligned}
&-2<\beta<0,\quad c_1, c_2\geq 0,\quad c_1\neq 0\ \textrm{or}\ c_2\neq 0,\\
&c_1=0\quad \textrm{for}\ \beta=-1,\qquad
c_2=0\quad \textrm{for}\ \beta=-3/2.
\end{aligned}
\end{equation}

The main result of this section is the construction of a unique solution to
the above problem.

\begin{thm}
Let $(\beta,c_1,c_2)$ satisfy (4.3). There exists a unique positive solution $w\in C^{\infty}(0,\pi/2)\cap C_0[0,\pi/2]$ to (4.1). For $-2<\beta< -3/2$, $w\in C^{1,2+2/\beta}[0,\pi/2]$. For $-1<\beta<0$, $w'$ diverges at $\phi=0$ and $\pi/2$. 
\end{thm}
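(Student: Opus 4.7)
The plan is to implement the approach of del Pino~\cite{delPino} already sketched in Section~1.5.3: regularize the singular right-hand side via the bounded function $g_\varepsilon$ in~(1.28), solve the resulting non-singular problem~(1.27) by direct minimization, and pass to the limit $\varepsilon\to 0$ with the aid of $\varepsilon$-uniform maximum-principle barriers. The structural fact enabling every step is that the principal eigenvalue $\mu_1=4-\beta^2$ of $-L_\beta=-\partial_\phi^2-\beta^2$ on $H^1_0(0,\pi/2)$ is strictly positive for $-2<\beta<0$, yielding both coercivity of the quadratic form and the Poincar\'e inequality $\int|v'|^2\ge 4\int v^2$.

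First I would verify the maximum principle for $-L_\beta+c(\phi)$ with $c\ge 0$: testing $-L_\beta w+c(\phi)w\ge 0$ against $w_-$ and using Poincar\'e forces $w_-\equiv 0$. Next, I would establish the barrier estimate~(1.26) with explicit sub- and super-solutions $\underline w=a\sin 2\phi$ and $\overline w=b(\sin 2\phi)^\sigma$. A direct computation gives
\begin{equation*}
-L_\beta\overline w=4b\sigma(1-\sigma)(\sin 2\phi)^{\sigma-2}\cos^2 2\phi+(4\sigma-\beta^2)b(\sin 2\phi)^\sigma,
\end{equation*}
and the range $\beta^2/4<\sigma\le\min\{1,-2\beta/3\}$ (with $\sigma\ne 1$) is exactly what makes the singular term $(\sin 2\phi)^{\sigma-2}$ dominate both $\overline w^{-s}\sim(\sin 2\phi)^{-\sigma s}$ and $\sin\phi\,\overline w^{-s'}\sim(\sin 2\phi)^{1-\sigma s'}$ for sufficiently large $b$. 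The sub-solution bound is easier since $-L_\beta\underline w$ is bounded while the RHS of~(4.1) is unbounded at the endpoints, so choosing $a$ small suffices. The comparison $\underline w\le w\le\overline w$ then follows from the maximum principle applied to the differences, using that $t\mapsto c_1t^{-s}+c_2\sin\phi\,t^{-s'}$ is decreasing so the induced zeroth-order potentials are non-negative.

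For fixed $\varepsilon>0$, the bounded nonlinearity $g_\varepsilon$ renders $G_\varepsilon$ of linear growth, so the functional $I_\varepsilon$ in~(1.29) is coercive on $H^1_0(0,\pi/2)$ (by $\mu_1>0$) and weakly lower semicontinuous; a minimizer $w_\varepsilon\in C^\infty[0,\pi/2]$ exists by elliptic bootstrap. The maximum principle gives $w_\varepsilon\ge 0$ since $g_\varepsilon\ge 0$, and strict positivity in the open interval follows from $-L_\beta w_\varepsilon=g_\varepsilon>0$. The crucial point is that the barrier construction above is \emph{uniform in $\varepsilon$}: replacing $w^{-s}$ by $(w_++\varepsilon)^{-s}$ only decreases the RHS, preserving the super-solution inequality; for the sub-solution one uses $(t+\varepsilon)^{-s}\ge (2t)^{-s}$ on $\{t\ge\varepsilon\}$ together with a direct check on the thin boundary layer $\{t<\varepsilon\}$. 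This produces constants $a,b$ independent of $\varepsilon$ with $a\sin 2\phi\le w_\varepsilon\le b(\sin 2\phi)^\sigma$.

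The limit passage is then routine: the interior lower bound $w_\varepsilon\ge a\sin 2\phi$ keeps $g_\varepsilon(\phi,w_\varepsilon)$ uniformly bounded on compact subsets of $(0,\pi/2)$, so interior Schauder estimates yield a subsequence converging in $C^2_{\mathrm{loc}}$ to a positive $w\in C^\infty(0,\pi/2)\cap C_0[0,\pi/2]$ satisfying the same barrier. Uniqueness follows by testing $-L_\beta(w_1-w_2)=c_1(w_1^{-s}-w_2^{-s})+c_2\sin\phi\,(w_1^{-s'}-w_2^{-s'})$ against $(w_1-w_2)_+$: the RHS contribution is non-positive by monotonicity, while the LHS is bounded below by $\mu_1\|(w_1-w_2)_+\|_{L^2}^2\ge 0$, forcing $w_1\le w_2$ and symmetrically $w_1=w_2$. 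The boundary regularity comes from plugging $w\sim C\sin 2\phi$ back into the ODE: for $-2<\beta<-3/2$ both $s,s'<1$, so $w''\sim -c_1C^{-s}\phi^{-s}-c_2C^{-s'}\phi^{1-s'}$ integrates to give $w'\in C^{0,1-s}=C^{0,2+2/\beta}$ up to the endpoints, while for $-1<\beta<0$ one has $s>1$ and $w'$ blows up. The main technical obstacle is the uniform-in-$\varepsilon$ lower barrier, whose delicate matching of boundary powers is exactly what forces the exceptional exponents $\beta=-1$ and $\beta=-3/2$ in~(4.3) to be excluded for the respective coefficients $c_1$ and $c_2$.
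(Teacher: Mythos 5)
Your overall architecture is exactly the paper's (Section 4): regularize with $g_{\varepsilon}$, minimize the coercive functional on $H^{1}_{0}(0,\pi/2)$ using $\mu_1=4-\beta^{2}>0$, establish $\varepsilon$-uniform barriers $a\sin 2\phi\leq w_{\varepsilon}\leq b(\sin 2\phi)^{\sigma}$ with $\beta^{2}/4<\sigma\leq\min\{1,-2\beta/3\}$, pass to the limit by interior estimates, and read off the boundary regularity from the ODE. The barrier computations, the monotonicity trick that produces a non-negative potential $\tilde c(\phi)$, and the treatment of the exceptional exponents all match Propositions 4.10--4.11 and Lemmas 4.17--4.18.

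The one step where you diverge from the paper is also where there is a genuine gap: you prove the maximum principle and the uniqueness by testing against $w_-$, resp.\ $(w_1-w_2)_+$, and invoking the Poincar\'e inequality. This requires the function being tested to lie in $H^{1}_{0}(0,\pi/2)$. That is fine for the approximate solutions $w_{\varepsilon}\in C^{\infty}(\overline{D})$, so your uniform barrier argument survives; but solutions of the singular problem (4.1) need not be in $H^{1}$. Indeed, near $\phi=0$ one has $w\sim c\,\phi^{2/(s+1)}$, so $|w'|^{2}\sim\phi^{2(1-s)/(1+s)}$, which fails to be integrable once $s\geq 3$, i.e.\ for $-1/2\leq\beta<0$ (and the behavior at $\phi=\pi/2$ governed by $s'$ is worse still). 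In that range your uniqueness argument cannot be run as written, and the direct comparison $\underline w\leq w\leq\overline w$ for solutions of (4.1) (as opposed to (4.12)) has the same problem. The paper avoids this entirely by proving the maximum principle pointwise (Lemma 4.9): substitute $v=w/\varphi$ with $\varphi=\sin 2\phi$, apply the classical maximum principle on the subintervals $D_{\varepsilon}=(\varepsilon,\pi/2-\varepsilon)$, and let $\varepsilon\to 0$ using only continuity of $w$ up to the boundary; uniqueness (Lemma 4.13) then follows for $C^{2}(D)\cap C_0(\overline{D})$ solutions with no integrability hypothesis. You should either adopt that route or justify the integration by parts on shrinking subintervals with explicit control of the boundary terms. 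Two smaller imprecisions: near $\phi=\pi/2$ one has $\sin\phi\to 1$, not $\sin\phi\sim\sin 2\phi$, so the super-solution condition is $2-\sigma\geq\sigma s'$ (obtained by simply bounding $\sin\phi\leq 1$, as in Proposition 4.11) rather than the exponent $1-\sigma s'$ you quote; and when $c_1=0$ the divergence of $w'$ for $-1<\beta<0$ must be extracted from the $c_2$-term with exponent $s'$, not from $s>1$.
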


\begin{rem}[The second derivative at the boundary]
\label{2ndderrem}
For $c_1\neq 0$, $w''$ diverges at $\phi=0$ and $\pi/2$ due to the negative nonlinear powers. For $c_1=0$ and $c_2\neq 0$, $w''$ diverges at $\phi=\pi/2$.
\end{rem}

\subsection{The autonomous equation}
Elgindi and Huang \cite[Lemma 5.3]{EH} showed the existence of solutions to the autonomous equation (4.1) for constants $-2<\beta<-1$, $c_1>0$, and $c_2=0$ by integrating the equation. We first note that the existence of solutions to the autonomous equation for $-1<\beta<0$ also follows from the same argument. 

\begin{thm}
Let $(\beta,c_1,c_2)$ satisfy (4.3). Assume that $c_2=0$ and $\beta\neq -1$. Then, there exists a unique positive solution $w\in C^{\infty}(0,\pi/2)\cap C_0[0,\pi/2]$ to (4.2). For $-2<\beta< -1$, $w\in  C^{1, 2+2/\beta}[0,\pi/2]$. For $-1<\beta<0$, $w\in C^{-\beta}[0,\pi/2]$. 
\end{thm}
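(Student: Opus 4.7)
The plan is to exploit the fact that with $c_{2}=0$ the equation $-w''=\beta^{2}w+c_{1}w^{-s}$ is autonomous and admits an explicit first integral. Multiplying by $w'$ and integrating gives
\begin{equation*}
\tfrac12(w')^{2}+F(w)=E,\qquad F(w)=\tfrac{\beta^{2}}{2}w^{2}+\tfrac{c_{1}\beta}{2(\beta+1)}w^{2+2/\beta},
\end{equation*}
for some constant $E$. Since $F'(w)=\beta^{2}w+c_{1}w^{-s}>0$ on $(0,\infty)$, $F$ is strictly increasing, so any positive solution attains a unique maximum $w_{*}\in(0,\infty)$ characterized by $E=F(w_{*})$. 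The autonomous structure and the equal Dirichlet values force any such solution to be symmetric about $\phi=\pi/4$ and strictly increasing on $(0,\pi/4)$ (via the reflection $\phi\mapsto\pi/2-\phi$ and uniqueness for the first-order problem $w'=\sqrt{2(F(w_{*})-F(w))}$). Quadrature then reduces the Dirichlet condition $w(\pi/2)=0$ to $T(w_{*})=\pi/4$, where, after the change of variable $v=w/w_{*}$,
\begin{equation*}
T(w_{*})=\int_{0}^{w_{*}}\frac{dw}{\sqrt{2(F(w_{*})-F(w))}}=\int_{0}^{1}\frac{dv}{\sqrt{\beta^{2}(1-v^{2})+\tfrac{c_{1}\beta}{\beta+1}\,w_{*}^{2/\beta}\,(1-v^{2+2/\beta})}}.
\end{equation*}

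Next I would show that $T:(0,\infty)\to(0,\pi/(2|\beta|))$ is a continuous, strictly increasing bijection. In both ranges $-2<\beta<-1$ and $-1<\beta<0$ the second summand under the square root is strictly positive on $(0,1)$: in the first case $\beta/(\beta+1)>0$ and $1-v^{2+2/\beta}>0$ with $2+2/\beta\in(0,1)$, while in the second $\beta/(\beta+1)<0$ and $1-v^{2+2/\beta}<0$ combine to give a positive product. Since $2/\beta<0$, the factor $w_{*}^{2/\beta}$ is strictly decreasing in $w_{*}$, so the integrand is strictly increasing in $w_{*}$, yielding $dT/dw_{*}>0$. As $w_{*}\to 0$ the second term blows up pointwise and dominated convergence gives $T\to 0$. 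As $w_{*}\to\infty$ the second term vanishes pointwise on $(0,1)$, and dominated convergence with dominant $1/(|\beta|\sqrt{1-v^{2}})$ yields $T\to\pi/(2|\beta|)$. Since $|\beta|<2$, the value $\pi/4$ lies in the range of $T$, so a unique $w_{*}$ with $T(w_{*})=\pi/4$ exists, producing a unique positive solution on $(0,\pi/4)$ which extends by reflection across $\phi=\pi/4$. Smoothness on $(0,\pi/2)$ follows from standard ODE bootstrap applied to the smooth nonlinearity $w\mapsto\beta^{2}w+c_{1}w^{-s}$ for $w>0$.

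Finally, the boundary regularity is read off from the first integral. When $-2<\beta<-1$ one has $F(0)=0$, so $w'(0^{+})=\sqrt{2F(w_{*})}>0$ is finite; then $w(\phi)\sim\sqrt{2F(w_{*})}\,\phi$, and the equation gives $|w''(\phi)|\lesssim\phi^{-s}$ with $s=-1-2/\beta\in(0,1)$, so integrating shows that $w'$ is H\"older continuous of exponent $1-s=2+2/\beta$ at the endpoints, i.e.\ $w\in C^{1,\,2+2/\beta}[0,\pi/2]$. When $-1<\beta<0$ the term $w^{2+2/\beta}$ dominates $F$ near zero, so $(w')^{2}\sim-\tfrac{c_{1}\beta}{\beta+1}\,w^{2+2/\beta}$; separating variables and integrating produces the asymptotic $w(\phi)\sim c\,\phi^{-\beta}$ as $\phi\to 0^{+}$, which identifies $w\in C^{-\beta}[0,\pi/2]$ as the optimal H\"older regularity.

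The main obstacle I anticipate is justifying the monotonicity of $T(w_{*})$ and the limit $T(w_{*})\to\pi/(2|\beta|)$ in the range $-1<\beta<0$, where the integrand defining $T$ blows up at $v=0$ because $2+2/\beta<0$ makes $v^{2+2/\beta}$ unbounded. The argument hinges on the sign check above: although individual terms inside the square root become singular as $v\to 0$, their combination remains strictly positive because the two negatives $\beta/(\beta+1)<0$ and $1-v^{2+2/\beta}<0$ multiply to a positive quantity. This positivity also provides the uniform domination $1/(|\beta|\sqrt{1-v^{2}})$ needed to pass to the limit $w_{*}\to\infty$ by dominated convergence, which is what lets the same monotonicity and asymptotic computations go through in both parameter subranges simultaneously.
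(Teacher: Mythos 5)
Your proposal is correct and follows essentially the same route as the paper: reduce to the quarter-interval by symmetry, use the first integral to define the time map (the paper's $\phi_s(B;B)$, your $T(w_*)$), check that it runs from $0$ to $(1+s)\pi/4=\pi/(2|\beta|)>\pi/4$, solve $T(w_*)=\pi/4$, and read off the endpoint regularity from the same asymptotics of the quadrature integral. The only (welcome) deviation is that you obtain uniqueness directly from the strict monotonicity of $T$ in $w_*$ together with the concavity/symmetry of any positive solution, whereas the paper's existence proof only invokes the intermediate value theorem and defers uniqueness to the later maximum-principle argument (Lemma 4.13).
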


\begin{rem}
When $\beta=-1$ and $c_1=c_2=0$, the equation (4.1) takes the form
\begin{equation*}
\begin{aligned}
-w''&=w,\\
w(0)&=w(\pi/2)=0,
\end{aligned}
\end{equation*}
which has no non-trivial solutions.
\end{rem}

\begin{rem}[The first derivative at the boundary]
Multiplying both sides of (4.1) by $2w'$
and integrating, we find 
\begin{align*}
|w'|^{2}+\beta^{2}|w|^{2}+\frac{2c_1}{1-s}w^{1-s}=C,\quad \phi\in [0,\pi/2],
\end{align*}
for some constant $C$. For $-2<\beta<-1$, $s<1$ and $w'$ is continuous at $\phi=0$ and $\pi/2$. Moreover, this identity implies the H\"older continuity $w'\in C^{1-s}[0,\pi/2]$ for $s=-1-2/\beta$. For $-1<\beta<0$, $s>1$ and $w'$ diverges at $\phi=0$ and $\pi/2$.  
\end{rem}

\begin{proof}[Proof of Theorem 4.3]
For simplicity, we take $c_1=1$. We construct positive increasing solutions to (4.1) with $c_1 = 1, c_2 = 0$ in $(0,\pi/4)$ and extend them to $(\pi/4,\pi/2)$ by reflection $w(\phi)=w(\pi/2-\phi)$. Namely, it suffices to construct solutions to the equations
 \begin{equation}
 \begin{aligned}
 -w''&=\beta^{2}w+w^{-s},\\
 w&>0,\quad w'>0,\quad  \phi\in (0,\pi/4),\\
 w(0)&=w'(\pi/4)=0.
 \end{aligned}
\end{equation}
For a constant $B>0$, we define the function
\begin{align}
\phi_s(w;B)=\int_{0}^{w}\frac{\dd \eta}{\displaystyle\sqrt{\beta^{2}(B^{2}-\eta^{2})+\frac{2}{1-s}\left(B^{1-s}-\eta^{1-s} \right)}},\quad 0\leq w\leq B.
\end{align}
The function $\phi_s(w; B)\in C^{1}(0,B]\cap C[0,B]$ is positive and  increasing and satisfies $\phi_s(0;B)=0$. Since $d \phi_s/d w>0$, the inverse function $w=w(\phi)$ exists and satisfies 
\begin{align*}
w'(\phi)=\sqrt{\beta^{2}(B^{2}-w^{2})+\frac{2}{1-s}\left(B^{1-s}-w^{1-s} \right)},
\qquad w'(0) = 0.
\end{align*}
By differentiating $|w'(\phi)|^{2}$, $w$ satisfies the equations (4.4$)_1$. 

We now show that there is $B > 0$, so that
$w'(\pi/4) \!\!=\!\! 0$. Note that $\phi_s(B;B)$ is
continuous for $B \geq 0$ and
\begin{equation*}
\lim_{B\to0}\phi_s(B;B)=0,\quad \lim_{B\to\infty}\phi_s(B;B)=\frac{(1+s)\pi}{4}>\frac{\pi}{4}.
\end{equation*}
Thus there exists $B_s>0$ such that $\phi_s(B_s; B_s)=\pi/4$. Since
\begin{equation*}
\phi_s(w(\pi/4),B_s)=\frac{\pi}{4}=\phi_s(B_s,B_s),
\end{equation*}
and $\phi_s(w;B_s)$ is increasing, we conclude that $w(\pi/4)=B_s$ and 
\begin{align*}
w'(\pi/4)=\sqrt{\beta^{2}\left(B^{2}_s-w(\pi/4)^{2}\right)+\frac{2}{1-s}\left(B^{1-s}_s-w(\pi/4)^{1-s}   \right) }=0.
\end{align*}
The right-hand side of (4.4$)_1$ is a smooth function for $w$ and hence $w\in C^{\infty}(0,\pi/2)$. 

By the integral representation (4.5) for $-1<\beta<0$, 
\begin{align*}
\lim_{w\to 0}\frac{\phi_s(w)}{w^{\frac{2}{s+1}}}=\left( \frac{1}{s+1} \sqrt{\frac{s-1}{2}} \right)^{\frac{2}{s+1}}.
\end{align*}
Thus $w\in C^{-\beta}[0,\pi/2]$. For $-2<\beta<-1$, $w\in C^{1,2+2/\beta}[0,\pi/2]$. For $-1<\beta<0$, $w'$ diverges at $\phi=0$ and $\pi/2$ as in Remark 4.5.
\end{proof}


\begin{rem}[Symmetry for the autonomous equation]
More generally, we can construct sign-changing solutions to the autonomous equation (4.1) with $c_2=0$ in $(0,\pi)$ by dividing $(0,\pi)$ into $2m$ intervals $[j\pi/2m,(j+1)\pi/2m ]$ for $m\geq 2$ and $j=0,\cdots,2m-1$. The case $m=2$ is the smallest integer for which we can construct solutions for all $-2<\beta<0$. For $m=1$, the condition $\lim_{B\to\infty}\phi(B;B)= (1+s)\pi/4>\pi/2$ imposes the restriction $-1<\beta<0$.  
\end{rem}

\subsection{Outline of construction
in the non-autonomous case }

We outline the construction of solutions to the non-autonomous equation (4.1) for $c_2\neq 0$. We set the operator $L_{\beta}$, the domain $D$, and the function $g$ by
\begin{equation}
\begin{aligned}
\label{singular}
L_{\beta}&=\partial_{\phi}^{2}+\beta^{2},\\
D&=(0,\pi/2),\\
g(\phi,w)&=c_1w^{-s}+c_2\sin\phi w^{-s'},
\end{aligned}
\end{equation}
and recast the problem (4.1) as 
\begin{equation}
\begin{aligned}
\label{recast}
-L_{\beta}w&=g(\phi,w), \quad
w>0\quad \textrm{in}\ D,\\
w&=0\quad \textrm{on}\ \partial D.
\end{aligned}
\end{equation}
 The properties of the operator $-L_{\beta}$ on $D=(0,\pi/2)$ are parallel to those on $(0,\pi)$ demonstrated in Section 3:
\begin{prop}
The eigenvalues of the operator $-L_{\beta}=-\partial_{\phi}^{2}-\beta^{2}$ on $D=(0,\pi/2)$ are $\mu_k=4k^{2}-\beta^{2}$ and the eigenfunctions $e_{k}=(2/\sqrt{\pi})\sin(2k \phi)$ are orthonormal basis on $L^{2}(D)$. 
\end{prop}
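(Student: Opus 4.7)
The plan is to reduce this to a standard Dirichlet eigenvalue problem on the interval $(0,\pi/2)$, since the $-\beta^2$ term only produces a uniform shift of the spectrum. Specifically, if $e\in H^2(D)\cap H^1_0(D)$ satisfies $-L_\beta e=\mu e$ with $e(0)=e(\pi/2)=0$, then rewriting as $-e''=(\mu+\beta^2)e$ shows that $(\mu+\beta^2,e)$ is an eigenpair for the Dirichlet Laplacian on $(0,\pi/2)$. So it suffices to identify the eigenpairs of $-\partial_\phi^2$ on $D$ with homogeneous Dirichlet conditions and then translate back.

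Next, I would solve the ODE $-e''=\lambda e$ with $e(0)=e(\pi/2)=0$ by elementary methods. For $\lambda\le 0$ the only solution is trivial. For $\lambda>0$, the general solution is $e=A\cos(\sqrt{\lambda}\phi)+B\sin(\sqrt{\lambda}\phi)$; the condition $e(0)=0$ forces $A=0$, and then $e(\pi/2)=0$ yields $\sin(\sqrt{\lambda}\pi/2)=0$, i.e.\ $\sqrt{\lambda}=2k$ for some $k\in\mathbb{N}$. Hence the Dirichlet eigenvalues of $-\partial_\phi^2$ are $\lambda_k=4k^2$ with eigenfunctions proportional to $\sin(2k\phi)$, and translating gives $\mu_k=4k^2-\beta^2$ with the same eigenfunctions for $-L_\beta$.

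For the orthonormal basis claim I would, first, normalize by computing
\begin{align*}
\int_0^{\pi/2}\sin^2(2k\phi)\,\dd\phi=\frac{\pi}{4},
\end{align*}
which fixes the prefactor $2/\sqrt{\pi}$ in $e_k$. Orthogonality $(e_k,e_l)_{L^2}=\delta_{k,l}$ for $k\neq l$ follows either from a direct computation using product-to-sum formulas, or more conceptually from the fact that $-L_\beta$ is symmetric on $C_c^\infty(D)$, so eigenfunctions associated to distinct eigenvalues are orthogonal in $L^2(D)$.

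Completeness of $\{e_k\}_{k\ge 1}$ in $L^2(D)$ is the only non-elementary step, and it is a quotable classical fact: the operator $-\partial_\phi^2$ with Dirichlet boundary conditions on a bounded interval is self-adjoint with compact resolvent, so its $L^2$-normalized eigenfunctions form an orthonormal basis of $L^2(D)$. Equivalently, the change of variables $\psi=2\phi$ carries $\{e_k\}$ to the classical Fourier sine basis on $(0,\pi)$, whose completeness is standard. This is really the only substantive ``obstacle''; the rest of the proposition is bookkeeping parallel to Proposition 3.4, with $D=(0,\pi)$ replaced by $D=(0,\pi/2)$ and the eigenvalues shifted from $k^2-\beta^2$ to $4k^2-\beta^2$.
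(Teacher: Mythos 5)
Your proof is correct and complete; the paper itself states this proposition without proof (as it does for the parallel Proposition 3.4 on $(0,\pi)$), treating it as a classical fact about the Dirichlet Laplacian on an interval. Your reduction to $-e''=(\mu+\beta^2)e$, the explicit ODE solution, the normalization $\int_0^{\pi/2}\sin^2(2k\phi)\,\dd\phi=\pi/4$, and the appeal to completeness of the Fourier sine basis (via the substitution $\psi=2\phi$ or the compact-resolvent argument) together supply exactly the standard justification the authors left implicit.
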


\subsubsection{Bilinear form estimates}

The bilinear form associated with the operator $-L_{\beta}$ is 
\begin{equation*}
B(w,\eta)=\int_{D}(w'\eta'-\beta^{2}w\eta)\dd \phi,\quad w,\eta \in H^{1}_{0}(D).
\end{equation*}

The same arguments we used to prove (3.22) and (3.23) give the following result.
\begin{lem}
For $-2<\beta<0$, the bilinear form satisfies 
\begin{equation}
\label{s4Bcoer}
B(w,w)\geq \mu_1 ||w||_{L^{2}}^{2},\quad w\in H^{1}_{0}(D).
\end{equation}
Moreover, there exists a constant $\delta>0$ such that 
\begin{equation}
\label{s4Bcoer2}
B(w,w)\geq \delta ||w||_{H^{1}_{0}}^{2},\quad w\in H^{1}_{0}(D).
\end{equation}
\end{lem}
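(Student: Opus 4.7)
The plan is to mirror the two-step argument used to prove Lemma 3.11 in the setting of the semicircle $(0,\pi)$, adapted to the present domain $D=(0,\pi/2)$ with its modified spectrum. The key observation is that under the hypothesis $-2<\beta<0$ the entire spectrum $\{\mu_k=4k^2-\beta^2\}_{k\geq 1}$ of $-L_\beta$ on $D$ is strictly positive, since $\mu_1=4-\beta^2>0$. This is precisely the feature that made the coercivity argument work in the range $0<\beta<1$ on $(0,\pi)$, and it allows us to dispense entirely with the direct-sum decomposition needed in the case $1\leq \beta$.

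For the first estimate \eqref{s4Bcoer}, I would compute $B(e_k,e_l)=\mu_k\delta_{k,l}$ via integration by parts using $-e_k''-\beta^2 e_k=\mu_k e_k$ and the Dirichlet boundary conditions. Then for $w\in H^1_0(D)$, expanding in the orthonormal basis $\{e_k\}$ from Proposition 4.8 and using Parseval gives
\begin{equation*}
B(w,w)=\sum_{k=1}^\infty \mu_k |(w,e_k)_{L^2}|^2 \geq \mu_1\sum_{k=1}^\infty |(w,e_k)_{L^2}|^2 = \mu_1\|w\|_{L^2}^2,
\end{equation*}
since $\mu_k\geq \mu_1$ for all $k\geq 1$.

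For the stronger coercivity \eqref{s4Bcoer2}, I would run a standard compactness/contradiction argument. Set
\begin{equation*}
\delta=\inf\{B(w,w): w\in H^1_0(D),\ \|w\|_{H^1_0}=1\},
\end{equation*}
which is $\geq 0$ by \eqref{s4Bcoer}. Take a minimizing sequence $\{w_n\}$; by Rellich compactness, up to a subsequence $w_n\rightharpoonup w$ weakly in $H^1_0(D)$ and strongly in $L^2(D)$. Then
\begin{equation*}
\delta=\lim_{n\to\infty}B(w_n,w_n)=\lim_{n\to\infty}\bigl(\|w_n'\|_{L^2}^2-\beta^2\|w_n\|_{L^2}^2\bigr)=\lim_{n\to\infty}\|w_n'\|_{L^2}^2-\beta^2\|w\|_{L^2}^2,
\end{equation*}
and by lower semicontinuity of the $H^1_0$ seminorm and the $L^2$ estimate \eqref{s4Bcoer},
\begin{equation*}
\delta\geq B(w,w)\geq \mu_1\|w\|_{L^2}^2.
\end{equation*}
If $w\equiv 0$, then $\delta=\lim\|w_n'\|_{L^2}^2=1$. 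Otherwise $\delta\geq \mu_1\|w\|_{L^2}^2>0$. Either way $\delta>0$, which gives \eqref{s4Bcoer2}.

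I expect no essential obstacle: the proof is a direct transcription of Lemma 3.11(i), with the only substantive input being the strict positivity of the full spectrum on $(0,\pi/2)$ in the regime $-2<\beta<0$. The most subtle point worth flagging explicitly is the use of the compact embedding $H^1_0(D)\hookrightarrow L^2(D)$ to promote weak $H^1_0$ convergence to strong $L^2$ convergence in the minimizing sequence, so that the indefinite term $-\beta^2\|w_n\|_{L^2}^2$ passes to the limit in equality rather than in an inequality.
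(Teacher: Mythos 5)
Your proposal is correct and follows essentially the same route as the paper, which simply states that "the same arguments we used to prove (3.22) and (3.23) give the result" — i.e., the Fourier-expansion argument of Lemma 3.10(i) for the $L^2$ bound and the compactness/contradiction argument of Lemma 3.11 for coercivity. Your key observation that $\mu_1=4-\beta^2>0$ for $-2<\beta<0$ makes the whole spectrum on $(0,\pi/2)$ positive, so no direct-sum decomposition is needed, is exactly the point that lets the $0<\beta<1$ argument transfer verbatim.
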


\subsubsection{Pointwise estimates}

We construct solutions to the singular elliptic problem (4.7) based on the argument of del Pino \cite{delPino} using the maximum principle; see Remark 4.14. We prepare the following maximum principle for the operator $L_{\beta}$.
 
\begin{lem}[Maximum principle]
Let $-2<\beta<0$. Let $c(\phi)\geq 0$ be a non-negative and locally bounded function in $D$. Suppose that $w\in C^{2}(D)\cap C(\overline{D})$ satisfies 
\begin{equation*}
-L_{\beta}w+c(\phi)w\geq 0,\quad \phi\in D.
\end{equation*}
Then, $w\geq \inf_{\partial D}w$. If $\inf_{\partial D}w=0$, $w$ is positive in $D$.  
\end{lem}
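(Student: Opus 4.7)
The result has two layers—the weak inequality and, when $\inf_{\partial D}w=0$, strict positivity in $D$—and both rely on two ingredients already at hand: the coercivity estimate~\eqref{s4Bcoer} for the bilinear form~$B$, and the positive principal eigenfunction $\varphi(\phi):=\sin(2\phi)$ of $-L_{\beta}$ on $D=(0,\pi/2)$, with eigenvalue $\mu_{1}=4-\beta^{2}>0$ (cf.\ Proposition following~\eqref{singular}). My approach is to dispatch the first layer by an $L^{2}$ energy identity and the second layer by conjugation.

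\textbf{Step 1 (weak inequality).} I would reduce to the case $m:=\inf_{\partial D}w\ge 0$, which is what the applications to positive solutions of~\eqref{recast} in fact use. The strategy is to test the distributional inequality against $w_{-}:=\max(-w,0)\ge 0$, which lies in $C(\overline D)$ and vanishes on~$\partial D$ because $w\ge 0$ there. Multiplying by a smooth cutoff $\chi_{\varepsilon}$ supported in $D$ (to legitimize integration by parts, since $w\in C^{2}(D)\cap C(\overline D)$ is not a priori in $H^{1}(D)$) and then sending $\varepsilon\to 0$, one obtains
\begin{equation*}
0\;\le\;\int_{D}\!\bigl[-L_{\beta}w+c\,w\bigr]\,w_{-}\,d\phi
\;=\;-\!\int_{D}\!\bigl(|(w_{-})'|^{2}-\beta^{2}w_{-}^{2}\bigr)\,d\phi
-\int_{D}\!c(\phi)\,w_{-}^{2}\,d\phi
\;=\;-B(w_{-},w_{-})-\int_{D}\!c\,w_{-}^{2}\,d\phi.
\end{equation*}
By~\eqref{s4Bcoer}, $B(w_{-},w_{-})\ge\mu_{1}\|w_{-}\|_{L^{2}}^{2}$, and $c\ge 0$, so $\mu_{1}\|w_{-}\|_{L^{2}}^{2}\le 0$. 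Since $\mu_{1}>0$, this forces $w_{-}\equiv 0$, i.e., $w\ge 0=\inf_{\partial D}w$ in $D$.

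\textbf{Step 2 (strict positivity).} Now assume $\inf_{\partial D}w=0$, so by Step~1, $w\ge 0$ in~$D$. To upgrade this to $w>0$, I would employ the conjugation $w=\varphi v$ with $v:=w/\varphi\ge 0$ in $D$. Using $\varphi''=-4\varphi$, a direct computation gives
\begin{equation*}
-L_{\beta}w+c\,w\;=\;\varphi\,\Bigl[-v''-2\,\tfrac{\varphi'}{\varphi}\,v'+\bigl(4-\beta^{2}+c(\phi)\bigr)\,v\Bigr]\;\ge\;0.
\end{equation*}
Dividing by $\varphi>0$ produces a linear second-order inequality for~$v$ whose zero-order coefficient $4-\beta^{2}+c(\phi)$ is strictly positive and whose drift $-2\varphi'/\varphi$ is locally bounded in~$D$. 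The classical strong maximum principle (e.g., Evans \S6.4 Theorem~4, applied to~$v$) then forbids a nonnegative supersolution from vanishing at an interior point unless it is identically zero; hence, if $w(\phi_{0})=0$ at some $\phi_{0}\in D$, then $v\equiv 0$ and therefore $w\equiv 0$. Either way, $w\equiv 0$ or $w>0$ throughout $D$.

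\textbf{Main obstacle.} The delicate point is making the IBP in Step~1 rigorous, because $C^{2}(D)\cap C(\overline D)$ does not force $w\in H^{1}(D)$: for $-1<\beta<0$ the derivative of $w$ may blow up at $\partial D$ (cf.\ Remark~\ref{2ndderrem} and Remark~4.5 in the autonomous case). The cutoff $\chi_{\varepsilon}$ resolves this: the boundary terms produced by integration by parts of $\chi_{\varepsilon}w_{-}$ vanish in the $\varepsilon\to 0$ limit because $w_{-}\in C(\overline D)$ already vanishes on $\partial D$, while the bulk integrals converge by dominated convergence once one uses that $c$ is locally bounded and $w_{-}^{2}$ is uniformly bounded on~$\overline D$. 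The conjugation argument of Step~2 is pointwise in~$D$ and thus avoids any such regularity issue, which is why it is the natural tool for the strict-positivity conclusion.
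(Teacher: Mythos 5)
Your Step 2 reproduces the paper's argument for strict positivity essentially verbatim: conjugate by the principal eigenfunction $\varphi=\sin2\phi$ and apply the strong maximum principle to $v=w/\varphi$, whose zero-order coefficient $4-\beta^{2}+c(\phi)$ is nonnegative. The real divergence is in Step 1, where you replace the paper's pointwise argument (the classical weak maximum principle for $v$ on $D_{\varepsilon}=(\varepsilon,\pi/2-\varepsilon)$, followed by $\varepsilon\to0$ using only $w\in C(\overline D)$) by an $L^{2}$ energy estimate against $w_{-}$. The algebra of that estimate is correct, and coercivity $B(w_{-},w_{-})\geq\mu_{1}\|w_{-}\|_{L^{2}}^{2}$ with $\mu_{1}=4-\beta^{2}>0$ would indeed close the case $\inf_{\partial D}w=0$. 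But the step you yourself flag as the main obstacle is not resolved by what you wrote. Integrating $(-L_{\beta}w+cw)\chi_{\varepsilon}w_{-}\geq0$ by parts produces, besides the good terms, the commutator term $\int_{D}w'\chi_{\varepsilon}'w_{-}\,\mathrm{d}\phi$; here $\chi_{\varepsilon}'=O(\varepsilon^{-1})$ on a set of measure $O(\varepsilon)$, $w_{-}$ merely tends to $0$ at $\partial D$, and $w'$ is a priori uncontrolled there. If, say, $w_{-}\sim\phi^{\delta}$ near $\phi=0$ with $0<\delta<1/2$, this term is of order $\varepsilon^{2\delta-1}\to\infty$; dominated convergence says nothing about it. There is also a mild circularity: the coercivity estimate (4.9) presupposes $w_{-}\in H^{1}_{0}(D)$, which is precisely what is in question, and the term $\int_{D}c\,w_{-}^{2}$ need not even be finite since $c$ is only locally bounded in $D$ (it has a favorable sign and must simply be discarded before passing to the limit, not "converged").

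The gap is fixable, but it needs an ingredient you did not supply. On a component $(0,b)$ of $\{w<0\}$ touching the boundary one has $cw\leq0$, hence $w''\leq-\beta^{2}w\leq\beta^{2}\sup_{D}|w|$ there; thus $w-\tfrac{1}{2}\beta^{2}(\sup_{D}|w|)\phi^{2}$ is concave on $(0,b)$ and vanishes at $\phi=0$, and concavity of a function vanishing at the endpoint with nonpositive values forces its derivative to stay bounded near $\phi=0$. Hence $w_{-}$ is actually Lipschitz up to $\partial D$ on such components, the commutator term vanishes, and $w_{-}\in H^{1}_{0}(D)$. This one-sided bound on $w''$ is the missing observation; without it the cutoff argument does not go through. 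The paper's route avoids the issue entirely because the weak maximum principle on $D_{\varepsilon}$ uses only the boundary values $w(\varepsilon)$, $w(\pi/2-\varepsilon)$ and continuity of $w$ on $\overline D$. Two minor further remarks: your Step 1 yields $w\geq0$ rather than $w\geq\inf_{\partial D}w$ when the boundary infimum is strictly positive (harmless, since every application in Section 4 has $\inf_{\partial D}w=0$), and your explicit caveat that the conclusion of Step 2 is "$w\equiv0$ or $w>0$" is in fact slightly more careful than the paper's phrasing.
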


\begin{proof}
For $\varphi=\sin2\phi$, the function $v=w/\varphi$ satisfies
\begin{align*}
0\leq -Lw+c(\phi)w=-w''-\beta^{2}w+c(\phi)w
=\varphi\left(-v''-2(\log\varphi)'v'+(4-\beta^{2}+c(\phi))v \right).
\end{align*}
Since $4-\beta^{2}+c(\phi)\geq 0$
and $\sin 2\phi/\sin 2\varepsilon \geq 1,$ the classical maximum principle \cite[p.6, Theorem 3]{PW} implies  
\begin{equation*}
\frac{w}{\varphi}=v\geq \inf_{\partial D_{\varepsilon}}v=\frac{1}{\sin 2\varepsilon}\min\{w(\varepsilon), w(\pi/2-\varepsilon)\},\quad \phi \in D_{\varepsilon}=(\varepsilon,\pi/2-\varepsilon).
\end{equation*}
Since $\min\{w(\varepsilon), w(\pi/2-\varepsilon)\}$ is continuous for $\varepsilon\!>\!0$, 
letting $\varepsilon\to 0$ gives $w\geq \inf_{\partial D}w$. 

We assume that $\inf_{\partial D}w=0$. If $w\geq 0$ takes a minimum in $D$, there exists $\phi_0\in D$ such that $w(\phi_0)=0$. By the strong maximum principle \cite[p.6, Theorem 3]{PW}, the function $v=w/\varphi\geq 0$ does not take a minimum in $D$. This contradicts $v(\phi_0)=0$. Thus, $w$ is positive in $D$.   
\end{proof}

We will use the maximum principle to derive an a priori estimate of the form  
\begin{align}
a \sin2\phi\leq w(\phi)\leq b (\sin2\phi)^{\sigma},\quad \phi\in D,
\end{align}
for solutions $w\in C^{\infty}(D)\cap C(\overline{D})$ to (4.7) with constants $\beta^{2}/4<\sigma\leq \min\{1, -2\beta/3\}$ satisfying $\sigma\neq 1$ and $a=a(\beta)$ and $b=b(\beta)$ (Lemma 4.12). The maximum principle also implies the uniqueness of solutions.

\subsubsection{Existence of solutions}

We define the bounded function 
\begin{align}
g_{\varepsilon}(\phi,t)=c_1(t_{+}+\varepsilon)^{-s}+c_2\sin\phi (t_{+}+\varepsilon)^{-s'},\quad \varepsilon>0,
\end{align}
and construct solutions to the approximate problem 
\begin{equation}
\begin{aligned}
-L_{\beta}w&=g_{\varepsilon}(\phi,w),\quad \textrm{in} D,\\
w&=0\quad \textrm{on}\ \partial D.
\end{aligned}
\end{equation}
The functional associated with this problem is 
\begin{align}
I[w]=\frac{1}{2}B(w,w)-J[w],\quad J[w]=\int_{D}G(\phi,w)\dd \phi,\quad 
G(\phi,w)=\int_{0}^{w}g_{\varepsilon}(\phi ,t)\dd t.
\end{align}
The functional $I$ is Fr\'echet differentiable, i.e., $I\in C^{1}(H^{1}_{0}(D); \mathbb{R} ) $, and has
Fr\'echet derivative
\begin{equation}
\langle I'[w],\eta\rangle=B(w,\eta)-(g(\phi,w),\eta)_{L^{2}},\quad \eta\in H^{1}_{0}(D).
\end{equation}
We find critical points of (4.13) by minimizing the functional $I[\cdot]$ on $H^{1}_{0}(D)$ using the bilinear form estimates (4.8) and (4.9) and construct smooth solutions $w\in C^{\infty}(\overline{D})\cap C_0(\overline{D})$ to (4.12). Solutions of (4.12) are positive by the maximum principle.  
We then apply the estimate (4.10) for approximate solutions and obtain solutions to (4.7) by letting $\varepsilon\to 0$.
\subsection{Pointwise estimates}

We first prove an a priori estimate for solutions to (4.7).

\begin{prop}
Let $(\beta,c_1,c_2)$ satisfy (4.3). Let $\varphi=\sin2\phi$. There exists a constant $a=a(\beta)>0$ such that 
\begin{equation}
-L_{\beta}(a\varphi)\leq g(\phi,a\varphi),\quad \phi\in D.
\end{equation}
\end{prop}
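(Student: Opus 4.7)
The plan is a direct computation exploiting that $\varphi=\sin 2\phi$ is an eigenfunction of $\partial_\phi^2$ on $D$, combined with the observation that the nonlinearities on the right-hand side have negative powers in $w$ and therefore dominate any linear-in-$\varphi$ quantity once $a$ is small.

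First I compute, using $\varphi''=-4\varphi$,
\[
-L_\beta(a\varphi) \ = \ -a\varphi'' - \beta^{2} a\varphi \ = \ (4-\beta^{2})\,a\varphi.
\]
For $-2<\beta<0$ the factor $4-\beta^{2}$ is positive and bounded, so the desired subsolution inequality reduces to
\[
(4-\beta^{2})\,a\varphi \ \leq \ c_1(a\varphi)^{-s} + c_2\sin\phi\,(a\varphi)^{-s'} \quad \text{on } D.
\]
The relevant exponent ranges for $-2<\beta<0$ are $s=-1-2/\beta>0$ and $s'=-1-3/\beta>1/2$, so both powers $-s,-s'$ on the right are negative.

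Next I split into two cases according to which constant is nonzero under (4.3). If $c_1>0$, the inequality $(4-\beta^{2})a\varphi\leq c_1(a\varphi)^{-s}$ rearranges to $(4-\beta^{2})a^{1+s}\varphi^{1+s}\leq c_1$; since $\varphi\leq 1$ and $1+s>0$, it suffices to take $a\leq (c_1/(4-\beta^{2}))^{1/(1+s)}$. If instead $c_1=0$ (which by (4.3) forces $\beta=-1$ and $c_2>0$), I use the elementary bound $\sin\phi \geq \varphi/2$ coming from $\varphi=2\sin\phi\cos\phi$ and $\cos\phi\leq 1$; this reduces $(4-\beta^{2})a\varphi\leq c_2\sin\phi\,(a\varphi)^{-s'}$ to $(4-\beta^{2})a^{1+s'}\varphi^{s'}\leq c_2/2$, which again holds for small $a$ since $\varphi^{s'}\leq 1$.

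There is no real obstacle: the singular nonlinearity on the right blows up like $\varphi^{-s}$ or $\varphi^{-s'}$ near $\partial D$ and easily absorbs the linearly vanishing left-hand side, while at interior points $\varphi$ is bounded below and the inequality collapses to a comparison of constants that is won by taking $a$ small. The constant $a=a(\beta)$ appearing in the statement is simply the minimum of the two explicit thresholds above (depending of course also on $c_1,c_2$).
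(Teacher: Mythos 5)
Your proof is correct and follows essentially the same route as the paper: compute $-L_\beta(a\varphi)=(4-\beta^2)a\varphi$, then absorb this into whichever nonlinear term is active, using $\varphi^{1+s}\leq 1$ when $c_1>0$ and $\sin\phi\geq \varphi/2$ together with $\varphi^{s'}\leq 1$ when $c_2>0$. One harmless slip: condition (4.3) states that $\beta=-1$ forces $c_1=0$, not the converse, so $c_1=0$ does not imply $\beta=-1$; your argument for that case never uses $\beta=-1$, so nothing breaks.
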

\begin{proof}
For any $\sigma>0$, the function $\varphi^{\sigma}$ satisfies 
\begin{equation}
-L_{\beta}\varphi^{\sigma}=
\sigma(1-\sigma)\varphi^{\sigma-2}|\varphi'|^{2}
+(4\sigma-\beta^{2}) \varphi^{\sigma}.
\end{equation}
We take $\sigma=1$ and observe that  
\begin{align*}
L_{\beta}(a\varphi)+g(\phi,a\varphi)
=c_1\frac{1}{a^{s}\varphi^{s}}+c_2\sin\phi \frac{1}{a^{s'}\varphi^{s'}}-a(4-\beta^{2})\varphi. 
\end{align*}
By (4.3), $c_1\neq 0$ or $c_2\neq 0$. For $c_1\neq 0$, 
\begin{align*}
L_{\beta}(a\varphi)+g(\phi,a\varphi)
\geq c_1\frac{1}{a^{s}\varphi^{s}}-a(4-\beta^{2})\varphi
\geq \frac{1}{a^{s}\varphi^{s}}\left(c_1-a^{s+1}\left(4-\beta^{2}\right) \right).
\end{align*}
For $c_2\neq 0$, we use $\varphi\leq 2\sin\phi$ and estimate 
\begin{align*}
L_{\beta}(a\varphi)+g(\phi,a\varphi)
\geq c_2\sin\phi \frac{1}{a^{s'}\varphi^{s'}}-a(4-\beta^{2})\varphi
&=\frac{\sin\phi}{a^{s'}\varphi^{s'}}\left(c_2-2a^{s'+1}(4-\beta^{2})\varphi^{s'}\cos\phi \right) \\
&\geq \frac{\sin\phi}{a^{s'}\varphi^{s'}}\left(c_2-2a^{s'+1}(4-\beta^{2}) \right). 
\end{align*}\\
Thus (4.15) holds with small $a=a(\beta)>0$.
\end{proof}


\begin{prop}
Let $(\beta,c_1,c_2)$ satisfy (4.3). Let $\beta^{2}/4<\sigma\leq \min\{1,-2\beta/3\}$ be a constant satisfying $\sigma\neq 1$. There exists a constant $b=b(\beta)>0$ such that 
\begin{equation}
-L_{\beta}(b\varphi^{\sigma})\geq g(\phi, b \varphi^{\sigma}),\quad \phi\in D.
\end{equation}
\end{prop}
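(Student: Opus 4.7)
The plan is to mirror the structure of the proof of Proposition 4.12: write $-L_\beta(b\varphi^\sigma)$ explicitly via the identity (4.16), extract a uniform lower bound of the shape $\text{const}\cdot b\,\varphi^{\sigma-2}$, and then compare term by term with $g(\phi,b\varphi^\sigma)$, using the constraints $\beta^2/4<\sigma\leq\min\{1,-2\beta/3\}$, $\sigma\neq 1$ to guarantee non-negative exponents so that $b=b(\beta)$ can be chosen large enough.

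First I will specialize (4.16) to $b\varphi^\sigma$, use $|\varphi'|^2=4\cos^2 2\phi$, and factor out $\varphi^{\sigma-2}$, so that with $\varphi^2+\cos^2 2\phi=1$,
\begin{equation*}
-L_\beta(b\varphi^\sigma)=b\varphi^{\sigma-2}\bigl[\,4\sigma(1-\sigma)\cos^2 2\phi+(4\sigma-\beta^2)\varphi^2\,\bigr]\geq b\kappa\,\varphi^{\sigma-2},
\end{equation*}
where $\kappa:=\min\{4\sigma(1-\sigma),\,4\sigma-\beta^2\}$. The hypotheses $\beta^2/4<\sigma$ and $\sigma<1$ (the latter forced by $\sigma\neq 1$ and $\sigma\leq 1$) make both quantities positive, so $\kappa>0$. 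Dividing the target inequality $-L_\beta(b\varphi^\sigma)\geq g(\phi,b\varphi^\sigma)$ by $\varphi^{\sigma-2}>0$, it suffices to show
\begin{equation*}
b\kappa\geq c_1 b^{-s}\varphi^{2-\sigma(1+s)}+c_2\sin\phi\cdot b^{-s'}\varphi^{2-\sigma(1+s')}\qquad\text{on }(0,\pi/2).
\end{equation*}

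Next I will verify that both exponents of $\varphi$ are non-negative. Using $1+s=-2/\beta$ and $1+s'=-3/\beta$, the equivalences $\sigma(1+s)<2\Leftrightarrow\sigma<-\beta$ and $\sigma(1+s')\leq 2\Leftrightarrow\sigma\leq-2\beta/3$ hold. The assumption $\sigma\leq-2\beta/3$ directly yields the second, and since $-2\beta/3<-\beta$ for $\beta<0$, it also yields the first strictly. Thus $\varphi^{2-\sigma(1+s)},\varphi^{2-\sigma(1+s')}\leq 1$ on $(0,1]$; combined with $\sin\phi\leq 1$, the inequality reduces to the $\phi$-independent estimate
\begin{equation*}
b\kappa\geq c_1 b^{-s}+c_2 b^{-s'}.
\end{equation*}
Since $s=-1-2/\beta>0$ and $s'=-1-3/\beta>0$ for $-2<\beta<0$, the right-hand side tends to $0$ while the left-hand side diverges as $b\to\infty$, so any sufficiently large $b=b(\beta)$ works.

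The proof is essentially bookkeeping, and the only non-routine point is recognizing that the range $\sigma\leq\min\{1,-2\beta/3\}$, together with $\sigma>\beta^2/4$, is exactly what is required to simultaneously make $\kappa$ positive and make the two exponents $2-\sigma(1+s)$ and $2-\sigma(1+s')$ non-negative. Once this alignment is noticed, the identity $\cos^2 2\phi+\varphi^2=1$ eliminates any dependence on the interior behaviour of $\varphi$ and everything reduces to one inequality in $b$.
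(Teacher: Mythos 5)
Your proof is correct and follows essentially the same route as the paper: both use the identity (4.16), bound the bracket below by $\kappa=\min\{4\sigma(1-\sigma),4\sigma-\beta^{2}\}>0$ via $\cos^{2}2\phi+\varphi^{2}=1$, and then use $\sigma\leq -2\beta/3$ to see that the singular positive term $b\kappa\varphi^{\sigma-2}$ dominates the nonlinear terms, reducing everything to choosing $b$ large. Your condition $2-\sigma(1+s'),\,2-\sigma(1+s)\geq 0$ is exactly the paper's inequality $2-\sigma\geq \sigma s'\geq\sigma s$ in a different guise, so there is no substantive difference.
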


\begin{proof}
By (4.16), 
\begin{align*}
-L_{\beta}(b\varphi^{\sigma})-g(\phi,b\varphi^{\sigma})
&=\frac{b}{\varphi^{2-\sigma}}\left(\sigma (1-\sigma)|\varphi'|^{2}+(4\sigma-\beta^{2})\varphi^{2}   \right)-c_1 \frac{1}{b^{s}\varphi^{\sigma s}}-c_2\sin\phi \frac{1}{b^{s'}\varphi^{\sigma s'}} \\
&\geq \frac{b}{\varphi^{2-\sigma}} \min\{4\sigma (1-\sigma),(4\sigma-\beta^{2})\}   -c_1 \frac{1}{b^{s}\varphi^{\sigma s}}-c_2\frac{1}{b^{s'}\varphi^{\sigma s'}}.
\end{align*}
Since $\beta^{2}/4<\sigma<1$, the first term is positive, and since $\sigma\leq -2\beta /3$, 
\begin{align*}
2-\sigma\geq \sigma s' \geq \sigma s.
\end{align*}
Since $0\leq \varphi\leq 1$, $\varphi^{2-\sigma}\leq \varphi^{\sigma s'}\leq \varphi^{\sigma s}$. Thus (4.17) holds for large $b=b(\beta)>0$. 
\end{proof}


\begin{lem}
Let $(\beta, c_1, c_2)$ satisfy (4.3). Let 
$\sigma \neq 1$
satisfy $\beta^{2}/4<\sigma\leq \min\{1, -2\beta/3\}$. There exist constants $a=a(\beta)$ and $b=b(\beta)$ such that 
\begin{align}
a \sin2\phi\leq w(\phi)\leq b (\sin2\phi)^{\sigma},\quad \phi\in D,
\end{align}
hold for positive solutions $w\in C^{\infty}(D)\cap C_{0}(\overline{D})$ to (4.7).
\end{lem}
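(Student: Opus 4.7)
The plan is to derive both inequalities in (4.18) from the maximum principle (Lemma 4.9), using $a\varphi$ as a sub-solution (via Proposition 4.10) and $b\varphi^{\sigma}$ as a super-solution (via Proposition 4.11). The structural feature that makes this comparison work is that $g(\phi,\cdot)$ is strictly decreasing on $(0,\infty)$: indeed,
\begin{equation*}
\partial_w g(\phi,w)=-sc_1 w^{-s-1}-s'c_2(\sin\phi)w^{-s'-1}<0
\end{equation*}
for all $w>0$, since $s,s'>0$ whenever $-2<\beta<0$.

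For the lower estimate I will set $v=w-a\varphi$, which belongs to $C^{2}(D)\cap C(\overline{D})$ and vanishes on $\partial D$. Subtracting (4.15) from (4.7) yields $-L_{\beta}v\geq g(\phi,w)-g(\phi,a\varphi)$. Applying the mean value theorem to $g(\phi,\cdot)$ at each $\phi\in D$ rewrites the right-hand side as $-C(\phi)v(\phi)$, where
\begin{equation*}
C(\phi)=sc_1\xi(\phi)^{-s-1}+s'c_2(\sin\phi)\xi(\phi)^{-s'-1}\geq 0
\end{equation*}
and $\xi(\phi)$ lies between $w(\phi)$ and $a\varphi(\phi)$. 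Hence $-L_{\beta}v+C(\phi)v\geq 0$ in $D$ with $v|_{\partial D}=0$. An entirely parallel argument with $u=b\varphi^{\sigma}-w$, combining (4.17) with the same mean value step, produces $-L_{\beta}u+\tilde C(\phi)u\geq 0$ in $D$ with $u|_{\partial D}=0$, for an analogous nonnegative coefficient $\tilde C$.

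The main obstacle is confirming the local boundedness of $C$ and $\tilde C$ required by Lemma 4.9. Both coefficients blow up at $\partial D$, because $\xi(\phi)\to 0$ and the exponents $-s-1,-s'-1$ are negative. This is harmless, though: on any compact $K\subset D$, the continuous positive functions $w$, $a\varphi$, and $b\varphi^{\sigma}$ admit a uniform positive lower bound, so $\xi$ does as well, and $C,\tilde C$ remain bounded on $K$. The boundary-vanishing condition together with local boundedness satisfy the hypotheses of Lemma 4.9, which then delivers $v\geq 0$ and $u\geq 0$ throughout $D$, giving exactly the two inequalities of (4.18). A minor technical point: although $b\varphi^{\sigma}$ is generally not $C^{2}$ at the endpoints when $0<\sigma<1$, Lemma 4.9 only requires $C^{2}(D)\cap C(\overline{D})$, which $u$ does satisfy.
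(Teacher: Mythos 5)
Your proposal is correct and follows essentially the same route as the paper: compare $w$ with the sub-solution $a\varphi$ and super-solution $b\varphi^{\sigma}$ from Propositions 4.10 and 4.11, convert the difference of nonlinearities into a term $-C(\phi)v$ with $C\geq 0$ locally bounded (the paper uses the difference quotient of the decreasing functions $t^{-s}$, $t^{-s'}$ where you use the mean value theorem, which is the same computation), and invoke the maximum principle of Lemma 4.9. Your explicit verification of local boundedness of the coefficient near $\partial D$ and the remark on the regularity of $b\varphi^{\sigma}$ are exactly the points the paper relies on implicitly.
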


\begin{proof}
We set $v=w-a\varphi$ with the constant $a$ in Proposition 4.10. Then by (4.15),
\begin{equation*}
-L_{\beta}v-g(\phi,w)+g(\phi,a \varphi)\geq 0. 
\end{equation*}
For points $\phi\in D$ where $w(\phi)=a\varphi(\phi)$, we have $-(L_{\beta}v)(\phi)\geq 0$. For points $\phi\in D$ where $w(\phi)\neq a\varphi(\phi)$,  
\begin{align*}
g(\phi,w)-g(\phi,a\varphi)
&=c_1w^{-s}+c_2\sin\phi w^{-s'}-c_1 (a\varphi)^{-s}-c_2\sin\phi (a\varphi)^{-s'} \\
&=c_1\frac{w^{-s}-(a\varphi)^{-s} }{w-a\varphi}v+c_2\sin\phi \frac{w^{-s'}-(a\varphi)^{-s'} }{w-a\varphi}v 
=: -\tilde{c}(\phi)v.
\end{align*}
The function $\tilde{c}(\phi)$ is non-negative and locally bounded in $\{w\neq a\varphi\} \cap D$ since $t^{-s}$ and $t^{-s'}$ are decreasing for $t>0$. We set a non-negative and locally bounded function $c(\phi)$ in $D$ by $c(\phi)=0$ on $\{w= a\varphi\}$ and $c(\phi)=\tilde{c}(\phi)$ on $\{w\neq a\varphi\}$ so that $-L_{\beta}v+c(\phi)v\geq 0$ and conclude that $v\geq 0$ by the maximum principle.

Similarly, we set $v=b\varphi^{\sigma}-w$ with the constants $b$ and $\sigma$ as in Proposition 4.11. Then, by (4.16)
\begin{align*}
-L_{\beta}v-g(\phi,b\varphi^{\sigma})+g(\phi,w)\geq 0. 
\end{align*}
For points $\phi\in D$ where $w(\phi)=b\varphi^{\sigma}(\phi)$, we have $-(L_{\beta}v)(\phi)\geq 0$. For points $\phi\in D$ where $w(\phi)\neq b\varphi^{\sigma}(\phi)$, 
\begin{align*}
g(\phi,b\varphi^{\sigma})-g(\phi,w)
&=c_1(b\varphi^{\sigma})^{-s}+c_2\sin\phi (b\varphi^{\sigma})^{-s'}-c_1 w^{-s}-c_2\sin\phi w^{-s'} \\
&=c_1\frac{(b\varphi^{\sigma})^{-s}-w^{-s} }{b\varphi^{\sigma}-w}v+c_2\sin\phi \frac{(b\varphi^{\sigma})^{-s'}-w^{-s'} }{b\varphi^{\sigma}-w}v 
=: -\tilde{c}(\phi)v.
\end{align*}
The function $\tilde{c}(\phi)$ is non-negative and locally bounded in $\{w\neq b\varphi^{\sigma}\} \cap D$. We set a non-negative and locally bounded function $c(\phi)$ in $D$ by $c(\phi)=0$ on $\{w= b\varphi^{\sigma}\}$ and $c(\phi)=\tilde{c}(\phi)$ on $\{w\neq b\varphi^{\sigma}\}$ so that $-L_{\beta}v+c(\phi)v\geq 0$ and conclude that $v\geq 0$ by the maximum principle.
\end{proof}

\begin{lem}[Uniqueness]
Positive solutions $w\in C^{\infty}(D)\cap C_0(\overline{D})$ of (4.7) are unique. 
\end{lem}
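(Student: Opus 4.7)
The plan is to argue by comparison via the maximum principle of Lemma 4.9, in direct analogy with the proof of the two-sided bound (4.18) given in Lemma 4.12. Let $w_1, w_2 \in C^{\infty}(D) \cap C_0(\overline{D})$ be two positive solutions of (4.7) and put $v = w_1 - w_2$. Then $v \in C^2(D) \cap C(\overline{D})$ with $v = 0$ on $\partial D$, and subtracting the two equations yields
$$-L_\beta v = g(\phi, w_1) - g(\phi, w_2).$$

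The decisive observation is monotonicity: under the hypotheses (4.3) one has $c_1, c_2 \geq 0$ and $s, s' > 0$ (since $-2 < \beta < 0$ forces $-2/\beta > 1$ and $-3/\beta > 3/2$), so the map $t \mapsto g(\phi, t) = c_1 t^{-s} + c_2 \sin\phi\, t^{-s'}$ is non-increasing in $t > 0$. On $\{v \neq 0\} \cap D$ I would therefore write
$$g(\phi, w_1) - g(\phi, w_2) = -\tilde{c}(\phi)\, v, \qquad \tilde{c}(\phi) \geq 0,$$
and extend by $c(\phi) := 0$ on $\{v = 0\}$ to obtain a non-negative function on $D$ for which
$$-L_\beta v + c(\phi) v \geq 0 \quad \text{and} \quad -L_\beta(-v) + c(\phi)(-v) \geq 0 \quad \text{in } D.$$
Applying Lemma 4.9 successively to $v$ and to $-v$, each vanishing on $\partial D$, forces $v \geq 0$ and $-v \geq 0$, hence $w_1 \equiv w_2$.

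The only step where care is needed is verifying that $c(\phi)$ is locally bounded in $D$, as required by Lemma 4.9. This is the main (but mild) technical point: since $w_1, w_2$ are continuous and strictly positive in the open set $D$, on every compact subset of $D$ both are bounded below away from zero, so the mean value theorem applied to the smooth functions $t \mapsto t^{-s}$ and $t \mapsto t^{-s'}$ yields a uniform bound on $\tilde{c}$ on that subset. No bound up to $\partial D$ is required; the boundary condition is absorbed by $v \in C(\overline{D})$ with $v|_{\partial D} = 0$. This is precisely the same verification carried out in the proof of Lemma 4.12 when comparing $w$ against $a\varphi$ and $b\varphi^{\sigma}$, so no new ingredient is needed.
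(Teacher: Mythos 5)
Your proof is correct and is essentially identical to the paper's: both set $v=w_1-w_2$, use the monotonicity of $t\mapsto g(\phi,t)$ to write $g(\phi,w_1)-g(\phi,w_2)=-\tilde c(\phi)v$ with $\tilde c\geq 0$ locally bounded, and apply the maximum principle (Lemma 4.9) to $v$ and $-v$. Your added remark on why $c(\phi)$ is locally bounded (positivity of $w_1,w_2$ on compacta of $D$) is a correct elaboration of a point the paper leaves implicit.
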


\begin{proof}
We set $v=w_1-w_2$ for two solutions $w_1$ and $w_2$ of (4.7). For points where $w_1(\phi)=w_2(\phi)$, we have $-(L_{\beta}v)(\phi)= 0$. For points $\phi\in D$ where $w_1(\phi)\neq w_2(\phi)$,
\begin{align*}
-L_{\beta}v=g(\phi,w_1)-g(\phi,w_2)
&=c_1w_1^{-s}+c_2\sin\phi w_1^{-s'}-c_1w_2^{-s}-c_2\sin\phi w_2^{-s'} \\
&=c_1\frac{w_1^{-s}-w_2^{-s} }{w_1-w_2}v+c_2\sin\phi \frac{w_1^{-s'}-w_2^{-s'} }{w_1-w_2}v=:-\tilde{c}(\phi)v.
\end{align*}
The function $\tilde{c}(\phi)$ is non-negative and locally bounded in $\{w_1\neq w_2\} \cap D$. We set a non-negative and locally bounded function $c(\phi)$ in $D$ by $c(\phi)=0$ on $\{w_1= w_2\}$ and $c(\phi)=\tilde{c}(\phi)$ on $\{w_1\neq w_2\}$ so that $-L_{\beta}v+c(\phi)v\geq 0$ and conclude that $v\geq 0$ by the maximum principle. Applying the maximum principle to $-v$, it follows that $v=0$.
\end{proof}


\if{

\begin{lem}
Let $(\beta,c_1,c_2)$ satisfy (3.4). Assume that $-1<\beta<0$. Then, there exists a constant $a=a(\beta)>0$ such that 

\begin{align}
a(\sin2\phi)^{-\beta}\leq w,\quad \phi\in D,
\end{align}\\
for positive solutions $w\in C^{\infty}(D)\cap C(\overline{D})$ of (4.7). 
\end{lem}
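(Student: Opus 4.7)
The target bound $w\ge a(\sin 2\phi)^{-\beta}$ improves the linear lower bound $w\ge a\sin 2\phi$ of Lemma 4.12 near the endpoints of $D$, since $-\beta\in(0,1)$ gives $(\sin 2\phi)^{-\beta}/\sin 2\phi\to\infty$ as $\phi\to 0,\pi/2$. I would prove it by the same sub-/super-solution method as in Propositions 4.10--4.11 and Lemma 4.12, replacing the linear sub-solution $a\varphi$ by the nonlinear one $\underline{w}:=a\varphi^{\sigma}$ with the distinguished exponent $\sigma=-\beta$. The plan is to (a) verify the pointwise sub-solution inequality $-L_\beta(a\varphi^{\sigma})\le g(\phi,a\varphi^{\sigma})$ for small $a>0$, and (b) compare $w$ with $a\varphi^{\sigma}$ by the maximum principle of Lemma 4.9.

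For step (a), I would apply the identity (4.16) to obtain
\begin{equation*}
-L_\beta(a\varphi^{\sigma})=a\sigma(1-\sigma)\varphi^{\sigma-2}|\varphi'|^{2}+a(4\sigma-\beta^{2})\varphi^{\sigma},
\end{equation*}
and then exploit the resonance induced by the choice $\sigma=-\beta$: recalling $s=-1-2/\beta$ and $s'=-1-3/\beta$, one checks directly that $-\sigma s=\sigma-2$ and $-\sigma s'=\sigma-3$, so that
\begin{equation*}
g(\phi,a\varphi^{\sigma})=c_1 a^{-s}\varphi^{\sigma-2}+c_2\sin\phi\, a^{-s'}\varphi^{\sigma-3}.
\end{equation*}
Both sides are then expressible in terms of the same powers $\varphi^{\sigma-2}$ and $\varphi^{\sigma-3}$. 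Using $|\varphi'|\le 2$, $\varphi\le 1$ and $\sin\phi/\varphi=1/(2\cos\phi)\ge 1/2$, the sub-solution inequality reduces to a scalar comparison of the form
\begin{equation*}
C(\beta)\,a \;\le\; c_1 a^{-s}+\tfrac{c_2}{2}a^{-s'},
\end{equation*}
with $C(\beta)=-\beta(8+5\beta)>0$ for $\beta\in(-1,0)$. Since $s,s'>0$ on this range of $\beta$ and at least one of $c_1,c_2$ is positive by (4.3), this holds for every $0<a\le a_0(\beta)$ with $a_0(\beta)$ sufficiently small.

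For step (b), set $v=w-a\varphi^{\sigma}$, which vanishes on $\partial D$. Combining the equation $-L_\beta w=g(\phi,w)$ with the sub-solution inequality just proved,
\begin{equation*}
-L_\beta v\;\ge\; g(\phi,w)-g(\phi,a\varphi^{\sigma}).
\end{equation*}
Since $g(\phi,\cdot)$ is strictly decreasing in its second argument (as $s,s'>0$), at points where $w\neq a\varphi^{\sigma}$ one may write $g(\phi,w)-g(\phi,a\varphi^{\sigma})=-\tilde c(\phi) v$ with $\tilde c\ge 0$ locally bounded, exactly as in the proof of Lemma 4.12. Extending by zero on $\{w=a\varphi^{\sigma}\}$ yields $-L_\beta v+c(\phi)v\ge 0$ with $c\ge 0$ locally bounded, and the maximum principle (Lemma 4.9) gives $v\ge 0$, i.e., $w\ge a(\sin 2\phi)^{-\beta}$. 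The only genuinely new ingredient compared with Lemma 4.12 is the recognition of the resonant exponent $\sigma=-\beta$ that makes $a\varphi^{\sigma}$ an actual sub-solution; the main technical care will be to verify the reduced scalar inequality uniformly on $D$ (rather than merely at the boundary) and to handle the two cases $c_1>0$ and $c_1=0,c_2>0$ (the latter using the $c_2$ term alone to dominate $C(\beta)a$ through its stronger singularity $\varphi^{\sigma-3}$).
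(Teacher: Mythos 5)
Your proposal is correct and follows essentially the same route as the paper's own argument: the sub-solution $a\varphi^{-\beta}$ with the resonant exponent $\sigma=-\beta$ (so that $\sigma s=2-\sigma$ and $\sigma s'=3-\sigma$), the identity (4.16), and the comparison via the maximum principle of Lemma 4.9 exactly as in Lemma 4.12. The only (harmless) difference is that you verify the sub-solution inequality uniformly on $D$ using $\sin\phi/\sin 2\phi=1/(2\cos\phi)\geq 1/2$, whereas the paper splits $D$ into $(0,\pi/4)$ and $[\pi/4,\pi/2)$; your reduction to the scalar inequality $C(\beta)a\leq c_1a^{-s}+\tfrac{c_2}{2}a^{-s'}$ with $C(\beta)=-\beta(8+5\beta)>0$ is a slight streamlining of the same computation.
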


\begin{proof}
It suffices to show 
\begin{align}
-L_{\beta}(a\varphi^{\sigma})\leq g(\phi,a\varphi^{\sigma}),\quad \phi\in D.
\end{align}\\
for $\sigma=-\beta$ and small constant $a>0$. Then, the lower bound (4.18) follows the maximum principle as in Lemma 4.12. By the same computation as Proposition 4.11,

\begin{align*}
L_{\beta}(a\varphi^{\sigma})+g(\phi,a\varphi^{\sigma})
&=c_1 \frac{1}{a^{s}\varphi^{\sigma s}}+c_2\sin\phi \frac{1}{a^{s'}\varphi^{\sigma s'}}-\frac{a}{\varphi^{2-\sigma}}\left(\sigma (1-\sigma)|\varphi'|^{2}+(4\sigma-\beta^{2})\varphi^{2}   \right) \\
&\geq c_1 \frac{1}{a^{s}\varphi^{\sigma s}}+c_2\sin\phi\frac{1}{a^{s'}\varphi^{\sigma s'}}
-\frac{a}{\varphi^{2-\sigma}} \max\{4\sigma (1-\sigma),(4\sigma-\beta^{2})\}.
\end{align*}\\
We set $D=D_1\cup D_2=(0,\pi/4)\cup [\pi/4,\pi/2)$. Since $\sigma =-\beta$, 

\begin{align*}
\sigma s'+\sigma-2=1.
\end{align*}\\
Thus on $D_1$, 

\begin{align*}
L(a\varphi^{\sigma})+g(\phi,a\varphi^{\sigma})
&\geq c_1\frac{1}{a^{s}\varphi^{\sigma s}}
+\frac{1}{\varphi^{2-\sigma}}\left(\frac{c_2\sin\phi}{a^{s'}\varphi}-a\max\{4\sigma (1-\sigma),(4\sigma-\beta^{2})\} \right) \\
&\geq c_1\frac{1}{a^{s}\varphi^{\sigma s}}
+\frac{1}{\varphi^{2-\sigma}}\left(\frac{c_2}{2a^{s'}}-a\max\{4\sigma (1-\sigma),(4\sigma-\beta^{2})\} \right).
\end{align*}\\
The right-hand side is non-negative for $a\leq a_1$ and some $a_1>0$. On $D_2$, we use $\sin\phi\geq 1/\sqrt{2}$. By $\sigma-2=\sigma \leq \sigma s'$ and $0\leq \varphi\leq 1$, $\varphi^{\sigma-2}\geq \varphi^{\sigma s'}$ and 

\begin{align*}
L_{\beta}(a\varphi^{\sigma})+g(\phi,a\varphi^{\sigma})
\geq \frac{1}{\varphi^{\sigma-2}}\left( \frac{c_1}{a^{s}}+\frac{c_2}{\sqrt{2}a^{s'}}-a \max\{4\sigma (1-\sigma),(4\sigma-\beta^{2})\}\right).
\end{align*} \\
The right-hand side is non-negative for $a\geq a_2$ and some $a_2>0$. Thus (4.19) holds for $a\leq \min\{a_1,a_2\}$. 
\end{proof}

}\fi


\begin{rem}[Singular elliptic problems]
The above problem is the one-dimensional analog
of the singular elliptic problem
\begin{equation}
\begin{aligned}
-\Delta \psi &=k(x)\psi^{-\nu}\quad \textrm{in}\ \Omega,\\
\psi&=0\quad \textrm{on}\ \partial \Omega,
\end{aligned}
\end{equation}
for a non-negative function $k$ in a bounded domain $\Omega\subset \mathbb{R}^{n}$, $n\geq 2$ with constant $\nu>0$; see \cite{Fulks}, \cite{Stu}, and \cite{CRT} for the origin of the problem (4.19). For positive $k(x)>0$ in $\overline{\Omega} $, $\nu=1$ is the threshold for $C^{1}$ regularity on the boundary. For $\nu>1$, $\psi\notin C^{1}(\overline{\Omega}) $ and $\psi\in C^{2/(\nu+1)}(\overline{\Omega}) $ \cite{Lazer}. For functions $k(x)$ vanishing on the boundary, the regularity of solutions has been investigated in \cite{delPino}, \cite{GuiLin} under the condition, e.g., $k(x) \approx \textrm{dist}(x,\partial \Omega) $, cf. \cite[Theorem 8]{FQS}. The $C^{1}$-threshold for such $k$ is $\nu=2$ and for $\nu>2$, $\psi\notin C^{1}(\overline{\Omega}) $ and $\psi\in C^{3/(\nu+1)}(\overline{\Omega}) $. See \cite{Oliva} for a survey and also \cite{FQS}, \cite{Biri} for recent works on singular problems of fully nonlinear equations. 

The singular 1D problem (4.7) is similar to the classical problem (4.19), though the fact that
the function $k(\phi)=\sin\phi$ vanishes only at one side of the boundary $\phi=0$ leads to different regularity at $\phi=0$ and $\pi/2$. 
\end{rem}

\begin{rem}[The first derivative at the boundary]
The identity 
 \begin{align*}
|w'|^{2}+\beta^{2}w^{2}+\frac{2c_1}{1-s}w^{1-s}+\frac{2c_2}{1-s'}\sin\phi w^{1-s'}+\frac{2c_2}{1-s'}\int_{\phi}^{\pi/4}\cos\tilde{\phi} w^{1-s'} \dd \tilde{\phi}=C,\quad \phi\in D,
 \end{align*}
holds for solutions to (4.7) with some constant $C$ by multiplying $2w'$ by (4.7) and integrating it on $(\pi/4,\phi)$. For $-1<\beta<0$, $1<s<s'$ and $w'$ diverges on the boundary $\phi=0$ and $\pi/2$. For $-2<\beta<-3/2$, $s<s'<1$ and $w\in C^{1}(\overline{D})$. Moreover, $w'\in C^{2+2/\beta}(\overline{D})$.   
\end{rem}

\subsection{Existence for the non-autonomous equation}
We now solve the approximate problem (4.12).

\begin{prop}
Let $\delta>0$ be the constant in Lemma 4.8. The functional $I\in C^{1}(H^{1}_{0}(D); \mathbb{R})$ in (4.13) satisfies the following inequalities: 
\begin{align}
\frac{\delta}{4}||w||_{H^{1}_{0}}^{2}\leq I[w]+\frac{\pi}{8\delta}\left(c_1\varepsilon^{-s}+c_2\varepsilon^{-s'} \right)^{2},\quad w\in H^{1}_{0}(D),\\
\label{s4diffbd}
|J[w]-J[\eta]|\leq \left(c_1\varepsilon^{-s}+c_2\varepsilon^{-s'} \right) ||w-\eta||_{L^{1}},\quad w,\eta\in H^{1}_{0}(D).
\end{align}
\end{prop}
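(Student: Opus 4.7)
The main input for both inequalities is that, because $t_{+}+\varepsilon\geq \varepsilon$ and $|\sin\phi|\leq 1$, the truncated nonlinearity $g_\varepsilon(\phi,t)$ is uniformly bounded on $D\times \mathbb{R}$ by the constant
\[
K:=c_1\varepsilon^{-s}+c_2\varepsilon^{-s'}.
\]
This single bound gives the Lipschitz estimate (4.21) essentially for free: writing $G(\phi,w)-G(\phi,\eta)=\int_\eta^{w}g_\varepsilon(\phi,t)\,\dd t$ and applying the $L^\infty$ bound pointwise yields $|G(\phi,w)-G(\phi,\eta)|\leq K|w-\eta|$, and integrating over $D$ gives (4.21).

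For the coercive lower bound (4.20), the plan is to combine the pointwise bound $|J[w]|\leq K\|w\|_{L^1}$ (take $\eta=0$ in the previous step) with Cauchy--Schwarz $\|w\|_{L^1}\leq |D|^{1/2}\|w\|_{L^2}=\sqrt{\pi/2}\,\|w\|_{L^2}$, the Poincar\'e inequality $\|w\|_{L^2}\leq \tfrac12\|w\|_{H^1_0}$ coming from the first eigenvalue $\mu_1^0=4$ of $-\partial_\phi^2$ on $D=(0,\pi/2)$ (eigenfunction $\sin 2\phi$), and finally the coercivity \eqref{s4Bcoer2} $B(w,w)\geq \delta\|w\|_{H^1_0}^2$. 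Thus
\[
I[w]\geq \tfrac{\delta}{2}\|w\|_{H^1_0}^2-\tfrac{K\sqrt{\pi/2}}{2}\|w\|_{H^1_0},
\]
and Young's inequality $\tfrac{K\sqrt{\pi/2}}{2}\|w\|_{H^1_0}\leq \tfrac{\delta}{4}\|w\|_{H^1_0}^2+\tfrac{K^2\pi}{8\delta}$ rearranges to (4.20). The only detail that needs care is tracking the Poincar\'e constant $\tfrac12$ accurately on $(0,\pi/2)$ so that the final constant is $\pi/(8\delta)$ rather than something larger.

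The $C^1$-regularity claim $I\in C^1(H^1_0(D);\mathbb{R})$ follows essentially verbatim from the argument in Lemma 3.12, and is in fact easier here because $g_\varepsilon$ is bounded: the quadratic part $\tfrac12 B(w,w)$ is obviously smooth with derivative $B(w,\cdot)$, while for $J$ one computes the Gateaux derivative by dominated convergence (uniform bound $|g_\varepsilon|\leq K$) to obtain $\langle J'[w],\eta\rangle=(g_\varepsilon(\cdot,w),\eta)_{L^2}$, and continuity of $w\mapsto g_\varepsilon(\cdot,w)$ in $L^2(D)$ (again by dominated convergence, using $H^1_0(D)\hookrightarrow C(\overline{D})$) upgrades the Gateaux derivative to a Fr\'echet derivative; this yields the stated formula (4.14) en route. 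No step presents a genuine obstacle, and the proposition is really just the two-line consequence of $\|g_\varepsilon\|_\infty\leq K$ combined with the coercivity and Poincar\'e inequality already established.
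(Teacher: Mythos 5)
Your proposal is correct and follows essentially the same route as the paper: both proofs rest on the uniform bound $g_{\varepsilon}\leq c_1\varepsilon^{-s}+c_2\varepsilon^{-s'}$, the chain $\|w\|_{L^1}\leq\sqrt{\pi/2}\,\|w\|_{L^2}\leq\frac{\sqrt{\pi}}{2\sqrt{2}}\|w\|_{H^1_0}$ via Cauchy--Schwarz and the Poincar\'e inequality on $(0,\pi/2)$, Young's inequality with weight $\delta/4$, and the coercivity bound $B(w,w)\geq\delta\|w\|_{H^1_0}^2$; the Lipschitz estimate for $J$ is obtained identically by integrating $g_\varepsilon$ between $\eta$ and $w$. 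The constants match the paper's exactly.
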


\begin{proof}
The inequality (4.8) is nothing but the Poincar\'e inequality
\begin{equation*}
2||w||_{L^{2}}\leq ||w||_{H^{1}_{0}},\quad w\in H^{1}_{0}(D).
\end{equation*}
Since $g_{\varepsilon}(\phi,t)\leq c_1\varepsilon^{-s}+c_2\varepsilon^{-s'}$, by the Poincar\'e inequality,
\begin{align*}
J[w]\leq \left(c_1\varepsilon^{-s}+c_2\varepsilon^{-s'} \right)||w||_{L^{1}}
\leq \sqrt{\frac{\pi}{2}}\left(c_1\varepsilon^{-s}+c_2\varepsilon^{-s'} \right) ||w||_{L^{2}} 
&\leq \frac{\sqrt{\pi}}{2\sqrt{2}}\left(c_1\varepsilon^{-s}+c_2\varepsilon^{-s'} \right) ||w||_{H^{1}_{0}} \\
&\leq \frac{\pi}{8\delta }\left(c_1\varepsilon^{-s}+c_2\varepsilon^{-s'} \right)^{2}+ \frac{\delta}{4}||w||_{H^{1}_{0}}^{2}.
\end{align*}
By the lower bound for the bilinear form (4.9),
\begin{align*}
\frac{\delta}{2}||w||_{H^{1}_{0}}^{2}\leq \frac{1}{2}B(w,w)=I[w]+J[w].
\end{align*}
By subtracting $(\delta/4)||w||_{H^{1}_{0}}^{2}$ from both sides, the estimate (4.20) follows. Bounding
\begin{align*}
|J[w]-J[\eta]|=\left| \int_{D}\int_{\eta}^{w}g(\phi,t)\dd t \dd \phi  \right|
\leq \left(c_1\varepsilon^{-s}+c_2\varepsilon^{-s'} \right) ||w-\eta||_{L^{1}},
\end{align*}
we obtain the estimate (4.21).
\end{proof}

\begin{lem}
Let $(\beta,c_1,c_2)$ satisfy (4.3). For
any $\varepsilon>0$, there is a positive solution $w\in C^{\infty}(\overline{D})\cap C_0(\overline{D})$ to (4.12).
\end{lem}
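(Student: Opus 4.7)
The plan is to produce a weak solution of (4.12) as a global minimizer of the functional $I$ on $H^1_0(D)$, then upgrade regularity and verify positivity a posteriori. First I would invoke the coercivity estimate (4.20) from Proposition 4.16, which yields $I[w] \geq \frac{\delta}{4}\|w\|_{H^1_0}^2 - C_\varepsilon$ with $C_\varepsilon = \frac{\pi}{8\delta}(c_1\varepsilon^{-s} + c_2\varepsilon^{-s'})^2$. This shows $m := \inf_{H^1_0(D)} I > -\infty$ and that any minimizing sequence $\{w_n\}$ is bounded in $H^1_0(D)$.

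Next, I would extract a weakly convergent subsequence $w_{n_k} \rightharpoonup w$ in $H^1_0(D)$; by the compactness of the embeddings $H^1_0(D) \subset L^2(D)$ and $H^1_0(D) \subset C(\overline{D})$ (the latter available in dimension one), one may further assume $w_{n_k} \to w$ in $L^1(D)$ and uniformly on $\overline{D}$. The quadratic form $\frac{1}{2}B(w,w) = \frac{1}{2}\|w'\|_{L^2}^2 - \frac{\beta^2}{2}\|w\|_{L^2}^2$ is weakly lower semicontinuous (its first term by lower semicontinuity of the norm, its second by strong $L^2$ convergence), and the Lipschitz estimate (4.21) makes $J$ continuous in the $L^1$ topology. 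Therefore $I[w] \leq \liminf_{k\to\infty} I[w_{n_k}] = m$, so $w$ realizes the minimum.

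Since $I \in C^1(H^1_0(D);\mathbb{R})$ by Proposition 4.16, the minimizer satisfies $\langle I'[w], \eta\rangle = 0$ for all $\eta \in H^1_0(D)$, which by (4.14) is the weak formulation of (4.12). The bound $\|g_\varepsilon(\cdot, w)\|_{L^\infty(D)} \leq c_1\varepsilon^{-s} + c_2\varepsilon^{-s'}$ and standard one-dimensional elliptic regularity then give $w \in H^2(D) \cap H^1_0(D) \subset C^1(\overline{D}) \cap C^2(D)$. For positivity, observe $g_\varepsilon \geq 0$, so $-L_\beta w \geq 0$ in $D$ with $w|_{\partial D} = 0$; Lemma 4.9 with $c \equiv 0$ gives $w \geq 0$ in $D$, and $w \equiv 0$ is excluded because $g_\varepsilon(\phi,0) = c_1 \varepsilon^{-s} + c_2 \sin\phi\,\varepsilon^{-s'}$ is not identically zero under (4.3). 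The strong form of Lemma 4.9 then yields $w > 0$ in $D$.

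With $w > 0$ in $D$, the right-hand side $g_\varepsilon(\phi, w) = c_1(w+\varepsilon)^{-s} + c_2 \sin\phi\,(w+\varepsilon)^{-s'}$ is a $C^\infty$ function of its arguments (since $w + \varepsilon \geq \varepsilon > 0$ on $\overline{D}$), and classical bootstrapping in the ODE $-w'' = \beta^2 w + g_\varepsilon(\phi, w)$ upgrades $w$ to $C^\infty(\overline{D})$. The main obstacle is the weak lower semicontinuity step: one must pass to the limit in the indefinite quadratic form (which would fail to be coercive without the bilinear form lower bound in Lemma 4.8) and in the nonlinear potential $J$ simultaneously; this is resolved by combining the coercivity (4.9), the $L^1$-Lipschitz bound (4.21), and the 1D compact embedding of $H^1_0(D)$ into $C(\overline{D})$.
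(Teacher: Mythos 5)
Your proposal is correct and follows essentially the same route as the paper: direct minimization of $I$ on $H^1_0(D)$ using the coercivity bound (4.20), the compact embedding into $C(\overline{D})$, weak lower semicontinuity of the quadratic part together with the $L^1$-Lipschitz bound (4.21) for $J$, then one-dimensional elliptic regularity, the maximum principle of Lemma 4.9 for positivity, and bootstrapping to $C^\infty(\overline{D})$. Your explicit exclusion of $w\equiv 0$ (via $g_\varepsilon(\phi,0)\not\equiv 0$) is a small point the paper leaves implicit, but otherwise the arguments coincide.
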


\begin{proof}
The functional $I$ is bounded from below on $H^{1}_{0}$ by (4.20). We take a minimizing sequence $\{w_n\}\subset H^{1}_{0}$ such that $I[w_n]\to \inf_{\tilde{w}\in H^{1}_{0}}I[\tilde{w}]$. By (4.20), $\{w_n\}$ is bounded in $H^{1}_{0}$. By choosing a subsequence, $w_n\rightharpoonup w$ in $H^{1}_{0}$ and $w_n\to w$ in $C(\overline{D})$. By (4.21), $J[w_n]\to J[w]$ and
\begin{equation*}
\inf_{\tilde{w}\in H^{1}_{0}}I[\tilde{w}]
=\lim_{n\to\infty}I[w_n]
=\lim_{n\to\infty}\left(\frac{1}{2}B(w_n,w_n)-J[w_n] \right)
\geq \frac{1}{2}B(w,w)-J[w]=I[w] \geq \inf_{\tilde{w}\in H^{1}_{0}}I[\tilde{w}]. 
\end{equation*}\\
Thus $w_n\to w$ in $H^{1}_{0}$ and $I[w]=\inf_{\tilde{w}\in H^{1}_{0}}I[\tilde{w}]$. By $I'[w]=0$, $w\in H^{1}_{0}$ is a weak solution to (4.12). Since $g_{\varepsilon}(\phi,w(\phi))\in C(\overline{D})$, $w\in C^{2}(\overline{D})$ is a classical solution. By $g_{\varepsilon}(\phi,w)\geq 0$ and the maximum principle, $w\in C^{2}(\overline{D})$ is positive. The regularity $w\in C^{\infty}(\overline{D})$ follows by differentiating the equation (4.12$)_1$. 
\end{proof}


We now show that the estimate (4.10) holds for solutions to the approximate problem (4.12).


\begin{lem}
Let $(\beta, c_1, c_2)$ satisfy (4.3). Let $\beta^{2}/4<\sigma\leq \min\{1, -2\beta/3\}$ be a constant satisfying $\sigma\neq 1$. There exist constants $a=a(\beta)$ and $b=b(\beta)$ such that 
for all $\phi \in D,$ the bound
\begin{equation}
a \sin2\phi\leq w_{\varepsilon}(\phi)\leq b (\sin2\phi)^{\sigma}
\end{equation}
holds for positive solutions $w_{\varepsilon}\in C^{\infty}(\overline{D})\cap C_0(\overline{D})$ to (4.12) and $0<\varepsilon\leq 1$.
\end{lem}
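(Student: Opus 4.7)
My plan is to adapt the barrier-and-comparison argument of Lemma 4.12 to the regularized equation (4.12), the main point being that the same test functions $a\sin 2\phi$ and $b(\sin 2\phi)^{\sigma}$ continue to serve as sub- and supersolutions of (4.12) with constants $a=a(\beta)>0$ and $b=b(\beta)>0$ chosen independently of $\varepsilon\in(0,1]$; once this is established, the maximum principle (Lemma 4.9) closes the argument exactly as in the proof of Lemma 4.12.

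For the upper bound, observe that $g_{\varepsilon}(\phi,t)\leq g(\phi,t)$ whenever $t>0$, since $(t+\varepsilon)^{-s}\leq t^{-s}$ and likewise for $s'$. Consequently, the supersolution inequality (4.17) of Proposition 4.11 implies
\[
-L_{\beta}(b\varphi^{\sigma})\geq g(\phi,b\varphi^{\sigma})\geq g_{\varepsilon}(\phi,b\varphi^{\sigma}),\qquad \phi\in D,
\]
with the same $b$ as before, where $\varphi=\sin 2\phi$. Setting $v=b\varphi^{\sigma}-w_{\varepsilon}$, the monotonicity of $g_{\varepsilon}(\phi,\cdot)$ allows the linearization $g_{\varepsilon}(\phi,b\varphi^{\sigma})-g_{\varepsilon}(\phi,w_{\varepsilon})=-\tilde c(\phi)v$ with $\tilde c\geq 0$ locally bounded in $D$, and since $v=0$ on $\partial D$, Lemma 4.9 yields $w_{\varepsilon}\leq b\varphi^{\sigma}$.

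The lower bound is the delicate step, because here $g_{\varepsilon}\leq g$ goes the wrong way, and the subsolution property of $a\varphi$ must be re-verified for the regularized nonlinearity. The key observation is that the proof of Proposition 4.10 never truly used the boundary blow-up of $g$: a uniform positive lower bound on the source that dominates $a(4-\beta^{2})\varphi$ suffices. Using $a\varphi+\varepsilon\leq a+1$ for $\varphi\leq 1$ and $\varepsilon\leq 1$, one has, when $c_1\neq 0$,
\[
g_{\varepsilon}(\phi,a\varphi)\geq c_1(a\varphi+\varepsilon)^{-s}\geq c_1(a+1)^{-s},
\]
while when $c_1=0$ and $c_2\neq 0$ one uses $\sin\phi\geq \varphi/2$ to obtain $g_{\varepsilon}(\phi,a\varphi)\geq (c_2/2)(a+1)^{-s'}\varphi$. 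In either case, choosing $a=a(\beta)>0$ small enough (depending only on $\beta,c_1,c_2$) gives $-L_{\beta}(a\varphi)\leq g_{\varepsilon}(\phi,a\varphi)$ in $D$, uniformly in $\varepsilon\in(0,1]$. Setting $v=w_{\varepsilon}-a\varphi$ and invoking the same monotonicity-based linearization together with Lemma 4.9 then yields $w_{\varepsilon}\geq a\varphi$.

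The main obstacle is precisely this uniformity in $\varepsilon$: the regularized source $g_{\varepsilon}$ is bounded, so it no longer blows up as $\phi\to 0,\pi/2$, and one must ensure that the subsolution character of $a\varphi$ is not destroyed by the truncation. Once the trivial bound $a\varphi+\varepsilon\leq a+1$ is exploited to absorb the regularization, the remainder of the argument is a verbatim repetition of the comparison step carried out in Lemma 4.12.
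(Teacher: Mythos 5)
Your proposal is correct and follows essentially the same route as the paper: the upper bound via $g_{\varepsilon}\leq g$ and the existing supersolution inequality (4.17), the lower bound by re-verifying the subsolution property of $a\varphi$ for $g_{\varepsilon}$ uniformly in $0<\varepsilon\leq 1$ using $a\varphi+\varepsilon\leq a+1$, and the conclusion by the monotonicity-based linearization and the maximum principle exactly as in Lemma 4.12.
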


\begin{proof}
We show (4.22) by a small modification to the proof of Lemma 4.12. Since $g_{\varepsilon}(\phi,t)<g (\phi,t)$ for $t>0$, using (4.17) we have 
\begin{align}
-L_{\beta}(b\varphi^{\sigma})\geq g_{\varepsilon}(\phi, b\varphi^{\sigma}),\quad \phi\in D
\end{align}
with the same constant $b=b(\beta)>0$ in Proposition 4.11. For $a>0$,
\begin{align*}
L_{\beta}(a\varphi)+g_{\varepsilon}(\phi,a\varphi)
=c_1 \frac{1}{(a\varphi+\varepsilon)^{s}}+c_2 \sin\phi \frac{1}{(a\varphi+\varepsilon)^{s'}}-a(4-\beta^{2})\varphi. 
\end{align*}
For $c_1\neq 0$, 
\begin{align*}
L_{\beta}(a\varphi)+g_{\varepsilon}(\phi,a\varphi)
\geq \frac{1}{(a\varphi+\varepsilon)^{s}}\left(c_1-a\left(4-\beta^{2}\right)(a+1)^{s} \right).
\end{align*}
For $c_2\neq 0$,
\begin{align*}
L_{\beta}(a\varphi)+g_{\varepsilon}(\phi,a\varphi)
\geq \frac{\sin\phi}{(a\varphi+\varepsilon)^{s'}}\left(c_2-2a\left(4-\beta^{2}\right)(a+1)^{s'} \right).
\end{align*}
Thus for small $a=a(\beta)>0$ and $0<\varepsilon\leq 1$,
\begin{align}
-L_{\beta}(a\varphi)\leq g_{\varepsilon}(\phi,a\varphi),\quad \phi\in D.
\end{align}
Since the functions $(t+\varepsilon)^{-s}$ and $(t+\varepsilon)^{-s'}$ are decreasing for $t>0$, we apply (4.23) and (4.24) and obtain (4.22) just as in
the proof of Lemma 4.12.
\end{proof}


\begin{proof}[Proof of Theorem 4.3]
The pointwise estimates (4.22) imply that the positive solutions $w_{\varepsilon}\in C^{\infty}(\overline{D})$ to (4.12) constructed in Lemma 4.17 are uniformly bounded in $\varepsilon>0$ and their second derivative is locally bounded in $D$ for all $\varepsilon>0$. Thus, by choosing a subsequence, $w_{\varepsilon}$ converges to a limit $w$ as $\varepsilon\to 0$ locally uniformly in $D$ up to the first order. By the equation (4.12$)_1$, the second derivative also converges to $w$ locally uniformly in $D$, and the limit $w\in C^{2}(D)$ satisfies the equation (4.7$)_1$. Since the limit $w$ satisfies the pointwise estimates (4.22), $w$ is positive and uniquely extendable up to the boundary and vanishes on $\partial D$. Thus, $w\in C^{2}(D)\cap  C_0(\overline{D})$ is a positive solution to (4.7). The regularity $w\in C^{\infty}(D)$ follows by differentiating the equation (4.7$)_1$. The uniqueness follows from Lemma 4.13. 
\end{proof}

\subsection{Homogeneous solution construction}

We now construct ($-\alpha$)-homogeneous solutions to (1.1), (1.3), and (1.4) with singular profiles using the solutions to the elliptic
problem (4.1) constructed in Theorem 4.1.

\begin{prop}
Let $(\beta,c_1,c_2)$ satisfy (4.3). Let $w\in C^{\infty}(0,\pi/2)\cap C_{0}[0,\pi/2]$ be a positive solution to (4.1). Set $(\alpha,C_1,C_2)$ by (3.5). For $\beta=-1$ and $-3/2$, set $C_1$ and $C_2$ by arbitrary constants, respectively. Set $(\psi,\Pi,\rho)$ by (3.2) and (3.3), and extend them to $\mathbb{R}^{2}\backslash \{0\}$ by the symmetries (1.3) and (1.4). Then, $(u,p,\rho)\in C^{\infty}(\mathbb{R}^{2}\backslash \{x_1=0\}\cup \{x_2=0\})$. For $-1<\alpha<-1/2$, $(u,p,\rho)\in C(\mathbb{R}^{2})$. Moreover, the velocity field $u$ is ($-\alpha$)-H\"older continuous in the radial variable and ($2+2/(\alpha-1)$)-H\"older continuous in the angular variable.
\end{prop}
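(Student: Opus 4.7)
The plan is to define $(u,p,\rho)$ on the first quadrant via the stream function formula, verify the required regularity there, and then extend by the symmetries (1.3) and (1.4). First I would write the velocity field in polar coordinates: from $\psi = w(\phi)/r^{\alpha-1}$, one has
$u = -r^{-\alpha}\bigl((\alpha-1)w(\phi)\,e_\phi + w'(\phi)\,e_r\bigr)$,
together with $\Pi = C_1 r^{-2\alpha}|w|^{2+2/(\alpha-1)}$, $\rho = C_2 r^{-2\alpha-1}w|w|^{1+3/(\alpha-1)}$, and $p = \Pi - |u|^2/2 - x_2\rho$. Since $w \in C^\infty(0,\pi/2)$ by Theorem 4.1, these formulas yield smooth functions on the open first quadrant $\{x_1 > 0,\ x_2 > 0\}$. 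Applying (1.3) and (1.4) transports this smoothness to each of the other three open quadrants, giving $(u,p,\rho) \in C^\infty(\mathbb{R}^2 \setminus (\{x_1=0\} \cup \{x_2=0\}))$ for the entire parameter range.

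For the case $-1 < \alpha < -1/2$ (i.e., $-2 < \beta < -3/2$), Theorem 4.1 additionally provides $w \in C^{1,2+2/\beta}[0,\pi/2]$ with $w(0) = w(\pi/2) = 0$. To check continuity across $\{x_2 = 0\}$ for $r > 0$, I take $\phi \to 0^+$ in the velocity formula: since $e_r \to e_1$, $e_\phi \to e_2$, and $w(0) = 0$, the limit is $u \to -r^{-\alpha} w'(0)\,e_1$, and the $x_2$-symmetry (1.4) produces the matching limit from the fourth quadrant. An analogous computation at $\phi \to (\pi/2)^-$, combined with the $x_1$-symmetry (1.3), yields continuity across $\{x_1 = 0\}$. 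The density $\rho$ is continuous across both axes because $w$ vanishes on the boundary, and the same holds for $\Pi$ and $p$. At the origin, the homogeneity exponents $-\alpha$, $-2\alpha-1$, $-2\alpha$ are all positive in this range, so $|u|$, $|\rho|$, $|\Pi|$, and $p$ all tend to $0$ as $r \to 0$. Hence $(u,p,\rho) \in C(\mathbb{R}^2)$.

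For the H\"older regularity of $u$ in the range $-1 < \alpha < -1/2$, I write $u(r,\phi) = r^{-\alpha} U(\phi)$ with $U(\phi) = -((\alpha-1)w(\phi)\,e_\phi + w'(\phi)\,e_r)$. Since $-\alpha \in (1/2, 1) \subset (0,1)$, the map $r \mapsto r^{-\alpha}$ is locally $(-\alpha)$-H\"older, supplying the radial exponent. For the angular variable, $w \in C^{1,2+2/\beta}[0,\pi/2]$ gives $U \in C^{0,2+2/(\alpha-1)}[0,\pi/2]$. The main obstacle is checking that the extension of $U$ to $\mathbb{S}^1$ via (1.3)–(1.4) preserves this H\"older regularity globally, rather than merely on each open quadrant separately. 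This reduces to matching boundary values: the condition $w(0) = w(\pi/2) = 0$ together with the parity of $w'$ under the reflections induced by (1.3) and (1.4) ensures that the extended $U$ agrees in value across each axis, so the H\"older estimate survives the gluing and yields the stated $(2+2/(\alpha-1))$-H\"older regularity in the angular variable.
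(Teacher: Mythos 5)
Your proposal is correct and follows essentially the same route as the paper: express $(u,p,\rho)$ on the first quadrant via the polar formulas, get smoothness off the axes from $w\in C^{\infty}(0,\pi/2)$, and for $-1<\alpha<-1/2$ use $w\in C^{1,2+2/\beta}[0,\pi/2]$ with $w(0)=w(\pi/2)=0$ to match limits across the axes and to read off the radial and angular H\"older exponents from $u=r^{-\alpha}U(\phi)$. The only minor divergence is that the paper derives the regularity of $p$ from the explicit integral identity of Remark 4.15 (giving $r^{2\alpha}p\in C^{1}(\mathbb{S}^{1})$), whereas you deduce mere continuity of $p$ from that of $u$, $\Pi$, and $\rho$; both suffice for the stated conclusion.
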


\begin{proof}
The function $w$ satisfies $w\in C^{\infty}(\mathbb{S}^{1}\backslash P)\cap C(\mathbb{S}^{1})$ for $P=\{0,\pm \pi/2,\pi\}$. Observe that on the first quadrant $\{x_1>0,\ x_2>0\}$, 
\begin{equation}
u=\frac{-1}{r^{\alpha}}\left( (\alpha-1)we_{\phi}+w'e_{r}  \right),\quad
\rho=\frac{C_2}{r^{2\alpha +1}}w^{2+\frac{3}{\alpha-1}},\quad \Pi=\frac{C_1}{r^{2\alpha}}w^{2+\frac{2}{\alpha-1}},\quad p=\Pi-\frac{1}{2}|u|^{2}-x_2\rho. 
\end{equation}
Thus $(u,p,\rho)\in C^{\infty}(\mathbb{S}^{1}\backslash P)$. This means $(u,p,\rho)\in C^{\infty}(\mathbb{R}^{2}\backslash \{x_1=0\}\cup \{x_2=0\})$.  

For $-1<\alpha<-1/2$, $w\in C^{1,2+2/\beta}(\mathbb{S}^{1})$ implies $u\in C^{2+2/(\alpha-1)}(\mathbb{S}^{1})$ and $\rho\in C^{2+3/(\alpha-1)}(\mathbb{S}^{1})$. By the identity in Remark 4.15,
\begin{align}
2r^{2\alpha}p=C-\frac{c_2}{\beta+3/2}\int_{\phi}^{\pi/4}\cos\tilde{\phi}w^{2+3/\beta}\dd \tilde{\phi},\quad 0\leq \phi\leq \frac{\pi}{2},
\end{align}\\
and hence $p\in C^{1}(\mathbb{S}^{1})$. We conclude that $(u,p,\rho)\in C(\mathbb{R}^{2})$. The H\"older continuity of $u$ follows from the representation (4.25$)_1$ and $(\alpha-1)we_{\phi}+w'e_{r} \in C^{2+2/(\alpha-1)}(\mathbb{S}^{1})$.
\end{proof}

\begin{prop}
Let $u$ be the velocity field constructed in
Proposition 4.19. The $C^{1}$-norm of $u$ diverges on the axis $\{x_1=0\}$. If $c_1\neq 0$, the $C^{1}$-norm of $u$ diverges on the axis $\{x_1=0\}\cup \{x_2=0\}$. If in addition $c_2=0$ and $0<\alpha<1$, $u$ diverges on the axes $\{x_1=0\}\cup \{x_2=0\}$. 
\end{prop}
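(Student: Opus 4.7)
The plan is to read off both the statement $u(x)=r^{-\alpha}\mathbf{U}(\phi)$ and the formula for $\nabla u$ directly from (4.25), then apply the boundary regularity facts for $w$ recorded in Theorem 4.1 and Remark 4.2. By homogeneity and the symmetric extensions (1.3)–(1.4), it suffices to analyze the behavior of $u$ and $\nabla u$ as $\phi\to\pi/2$ and $\phi\to 0$ for a fixed radius $r>0$ in the first quadrant.

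First I would compute, using $\partial_\phi e_r=e_\phi$ and $\partial_\phi e_\phi=-e_r$, the two derivatives
\begin{equation*}
r^{\alpha+1}\partial_r u=\alpha\bigl((\alpha-1)w(\phi)e_\phi+w'(\phi)e_r\bigr),\qquad
r^{\alpha+1}\tfrac{1}{r}\partial_\phi u=-\bigl(\alpha\, w'(\phi)e_\phi+(w''(\phi)-(\alpha-1)w(\phi))e_r\bigr),
\end{equation*}
so that $|\nabla u(r,\phi)|$ is bounded below by a positive constant times $r^{-\alpha-1}|w''(\phi)|$ minus bounded-in-$\phi$ multiples of $|w(\phi)|$ and $|w'(\phi)|$. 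For the axis $\{x_1=0\}$, corresponding to $\phi=\pi/2$: by (4.3) at least one of $c_1,c_2$ is nonzero, so Remark 4.2 ensures $w''(\phi)\to\infty$ as $\phi\to\pi/2$. The lower bound then gives $|\nabla u(r,\phi)|\to\infty$ for every fixed $r>0$ as $\phi\to\pi/2$, i.e., as the point tends to $\{x_1=0\}$ from within the first quadrant. The same divergence holds as one approaches $\{x_1=0\}$ from any adjacent quadrant because the symmetries (1.3)–(1.4) merely flip signs of the components of $u$, not magnitudes.

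When $c_1\neq 0$, Remark 4.2 additionally supplies the divergence of $w''(\phi)$ as $\phi\to 0$, so the identical argument applied at $\phi=0$ and $\phi=\pi$ extends the $C^1$-blowup to the axis $\{x_2=0\}$. For the last assertion, in the autonomous regime $c_2=0$, $c_1\neq 0$, $0<\alpha<1$ (so $\beta=\alpha-1\in(-1,0)$), I would use the first integral derived in the proof of Theorem 4.3 (and recorded in Remark 4.5),
\begin{equation*}
|w'(\phi)|^{2}+\beta^{2}w(\phi)^{2}+\frac{2c_{1}}{1-s}\,w(\phi)^{1-s}=C,\qquad s=-1-\tfrac{2}{\beta}.
\end{equation*}
Since $-1<\beta<0$ forces $1-s=2+2/\beta<0$, the coefficient $2c_{1}/(1-s)$ is negative and $w(\phi)^{1-s}\to+\infty$ as $w(\phi)\to 0$, so balancing the constant $C$ compels $|w'(\phi)|^{2}\to+\infty$ as $\phi\to 0$ or $\phi\to\pi/2$. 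Because $u=-r^{-\alpha}\bigl((\alpha-1)w\,e_\phi+w'e_r\bigr)$ with the first summand bounded in $\phi$, this yields $|u(r,\phi)|\to\infty$ on both axes.

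The main obstacle I foresee is purely bookkeeping: one has to check that, when the axis is approached from the various quadrants defined via (1.3) and (1.4), the divergence proved for the first quadrant does not get cancelled by the sign changes induced by the extension. This is handled uniformly since the symmetries act by components and the conclusions above are statements about the size of $u$ and $\nabla u$; in particular, the divergence of $w''$ at $\phi=\pi/2$ and (for $c_1\neq 0$) at $\phi=0$ is an even property, and the divergence of $|w'|$ in the autonomous sub-case likewise passes to each reflected copy without cancellation.
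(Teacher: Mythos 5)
Your proposal is correct and follows essentially the same route as the paper: the paper's proof likewise reduces everything to the divergence of $w''$ at $\phi=\pi/2$ (and at $\phi=0$ when $c_1\neq 0$) from Remark 4.2, and to the divergence of $w'$ at both endpoints in the autonomous case $-1<\beta<0$ from Remark 4.5, the latter obtained from exactly the first integral you invoke. Your explicit computation of $\nabla u$ in polar coordinates, isolating the $w''-(\alpha-1)w$ entry (in which only the bounded function $w$ accompanies $w''$, so no cancellation is possible), simply makes precise the step the paper leaves implicit.
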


\begin{proof}
By Remark 4.2, $w''$ diverges when $\phi=\pi/2$. Thus the $C^{1}$-norm of $u$ diverges on $\{x_1=0\}$. If $c_1\neq 0$, $w''$ also diverges on $\phi=0$ and the $C^{1}$-norm of $u$ diverges on $\{x_2=0\}$. For $c_1\neq 0$, $c_2=0$, and $0<\alpha<1$, $w'$ diverges on $\phi=0$ and $\pi/2$ by Remark 4.5. Thus $u$ diverges on $\{x_1=0\}\cup \{x_2=0\}$.
\end{proof}

\begin{lem}
The functions $(u,p,\rho)$ in Proposition 4.19 are ($-\alpha$)-homogeneous weak solutions to (1.1), (1.3), and (1.4) in $\mathbb{R}^{2}$ for $-1<\alpha< -1/2$ and ($-\alpha$)-homogeneous solutions to (1.1), (1.3), and (1.4) in $\mathbb{R}^{2}\backslash \{x_1=0\}\cup \{x_2=0\}$ for $-1/2\leq \alpha<1$.  
\end{lem}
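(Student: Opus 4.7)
I would break the proof into two parts corresponding to the two regimes stated.

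\emph{Classical validity away from the axes.} On the open first quadrant the triple $(u,p,\rho)$ is built from $\psi=w(\phi)/r^{\alpha-1}$ together with the homogeneous choices (3.3) of $\Pi(\psi)$ and $\rho(\psi)$. The polar-coordinate computation is identical to that in the proof of Lemma 3.19: substitution reduces $\omega u^{\perp}+\nabla\Pi-x_2\nabla\rho$ to
\begin{align*}
-\frac{\nabla\psi}{r^{\beta+2}}\Bigl(\beta^{2}w+w''+c_1 w^{-s}+c_2\sin\phi\, w^{-s'}\Bigr),
\end{align*}
which vanishes because $w$ solves (4.1); $\nabla\cdot u=0$ is automatic from $u=\nabla^{\perp}\psi$, and $u\cdot\nabla\rho=0$ from $\rho=\rho(\psi)$. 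A one-line parity check shows that (1.1) is invariant under the reflections (1.3) and (1.4), so the symmetric extension yields a classical solution on each of the four open quadrants, i.e.\ on $\mathbb{R}^{2}\setminus(\{x_1=0\}\cup\{x_2=0\})$. This settles the range $-1/2\le\alpha<1$.

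\emph{Weak solution on $\mathbb{R}^{2}$ for $-1<\alpha<-1/2$.} Proposition 4.19 gives $(u,p,\rho)\in C(\mathbb{R}^{2})$, and the homogeneity bounds $|u|^{2}+|p|\lesssim r^{2|\alpha|}$, $|\rho|\lesssim r^{2|\alpha|-1}$, $|u\rho|\lesssim r^{3|\alpha|-1}$ make the relevant products locally integrable. For $\eta\in C_c^{\infty}(\mathbb{R}^{2};\mathbb{R}^{2})$, I would decompose $\mathbb{R}^{2}\setminus B_{\varepsilon}(0)$ into the four open quadrants and integrate by parts on each using the classical identity above. The interior contributions reassemble into the distributional integral
\begin{align*}
\int_{\mathbb{R}^{2}}\bigl[-u\otimes u:\nabla\eta-p\,\nabla\cdot\eta+\rho\, e_2\cdot\eta\bigr]\,dx,
\end{align*}
the $\partial B_{\varepsilon}$-contribution is $O(\varepsilon^{2|\alpha|+1})\to 0$, and the axis traces cancel in pairs: adjacent quadrants contribute opposite outward normals and $u\otimes u+p\,\mathrm{Id}$ is continuous across $\{x_1=0\}\cup\{x_2=0\}$. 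Sending $\varepsilon\to 0$ then yields the distributional momentum equation, and the same template applied to scalar test functions $\zeta\in C_c^{\infty}(\mathbb{R}^{2})$ handles $u\cdot\nabla\rho=0$ and $\nabla\cdot u=0$, the cancellation now using continuity of $u\rho$ and of $u$.

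The delicate point is that $w''$ (and hence $\nabla u$) diverges at $\phi=0,\pi/2$ by Remark \ref{2ndderrem}, so no classical trace of the first derivatives exists across the axes. My argument sidesteps this: the interface cancellation requires only continuity of the algebraic combinations $u\otimes u+p\,\mathrm{Id}$, $u\rho$, and $u$, all of which follow from $(u,p,\rho)\in C(\mathbb{R}^{2})$ together with $u^{1}|_{\{x_1=0\}}=0$ and $u^{2}|_{\{x_2=0\}}=\rho|_{\{x_2=0\}}=0$ (imposed by (1.3) and (1.4)). With this observation no distributional mass concentrates on $\{x_1=0\}\cup\{x_2=0\}$, and the lemma follows.
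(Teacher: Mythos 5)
Your proposal is correct and follows essentially the same route as the paper: verify (1.1) classically on each open quadrant via the Lemma 3.19 computation, then obtain the weak formulation for $-1<\alpha<-1/2$ by quadrant-by-quadrant integration by parts, with the axis boundary terms handled by the continuity of $(u,p,\rho)$ and the vanishing of the normal component of $u$ on $\{x_1=0\}\cup\{x_2=0\}$. The only (cosmetic) difference is that the paper makes your "no mass concentrates on the axes" step rigorous by integrating by parts on the retracted quadrants $G_{1,\varepsilon}=\{x_1>\varepsilon,\,x_2>\varepsilon\}$, where everything is smooth up to the boundary, and then letting $\varepsilon\to0$ using continuity — which is the clean way to justify your interface cancellation given that $\nabla u$ diverges on the axes.
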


\begin{proof}
Arguing as in the proof of Lemma 3.19, we see that the functions $(u,p,\rho)\in C^{\infty}(\mathbb{R}^{2}\backslash \{x_1=0\}\cup \{x_2=0\})$ satisfy the equation (1.1) in $\mathbb{R}^{2}\backslash \{x_1=0\}\cup \{x_2=0\}$. For $-1<\alpha<-1/2$, $(u,p,\rho)$ is continuous in $\mathbb{R}^{2}$ and the normal component of $u$ vanish on $\{x_1=0\}\cup\{x_2=0\}$. 

Let $\varphi\in C^{1}_{c}(\mathbb{R}^{2})$
be arbitrary. Since $(u,p,\rho)$ satisfies (1.1) in the first quadrant $G_1=\{x_1>0,\ x_2>0\}$, integrating by parts in $G_{1,\varepsilon}=\{x_1>\varepsilon,\ x_2>\varepsilon \}$ for $\varepsilon>0$, we find
\begin{align*}
\int_{G_{1,\varepsilon}}((u\cdot \nabla \varphi) \cdot u+p\nabla \cdot \varphi)\dd x
-\int_{\partial G_{1,\varepsilon}}\left( u\cdot n (u\cdot \varphi)+p  \varphi \cdot n\right) \dd H
=\int_{G_{1,\varepsilon}}\rho e_{2}\cdot \varphi\dd x.
\end{align*}
Here, $n$ is the unit outward normal vector field on $\partial G_{1,\varepsilon}$. Since $(u,p,\rho)\in C(\mathbb{R}^{2})$, letting $\varepsilon \to 0$ implies 
\begin{align*}
\int_{G_1}((u\cdot \nabla \varphi) \cdot u+p\nabla \cdot \varphi)\dd x
-\int_{\partial G_{1}} p  \varphi \cdot n \dd H
=\int_{G_1}\rho e_{2}\cdot \varphi\dd x.
\end{align*}
Applying the same argument to the second, third, and fourth quadrants and combining the results,
\begin{equation*}
\int_{\mathbb{R}^{2}}(u\cdot \nabla \varphi \cdot u+p\nabla \cdot \varphi)\dd x
=\int_{\mathbb{R}^{2}} \rho e_2\cdot \varphi \dd x,
\end{equation*}
for all $\varphi\in C^{1}_{c}(\mathbb{R}^{2})$. Thus $(u,p,\rho)$ is a weak solution in $\mathbb{R}^{2}$. 
\end{proof}

For $c_2=0$, we deduce properties of ($-\alpha$)-homogeneous solutions from the autonomous equation.

\begin{prop}
In Proposition 4.19, take $-2<\beta<0$ ($\beta\neq -1$), $c_1\neq 0$, and $c_2=0$ ($C_2=0$ for $\beta\neq -3/2$). Set $C_2=0$ for $\beta=-3/2$. Then, $(u,p)\in C^{\infty}(\mathbb{R}^{2}\backslash \{x_1=0\}\cup \{x_2=0\})$ and $\rho=0$. For $-1<\alpha<0$, $(u,p)\in C(\mathbb{R}^{2})$. Moreover, the velocity field $u$ is ($-\alpha$)-H\"older continuous for the radial variable and $(2+2/(\alpha-1))$-H\"older continuous for the angular variable. 
\end{prop}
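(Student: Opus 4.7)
The plan is to reduce the statement to the autonomous case: the assumption $c_2=0$ lets us supply the profile $w$ from Theorem 4.3 (rather than from the full non-autonomous existence Theorem 4.1), and the assumption $C_2=0$ forces $\rho\equiv 0$ through the explicit formula (3.3). The smoothness claim and the $x_1$-symmetry extensions are then exactly what Proposition 4.19 already provides for this $w$; the new content is that the improved boundary regularity in the autonomous case widens the range in which $(u,p)\in C(\mathbb{R}^{2})$ holds, from $-1<\alpha<-1/2$ up to $-1<\alpha<0$.

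First I would invoke Theorem 4.3 to obtain a positive solution $w\in C^{\infty}(0,\pi/2)\cap C_{0}[0,\pi/2]$ to (4.1), and then apply Proposition 4.19. Since $C_2=0$, the density formula in (3.3) gives $\rho\equiv 0$, and Proposition 4.19 supplies $(u,p)\in C^{\infty}(\mathbb{R}^{2}\backslash \{x_1=0\}\cup\{x_2=0\})$.

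Next, for continuity on $\mathbb{R}^{2}$ when $-1<\alpha<0$ (i.e.\ $-2<\beta<-1$), Theorem 4.3 yields the stronger boundary regularity $w\in C^{1,2+2/\beta}[0,\pi/2]$, so both $w$ and $w'$ are continuous up to $\phi=0$ and $\phi=\pi/2$. Using the representation
\[
u=-\frac{1}{r^{\alpha}}\bigl((\alpha-1)w\,e_{\phi}+w'\,e_{r}\bigr),
\]
the Dirichlet vanishing $w(0)=w(\pi/2)=0$ kills the azimuthal component on the axes, leaving only $w'e_{r}$, which by (1.3)--(1.4) matches continuously across $\{x_1=0\}$ and $\{x_2=0\}$. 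At the origin, the factor $r^{-\alpha}=r^{|\alpha|}$ tends to $0$ since $\alpha<0$, so $u$ extends continuously there. With $\rho\equiv 0$, $p=\Pi-|u|^{2}/2$, and the same reasoning applied to $\Pi=C_{1}r^{-2\alpha}w^{2+2/(\alpha-1)}$ (whose exponent $2+2/(\alpha-1)\in(0,1)$ in this range makes $w^{2+2/(\alpha-1)}$ continuous) gives continuity of $p$.

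Finally, for the H\"older statements, the radial factor $r^{-\alpha}=r^{|\alpha|}$ is globally $(-\alpha)$-H\"older continuous in $r$, and the angular profile $(\alpha-1)w\,e_{\phi}+w'\,e_{r}$ inherits $C^{2+2/(\alpha-1)}$ regularity from $w'\in C^{2+2/\beta}[0,\pi/2]$; the odd/even symmetric extensions preserve this H\"older exponent across $\mathbb{S}^{1}$. The only subtle point, beyond routine bookkeeping, is verifying continuous matching across the coordinate axes, which is exactly what the Dirichlet boundary conditions for $w$ together with the symmetries (1.3)--(1.4) provide.
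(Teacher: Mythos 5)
Your proposal is correct and follows essentially the same route as the paper: reduce to the autonomous case via Theorem 4.3, use $C_2=0$ in (3.3) to kill $\rho$, and exploit the improved boundary regularity $w\in C^{1,2+2/\beta}[0,\pi/2]$ for $-2<\beta<-1$ to extend continuity of $(u,p)$ to the whole range $-1<\alpha<0$. The only cosmetic difference is that the paper observes directly from the first-integral identity (4.26) (with $c_2=0$) that $r^{2\alpha}p$ is constant on $\mathbb{S}^{1}$, which makes the continuity of $p$ immediate, whereas you verify it by combining the continuity of $\Pi$ and $|u|^{2}$ -- both arguments are fine.
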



\begin{proof}
By (4.25) and (4.26), $\rho=0$ and $p$ is constant on $\mathbb{S}^{1}$. Thus $(u,p)\in C^{\infty}(\mathbb{R}^{2}\backslash \{x_1=0\}\cup \{x_2=0\})$. For $-1<\alpha<0$, solutions of the autonomous equation in Theorem 4.3 satisfy $w\in C^{1,2+2/(\alpha-1)}(\mathbb{S}^{1})$. Thus $(u,p)\in C(\mathbb{R}^{2})$ and $u$ is ($2+2/(\alpha-1)$)-H\"older continuous in the angular variable.   
\end{proof}

\begin{proof}[Proof of Theorems 1.3, 1.6, and 1.8]
The existence of ($-\alpha$)-homogeneous solutions $(u,p,\rho)\in C^{\infty}(\mathbb{R}^{2}\backslash \{x_1=0\}\cup \{x_2=0\})\cap C(\mathbb{R}^{2})$ for $-1<\alpha<-1/2$ and $(u,p,\rho)\in C^{\infty}(\mathbb{R}^{2}\backslash \{x_1=0\}\cup \{x_2=0\})$ for $-1/2\leq \alpha<1$ follows
from Proposition 4.19 and Lemma 4.21. By Proposition 4.20, the $C^{1}$-norm of $u$ diverges at $\{x_1=0\}$. This proves the assertions (i) and (ii)
from Theorem 1.3.

We now apply Proposition 4.19 with $-2<\beta<0$ ($\beta\neq -3/2$), $c_1=0$, and $c_2\neq 0$. By (3.5), $C_1=0$ for $\beta\neq -1$. For $\beta=-1$, $C_1$ is arbitrary, and we take $C_1=0$. Then $\Pi=0$ by (4.25) and $(u,p,\rho)$ is a Pseudo-Beltrami solution. Thus, the existence of ($-\alpha$)-homogeneous solutions for $-1<\alpha<1$ ($\alpha\neq -1/2$) in (i) and (ii) from Theorem 1.8 hold.

Finally, to prove the assertions (i) and (ii)
from Theorem 1.6, we apply Proposition 4.19 with $-2<\beta<0$ ($\beta\neq -1$), $c_1\neq 0$, and $c_2= 0$. By (3.5), $C_2=0$ for $\beta\neq -3/2$. For $\beta=-3/2$, $C_2$ is arbitrary, and we take $C_2=0$. Then $\rho=0$ by (4.25), and $(u,p)$ is a 2D Euler solution. 
The existence of ($-\alpha$)-homogeneous solutions for $-1<\alpha<1$ ($\alpha\neq 0$) now follows from Propositions 4.20 and 4.22.
\end{proof}

\section{Desingularization of the ($-1$)-homogeneous vortex sheet}

We finally show the desingularization of the ($-1$)-homogeneous vortex sheet solution by ($-\alpha$)-homogeneous solutions with regular profiles in Theorem 1.10. We show the existence of the stream function $w=w_{\beta}$ of ($-\alpha$)-homogeneous solutions uniformly converging to the stream function of the ($-1$)-homogeneous vortex sheet solution $2\phi/\pi$ in $[0,\pi/2]$.

\begin{prop}
There exist solutions $w=w_{\beta}\in C^{2}(0,\pi/2]\cap C[0,\pi/2]$ of 
\begin{equation}
\begin{aligned}
-w''&=\beta^{2}w+\frac{c_{1,\beta}}{\beta}w^{1+\frac{2}{\beta}}+\frac{c_{2,\beta}}{\beta}\sin\phi w^{1+\frac{3}{\beta}},\\
w&>0,\ w'>0,\quad \phi\in (0,\pi/2),\\
w(0)&=0,\ w(\pi/2)=1,\ w'(\pi/2)=0,
\end{aligned}
\end{equation}
such that $w_{\beta}$ converges to $2\phi /\pi$ uniformly in $[0,\pi/2]$ as $\beta\to 0$.
\end{prop}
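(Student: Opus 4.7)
The plan is to construct $w_\beta$ directly by inverting a first integral; for transparency I first describe the autonomous case $c_{2,\beta} = 0$, and indicate at the end how the general case follows by the same scheme. Multiplying (5.1) by $2w'$ and integrating from $\phi$ to $\pi/2$ using the endpoint data $w(\pi/2) = 1$ and $w'(\pi/2) = 0$ yields
\begin{equation*}
(w_\beta')^2 = \beta^{2}(1 - w_\beta^{2}) + \frac{c_{1,\beta}}{\beta+1}\bigl(1 - w_\beta^{2+2/\beta}\bigr).
\end{equation*}
Since the right side is positive on $[0,1)$, $w_\beta$ is defined implicitly as the inverse of
\begin{equation*}
\phi_\beta(W) = \int_{0}^{W}\frac{\dd \eta}{\sqrt{\beta^{2}(1-\eta^{2}) + \dfrac{c_{1,\beta}}{\beta+1}\bigl(1-\eta^{2+2/\beta}\bigr)}},\qquad W\in[0,1],
\end{equation*}
and the boundary condition $w_\beta(\pi/2)=1$ is equivalent to the single constraint $\phi_\beta(1) = \pi/2$.

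For each small $\beta>0$, the right side of the constraint is continuous and strictly decreasing in $c_{1,\beta}$, equals $\pi/(2\beta) > \pi/2$ at $c_{1,\beta} = 0$, and tends to $0$ as $c_{1,\beta} \to \infty$, so the intermediate value theorem provides a unique admissible $c_{1,\beta} > 0$. Elementary integral estimates (using $1-\eta^{2+2/\beta} \geq 1-\eta$ on $[0,1]$ for the upper bound, and the pointwise bound $\phi_\beta(1) \geq 1/\sqrt{\beta^2 + c_{1,\beta}/(\beta+1)}$ for the lower bound) yield $c_{1,\beta}/(\beta+1)$ bounded in $[2/\pi^{2}, 16/\pi^{2}]$ for all small $\beta$. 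To extract the limit, note that for each fixed $\eta \in (0,1)$, $\eta^{2+2/\beta} \to 0$ and $\beta^{2}(1-\eta^{2}) \to 0$, so if $c_{1,\beta}/(\beta+1) \to \kappa$ along a subsequence, the integrand converges pointwise to $1/\sqrt{\kappa}$; it is uniformly dominated by $\sqrt{(\beta+1)/c_{1,\beta}}/\sqrt{1-\eta} \in L^{1}(0,1)$, and dominated convergence gives $\pi/2 = \phi_\beta(1) \to 1/\sqrt{\kappa}$, hence $\kappa = 4/\pi^{2}$. By uniqueness, the full family satisfies $c_{1,\beta}/(\beta+1) \to 4/\pi^{2}$.

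The same argument shows that $\phi_\beta(W) \to W\pi/2$ uniformly on $[0,1]$: the integrand converges in $L^{1}(0,1)$ to the constant $\pi/2$, so $\sup_{W \in [0,1]} |\phi_\beta(W) - W\pi/2|$ is bounded by $\|\text{integrand} - \pi/2\|_{L^{1}(0,1)} \to 0$. Inverting and using that $\phi_\beta$ and $W \mapsto W\pi/2$ are strictly increasing continuous surjections $[0,1] \to [0,\pi/2]$ with matching endpoints, $w_\beta = \phi_\beta^{-1}$ converges to $2\phi/\pi$ uniformly on $[0,\pi/2]$. The smoothness $w_\beta \in C^{2}(0,\pi/2] \cap C[0,\pi/2]$ and the monotonicity properties follow from the explicit first-integral formula and the smoothness of the right side of (5.1) for $w > 0$.

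The main obstacle is the boundary-layer behaviour near $\phi = \pi/2$: over a region of width $O(\beta)$, $w_\beta$ transitions rapidly from slope $\approx 2/\pi$ to slope $0$, so derivative convergence fails even though the endpoint values are pinned at $1$; uniform convergence of $w_\beta$ itself is preserved precisely because this layer is spatially vanishing. For the general case $c_{2,\beta} \neq 0$, multiplying (5.1) by $2w'$ produces an additional non-autonomous contribution $(2c_{2,\beta}/\beta)\int_\phi^{\pi/2} \sin \tilde\phi \, w_\beta^{1+3/\beta}(\tilde\phi) \, w_\beta'(\tilde\phi)\,\dd\tilde\phi$ in the first integral; bounding $w_\beta \leq 1$ shows this term is $O(c_{2,\beta}/\beta)$ and is asymptotically negligible under the analogous choice $c_{2,\beta}/(\beta+3/2) = o(1)$, so the same intermediate-value/dominated-convergence scheme selects compatible pairs $(c_{1,\beta}, c_{2,\beta})$ and yields the same limiting profile with only routine additional bookkeeping.
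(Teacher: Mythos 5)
Your autonomous construction ($c_{2,\beta}=0$) is correct and, since the statement does not force $c_{2,\beta}\neq 0$, it does establish the proposition as literally written, but by a genuinely different route from the paper. The paper does not build $w_\beta$ from scratch here: it takes the positive, even-symmetric, increasing solutions of (3.4) already produced variationally in Theorem 3.1 for $0<\beta<1$, normalizes by $w(\pi/2)$ (which is what generates the $\beta$-dependent coefficients $c_{1,\beta},c_{2,\beta}$), and then proves convergence by compactness: uniform bounds on $c_{1,\beta},c_{2,\beta}$ via the Taylor expansion (5.4), a uniform bound on $w'$, Arzel\`a--Ascoli, and identification of the limit through the trace identity (5.3) and the first integral (5.2). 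Your inversion-plus-IVT scheme is more explicit and self-contained for the autonomous case, and it buys uniqueness of $c_{1,\beta}$ and the clean identification $c_{1,\beta}/(\beta+1)\to 4/\pi^2$ without appealing to Section 3; the paper's route buys the non-autonomous case for free. Your estimates (the bound $1-\eta^{2+2/\beta}\geq 1-\eta$, the $L^1$ domination by $(1-\eta)^{-1/2}$, and the inversion of uniformly convergent increasing bijections) all check out.

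The gap is in the final paragraph. Once $c_{2,\beta}\neq 0$, multiplying by $2w'$ and integrating produces, after integration by parts, the term $\frac{c_{2,\beta}}{\beta+3/2}\bigl(\sin\phi\,w^{2+3/\beta}+\int_\phi^{\pi/2}\cos\tilde\phi\,w^{2+3/\beta}\,\dd\tilde\phi\bigr)$ (this is $f_\beta$ in the paper's identity (5.2)). The second piece depends on the unknown solution on the whole interval $(\phi,\pi/2)$, so $(w')^2$ is no longer a function of $w(\phi)$ alone and there is no integral $\phi_\beta(W)$ to invert: your construction does not define $w_\beta$ in this case, and existence becomes a genuine problem rather than ``routine additional bookkeeping.'' This is precisely why the paper routes existence through the mountain-pass construction of Theorem 3.1 and only uses the first integral a posteriori for estimates. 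Moreover, choosing $c_{2,\beta}/(\beta+3/2)=o(1)$ would make the density contribution vanish in the limit by fiat, defeating the purpose of carrying $c_2\neq 0$ at all; the paper instead keeps $c_{2,\beta}$ of order one (bounded above and below via (5.5)--(5.7)) and still shows $w^{2+3/\beta}\to 0$ locally uniformly, which is what makes $r^{2\alpha+1}\rho_\alpha\to 0$ a theorem rather than an assumption. Also note the additional term is $O(c_{2,\beta})$, not $O(c_{2,\beta}/\beta)$, since $\int\sin\tilde\phi\,w^{1+3/\beta}w'\,\dd\tilde\phi\leq\frac{\beta}{2\beta+3}$; this is harmless but symptomatic of the sketchiness of that paragraph.
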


\begin{proof}
For $c_1\geq 0$, $c_2\geq 0$ satisfying $c_1\neq 0$ or $c_2\neq 0$, and $0<\beta<1$, there exists a positive solution $w\in C^{3}[0,\pi]\cap C_0[0,\pi]$ to (3.4) such that $w$ is even symmetric for $\phi=\pi/2$ and increasing in $(0,\pi/2)$ by Theorem 3.1. The function $w$ takes a maximum at $\phi=\pi/2$ and $w'(\pi/2)=0$. We normalize $w$ by $w(\pi/2)$ and denote it again by $w$. Then for some constants $c_{1,\beta},\ c_{2,\beta}\geq 0$, $w=w_{\beta}$ satisfies the equations (5.1).

By multiplying $2w'$ by $(5.1)_1$, and by using the conditions $(5.1)_3$ at $\phi=\pi/2$, we obtain the identity:  
\begin{equation}
\begin{aligned}
|w'|^{2}&=\beta^{2}\left(1-w^{2}\right)+\frac{c_{1,\beta}}{\beta+1}\left(1-w^{2+\frac{2}{\beta}}\right)
+\frac{c_{2,\beta}}{\beta+3/2}\left(1-(\sin\phi)w^{2+\frac{3}{\beta}}-f_{\beta}(\phi) \right),\quad \phi\in [0,\pi/2],\\
f_{\beta}(\phi)&=\int_{\phi}^{\pi/2}\cos\tilde{\phi} w^{2+\frac{3}{\beta}}\dd \tilde{\phi}.
\end{aligned}
\end{equation}
By integrating the equation $(5.1)_1$ in $(0,\pi/2)$, we obtain the trace condition of $w'$ at $\phi=0$:
\begin{align}
w'(0)&=\beta^{2}\int_{0}^{\pi/2}w\dd \phi+\frac{c_{1,\beta}}{\beta}\int_{0}^{\pi/2}w^{1+\frac{2}{\beta}}\dd \phi+\frac{c_{2,\beta}}{\beta}\int_{0}^{\pi/2}(\sin\phi)w^{1+\frac{3}{\beta}}\dd \phi.  
\end{align}


We show that the constants $c_{1,\beta}$ and $c_{2,\beta}$ are bounded as $\beta\to 0$. We use the Taylor expansion
\begin{align}
\int_{0}^{1}\frac{\dd \sigma}{\sqrt{1-\sigma^{s}}}=1+\sum_{n=1}^{\infty}\frac{(2n-1)!!}{(2n)!!(sn+1)},\quad s>0.
\end{align}
Since each term on the right-hand side of $(5.2)_1$ are non-negative, we integrate the inverse function of $w(\phi)$ in $(0,1)$ and obtain 
\begin{align}
\frac{\pi}{2}\sqrt{\frac{c_{1,\beta}}{\beta+1}}\leq 1+\sum_{n=1}^{\infty}\frac{(2n-1)!!}{(2n)!! (2n+1)}.
\end{align}
Thus, $c_{1,\beta}$ is uniformly bounded. 

We show that $c_{2,\beta}$ is uniformly bounded. We first show the inequality 
\begin{align}
1-(\sin\phi) w^{2+\frac{3}{\beta}}-f_{\beta}(\phi)\geq \left(1-f_{\beta}(0) \right)\left(1-w^{2+\frac{3}{\beta}}\right),\quad \phi\in [0,\pi/2].
\end{align}
The function $f_{\beta}$ is decreasing in $[0,\pi/2]$ by $f_{\beta}'(\phi)=-\cos\phi w^{2+3/\beta}<0$. By $(1-f_{\beta}-\sin\phi)'=-\cos\phi (1-w^{2+3/\beta})<0$, 
\begin{align*}
1-f_{\beta}(\phi)-\sin\phi\geq 1-f_{\beta}\left(\frac{\pi}{2}\right)-1=0.
\end{align*}
Thus 
\begin{equation*}
\begin{aligned}
1-\sin\phi w^{2+\frac{3}{\beta}}-f_{\beta}(\phi)\geq 1-f_{\beta}(\phi)-\left(1-f_{\beta}(\phi)\right)w^{2+\frac{3}{\beta}}
=\left(1-f_{\beta}(\phi)\right) \left(1-w^{2+\frac{3}{\beta}}\right) 
\geq \left(1-f_{\beta}(0)\right) \left(1-w^{2+\frac{3}{\beta}}\right),
\end{aligned}
\end{equation*}
and the inequality (5.6) holds.

By using the inequality (5.6) and the identity (5.2), we integrate the inverse function of $w(\phi)$ in $(0,1)$ and obtain the upper bound
\begin{align}
\frac{\pi}{2}\sqrt{\frac{c_{2,\beta}}{\beta+3/2}}\leq \frac{1}{\sqrt{1-f_{\beta}(0) }}\left(1+\sum_{n=1}^{\infty}\frac{(2n-1)!!}{(2n)!! (2n+1)}\right).
\end{align}
Suppose that $c_{2,\beta}$ diverges as $\beta\to 0$. Then $\beta w'(0)/c_{2,\beta}$ vanishes by the identity $(5.2)_1$. By the trace condition (5.3),   
\begin{align*}
\int_{0}^{\pi/2}\sin\phi \left(\limsup_{\beta\to 0}w^{1+\frac{3}{\beta}}\right) \dd \phi=0.
\end{align*}
Thus $\lim_{\beta\to 0}w^{1+3/\beta}=0$. This implies that $\lim_{\beta\to 0}f_{\beta}(0)=0$ and $c_{2,\beta}$ is bounded by (5.7). This is a contradiction. Thus $c_{2,\beta}$ is uniformly bounded for $0<\beta\leq 1$. 

We now show that $w=w_{\beta}$ subsequently converges to the limit $2\phi/\pi$. By $(5.1)_1$ and (5.2), $w'$ is decreasing and 
\begin{align*}
0\leq w'(\phi)\leq w'(0)=\beta^{2}+\frac{c_{1,\beta}}{\beta+1}+\frac{c_{2,\beta}}{\beta+3/2}(1-f_{\beta}(0) ),\quad \phi\in [0,\pi/2].
\end{align*}
Since $c_{1,\beta}$ and $c_{2,\beta}$ are bounded for $0<\beta\leq 1$, $w'$ is uniformly bounded in $[0,\pi/2]$. By choosing a subsequence, $w$ converges to a limit $w_0$ uniformly in $[0,\pi/2]$. By choosing a subsequence, we may assume that $c_{1,\beta} $ and $c_{2,\beta}$ converge to non-negative constants $c_{1,0}$ and $c_{2,0}$, respectively. If $c_{1,0}=0$ and $c_{2,0}=0$, letting $\beta\to 0$ to the above inequality implies that $w_0\in C[0,\pi/2]$ is a constant. This contradicts the boundary conditions $w_0(0)=0$ and $w_0(\pi/2)=1$. 

We may assume that $c_{1,0}\neq 0$ or $c_{2,0}\neq 0$. By letting $\beta\to 0$ to the identity (5.3), 
\begin{equation*}
c_{1,0}\int_{0}^{\pi/2}\left(\limsup_{\beta\to 0}w^{1+\frac{2}{\beta}}\right)\dd \phi+c_{2,0}\int_{0}^{\pi/2}\sin\phi \left(\limsup_{\beta\to 0} w^{1+\frac{3}{\beta}}\right)\dd \phi  
=0.  
\end{equation*}
By $w^{1+3/\beta}= w^{1+2/\beta}w^{1/\beta}\leq w^{1+2/\beta}$, $\lim_{\beta\to 0}w^{1+3/\beta}=0$ holds. In a similar way, $w^{2+3/\beta}$ and $w^{2+2/\beta}$ also converge to zero for each $\phi\in [0,\pi/2)$. Since $w$ is increasing, $w^{2+3/\beta}$ and $w^{2+2/\beta}$ converge to zero locally uniformly in $[0,\pi/2)$. This particularly implies that $\lim_{\beta\to 0}f_{\beta}(0)=0$. We take arbitrary $\delta\in (0,\pi/2)$. By the identity (5.2),

\begin{align*}
\sup_{[0,\pi/2-\delta]}\left| |w'|^{2}-\frac{c_{1,\beta}}{\beta+1}-\frac{c_{2,\beta}}{\beta+3/2}  \right|
\leq \beta^{2}+\frac{c_{1,\beta}}{\beta+1}\sup_{[0,\pi/2-\delta]}w^{2+\frac{2}{\beta}}
+\frac{c_{2,\beta}}{\beta+3/2}\left(\sup_{[0,\pi/2-\delta]}w^{2+\frac{3}{\beta}}
+f_{\beta}(0)
\right).
\end{align*}
The right-hand side converges to zero. Thus $w'$ converges to $\sqrt{c_{1,0}+2c_{2,0}/3 }$ locally uniformly in $[0,\pi/2)$ and the limit $w_0$ is a linear function. By the boundary conditions $w_0(0)=0$ and $w_0(\pi/2)=1$, we conclude that $w_0=2\phi/\pi$. 
\end{proof}


\begin{proof}[Proof of Theorem 1.10]
Let $(u_{\textrm{sh}}, p_{\textrm{sh}})$ be the vortex sheet solution in (1.9). For $\alpha=\beta+1$, $C_{1,\beta}=-c_\beta/(2\beta+2)$, and $C_{2,\beta}=c_{2,\beta}/(2\beta+3)$, we set $(u_\alpha,p_{\alpha})$ by (3.27). Then, 
\begin{align*}
-r^{\alpha} u_{\alpha}&=\beta w_{\beta}e_{\phi}+w'_{\beta}e_{r},\\
-2r^{2\alpha} p_{\alpha}&= |w_{\beta}'|^{2}+\beta^{2}w_{\beta}^{2}+\frac{c_{1,\beta}}{\beta+1}w^{2+\frac{2}{\beta}}+\frac{c_{2,\beta}}{\beta+3/2}\sin\phi w_{\beta}^{2+\frac{3}{\beta}},\\
r^{2\alpha+1}\rho_{\alpha}&=\frac{c_{2,\beta}}{2\beta+3}w_{\beta}^{2+\frac{3}{\beta}}.
\end{align*}
By Lemma 3.19, $(u_{\alpha},p_{\alpha})\in C^{2}(\mathbb{R}^{2}\backslash \{0\}) $ are $(-\alpha)$-homogeneous solution to the Boussinesq equations satisfying symmetry (1.4). Since $w_{\beta}'$ converges to $2/\pi$, and $w^{2+2/\beta}$ and $w^{2+3/\beta}$ converge to zero locally uniformly in $[0,\pi/2)$, respectively, $r^{\alpha}u_{\alpha}$ converges to $(4/\pi) r u_{\textrm{sh}}$, $p_{\alpha}$ converges to $(4/\pi)^{2} p_{\textrm{sh}}$, and $r^{2\alpha+1}\rho_{\alpha}$ converges to zero locally uniformly for $\phi\in [0,\pi/2)$. For $\phi=\pi/2$, 
\begin{align*}
-2r^{2\alpha }p_{\alpha}=\beta^{2}+\frac{c_{1,\beta}}{\beta+1}+\frac{c_{2,\beta}}{\beta+3/2}\to c_{1,0}+\frac{2}{3}c_{2,0}=\frac{4}{\pi^{2}}.
\end{align*}
Thus $r^{2\alpha } p_{\alpha}$ converges to $(4/\pi)^{2} r^{2 }p_{\textrm{sh}}$ for $\phi=\pi/2$. 

By symmetry for $x_1$ and $x_2$, $r^{\alpha}u_{\alpha}\to (4/\pi) r u_{\textrm{sh}}$, $r^{2\alpha}p_{\alpha}\to (4/\pi)^{2} p_{\textrm{sh}}$, and $r^{2\alpha}\rho_{\alpha}\to 0$ locally uniformly for $\phi \in \mathbb{S}^{1}\backslash \{\pm\pi/2\}$. The desired $(-\alpha)$-homogeneous solutions are obtained by dividing $u_\alpha$ by $4/\pi$ and $p_{\alpha}$ and $\rho_{\alpha}$ by $(4/\pi)^{2}$, respectively.
\end{proof}

\appendix 

\section{The second kind self-similarities in hydrodynamics}

We review existence results on backward self-similar solutions to the following 
fluid equations:
\begin{itemize}
\item gCLM equations
\item Burgers equation
\item Prandtl equations
\item Isentropic compressible Euler equations
\end{itemize}
Those equations partially share the essential features with the 2D inviscid Boussinesq equations in a half-plane, such as advection and stretching of vorticity, incompressible flows in a half-plane, density variation, etc., and possess similar scaling laws; see Table 1.

\clearpage

\begin{landscape}
\begin{table}[]
\vspace{60pt}
\small
\begin{tabular}{|c|c|c|c|c|c|c|}
\hline
Equations                                                                                & Variables & Unknowns      & Scaling                                                                                                                                                                                                                                   & Exponents                                                                                                                                   & Methods                                                                           & References                                                                                                                                                 \\ \hline
gCLM                                                                                     & $1$       & $\omega$      & $\omega(x,t)=\lambda^{1+\alpha}\omega(\lambda x,\lambda^{1+\alpha}t )$                                                                                                                                                                    & $-2\leq \alpha(a)\leq 2$                                                                                                                    & \begin{tabular}[c]{@{}c@{}}Explicit solutions\\ Fixed point theorems\end{tabular} & \begin{tabular}[c]{@{}c@{}}Elgindi \& Jeong \cite{EJ20c} \\ Zhen \cite{Z23} \\ Chen et al. \cite{CHH21} \\ Huang et al. \cite{HTW} \\ Jia \& \v{S}ver\'{a}k \cite{SverakVideo} \\ Chen \cite{Chen20} \\ Lushikov et al. \cite{LSS} \\ Huang et al. \cite{HQWW}\end{tabular} \\ \hline
Burgers                                                                                  & $n$       & $u$           & \begin{tabular}[c]{@{}c@{}}$u(x_1,x_2,t)=\lambda^{\alpha}u(\lambda x_1,\lambda^{\beta} x_2, \lambda^{1+\alpha}t)$\\ ($n=2$)\end{tabular}                                                                                                  & \begin{tabular}[c]{@{}c@{}}$\alpha=-\displaystyle\frac{1}{3}$\\ $\beta=\gamma=\displaystyle\frac{1}{3}$\end{tabular}                        & \begin{tabular}[c]{@{}c@{}}Explicit solutions\\ Taylor expansion\end{tabular}     & \begin{tabular}[c]{@{}c@{}}Collot et al. \cite{CGM22} \\ Buckmaster et al. \cite{BSV23}\end{tabular}                                                                                 \\ \hline
Prandtl                                                                                  & $2$       & $u=(u_1,u_2)$ & \begin{tabular}[c]{@{}c@{}}$u_1(x_1,x_2,t)=\lambda^{\alpha}u_1(\lambda x_1,\lambda^{\beta} x_2, \lambda^{1+\alpha}t )$\\ $u_2(x_1,x_2,t)=\lambda^{\alpha-1+\beta}u_2(\lambda x_1,\lambda^{\beta} x_2, \lambda^{1+\alpha}t )$\end{tabular} & \begin{tabular}[c]{@{}c@{}}$\alpha=-\displaystyle\frac{1}{3}$\\ $\beta=-\displaystyle\frac{1}{6}$, $-\displaystyle\frac{1}{3}$\end{tabular} & \begin{tabular}[c]{@{}c@{}}Volume-preserving \\ Crocco transform\end{tabular}     & Collot et al. \cite{CGM21}                                                                                                                                              \\ \hline
\begin{tabular}[c]{@{}c@{}}Isentropic \\ Euler\\ (spherically \\ symmetric)\end{tabular} & $1$       & $(u,\rho)$    & \begin{tabular}[c]{@{}c@{}}$u(x,t)=\lambda^{\alpha}u(\lambda x,\lambda^{1+\alpha}t )$\\ $\rho^{\gamma-1}(x,t)=\lambda^{2\alpha}\rho^{\gamma-1}(\lambda x, \lambda^{1+\alpha}t )$\end{tabular}                                             & $\{\alpha_n\}\subset (0,\alpha_{\peye})$                                                                                                    & Phase portrait                                                                    & \begin{tabular}[c]{@{}c@{}}Merle et al. \cite{MRRS}\\ Buckmaster et al. \cite{BCG}\end{tabular}                                                                                   \\ \hline
Boussinesq                                                                               & $2$       & $(u,p,\rho)$  & \begin{tabular}[c]{@{}c@{}}$u(x,t)=\lambda^{\alpha}u(\lambda x,\lambda^{1+\alpha}t )$\\ $p(x,t)=\lambda^{2\alpha}p (\lambda x,\lambda^{1+\alpha}t )$\\ $\rho(x,t)=\lambda^{2\alpha+1}\rho (\lambda x,\lambda^{1+\alpha}t )$\end{tabular}  & $\alpha\approx -0.617$                                                                                                                      & \begin{tabular}[c]{@{}c@{}}Neural network\\ (numerical analysis)\end{tabular}              & Wang et al. \cite{WLGB}                                                                                                                                                \\ \hline
\end{tabular}
\vspace{15pt}
\caption{Comparison with backward self-similar solutions to other quasi-linear fluid equations}
\end{table}
\end{landscape}

\subsection{One-dimensional vortex singularities}

A canonical 1D blow-up model for the 3D Euler equations is the gCLM equations
\begin{equation}
\omega_t+au\omega_x=\omega u_x,\quad u_x=H\omega,\quad x\in \mathbb{R}.
\end{equation}
Constantin et al. \cite{CLM85} introduced this equation for $a=0$ as a 1D model for vorticity of the 3D Euler equations incorporating the vortex stretching and the Biot--Savart law with the Hilbert transform $H$ and demonstrated that the equation exhibits a blow-up. Later, De Gregorio \cite{DeG} added the advection term $a=1$, and the continuum model for $0\leq a\leq 1$ is due to Okamoto et al. \cite{OSW08}. See also C\'{o}rdoba et al. \cite{CCF} and Castro and C\'{o}rdoba \cite{CC10} for the case $a=-1$. The self-similar equations of the gCLM equations form 
\begin{align*}
\omega+\frac{1}{\alpha+1}x\omega_x+au\omega_x=\omega u_x,\quad u_x=H\omega,\quad x\in \mathbb{R}.
\end{align*}
Elgindi and Jeong \cite[1.5, Proposition 2.1]{EJ20c} found the explicit solution to the CLM equations ($a=0$) with the scaling exponent $\alpha=0$:
\begin{align*}
\omega(x)=\frac{x}{1+x^{2}},\quad u(x)=-\arctan{x}.
\end{align*}
The work \cite{EJ20c} demonstrated the existence of smooth profile functions $\omega\in C^{\infty}$ for small $|a|$ with some exponent $\alpha\in (-1,1)$,  and also the existence of H\"older continuous profile functions $\omega\in C^{\beta}$ for all $a$ with some exponent $\alpha\in (-1,2\beta-1)$ and $\beta\in (0,1)$. See also Zhen \cite{Z23}.

Chen et al. \cite{CHH21} discovered the existence of compactly supported self-similar solutions to the De Gregorio model ($a=1$) with the scaling exponent $\alpha=-2$. See also Huang et al. \cite{HTW}. 

Chen \cite{Chen20} and Lushikov et al. \cite{LSS} found the explicit solution for $a=1/2$ with the scaling exponent $\alpha=2$:
\begin{align*}
\omega(x)=-\frac{2bx}{(x^{2}+b^{2})^{2}},\quad u(x)=\frac{x}{x^{2}+b^{2}},\quad b=\sqrt{\frac{3}{8}}.
\end{align*}
Lushikov et al. \cite{LSS} numerically found the threshold $a_c\approx 0.689$ for concentrating $\alpha>-1$ and expanding $\alpha<-1$ self-similarities. Huang et al. \cite[Theorem 2.4]{HQWW} demonstrated the existence of solutions for all $a\leq 1$ with some exponent $\alpha(a)\in [-2,2]$. The constructed solutions satisfy the asymptotic 
\begin{align*}
\omega(x)=O\left(\frac{1}{|x|^{\alpha(a)+1}} \right)\quad \textrm{as}\ |x|\to\infty,
\end{align*}
for $\alpha(a)>-1$ and compact support for $\alpha(a)<-1$. The solutions \cite{EJ20c}, \cite{Z23}, \cite{HTW}, and \cite{HQWW} are constructed via fixed-point theorems. Figure 5 summarizes the advection constant $a\leq 1$ and the scaling exponent $\alpha\in \mathbb{R}$.

\begin{figure}[h]
\includegraphics[scale=0.241]{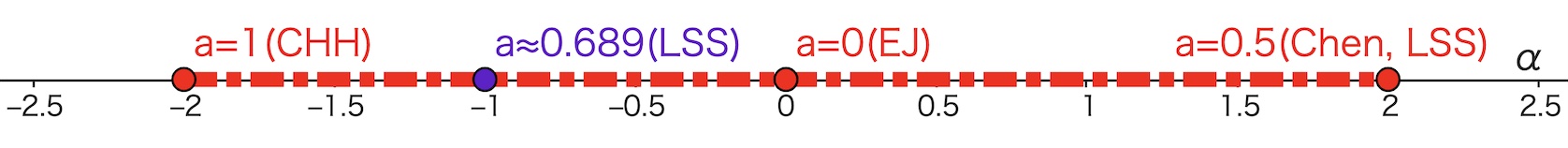}
\caption{The scaling exponent $\alpha\in \mathbb{R}$ of self-similar solutions to the gCLM equations. The red dots represent the scaling exponents of explicit solutions. The red dashed line represents the range where the scaling exponents $\alpha(a)$ of a family of solutions for $a\leq 1$\cite{HQWW} distribute.}
\end{figure}

\subsection{Multi-dimensional shock singularities}

A classic example of a nonlinear hyperbolic equation exhibiting a blow-up from smooth data is the 1D Burgers equation
\begin{align*}
u_t+uu_x=0,\quad x\in \mathbb{R}.
\end{align*}
The self-similar equation
\begin{align*}
\frac{\alpha}{\alpha+1}u+\frac{1}{\alpha+1}xu_x+uu_x=0,\quad x\in \mathbb{R},
\end{align*}
admits smooth solutions satisfying $u(x)=O(|x|^{-\alpha})$ as $|x|\to\infty$ for the scaling exponents 
\begin{align*}
\alpha=-\frac{1}{2i+1},\quad i\in \mathbb{N}.
\end{align*} 
See, e.g., \cite{EF09}. The solution for $\alpha=-1/3$ \cite{CSW} forming 
\begin{align*}
u_1(x)=\left(-\frac{x}{2}-\sqrt{\frac{x^{2}}{4} +\frac{1}{27}  }  \right)^{\frac{1}{3}}-\left(\frac{x}{2}+\sqrt{\frac{x^{2}}{4} +\frac{1}{27}  }  \right)^{\frac{1}{3}},
\end{align*}
is particularly important for the stability of blow-up solutions in the $1$D Burgers equation \cite{EF09} (This solution can be computed via the inverse equation $x=-u-u^{-1/\alpha}$ by setting $u=a^{1/3}-b^{1/3}$ and solving the quadratic equation $a-b=-x$ and $ab=1/27$). Non-smooth solutions exist for non-integers $i>0$ \cite{CGM22}.

Self-similar solutions depending on two variables involve an additional scaling exponent $\beta\in \mathbb{R}$; see Table 1. The associated self-similar equations form  
\begin{align*}
\frac{\alpha}{\alpha+1}u+\frac{1}{\alpha+1}xu_x+\frac{\beta}{\alpha+1}yu_y+uu_x=0,\quad (x,y)\in \mathbb{R}^{2}.
\end{align*}
Collot et al. \cite{CGM22} found the solution for $(\alpha,\beta)=(-1/3, 1/3)$:
\begin{align*}
u(x,y)=\frac{1}{\sqrt{F(y)}}u_1(F^{3/2}(y)x),\quad F(y)=\frac{1}{1+y^{2}},
\end{align*}
in the study of the Burgers equation with transverse viscosity $u_t+uu_x-u_{yy}=0$. More generally, the work \cite{CGM22} found the solutions $u(x,y)=F_k^{-1/2}(y)u_1(F_k^{3/2}(y)x )$, $F_k(y)=(1+y^{2k})^{-1}$ for $(\alpha,\beta)=(-1/3, 1/(3k))$ and $k\in \mathbb{N}$. 

Self-similar solutions depending on three variables involve two additional scaling exponents $\beta,\gamma\in \mathbb{R}$ and the associated self-similar equations form  
\begin{align*}
\frac{\alpha}{\alpha+1}u+\frac{1}{\alpha+1}xu_x+\frac{\beta}{\alpha+1}yu_y++\frac{\gamma}{\alpha+1}zu_z+uu_x=0,\quad (x,y,z)\in \mathbb{R}^{3}.
\end{align*}
The function $u(x,y,z)=F^{-1/2}(r)u_1(F^{3/2}(r)x )$ for $r=\sqrt{y^{2}+z^{2}}$ is a solution to this equation for $(\alpha,\beta,\gamma)=(-1/3,1/3,1/3)$ \cite[2.7]{BSV23}. Buckmaster et al. \cite[A.1]{BSV23} constructed a $10$D family of solutions to this equation for $(\alpha,\beta,\gamma)=(-1/3,1/3,1/3)$ by Taylor expansion and described shock formation of the 3D  isentropic compressible Euler equations, cf. \cite{BVS22}, \cite{BSV23b}. 

\subsection{Two-dimensional separation singularities}

The inviscid Prandtl equations model singularity formations of the Prandtl boundary layer equations: 
\begin{equation*}
\begin{aligned}
u_t+uu_x+vu_y&=0,  \\
u_x+v_y&=0,\quad (x,y)\in \mathbb{R}^{2}_{+},  \\
v(x,0,t)&=0,\quad x\in \mathbb{R}.
\end{aligned}
\end{equation*}
Self-similar solutions to these equations possess two scaling exponents $\alpha, \beta\in \mathbb{R}$ and the associated self-similar equation form 
\begin{align*}
\frac{\alpha}{\alpha+1}u+\frac{1}{\alpha+1}
x u_x
+\frac{\beta}{\alpha+1}yu_y
+uu_x+vu_y&=0,\\
u_x+v_y&=0,\quad (x,y)\in \mathbb{R}^{2}_{+},\\
v(x,0)&=0,\quad x\in \mathbb{R}.
\end{align*}
Collot et al. \cite[Lemma 3.1]{CGM22} found a new way to solve this equation for $\alpha\neq 0$ using a volume-preserving Crocco transform. Namely, $(u,v)$ solves the self-similar equation if and only if there exists a function $b(x,y)$ such that the mapping $(x,y)\longmapsto (a,b)$ for $a(x,y)=-u(x,y)$ is a volume-preserving diffeomorphism and the inverse function $x=x(a,b)$ solves the equation 
\begin{align*}
-\frac{\alpha}{\alpha+1}ax_a+\frac{\beta-\alpha+1}{\alpha+1}
b x_b=\frac{1}{\alpha+1}x-a.
\end{align*}
The work \cite[Lemma 3.1]{CGM22} found two explicit solutions for $\alpha=-1/3$:
\begin{align*}
&x=a+b^{2}+Ca^{3},\quad  \beta=-\frac{1}{6},\\
&x=a+a^{3}+\frac{1}{4}ab^{2},\quad \beta=-\frac{1}{3}.
\end{align*}
The case $\beta=-1/6$ is a generic boundary layer separation singularity relevant to Van-Dommelen--Shen singularity, and the case $\beta=-1/3$ is a degenerate singularity relevant to Burgers shock; see \cite{CGIM22} for a stability result in the Prandtl boundary layer equations.

\subsection{Imploding density singularities}

The isentropic compressible Euler equations with the ideal gas law is a system for gas dynamics involving the density:
\begin{equation*}
\begin{aligned}
u_t+u\cdot \nabla u+\nabla (\rho^{\gamma-1})&=0,\\
\rho_t +\nabla \cdot (u \rho)&=0,\\
\rho&>0,\quad x\in \mathbb{R}^{n},
\end{aligned}
\end{equation*}
with the adiabatic exponent $\gamma>1$. Its self-similar equations form 
\begin{align*}
\frac{\alpha}{\alpha+1}u+\frac{1}{\alpha+1}x\cdot \nabla u+u\cdot \nabla u+\nabla \rho^{\gamma-1}&=0,\\
\frac{2\alpha}{\alpha+1}\rho^{\gamma-1}+\frac{1}{\alpha+1}x\cdot \nabla \rho^{\gamma-1}+u\cdot \nabla \rho^{\gamma-1} +(\gamma-1)(\nabla \cdot u) \rho^{\gamma-1}&=0,\quad x\in \mathbb{R}^{n}.
\end{align*}
These equations can be expressed in the following simpler form by changing the variable $(u,\rho)(x)=(\tilde{u},\tilde{\rho})(x/(\alpha+1))$ and denoting $(\tilde{u},\tilde{\rho})$ again by $(u,\rho)$: 
\begin{align*}
\alpha u+(x+u)\cdot \nabla u+\nabla \rho^{\gamma-1}&=0,\\
2\alpha \rho^{\gamma-1}+(x+u)\cdot \nabla \rho^{\gamma-1} +(\gamma-1)(\nabla \cdot u) \rho^{\gamma-1}&=0,\quad x\in \mathbb{R}^{n}.
\end{align*}
Spherically symmetric solutions $u=f(r)x/|x|$ and $\rho=\rho(r)$ for $r=|x|$ satisfy the $1$D non-autonomous equations 
\begin{align*}
\alpha f+(r+f)f_r+(\rho^{\gamma-1})_{r}&=0,\\
2\alpha \rho^{\gamma-1}+(r+f)(\rho^{\gamma-1})_r+(\gamma-1)\left(f_r+\frac{n-1}{r}f \right) \rho^{\gamma-1}&=0,\quad r>0,
\end{align*}
and by the Emden transform, 
\begin{align*}
f=-r w(\log {r}),\quad \rho^{\gamma-1}=\frac{1}{\gamma-1}r^{2}\sigma^{2}(\log{r}), 
\end{align*}
this system is reduced to the autonomous equations for $(w,\sigma)$ with the constant $l=2/(\gamma-1)$:
\begin{align*}
(w-1)w_s+w^{2}-(\alpha+1)w+l (\sigma^{2}+\sigma\sigma_s)&=0,\\
\frac{1}{l}\sigma w_s+(w-1)\sigma_s+\left\{\left(\frac{n}{l}+1 \right)w-\alpha-1\right\}\sigma&=0,\quad s\in \mathbb{R}.
\end{align*}
With the polynomials and the constant 
\begin{align*}
\Delta &=(w-1)^{2}-\sigma^{2},\\
\Delta_1&=w(w-1)(w-\alpha-1)-n(w-w_e)\sigma^{2},\\
\Delta_2&= \frac{\sigma}{l}\left((l+n-1)w^{2}-\left(l+n+(l-1)(\alpha+1)\right)w+l(\alpha+1)+l\sigma^{2}  \right),\\
w_e&=\frac{l\alpha}{n},
\end{align*}
the system is expressed as follows:
\begin{align*}
\Delta \frac{dw}{ds}=-\Delta_1,\quad \Delta \frac{d\sigma}{ds}=-\Delta_2. 
\end{align*}
Merle et al. \cite{MRRS} investigated the phase portrait of this system with the parameters $(n,l,\alpha)$ in the $(\sigma,w)$ plane and demonstrated the existence of solutions for $n\geq 2$. In particular for $n=2$ and $3$, it is shown \cite[Theorem 1.3]{MRRS} that there exists a function $S_{\infty}(n,l)$ such that for all $l\in (0,\infty)\backslash \{n\}$ such that $S_{\infty}(n,l)\neq 0$, there exists a sequence $\{\alpha_n\}\subset (0,\alpha_{\peye})$ such that $\alpha_n\to \alpha_{\peye}$ for 
\begin{align*}
\alpha_{\peye}=\begin{cases}
&\displaystyle \frac{n+l}{l+\sqrt{n}}-1,\quad 0<l<n,\\
&\displaystyle \frac{n-1}{\left(1+\sqrt{l}\right)^{2}},\quad n<l,
\end{cases}
\end{align*}
and $\alpha=\alpha_n$ admits a smooth spherically symmetric profile function $(u,\rho)$ satisfying 
\begin{align*}
u(x)=O\left( \frac{1}{|x|^{\frac{\alpha}{\alpha+1}}}\right),\quad \rho(x)=O\left( \frac{1}{|x|^{\frac{2\alpha}{\alpha+1}}}\right)\quad \textrm{as}\ |x|\to\infty.
\end{align*}
The mass of this density $\rho$ is infinite. Buckmaster et al. \cite{BCG} demonstrated the existence of smooth spherically symmetric solutions for all adiabatic exponent $\gamma>1$ by studying phase portraits associated with Riemann invariants; see \cite{BCG2} for a review.

\begin{center}
\sc{Conflict of Interest}
\end{center}
The authors declare that they have no conflict of interest.

\begin{center}
\sc{Data availability}
\end{center}
Data sharing is not applicable to this article as no datasets were generated or analyzed during the current study.

\bibliographystyle{alpha}
\bibliography{ref}

\begin{thebibliography}{WLGSB23}

\bibitem[Abe24]{Abe11}
K.~Abe.
\newblock {E}xistence of homogeneous {E}uler flows of degree $-\alpha\notin [-2,0]$.
\newblock {\em Arch. Rational Mech. Anal.}, 248(30), (2024).

\bibitem[Aly94]{Aly94}
J.~J. Aly.
\newblock Asymptotic formation of a current sheet in an indefinitely sheared force-free field: an analytical example.
\newblock {\em Astronomy and Astrophysics}, 288:p.1012--1020, (1994).

\bibitem[Bar96]{Bare96}
G.~I. Barenblatt.
\newblock Scaling, self-similarity, and intermediate asymptotics.
\newblock 14:xxii+386, 1996.

\bibitem[BBCZD23]{Bedrossian23}
J.~Bedrossian, R.~Bianchini, M.~Coti~Zelati, and M.~Dolce.
\newblock Nonlinear inviscid damping and shear-buoyancy instability in the two-dimensional {B}oussinesq equations.
\newblock {\em Comm. Pure Appl. Math.}, 76:3685--3768, (2023).

\bibitem[BCBP23]{Bianchini}
R.~Bianchini, T.~Crin-Barat, and M.~Paicu.
\newblock Relaxation approximation and asymptotic stability of stratified solutions to the {IPM} equation.
\newblock {\em Arch. Rational Mech. Anal.}, 248(1):2, (2023).

\bibitem[BCLGSa]{BCG}
T.~Buckmaster, G.~Cao-Labora, and J.~G\'{o}mez-Serrano.
\newblock Smooth imploding solutions for 3{D} compressible fluids.
\newblock arXiv:2208.09445.

\bibitem[BCLGSb]{BCG2}
T.~Buckmaster, G.~Cao-Labora, and J.~G\'{o}mez-Serrano.
\newblock {S}mooth self-similar imploding profiles to 3{D} compressible {E}uler.
\newblock arXiv:2301.10101v1.

\bibitem[BCMZ]{BCM}
R.~Bianchini, D.~C{\'o}rdoba, and L.~Mart{\'\i}nez-Zoroa.
\newblock {N}on {E}xistence and {S}trong {I}ll-posedness in ${H}^2$ for the {S}table {IPM} {E}quation.
\newblock arXiv:2410.01297.

\bibitem[BG19]{Biri}
I.~Birindelli and G.~Galise.
\newblock The {D}irichlet problem for fully nonlinear degenerate elliptic equations with a singular nonlinearity.
\newblock {\em Calc. Var. Partial Differential Equations}, 58(5):Paper No. 180, 13, (2019).

\bibitem[BK]{BK23}
L.~Brandolese and G.~Karch.
\newblock Large self-similar solutions to oberbeck-boussinesq system with newtonian gravitational field.
\newblock arXiv:2311.01093v1.

\bibitem[Bou03]{Bou}
J.~Boussinesq.
\newblock {\em Th{\'e}orie Analytique de la Chaleur}, volume Vol. II.
\newblock Gauthier-Villars, Paris, 1903.

\bibitem[BSV22]{BVS22}
T.~Buckmaster, S.~Shkoller, and V.~Vicol.
\newblock {F}ormation of shocks for 2{D} isentropic compressible {E}uler.
\newblock {\em Comm. Pure Appl. Math.}, 75(9):2069--2120, (2022).

\bibitem[BSV23a]{BSV23}
T.~Buckmaster, S.~Shkoller, and V.~Vicol.
\newblock Formation of point shocks for 3{D} compressible {E}uler.
\newblock {\em Comm. Pure Appl. Math.}, 76(9):2073--2191, (2023).

\bibitem[BSV23b]{BSV23b}
T.~Buckmaster, S.~Shkoller, and V.~Vicol.
\newblock Shock formation and vorticity creation for 3{D} {E}uler.
\newblock {\em Comm. Pure Appl. Math.}, 76(9):1965--2072, (2023).

\bibitem[BV22]{BV22}
J.~Bedrossian and V.~Vicol.
\newblock The mathematical analysis of the incompressible {E}uler and {N}avier-{S}tokes equations---an introduction.
\newblock 225:xiii+218, [2022] \copyright 2022.

\bibitem[CC10]{CC10}
{\'A}.~Castro and D.~C{\'o}rdoba.
\newblock Infinite energy solutions of the surface quasi-geostrophic equation.
\newblock {\em Adv. Math.}, 225(4):1820--1829, (2010).

\bibitem[CC21]{CC21}
D.~Chae and P.~Constantin.
\newblock {R}emarks on {T}ype {I} {B}low-{U}p for the 3{D} {E}uler {E}quations and the 2{D} {B}oussinesq {E}quations.
\newblock {\em Journal of Nonlinear Science}, 31(77), (2021).

\bibitem[CCF05]{CCF}
A.~C{\'o}rdoba, D.~C{\'o}rdoba, and M.A. Fontelos.
\newblock Formation of singularities for a transport equation with nonlocal velocity.
\newblock {\em Ann. Math.}, pages 1377--1389, (2005).

\bibitem[CCL19a]{CCL}
{\'A}.~Castro, D.~C{\'o}rdoba, and D.~Lear.
\newblock {G}lobal existence of quasi-stratified solutions for the confined {IPM} equation.
\newblock {\em Arch. Ration. Mech. Anal.}, 232(1):437--471, (2019).

\bibitem[CCL19b]{CCL19b}
\'{A}. Castro, D.~C\'{o}rdoba, and D.~Lear.
\newblock On the asymptotic stability of stratified solutions for the 2{D} {B}oussinesq equations with a velocity damping term.
\newblock {\em Math. Models Methods Appl. Sci.}, 29(7):1227--1277, (2019).

\bibitem[CDG21]{CDG21b}
P.~Constantin, T.~D. Drivas, and D.~Ginsberg.
\newblock Flexibility and rigidity in steady fluid motion.
\newblock {\em Commun. Math. Phys.}, 385:521--563, (2021).

\bibitem[CGIM22]{CGIM22}
C.~Collot, T.-E. Ghoul, S.~Ibrahim, and N.~Masmoudi.
\newblock On singularity formation for the two-dimensional unsteady {P}randtl system around the axis.
\newblock {\em J. Eur. Math. Soc.}, 24:3703--3800, (2022).

\bibitem[CGM21]{CGM21}
C.~Collot, T.-E. Ghoul, and N.~Masmoudi.
\newblock Singularities and unsteady separation for the inviscid two-dimensional {P}randtl system.
\newblock {\em Arch. Ration. Mech. Anal.}, 240:1349--1430, (2021).

\bibitem[CGM22]{CGM22}
C.~Collot, T.-E. Ghoul, and N.~Masmoudi.
\newblock Singularity formation for {B}urgers' equation with transverse viscosity.
\newblock {\em Ann. Sci. \'{E}c. Norm. Sup\'{e}r. (4)}, 55:1047--1133, (2022).

\bibitem[CGO07]{CGO}
D.~C{\'o}rdoba, F.~Gancedo, and R.~Orive.
\newblock Analytical behavior of two-dimensional incompressible flow in porous media.
\newblock {\em J. Math. Phys.}, 48(19):pp. 065206, (2007).

\bibitem[CHa]{CH22a}
J.~Chen and T.~Y. Hou.
\newblock Stable nearly self-similar blowup of the {2D} {B}oussinesq and 3{D} {E}uler equations with smooth data i: Analysis.
\newblock arXiv:2210.07191v3.

\bibitem[CHb]{CH22b}
J.~Chen and T.~Y. Hou.
\newblock Stable nearly self-similar blowup of the 2{D} {B}oussinesq and 3{D} {E}uler equations with smooth data ii: Rigorous numerics.
\newblock arXiv:2305.05660.

\bibitem[CH21]{CH21}
J.~Chen and T.~Y. Hou.
\newblock Finite time blowup of 2{D} {B}oussinesq and 3{D} {E}uler equations with {$C^{1,\alpha}$} velocity and boundary.
\newblock {\em Comm. Math. Phys.}, 383:1559--1667, (2021).

\bibitem[Cha07]{Chae07}
D.~Chae.
\newblock Nonexistence of self-similar singularities for the 3{D} incompressible {E}uler equations.
\newblock {\em Comm. Math. Phys.}, 273:203--215, (2007).

\bibitem[Che]{Chen23}
J.~Chen.
\newblock {R}emarks on the smoothness of the ${C}^{1,\alpha}$ asymptotically self-similar singularity in the 3{D} {E}uler and 2{D} {B}oussinesq equations.
\newblock arXiv:2309.00150v2.

\bibitem[Che20]{Chen20}
J.~Chen.
\newblock Singularity formation and global well-posedness for the generalized {C}onstantin--{L}ax--{M}ajda equation with dissipation.
\newblock {\em Nonlinearity}, 33(5):2502, (2020).

\bibitem[Che21]{Chen21}
J.~Chen.
\newblock On the slightly perturbed {D}e {G}regorio model on {$S^1$}.
\newblock {\em Arch. Ration. Mech. Anal.}, 241(3):1843--1869, (2021).

\bibitem[CHH21]{CHH21}
J.~Chen, T.~Y. Hou, and De~Huang.
\newblock {O}n the finite time blowup of the {D}e {G}regorio model for the 3{D} {E}uler equations.
\newblock {\em {C}omm. {P}ure {A}ppl. {M}ath.}, 74(6):1282--1350, (2021).

\bibitem[CKO22]{Cieslak2}
T.~Cie\'{s}lak, P.~Kokocki, and W.~S. O\.{z}a\'{n}ski.
\newblock {E}xistence of nonsymmetric logarithmic spiral vortex sheet solutions to the 2{D} {E}uler equations.
\newblock {\em Ann. Sc. Norm. Super. Pisa, Cl. Sci.}, in press, (2022).
\newblock arXiv:2207.06056.

\bibitem[CKO24a]{Cieslak3}
T.~Cie\'{s}lak, P.~Kokocki, and W.~S. O\.{z}a\'{n}ski.
\newblock Linear instability of symmetric logarithmic spiral vortex sheets.
\newblock {\em J. Math. Fluid Mech.}, 26(2):21, (2024).

\bibitem[CKO24b]{Cieslak1}
T.~Cie\'{s}lak, P.~Kokocki, and W.~S. O\.{z}a\'{n}ski.
\newblock {W}ell-posedness of logarithmic spiral vortex sheets.
\newblock {\em J. Differential Equations}, 389:508--539, (2024).

\bibitem[CLM85]{CLM85}
P.~Constantin, P.D. Lax, and A.~Majda.
\newblock A simple one-dimensional model for the three-dimensional vorticity equation.
\newblock {\em Commun. Pure Appl. Math.}, 38(6):715--724, (1985).

\bibitem[CLV19]{CLV}
P.~Constantin, J.~La, and V.~Vicol.
\newblock Remarks on a paper by {G}avrilov: {G}rad-{S}hafranov equations, steady solutions of the three dimensional incompressible {E}uler equations with compactly supported velocities, and applications.
\newblock {\em Geom. Funct. Anal.}, 29:1773--1793, (2019).

\bibitem[CMZ]{CM23}
D.~C\'{o}rdoba and L.~Mart\'{i}nez-Zoroa.
\newblock {B}low-up for the incompressible 3{D}-{E}uler equations with uniform ${C}^{1,\frac{1}{2}-\varepsilon}\cap {L}^{2}$.
\newblock arXiv:2309.08495v1.

\bibitem[CMZO]{CMO}
D.~C{\'o}rdoba, L.~Mart{\'\i}nez-Zoroa, and W.~O{\.z}a{\'n}ski.
\newblock Instantaneous gap loss of sobolev regularity for the 2d incompressible euler equations.
\newblock arXiv:2210.17458.

\bibitem[CMZZ]{CMZ23}
D.~C\'{o}rdoba, L.~Mart\'{i}nez-Zoroa, and F.~Zheng.
\newblock Finite time singularities to the 3{D} incompressible {E}uler equations for solutions in ${C}^{\infty}(\mathbb{R}^3 \backslash \{0\}) \cap {C}^{1,\alpha} \cap {L}^2$.
\newblock arXiv:2308.12197v2.

\bibitem[CRT77]{CRT}
M.~G. Crandall, P.~H. Rabinowitz, and L.~Tartar.
\newblock On a {D}irichlet problem with a singular nonlinearity.
\newblock {\em Comm. Partial Differential Equations}, 2(2):193--222, (1977).

\bibitem[CSW96]{CSW}
K.~W. Cassel, F.~T. Smith, and J.~D.~A. Walker.
\newblock The onset of instability in unsteady boundary-layer separation.
\newblock {\em J. Fluid Mech.}, 315:223--256, (1996).

\bibitem[CW19]{CW19}
D.~Chae and J.~Wolf.
\newblock On the regularity of solutions to the 2d boussinesq equations satisfying type i conditions.
\newblock {\em J. Nonlinear Sci.}, 29:643--654, (2019).

\bibitem[Dav13]{Dav}
P.~A. Davidson.
\newblock {\em Turbulence in rotating, stratified and electrically conducting fluids}.
\newblock Cambridge University Press, Cambridge, 2013.

\bibitem[DE]{DE}
T.~D. Drivas and T.~M. Elgindi.
\newblock Singularity formation in the incompressible {E}uler equation in finite and infinite time.
\newblock arXiv:2203.17221v1.

\bibitem[DG90]{DeG}
S.~De~Gregorio.
\newblock On a one-dimensional model for the three-dimensional vorticity equation.
\newblock {\em J. Stat. Phys.}, 59((5--6)):1251--1263, (1990).

\bibitem[DJ35]{Dub}
M.~L. Dubreil-Jacotin.
\newblock Compl\'{e}ment \`{a} une note ant{\'e}rieure sur les ondes de type permanent dans les liquides h\'{e}t\'{e}rog\'{e}nes.
\newblock {\em Atti Acad Lincei Rend Cl Sci Fis Mat Nat}, 21(6):344--346, (1935).

\bibitem[dP92]{delPino}
M.~A. del Pino.
\newblock A global estimate for the gradient in a singular elliptic boundary value problem.
\newblock {\em Proc. Roy. Soc. Edinburgh Sect. A}, 122(3-4):341--352, (1992).

\bibitem[DR82]{DR82}
P.~G. Drazin and W.~H. Reid.
\newblock Hydrodynamic stability.
\newblock pages xiv+527, 1982.

\bibitem[EF09]{EF09}
J.~Eggers and M.~A. Fontelos.
\newblock The role of self-similarity in singularities of partial differential equations.
\newblock {\em Nonlinearity}, 22:1--44, (2009).

\bibitem[EG19]{EG}
V.~Elling and M.~V. Gnann.
\newblock Variety of unsymmetric multibranched logarithmic vortex spirals.
\newblock {\em European J. Appl. Math.}, 30:23--38, (2019).

\bibitem[EGM21]{EGM2}
T.~M. Elgindi, T.-E. Ghoul, and N.~Masmoudi.
\newblock On the stability of self-similar blow-up for {$C^{1,\alpha}$} solutions to the incompressible {E}uler equations on {$\Bbb R^3$}.
\newblock {\em Camb. J. Math.}, 9:1035--1075, (2021).

\bibitem[EH]{EH}
T.~M. Elgindi and Y.~Huang.
\newblock Regular and singular steady states of 2d incompressible euler equations near the bahouri-chemin patch.
\newblock arXiv:2207.12640v2.

\bibitem[EJ20a]{EJ20}
T.~M. Elgindi and I.-J. Jeong.
\newblock Finite-time singularity formation for strong solutions to the {B}oussinesq system.
\newblock {\em Ann. PDE}, 6:Paper No. 5, 50, (2020).

\bibitem[EJ20b]{EJ20c}
T.~M. Elgindi and I.-J. Jeong.
\newblock On the effects of advection and vortex stretching.
\newblock {\em Arch. Ration. Mech. Anal.}, 235:1763--1817, (2020).

\bibitem[EJ20c]{EJ20b}
T.~M. Elgindi and I.-J. Jeong.
\newblock Symmetries and critical phenomena in fluids.
\newblock {\em Comm. Pure Appl. Math.}, 73:257--316, (2020).

\bibitem[Elg17]{Elgindi17}
T.~M. Elgindi.
\newblock On the asymptotic stability of stationary solutions of the inviscid incompressible porous medium equation.
\newblock {\em Arch. Ration. Mech. Anal.}, 225:573--599, (2017).

\bibitem[Elg21]{Elgindi}
T.~M. Elgindi.
\newblock Finite-time singularity formation for {$C^{1,\alpha}$} solutions to the incompressible {E}uler equations on {$\Bbb R^3$}.
\newblock {\em Ann. of Math. (2)}, 194:647--727, (2021).

\bibitem[Ell16]{Elling}
V.~Elling.
\newblock Self-similar 2d {E}uler solutions with mixed-sign vorticity.
\newblock {\em Comm. Math. Phys.}, 348:27--68, (2016).

\bibitem[EMS]{Elgindi2022}
T.~M. Elgindi, R.~W. Murray, and A.~R. Said.
\newblock On the long-time behavior of scale-invariant solutions to the 2d {E}uler equation and applications.
\newblock arXiv:2211.08418v1.

\bibitem[EPa]{Elgindi23}
T.~M. Elgindi and F.~Pasqualotto.
\newblock From instability to singularity formation in incompressible fluids.
\newblock arXiv:2310.19780v1.

\bibitem[EPb]{Elgindi23b}
T.~M. Elgindi and F.~Pasqualotto.
\newblock Invertibility of a linearized boussinesq flow: a symbolic approach.
\newblock arXiv:2310.19781.

\bibitem[Eva10]{E}
L.~C. Evans.
\newblock {\em Partial differential equations}, volume~19 of {\em Graduate Studies in Mathematics}.
\newblock American Mathematical Society, Providence, RI, second edition, 2010.

\bibitem[EW15]{EW}
T.~M. Elgindi and K.~Widmayer.
\newblock Sharp decay estimates for an anisotropic linear semigroup and applications to the surface quasi-geostrophic and inviscid boussinesq systems.
\newblock {\em SIAM J. Math. Anal.}, 47(6):4672--4684, (2015).

\bibitem[FM60]{Fulks}
W.~Fulks and J.S. Maybee.
\newblock A singular nonlinear equation.
\newblock {\em Osaka J. Math.}, 12:1--19, (1960).

\bibitem[FQS12]{FQS}
P.~Felmer, A.~Quaas, and B.~Sirakov.
\newblock Existence and regularity results for fully nonlinear equations with singularities.
\newblock {\em Math. Ann.}, 354(1):377--400, (2012).

\bibitem[Fra62]{Fraenkel62}
L.~E. Fraenkel.
\newblock Laminar flow in symmetrical channels with slightly curved walls. {I}. {O}n the {J}effery-{H}amel solutions for flow between plane walls.
\newblock {\em Proc. Roy. Soc. London Ser. A}, 267:119--138, (1962).

\bibitem[Fra63]{Fraenkel63}
L.~E. Fraenkel.
\newblock Laminar flow in symmetrical channels with slightly curved walls. {II}. {A}n asymptotic series for the stream function.
\newblock {\em Proc. Roy. Soc. London Ser. A}, 272:406--428, (1963).

\bibitem[GGS]{Serrano22}
C.~Garc{\'\i}a and J.~G{\'o}mez-Serrano.
\newblock {S}elf-similar spirals for the generalized surface quasi-geostrophic equations.
\newblock arXiv:2207.12363.

\bibitem[GL93]{GuiLin}
C.~Gui and F.-H. Lin.
\newblock Regularity of an elliptic problem with a singular nonlinearity.
\newblock {\em Proc. Roy. Soc. Edinburgh Sect. A}, 123(6):1021--1029, (1993).

\bibitem[GNN79]{GNN}
B.~Gidas, W.~M. Ni, and L.~Nirenberg.
\newblock Symmetry and related properties via the maximum principle.
\newblock {\em Comm. Math. Phys.}, 68:209--243, (1979).

\bibitem[GW15]{GW15}
J.~Guillod and P.~Wittwer.
\newblock Generalized scale-invariant solutions to the two-dimensional stationary {N}avier-{S}tokes equations.
\newblock {\em SIAM J. Math. Anal.}, 47(1):955--968, (2015).

\bibitem[Ham17]{Hamel}
G.~Hamel.
\newblock Spiralf{\"o}rmige bewegungen z{\"a}her fl{\"u}ssigkeiten.
\newblock {\em Jahresbericht der Deutschen Mathematiker-Vereinigung}, 25:34--60, (1917).

\bibitem[HQWW24]{HQWW}
D.~Huang, X.~Qin, X.~Wang, and D.~Wei.
\newblock Self-similar finite-time blowups with smooth profiles of the generalized constantin--lax--majda model.
\newblock {\em Arch. Rational Mech. Anal.}, 248(22), (2024).

\bibitem[HTW23]{HTW}
D.~Huang, J.~Tong, and D.~Wei.
\newblock On self-similar finite-time blowups of the de gregorio model on the real line.
\newblock {\em Commun. Math. Phys.}, 402:2791--2829, (2023).

\bibitem[Jef15]{Jeffery}
G.~B. Jeffery.
\newblock L. the two-dimensional steady motion of a viscous fluid.
\newblock {\em Phil. Mag.}, 29((6)):455--465, (1915).

\bibitem[Jeo17]{Jeong17}
I.-J. Jeong.
\newblock {\em Dynamics of the incompressible {E}uler equations at critical regularity}.
\newblock PhD thesis, Princeton University, 2017.

\bibitem[Jeo21]{Jeong21}
I.-J. Jeong.
\newblock Loss of regularity for the 2{D} euler equations.
\newblock {\em J. Math. Fluid Mech.}, 23(no. 4):Paper No. 95, 11 pp, (2021).

\bibitem[JS]{Jeong23}
I.-J. Jeong and A.~R. Said.
\newblock {L}ogarithmic spirals in 2d perfect fluids.
\newblock arXiv:2302.09447v1.

\bibitem[J{\v{S}}14]{JS}
H.~Jia and V.~{\v{S}}ver{\'a}k.
\newblock Local-in-space estimates near initial time for weak solutions of the {N}avier-{S}tokes equations and forward self-similar solutions.
\newblock {\em Invent. Math.}, 196(1):233--265, (2014).

\bibitem[JW]{JW}
C.~Jurja and K.~Widmayer.
\newblock {\em {L}ong-time stability of a stably stratified rest state in the inviscid 2{D} {B}oussinesq equation}.
\newblock arXiv:2408.15154v1.

\bibitem[Kis18]{KiselevICM}
A.~Kiselev.
\newblock Small scales and singularity formation in fluid dynamics.
\newblock {\em In Proceedings of the International Congress of Mathematicians}, volume 3, (2018).

\bibitem[KPY]{KPY22}
A.~Kiselev, J.~Park, and Y.~Yao.
\newblock Small scale formation for the 2d {B}oussinesq equation.
\newblock arXiv:2211.05070.

\bibitem[KY23]{KY23}
A.~Kiselev and Y.~Yao.
\newblock Small scale formations in the incompressible porous media equation.
\newblock {\em Arch. Ration. Mech. Anal.}, 247(1), (2023).

\bibitem[LBB94]{LB94}
D.~Lynden-Bell and C.~Boily.
\newblock Self-similar solutions up to flashpoint in highly wound magnetostatics.
\newblock {\em Mon. Not. R. Astron. Soc.}, 267:146--152, 1994.

\bibitem[LM91]{Lazer}
A.~C. Lazer and P.~J. McKenna.
\newblock On a singular nonlinear elliptic boundary-value problem.
\newblock {\em Proc. Amer. Math. Soc.}, 111(3):721--730, (1991).

\bibitem[Lon53]{Long53}
R.~R. Long.
\newblock Some aspect of the flow of stratified fluids. i. a theoretical investigation.
\newblock {\em Tellus}, 5:42--57, (1953).

\bibitem[LS15]{LuoShvydkoy}
X.~Luo and R.~Shvydkoy.
\newblock 2{D} homogeneous solutions to the {E}uler equation.
\newblock {\em Comm. Partial Differential Equations}, 40:1666--1687, (2015).

\bibitem[LSS21]{LSS}
P.~M. Lushnikov, D.~A. Silantyev, and M.~Siegel.
\newblock Collapse versus blow-up and global existence in the generalized constantin--lax--majda equation.
\newblock {\em J. Nonlinear Sci.}, 31(5):1--56, (2021).

\bibitem[Maj03]{MA}
A.~Majda.
\newblock {\em Introduction to {PDE}s and waves for the atmosphere and ocean}, volume~9 of {\em Courant Lecture Notes in Mathematics}.
\newblock New York University, Courant Institute of Mathematical Sciences, New York; American Mathematical Society, Providence, RI, 2003.

\bibitem[MB02]{MaB}
A.~J. Majda and A.~L. Bertozzi.
\newblock {\em Vorticity and incompressible flow}, volume~27 of {\em Cambridge Texts in Applied Mathematics}.
\newblock Cambridge University Press, Cambridge, 2002.

\bibitem[MRRS22a]{MRRS3}
F.~Merle, P.~Rapha\"{e}l, I.~Rodnianski, and J.~Szeftel.
\newblock On blow up for the energy super critical defocusing nonlinear schr{\"{o}}dinger equationsodinger equations.
\newblock {\em Invent. Math}, 227(1):247--413, (2022).

\bibitem[MRRS22b]{MRRS}
F.~Merle, P.~Rapha\"{e}l, I.~Rodnianski, and J.~Szeftel.
\newblock On the implosion of a compressible fluid i: smooth self-similar inviscid profiles.
\newblock {\em Ann. of Math. (2)}, 196((2)):567--778, (2022).

\bibitem[MRRS22c]{MRRS2}
F.~Merle, P.~Rapha\"{e}l, I.~Rodnianski, and J.~Szeftel.
\newblock On the implosion of a compressible fluid ii: singularity formation.
\newblock {\em Ann. of Math. (2)}, 196(2):779--889, (2022).

\bibitem[MSHZ22]{Masmoudi22}
N.~Masmoudi, B.~Said-Houari, and W.~Zhao.
\newblock Stability of the {C}ouette flow for a 2{D} {B}oussinesq system without thermal diffusivity.
\newblock {\em Arch. Ration. Mech. Anal.}, 245:645--752, (2022).

\bibitem[Obe79]{ob}
A.~Oberbeck.
\newblock {\"U}ber die w{\"a}rmeleitung der fl{\"u}ssigkeiten bei der ber{\"u}cksichtigung der str{\"o}mungen infolge von temperaturdifferenzen.
\newblock {\em Annual Review of Physical Chemistry}, 7((6)):271--292, 1879.

\bibitem[Obe88]{oberbeck88}
A.~Oberbeck.
\newblock On the phenomena of motion in the atmosphere.
\newblock {\em Sitz. K{\"o}nig. Preuss. Akad. Wiss. English translation in Saltzman (1962)}, pages 261--275, 1888.

\bibitem[OP]{Oliva}
F.~Oliva and F.~Petitta.
\newblock Singular elliptic pdes: an extensive overview.
\newblock arXiv:2409.00482.

\bibitem[OSW08]{OSW08}
H.~Okamoto, T.~Sakajo, and M.~Wunsch.
\newblock {O}n a generalization of the {C}onstantin--{L}ax--{M}ajda equation.
\newblock {\em Nonlinearity}, 21(10):2447, (2008).

\bibitem[PW67]{PW}
M.~H. Protter and H.~F. Weinberger.
\newblock {\em Maximum principles in differential equations}.
\newblock Prentice-Hall, Inc., Englewood Cliffs, N.J., 1967.

\bibitem[Ros40]{Rosenhead}
L.~Rosenhead.
\newblock The steady two-dimensional radial flow of viscous fluid between two inclined plane walls.
\newblock {\em Proc. R. Soc. Lond. A}, 175:436--467, (1940).

\bibitem[Shv18]{Shv}
R.~Shvydkoy.
\newblock Homogeneous solutions to the 3{D} {E}uler system.
\newblock {\em Trans. Amer. Math. Soc.}, 370:2517--2535, (2018).

\bibitem[Stu]{Stu}
C.A. Stuart.
\newblock Existence and approximation of solutions of nonlinear elliptic problems.
\newblock Mathematics Report, Battelle Advanced Studies Center, Geneva, Switzerland, 86 (1976).

\bibitem[SW15]{SW15}
A.~Sarria and J.~Wu.
\newblock Blowup in stagnation-point form solutions of the inviscid 2d {B}oussinesq equations.
\newblock {\em J. Differential Equations}, 259(8):3559--3576, (2015).

\bibitem[SWZ]{SWZ}
F.~Shao, D.~Wei, and Z.~Zhang.
\newblock Self-similar algebraic spiral solution of 2-{D} incompressible {E}uler equations.
\newblock arXiv:2305.05182.

\bibitem[Tsa]{Tsai24}
T.-P. Tsai.
\newblock Large discretely self-similar solutions to oberbeck-boussinesq system with newtonian gravitational field.
\newblock arXiv:2409.14007v1.

\bibitem[TWZZ20]{Wu20}
L.~Tao, J.~Wu, K.~Zhao, and X.~Zheng.
\newblock Stability near hydrostatic equilibrium to the 2{D} {B}oussinesq equations without thermal diffusion.
\newblock {\em Arch. Ration. Mech. Anal.}, 237:585--630, (2020).

\bibitem[Val17]{Vallis17}
G.~K. Vallis.
\newblock {\em Atmospheric and Oceanic Fluid Dynamics: Fundamentals and Large-Scale Circulation}.
\newblock Cambridge: Cambridge University Press, 2nd ed. edition, 2017.

\bibitem[Visa]{Vishik18}
M.~Vishik.
\newblock {I}nstability and non-uniqueness in the {C}auchy problem for the {E}uler equations of an ideal incompressible fluid. {P}art {I}.
\newblock arXiv:1805.09426.

\bibitem[Visb]{Vishik18b}
M.~Vishik.
\newblock {I}nstability and non-uniqueness in the {C}auchy problem for the {E}uler equations of an ideal incompressible fluid. {P}art {II}.
\newblock arXiv:1805.09440.

\bibitem[\v{S}]{SverakVideo}
V.~\v{S}ver\'{a}k.
\newblock ``{S}mall scale dynamics in fluid motion''.
\newblock https://scgp.stonybrook. edu/video$\_$portal/video.php?id=5281.

\bibitem[\v{S}11]{Sverak2011}
V.~\v{S}ver\'{a}k.
\newblock On {L}andau's solutions of the {N}avier-{S}tokes equations.
\newblock {\em J. Math. Sci. (N.Y.)}, 179:208--228, (2011).

\bibitem[Wil96]{Willem}
M.~Willem.
\newblock {\em Minimax theorems}, volume~24 of {\em Progress in Nonlinear Differential Equations and their Applications}.
\newblock Birkh\"{a}user Boston, Inc., Boston, MA, 1996.

\bibitem[WLGSB23]{WLGB}
Y.~Wang, C.-Y. Lai, J.~G\'{o}mez-Serrano, and T.~Buckmaster.
\newblock Asymptotic self-similar blow-up profile for three-dimensional axisymmetric {E}uler equations using neural networks.
\newblock {\em Phys. Rev. Lett.}, 130(24):Paper No. 244002, 6, (2023).

\bibitem[Yih58]{Yih58}
C.~S. Yih.
\newblock On the flow of a stratified fluid.
\newblock {\em Proc. 3rd U.S. Nat' Congr. Appl. Mech.}, pages 857--861, (1958).

\bibitem[Yih65]{Yih65}
C.~S. Yih.
\newblock {\em Dynamics of nonhomogeneous fluids}.
\newblock The Macmillan Company, New York; Collier Macmillan Ltd., London, 1965.

\bibitem[Zhe23]{Z23}
F.~Zheng.
\newblock Exactly self-similar blow-up of the generalized degregorio equation.
\newblock {\em Nonlinearity}, 36:5252--5264, (2023).

\bibitem[ZR67]{ZR}
Y.~B. Zel{\textquotesingle}dovich and Y.~P. Razier.
\newblock {\em Physics of Shock Waves and High Temper- ature Hydrodynamic Phenomena}, volume Volume II.
\newblock Academic Press, New York, 1967.

\end{thebibliography}

\end{document}